\documentclass[12pt, reqno]{amsart}

\usepackage{
amsmath,
amssymb,
amsthm,
mathrsfs,
amsfonts,
ytableau,
enumitem,
comment,
upgreek,
soul,
yhmath,
mathtools,
shuffle,
kotex,
array,
caption,
tabularx
}

\usepackage[linktocpage]{hyperref}

\hypersetup{
    colorlinks=true,
    linkcolor = blue,
    citecolor=magenta,
    urlcolor=cyan,
}

\usepackage[capitalise, nameinlink]{cleveref}

\crefname{equation}{}{}
\crefname{figure}{{\sc Figure}}{{\sc Figure}}
\crefname{subsection}{Subsection}{Subsections}

\usepackage[euler]{textgreek}

\usepackage{tikz,tikz-cd}

\usepackage[colorinlistoftodos, textwidth = 2.3cm]{todonotes}

\ytableausetup{mathmode, boxsize=1.2em}

\setlength{\marginparwidth}{2.2cm}
\setlength{\textwidth}{16cm} \setlength{\textheight}{20cm}
\setlength{\oddsidemargin}{0.3cm} \setlength{\evensidemargin}{0.3cm}

\newtheorem{theorem}{Theorem}[section]
\newtheorem{proposition}[theorem]{Proposition}
\newtheorem{lemma}[theorem]{Lemma}
\newtheorem{corollary}[theorem]{Corollary}
\newtheorem{conjecture}[theorem]{Conjecture}

\newtheorem*{claim*}{Claim}

\theoremstyle{definition}

\newtheorem{example}[theorem]{Example}
\newtheorem{definition}[theorem]{Definition}
\newtheorem{remark}[theorem]{Remark}

\numberwithin{equation}{section} \numberwithin{figure}{section}
\numberwithin{table}{section}

\def\Z{\mathbb Z}
\def\C{\mathbb C}

\newcommand{\nc}{\newcommand}

\nc{\sfP}{\mathsf{P}}
\nc{\sfS}{\mathsf{S}}
\nc{\sfDS}{\mathsf{DS}}
\nc{\sfst}{\mathsf{st}}
\nc{\tDC}{\mathtt{DC}}
\nc{\scrC}{\mathscr{C}}
\nc{\SR}{\mathcal{SR}}
\nc{\sfposet}
{\mathsf{poset}}
\nc{\rmcard}{\mathrm{card}}
\nc{\ocard}{\overline{\mathrm{card}}}
\nc{\omin}{\overline{\mathrm{min}}}
\nc{\omax}{\overline{\mathrm{max}}}
\nc{\calL}{\mathcal{L}}
\nc{\calK}{\mathcal{K}}
\nc{\calI}{\mathcal{I}}
\nc{\sfM}{\mathsf{M}}
\nc{\sfm}{\mathsf{m}}
\nc{\Ptab}{\mathtt{ins}}
\nc{\Qtab}{\mathtt{rec}}
\nc{\pt}{T^{\scalebox{0.5}{$\nearrow$}}}
\nc{\rmIm}{\mathrm{Im}}
\nc{\rmInv}{\mathrm{Inv}}
\nc{\rmInt}{\mathrm{Int}}
\nc{\Deq}{\overset{D}{\simeq}}
\nc{\Dleq}{\preceq_D}
\nc{\bdP}{\boldsymbol{P}}
\nc{\bdI}{\boldsymbol{I}}
\nc{\soc}{\mathrm{soc}}
\nc{\balproj}{\bal_{\mathrm{proj}}}
\nc{\balinj}{\bal_{\mathrm{inj}}}

\nc{\SG}{\mathfrak{S}}
\nc{\frakR}{\mathfrak{R}}
\nc{\frakL}{\mathfrak{L}}
\nc{\PCT}{\mathrm{PCT}}
\nc{\SPCT}{\mathrm{SPCT}}
\nc{\RT}{\mathrm{RT}}
\nc{\SRT}{\mathrm{SRT}}
\nc{\SYT}{\mathrm{SYT}}
\nc{\SSYT}{\mathrm{SSYT}}
\nc{\LR}{\mathrm{LR}}
\nc{\sgn}{\mathrm{sgn}}
\nc{\RCT}{\mathrm{RCT}}
\nc{\SRCT}{\mathrm{SRCT}}
\nc{\SYRT}{\mathrm{SYRT}}
\nc{\SYCT}{\mathrm{SYCT}}
\nc{\SPYCT}{\mathrm{SPYCT}}
\nc{\tst}{\mathtt{st}}
\nc{\Span}{\mathrm{span}}
\nc{\comp}{\mathrm{comp}}
\nc{\rmst}{\mathrm{st}}
\nc{\Des}{\mathrm{Des}}
\nc{\set}{\mathrm{set}}
\nc{\wt}{\mathrm{wt}}
\nc{\ch}{\mathrm{ch}}
\nc{\id}{\mathrm{id}}
\nc{\Sym}{\mathrm{Sym}}
\nc{\Qsym}{\mathrm{QSym}}
\nc{\Nsym}{\mathrm{NSym}}
\nc{\sh}{\mathrm{sh}}
\nc{\bfS}{\mathbf{S}}
\nc{\bfm}{\mathbf{m}}
\nc{\hbfS}{\widehat{\mathbf{S}}}
\nc{\bfF}{\mathbf{F}}

\nc{\calA}{\mathcal{A}}
\nc{\calB}{\mathcal{B}}
\nc{\calG}{\mathcal{G}}
\nc{\calR}{\mathcal{R}}
\nc{\calS}{\mathcal{S}}
\nc{\calV}{\mathcal{V}}

\nc{\sfR}{\mathsf{R}}

\nc{\tal}{\lambda(\alpha)}
\nc{\tbe}{\widetilde{\beta}}
\nc{\opi}{\overline{\pi}}
\nc{\calP}{\mathcal{P}}
\nc{\rmtop}{\mathrm{top}}
\nc{\rad}{\mathrm{rad}}
\nc{\bfP}{\mathbf{P}}
\nc{\bfw}{\mathbf{w}}
\nc{\SET}{\mathrm{SET}}

\nc{\tcd}{\mathtt{cd}}
\nc{\tyd}{\mathtt{yd}}
\nc{\trd}{\mathtt{rd}}

\nc{\rmr}{\mathrm{r}}
\nc{\rmc}{\mathrm{c}}
\nc{\rmt}{\mathrm{t}}

\nc{\len}{\mathsf{len}}
\nc{\col}{\mathrm{col}}
\nc{\row}{\mathrm{row}}
\nc{\calE}{\mathcal{E}}
\nc{\calT}{\mathscr{T}}
\nc{\sfT}{\mathsf{T}}
\nc{\calEsa}{\mathcal{E}^\upsig(\alpha)}
\nc{\tauC}{\tau_{\scalebox{0.5}{$C$}}}
\nc{\sytabC}{\sytab_{\scalebox{0.5}{$C$}}}
\nc{\pr}{\mathsf{pr}}
\nc{\Ups}{\Upsilon}
\nc{\pact}{\diamond}
\nc{\tauE}{\tau_{E}^{~}}
\nc{\tauF}{\tau_{\scalebox{0.5}{$F$}}}
\nc{\tauG}{\tau_{\scalebox{0.5}{$G$}}}
\nc{\rtE}{T_{\scalebox{0.5}{$E$}}}
\nc{\rtF}{T_{\scalebox{0.5}{$F$}}}
\nc{\rtG}{T_{\scalebox{0.5}{$G$}}}
\nc{\sytab}{\widehat{\tau}}
\nc{\hatE}{\widehat{E}}
\nc{\hati}{\hat{i}}
\nc{\hcalE}{\widehat{\calE}}
\nc{\hatC}{\widehat{C}}
\nc{\bal}{{\boldsymbol{\upalpha}}}
\nc{\bbe}{{\boldsymbol{\upbeta}}}
\nc{\SPYRT}{\mathrm{SPYRT}}

\nc{\bgam}{{\boldsymbol{\upgamma}}}
\nc{\bdel}{{\boldsymbol{\updelta}}}
\nc{\weakcon}{\odot}
\nc{\basisI}{I}

\nc{\ldalpha}{\lambda(\alpha)}
\nc{\SRIT}{\mathrm{SRIT}}
\nc{\re}{\mathrm{rev}}
\nc{\otau}{\overline{\tau}}
\nc{\rtop}{{\rm top}}
\nc{\sfc}{\mathsf{c}}
\nc{\sfC}{\mathsf{C}}
\nc{\sfr}{\mathsf{r}}
\nc{\sfSP}{\mathsf{SP}}
\nc{\sfRP}{\mathsf{RP}}
\nc{\sfRSP}{\mathsf{RSP}}
\nc{\tH}{\mathtt{H}}
\nc{\tV}{\mathtt{V}}
\nc{\rpi}{\mathring{\pi}}
\nc{\cpi}{\check{\pi}}
\nc{\frakm}{\mathfrak{m}}
\nc{\sfem}{\mathsf{em}}
\nc{\Hom}{\mathrm{Hom}}
\nc{\module}{\mathrm{mod} \, }
\nc{\rmread}{\mathsf{read}}
\nc{\tread}{\underline{\mathsf{read}}}
\nc{\ocalE}{\overline{\calE}}
\nc{\oE}{\overline{E}}

\nc{\SPCTsa}{\SPCT^\upsig(\alpha)}
\nc{\bfSsa}{\bfS_\alpha^\upsig}
\nc{\bfSsaC}{{\bfS}^\upsig_{\alpha,C}}
\nc{\hbfSsa}{\widehat{\bfS}_\alpha^\upsig}
\nc{\upineq}{\rotatebox{90}{$<$}}
\nc{\downineq}{\rotatebox{270}{$<$}}
\nc{\diagineq}{\rotatebox{135}{$<$}}
\nc{\sfB}{\mathsf{B}}
\nc{\hxi}{\widehat{\xi}}
\nc{\hxidwJ}{\hxi_{\scalebox{0.55}{$J$}}}
\nc{\hxiupJ}{\hxi^{\scalebox{0.55}{$J$}}}
\nc{\scrS}{\mathscr{S}}
\nc{\bfT}{\mathbf{T}}
\nc{\tshuffle}{\,\widetilde{\shuffle}\,}
\nc{\sJ}{\scalebox{0.55}{$J$}}
\nc{\sJo}{\scalebox{0.55}{$J_1$}}
\nc{\sJt}{\scalebox{0.55}{$J_2$}}
\nc{\Keq}{\overset{K}{\cong}}
\nc{\dKeq}{\overset{K^*}{\cong}}
\nc{\Rect}{\mathrm{Rect}}

\nc{\ra}{\rightarrow}

\nc{\matr}[2]{\left( \hspace{-1ex} \begin{array}{c} #1 \\ #2 \end{array} \hspace{-1ex} \right)}

\definecolor{wsgreen}{rgb}{0,0.5,0}

\nc{\DIRT}{\mathrm{DIRT}}
\nc{\hpi}{\pi}
\nc{\frakI}{\mathfrak{I}}
\nc{\hfrakI}{\widehat{\mathfrak{I}}}
\nc{\orho}{\overline{\rho}}
\nc{\autotheta}{\uptheta}
\nc{\calW}{\mathcal{W}}
\nc{\Endo}{\mathrm{End}}

\nc{\autophi}{\upphi}
\nc{\autochi}{\upchi}
\nc{\autoomega}{\upomega}
\nc{\hIM}{\widehat{\sfB}}
\nc{\bfpi}{\boldsymbol{\uppi}}
\nc{\bfopi}{\overline{\boldsymbol{\uppi}}}
\nc{\osfB}{\overline{\sfB}}
\nc{\rmw}{\mathrm{w}}
\nc{\conc}{\; {\bullet} \;}
\nc{\ostar}{\; \overline{\bullet} \;}
\nc{\rank}{\mathrm{rank}}
\nc{\fkp}{\mathfrak{p}}
\nc{\bfR}{\mathbf{R}}
\nc{\calD}{\overline{\mathrm{Des}}}

\nc{\upsig}{{\boldsymbol{\upsigma}}}
\nc{\bfSsaE}{{\bfS}^\upsig_{\alpha,E}}

\nc{\hfkp}{\widehat{\mathfrak{p}}}

\nc{\hautophi}{{\widehat{\autophi}}}
\nc{\hautotheta}{{\widehat{\autotheta}}}
\nc{\hautoomega}{{\widehat{\autoomega}}}
\nc{\rmperm}{\mathrm{perm}}
\nc{\Hnmod}{\text{$H_n(0)$-$\mathbf{mod}$}}
\nc{\modHn}{\text{$\mathbf{mod}$-$H_n(0)$}}

\nc{\bfsigJ}{\sigma_{\scalebox{0.55}{$J$}}}
\nc{\bfrhoJ}{\rho^{\scalebox{0.55}{$J$}}}

\nc{\teta}{\widetilde{\eta}}
\nc{\urmw}{\underline{\mathrm{w}}}
\nc{\sfcnt}{\mathsf{cnt}}

\nc{\pistar}[1]{\pi_{#1}^*}
\nc{\wfkp}{\widetilde{\mathfrak{p}}}
\nc{\bfpsi}{\boldsymbol{\uppsi}}

\nc{\yt}[1]{\todo[size=\tiny,color=blue!10]{#1 \\ \hfill --- Young-Tak}}
\nc{\YT}[1]{\todo[size=\tiny,inline,color=blue!10]{#1
		\\ \hfill --- Young-Tak}}
		
\nc{\yh}[1]{\todo[size=\tiny,color=cyan!10]{#1 \\ \hfill --- Young-Hun}}
\nc{\YH}[1]{\todo[size=\tiny,inline,color=cyan!10]{#1
		\\ \hfill --- Young-Hun}}
		
\nc{\sy}[1]{\todo[size=\tiny,color=magenta!10]{#1 \\ \hfill --- So-Yeon}}
\nc{\SY}[1]{\todo[size=\tiny,inline,color=magenta!10]{#1
		\\ \hfill --- So-Yeon}}

\nc{\nt}[1]{\todo[size=\tiny,color=green!10]{#1 \\ \hfill --- Note}}
\nc{\NT}[1]{\todo[size=\tiny,inline,color=green!10]{#1
		\\ \hfill --- Note}}

\definecolor{purple}{rgb}{0.44, 0.0, 1.0}

\definecolor{yhblue}{rgb}{0,0,0.6}

\newenvironment{red}{\relax\color{red}}{\hspace*{.5ex}\relax}
\newenvironment{blue}{\relax\color{yhblue}}{\hspace*{.5ex}\relax}
\newenvironment{green}{\relax\color{wsgreen}}{\hspace*{.5ex}\relax}
\newenvironment{magenta}{\relax\color{magenta}}{\hspace*{.5ex}\relax}
\newenvironment{purple}{\relax\color{purple}}{\hspace*{.5ex}\relax}

\nc{\ber}{\begin{red}}
\nc{\er}{\end{red}}
\nc{\beb}{\begin{blue}}
\nc{\eb}{\end{blue}}
\nc{\bema}{\begin{magenta}}
\nc{\ema}{\end{magenta}}
\nc{\begr}{\begin{green}}
\nc{\egr}{\end{green}}

\nc{\bepu}{\begin{purple}}
\nc{\epu}{\end{purple}}

\title[Regular Schur labeled skew shape posets and their 0-Hecke modules]{
Regular Schur labeled skew shape posets and their 0-Hecke modules}

\author[Y.-H. Kim]{Young-Hun Kim}
\address[Y.-H. Kim]{Center for Quantum structures in Modules and Spaces, Seoul National University, Seoul 08826, Republic of Korea}
\email{ykim.math@gmail.com}

\author[S.-Y. Lee]{So-Yeon Lee}
\address[S.-Y. Lee]{Department of Mathematics, Sogang University, Seoul 04107, Republic of Korea}
\email{sylee0814@sogang.ac.kr}

\author[Y.-T. Oh]{Young-Tak Oh}
\address[Y.-T. Oh]{Department of Mathematics / Institute for Mathematical and Data Sciences, Sogang University, Seoul 04107, Republic of Korea}
\email{ytoh@sogang.ac.kr}

\keywords{labeled poset, $P$-partition, weak Bruhat order, $0$-Hecke algebra, representation, skew Schur function}

\date{\today}
\subjclass[2020]{20C08, 06A07, 05E10, 05E05}

\begin{document}

\maketitle

\begin{abstract}
Assuming Stanley's $P$-partitions conjecture holds, the regular Schur labeled skew shape posets are precisely the finite posets $P$ with underlying set $\{1, 2, \ldots, |P|\}$
such that the $P$-partition generating function is symmetric and the set of linear extensions of $P$, denoted  $\Sigma_L(P)$, is a left weak Bruhat interval in the symmetric group $\mathfrak{S}_{|P|}$.
We describe the permutations in $\Sigma_L(P)$ in terms of reading words of standard Young tableaux when $P$ is a regular Schur labeled skew shape poset, and classify $\Sigma_L(P)$'s up to descent-preserving isomorphism as $P$ ranges over regular Schur labeled skew shape posets.
The results obtained are then applied to classify the $0$-Hecke modules $\mathsf{M}_P$ associated with regular Schur labeled skew shape posets $P$ up to isomorphism. 
Then we characterize  regular Schur labeled skew shape posets as the finite posets $P$ whose linear extensions form a dual plactic-closed subset of  $\mathfrak{S}_{|P|}$. 
Using this characterization, we construct  distinguished filtrations of $\mathsf{M}_P$ with respect to the Schur basis 
when $P$ is a regular Schur labeled skew shape poset.
Further issues concerned with the classification and decomposition of the $0$-Hecke modules $\mathsf{M}_P$ are also discussed.
\end{abstract}

\tableofcontents

\section{Introduction}
\emph{Schur labeled skew shape posets} naturally appear in the context of the celebrated  Stanley's $P$-partitions conjecture. 
Let $\sfP_n$ be the set of posets with underlying set $[n] := \{1,2,\ldots, n\}$. Each poset $P \in \sfP_n$ can be identified with the labeled poset $(P, \omega)$ with the labeling $\omega: P \ra [n]$ given by $\omega(i) = i$.
Consequently, to each poset $P\in \sfP_n$, 
one can associate the following generating function for its $P$-partitions: 
\[
K_{P} := \sum_{f: \text{$P$-partition}}
x_1^{|f^{-1}(1)|} x_2^{|f^{-1}(2)|} \cdots.
\]
In 1972, Stanley \cite[p. 81]{72Stanley} proposed a conjecture stating that $K_P$ is a symmetric function if and only if $P$ is a Schur labeled skew shape poset. 
For the precise definition of Schur labeled skew shape posets, refer to ~\cref{Regular posets and Schur labeled skew shape posets}.
While this conjecture has been verified to be true for all posets $P$ with $|P| \le 8$, it remains an open question in the general case (see \cite{06McNamara}).
We denote by $\sfSP_n$ the set of all Schur labeled skew shape posets in $\sfP_n$.

On the other hand, \emph{regular posets} were introduced by Bj\"{o}rner--Wachs ~\cite{91BW} during their investigation of the convex subsets of the symmetric group $\SG_n$ on $\{1,2, \ldots n\}$ under the right weak Bruhat order.
For $P \in \sfP_n$ with the partial order $\preceq$, let $\Sigma_R(P)$ be the set  of permutations $\pi \in \SG_n$ satisfying that if $x \preceq y$, then $\pi^{-1}(x) \le \pi^{-1}(y)$.
They 
observed  that every convex subset  of $\SG_n$ under the right weak Bruhat order appears as $\Sigma_R(P)$ for some $P \in \sfP_n$, and 
every right weak Bruhat interval in $\SG_n$ is convex.
This observation led them to characterize the posets $P \in \sfP_n$ satisfying that $\Sigma_R(P)$ is a right weak Bruhat interval. 
They introduced the notion of regular posets, and  proved that $P \in \sfP_n$ is a regular poset if and only if $\Sigma_R(P)$ is a right weak Bruhat interval in $\SG_n$.
For the definition of regular posets, refer to \cref{def: regular posets}.
We denote by $\sfRP_n$ the set of all regular posets in $\sfP_n$.

Let $\sfRSP_n := \sfRP_n \cap \sfSP_n$.
In the following, we explain the reason why we consider regular Schur labeled skew shape posets from the perspective of the representation theory of the $0$-Hecke algebra.

In 1996, Duchamp, Krob, Leclerc, and Thibon~\cite{96DKLT} showed that the Grothendieck ring of the tower of $0$-Hecke algebras $\bigoplus_{n\ge 0}H_n(0)$, when equipped with addition and multiplication from direct sum and induction product, is isomorphic to the ring $\Qsym$ of quasisymmetric functions.
To be precise, they showed that 
the map  
\begin{align*}
\ch : \bigoplus_{n \ge 0} \calG_0(\Hnmod) \ra \Qsym, \quad [\bfF_\alpha] \mapsto F_{\alpha},
\end{align*}
called the \emph{quasisymmetric characteristic},
is a ring isomorphism.
Here, $\calG_0(\Hnmod)$ is the Grothendieck group of the category $\Hnmod$ of finitely generated left $H_n(0)$-modules, $\alpha$ is a composition, $\bfF_\alpha$ is the irreducible $H_n(0)$-module attached to $\alpha$, and $F_\alpha$ is the fundamental quasisymmetric function attached to $\alpha$ (for more details, see \cref{subsec: 0-Hecke alg and QSym}).
Afterwards, Bergeron--Li \cite{09BL} showed that the map $\ch$ is not just a ring isomorphism but also a Hopf algebra isomorphism.
In 2002, Duchamp--Hivert--Thibon \cite{02DHT} associated a right $H_n(0)$-module $M_P$ with each poset $P \in \sfP_n$, such that the image of $M_P$ under the quasisymmetric characteristic is $K_P$. 
This was achieved by defining a suitable right $H_n(0)$-action on  $\Sigma_R(P)$.

Since the middle of 2010, various  left $0$-Hecke modules, each equipped with a tableau basis and yielding an important quasisymmetric characteristic image, have been constructed (\cite{20BS, 15BBSSZ, 19Searles, 15TW, 19TW}).
In order to handle these modules in a uniform manner, Jung--Kim--Lee--Oh \cite{22JKLO} introduced a left $H_n(0)$-module $\sfB(I)$, referred to as \emph{the weak Bruhat interval module associated with $I$}, for each left weak Bruhat interval $I$ in $\SG_n$.
Furthermore, they showed that $\bigoplus_{n \ge 0} \mathcal{G}_0(\mathscr{B}_n)$ is isomorphic to $\Qsym$ as Hopf algebras, where $\mathscr{B}_n$ is the full subcategory of $\Hnmod$ consisting of objects that are direct sums of finitely many isomorphic copies of weak Bruhat interval modules of $H_n(0)$.
Recently, Choi--Kim--Oh \cite{23CKO} clarified the exact relationship between the weak Bruhat interval modules and the $0$-Hecke modules $M_P$, using Bj\"{o}rner--Wachs' characterization.
More precisely, they constructed a contravariant functor $\mathcal{F}:\Hnmod \ra \modHn$ that preserves the quasisymmetric characteristic and showed that $M_P = \mathcal{F}(\sfB(\Sigma_L(P)))$, where $\modHn$ is the category of finitely generated right $H_n(0)$-modules and $\Sigma_L(P) := \{\gamma^{-1} \mid \gamma \in \Sigma_R(P)\}$ for $P \in \sfRP_n$.
For technical reasons, we use a slightly different $0$-Hecke module, denoted as $\sfM_P$, instead of Duchamp, Hivert, and Thibon's module $M_P$. This module is a left $H_n(0)$-module with the basis $\Sigma_L(P)$.
For the detailed definition of $\sfM_P$, refer to \cref{def: poset module}.

The aim of this paper is to give a comprehensive investigation of regular Schur labeled skew shape posets and their associated $0$-Hecke modules.

In ~\cref{sec: int str for regular Schur}, we provide an explicit description of  $\Sigma_L(P)$ for $P \in \sfRSP_n$.  
We first introduce a Schur labeling $\tau_P$, which is a bijective tableau uniquely determined by suitable conditions.
For details, see \cref{eq: def of tau_P}.  
Let $\lambda/\mu$ be the shape of $\tau_P$.
Then $\tau_P$ gives rise to a reading, denoted $\rmread_{\tau_P}$, on 
the set $\SYT(\lambda/\mu)$ of standard Young tableaux of shape $\lambda/\mu$.
We show that all permutations in $\Sigma_L(P)$ appear  
as reading words of standard Young tableaux of shape $\lambda/\mu$, i.e.,  
$
\Sigma_L(P) = \rmread_{\tau_P}(\SYT(\lambda/\mu))
$
(\cref{lem: identification of P and S}).
Then, we derive that 
\[
\Sigma_L(P) = [\rmread_{\tau_P}(T_{\lambda/\mu}), \rmread_{\tau_P}(T'_{\lambda/\mu})]_L,\]
where $T_{\lambda/\mu}$(resp. $T'_{\lambda/\mu}$) is the standard Young tableau obtained by filling the Young diagram of shape $\lambda/\mu$ 
by $1, 2, \ldots, n$ from left to right starting with the top row (resp. from top to bottom starting with leftmost column)
(\cref{thm: interval descriptrion for regular skew schur poset}).

In ~\cref{sec: interval equal upto desc pres iso},
we introduce an equivalence relation $\Deq$ on the set $\rmInt(n)$ of left weak Bruhat intervals in $\SG_n$. This relation is defined by
$I_1 \Deq I_2$ if there is a descent-preserving poset isomorphism between $I_1$ and $I_2$.
We show that every  equivalence class $C$ is of the form
\[
\{[\gamma,  \xi_C\gamma]_L \mid \gamma \in [\sigma_0, \sigma_1]_R\},
\]
where $\sigma_0$ and $\sigma_1$ are the minimal and maximal elements in $\{\sigma \mid [\sigma, \rho]_L \in C\}$, 
respectively, and $\xi_C = \rho \sigma^{-1}$ for any $[\sigma, \rho]_L \in C$
(\cref{thm: desc pres equiv}).
In the case where $P \in \sfRSP_n$, 
we show in \cref{thm: equiv class X lam mu} that the equivalence class of $\Sigma_L(P)$ is given by
\begin{align}\label{eq: equiv class for P in RSPn}
\{\Sigma_L(Q) \mid \text{$Q \in \sfRSP_n$ with $\sh(\tau_Q) = \sh(\tau_P)$} \}.
\end{align}

In ~\cref{sec: classification of X}, we classify the $H_n(0)$-modules $\sfM_P$
up to isomorphism as $P$ ranges over $\sfRSP_n$.
We show in \cref{thm: classification} that for $P, Q \in \sfRSP_n$, 
\begin{align*}
\sfM_P \cong \sfM_Q \quad \text{if and only if} \quad \sh(\tau_P) = \sh(\tau_Q).
\end{align*}
The ``if'' part is straightforward and can be derived from \cref{eq: equiv class for P in RSPn}. As for the ``only if'' part, it can be verified by showing that when $\tau_P$ and $\tau_Q$ have different shapes, it results in either nonisomorphic projective covers or nonisomorphic injective hulls of $\sfM_P$ and $\sfM_Q$. 
To accomplish this, we compute both a projective cover and an injective hull of $\sfM_P$ for $P \in \sfRSP_n$ (\cref{lem: proj cov and inj hull}).

In ~\cref{Sec: filtration}, we first prove that a poset $P \in \sfP_n$ is a regular Schur labeled skew shape poset if and only if $\Sigma_L(P)$ is dual plactic-closed (\cref{thm: RSP iff DPC}).
This improves Malvenuto's result \cite[Theorem 1]{93M}, which states that if $\Sigma_L(P)$ is dual plactic-closed, then $P \in \sfSP_n$.
Then, we introduce the notion of a \emph{distinguished filtration} of an $H_n(0)$-module $M$ with respect to a linearly independent subset $\calB$ of $\Qsym_n$ (\cref{def: dist filt}).
If such a filtration is available, we have a  representation theoretic interpretation of the expansion of $\ch([M])$ in $\calB$.
The existence of a distinguished filtration is quite nontrivial as seen in ~\cref{eg: not dist filt}.
However, using the characterization given in ~\cref{thm: RSP iff DPC}, we show that $\sfM_P$  admits a distinguished filtration with respect to the Schur basis 
when $P \in \sfRSP_n$ (\cref{thm: filt of X lam mu}).

The final section is mainly devoted to further issues concerned with the classification and decomposition of the $0$-Hecke modules $\sfM_P$.
We discuss the classification problem for $\{\sfM_P \mid P \in \sfSP_n\}$ and $\{\sfM_P \mid P \in \sfRP_n\}$.
In particular, we expect that for $P, Q \in \sfRP_n$,
$\sfM_P \cong \sfM_Q$  if and only if  
$\Sigma_L(P) \Deq \Sigma_L(Q)$
(\cref{conj: first one}). 
The decomposition problem is also discussed for the $0$-Hecke modules $\sfM_P$ when $P \in \sfRSP_n$.
Based on experimental data, we expect that for $P \in \sfRSP_n$,
$\sfM_P$ is indecomposable if and only if $\sh(\tau_P)$ is disconnected and does not contain any disconnected ribbon
(\cref{conj: on indecomp}).
At the end of this section, we provide a remark on how to recover $\sfM_P$ for $P \in \sfRSP_n$ from a module of the generic Hecke algebra $H_n(q)$ by specializing $q$ to $0$.

In the appendix, we give a tableau description of $\sfM_P$ for $P \in \sfRSP_n$.
For a skew partition $\lambda/\mu$ of size $n$, we construct an $H_n(0)$-module $X_{\lambda/\mu}$ with standard Young tableaux of shape $\lambda/\mu$ as basis elements.
This module can be viewed as a representative of the isomorphism class of $\sfM_P$  in the category $\Hnmod$ for every $P \in \sfRSP_n$ with $\sh(\tau_P) = \lambda/\mu$.

\section{Preliminaries}\label{sec: preliminary}

For integers $m$ and $n$, we define $[m,n]$ and $[n]$ to be the intervals $\{t\in \mathbb Z \mid m\le t \le n\}$ and $\{t\in \mathbb Z \mid 1\le t \le n\}$, respectively.
Throughout this paper, $n$ will denote a nonnegative integer unless otherwise stated.

\subsection{Compositions, Young diagrams, and bijective tableaux}\label{subsec: comp and diag}

A \emph{composition} $\alpha$ of $n$, denoted by $\alpha \models n$, is a finite ordered list of positive integers $(\alpha_1,\alpha_2,\ldots, \alpha_k)$ satisfying $\sum_{i=1}^k \alpha_i = n$.
We call $\alpha_i$ ($1 \le i \le k$) a \emph{part} of $\alpha$, $k =: \ell(\alpha)$ the \emph{length} of $\alpha$, and $n =:|\alpha|$ the \emph{size} of $\alpha$. 
And, we define the empty composition $\varnothing$ to be the unique composition of size and length $0$.
Whenever necessary, we set $\alpha_i = 0$ for all $i > \ell(\alpha)$.

Given $\alpha = (\alpha_1,\alpha_2,\ldots,\alpha_k) \models n$ and $I = \{i_1 < i_2 < \cdots < i_{l}\} \subseteq [n-1]$, 
let 
\begin{align*}
    \set(\alpha) &:= \{\alpha_1,\alpha_1+\alpha_2,\ldots, \alpha_1 + \alpha_2 + \cdots + \alpha_{k-1}\}, \text{ and} \\\comp(I) &:= (i_1,i_2 - i_1, i_3 - i_2, \ldots,n-i_{l}).
\end{align*}
The set of compositions of $n$ is in bijection with the set of subsets of $[n-1]$ under the correspondence $\alpha \mapsto \set(\alpha)$ (or $I \mapsto \comp(I)$).
The \emph{reverse composition $\alpha^\rmr$ of $\alpha$} is defined to be the composition $(\alpha_k, \alpha_{k-1}, \ldots, \alpha_1)$ and
the \emph{complement $\alpha^\rmc$ of $\alpha$} is defined to be the unique composition satisfying $\set(\alpha^c) = [n-1] \setminus \set(\alpha)$.

If a composition $\lambda = (\lambda_1, \lambda_2, \ldots, \lambda_k) \models n$ satisfies $\lambda_1 \ge \lambda_2 \ge \cdots \ge \lambda_k$, then it is called a \emph{partition} of $n$ and denoted as $\lambda \vdash n$.
Given two partitions $\lambda$ and $\mu$ with $\ell(\lambda) \ge \ell(\mu)$, we write $\lambda \supseteq \mu$ if $\lambda_i \geq  \mu_i$ for all $1 \le i \le \ell(\mu)$. 
A \emph{skew partition} $\lambda/\mu$ is a pair  $(\lambda, \mu)$ of partitions with $\lambda \supseteq \mu$.
We call $|\lambda/\mu| := |\lambda| - |\mu|$ the \emph{size} of $\lambda/\mu$.
In the case where $\lambda \supset \mu \supset \nu$, we say that  $\lambda/\mu$ \emph{extends} $\mu/\nu$.

Given a partition $\lambda$, we define the \emph{Young diagram $\tyd(\lambda)$ of $\lambda$} to be the left-justified array of $n$ boxes, where the $i$th row from the top has $\lambda_i$ boxes for $1 \le i \le k$.
Similarly, given a skew partition $\lambda / \mu$, we define the \emph{Young diagram $\tyd(\lambda / \mu)$ of $\lambda / \mu$} to be the Young diagram $\tyd(\lambda)$ with all boxes belonging to $\tyd(\mu)$ removed.
A Young diagram is called \emph{connected}
if for each pair of consecutive rows, there are at least two boxes (one in each row) which have a common edge.
A skew partition is called \emph{connected} if the corresponding Young diagram is connected, and it is called \emph{basic} if the corresponding Young diagram contains neither empty rows nor empty columns.
In this paper, every skew partition is assumed to be  basic unless otherwise stated.

For two skew partitions $\lambda/\mu$ and $\nu/\kappa$, we define
$\lambda/\mu \star \nu/\kappa$ to be the skew partition whose Young diagram is obtained  by taking a rectangle of empty squares with the same number of rows as  $\tyd(\lambda/\mu)$ and the same number of columns as $\tyd(\nu/\kappa)$, and putting
$\tyd(\nu/\kappa)$ below and $\tyd(\lambda/\mu)$ to the right of this rectangle.
For instance, if $\lambda/\mu = (2,2)$ and $\nu/\kappa = (3,2)/(1)$,
then $\lambda/\mu \star \nu/\kappa = (5,5,3,2)/(3,3,1)$ and
\[
\tyd(\lambda/\mu \star \nu/\kappa) = 
\begin{array}{l}
\begin{ytableau}
\none & \none & \none & \ & \  \\
\none & \none & \none & \ & \  \\
\none & \ & \  \\
\ & \ 
\end{ytableau}
\end{array}.
\]

Given a skew partition $\lambda/\mu$ of size $n$, a \emph{bijective tableau} of shape $\lambda / \mu$ is a filling of $\tyd(\lambda / \mu)$ with distinct entries in $[n]$.
For later use, we denote by $\tau_0^{\lambda/\mu}$ (resp. $\tau_1^{\lambda/\mu}$) the bijective tableau of shape $\lambda / \mu$ obtained by filling $1, 2, \ldots, n$ from right to left starting with the top row (resp. from top to bottom starting with the rightmost column).
If $\lambda/\mu$ is clear in the context, we will drop the superscript $\lambda/\mu$ from $\tau_0^{\lambda/\mu}$ and $\tau_1^{\lambda/\mu}$.
Again, letting $\lambda / \mu = (2,2) \star (3,2) / (1)$,
we have
\[
\tau_0 = \begin{array}{l}
\begin{ytableau}
\none & \none & \none & 2 & 1  \\
\none & \none & \none & 4 & 3  \\
\none & 6 & 5  \\
8 & 7 
\end{ytableau}
\end{array} 
\quad \text{and} \quad
\tau_1 = \begin{array}{l}
\begin{ytableau}
\none & \none & \none & 3 & 1  \\
\none & \none & \none & 4 & 2  \\
\none & 6 & 5  \\
8 & 7 
\end{ytableau}
\end{array}.
\]    
A bijective tableau is referred to as a \emph{standard Young tableau} if the elements in each row are arranged in increasing order from left to right, and the elements in each column are arranged in increasing order from top to bottom.
We denote by $\SYT(\lambda / \mu)$ the set of all standard Young tableaux of shape $\lambda / \mu$.
And, we let $\SYT_n := \bigcup_{\lambda \vdash n} \SYT(\lambda)$.

\subsection{Weak Bruhat orders on the symmetric group}\label{subsec: Symmetric group}

Let $\SG_n$ denote the symmetric group on $[n]$. Every permutation $\sigma \in \SG_n$ can be expressed as a product of simple transpositions $s_i := (i,i+1)$ for $1 \leq i \leq n-1$.
A \emph{reduced expression for $\sigma$} is an expression that represents $\sigma$ in the shortest possible length, and 
the \emph{length $\ell(\sigma)$ of $\sigma$} is the number of simple transpositions in any reduced expression for $\sigma$.
Let
\[
\Des_L(\sigma):= \{i \in [n-1] \mid \ell(s_i \sigma) < \ell(\sigma)\}
\ \  \text{and} \ \  
\Des_R(\sigma):= \{i \in [n-1] \mid \ell(\sigma s_i) < \ell(\sigma)\}.
\]
It is well known that if $\sigma = w_1 w_2 \cdots w_n$ in one-line notation, then
\begin{align*}\label{eq: Des alternating def}
\begin{aligned}
\Des_L(\sigma) & = \{ i \in [n-1] \mid \text{$i$ is right of $i+1$ in $w_1 w_2 \cdots w_n$} \} \quad \text{and} \\
\Des_R(\sigma) & = \{ i \in [n-1] \mid w_i > w_{i+1}
\}.
\end{aligned} 
\end{align*}

The \emph{left weak Bruhat order} $\preceq_L$ 
(resp. \emph{right weak Bruhat order} $\preceq_R$) on $\SG_n$
is the partial order on $\SG_n$ whose covering relation $\preceq_L^{\rmc}$ (resp. $\preceq_R^\rmc$) is given as follows: 
\[
\sigma \preceq_L^\rmc s_i \sigma \text{ if }i \notin \Des_L(\sigma)
\quad \text{(resp. $\sigma \preceq_R^\rmc \sigma s_i$ if  $i \notin \Des_R(\sigma)$).}
\]
Although these two weak Bruhat orders are not identical,  
there exists a poset isomorphism $$(\SG_n, \preceq_L)\to (\SG_n, \preceq_R),\quad \sigma \mapsto \sigma^{-1}.$$
For each $\gamma \in \SG_n$, let
\begin{align*}
\rmInv_L(\gamma) & := \{(i,j) \mid 1 \le i < j \le n \text{ and } \gamma(i) > \gamma(j) \} \quad \text{and} \\
\rmInv_R(\gamma) & :=
\{(\gamma(i), \gamma(j)) \mid 1 \le i < j \le n \text{ and } \gamma(i) > \gamma(j)\}.
\end{align*}
Then, for $\sigma, \rho \in \SG_n$, 
\begin{align*}
& \sigma \preceq_L \rho 
\quad \text{if and only if} \quad
\rmInv_L(\sigma) \subseteq \rmInv_L(\rho) \quad \text{and}\\
& \sigma \preceq_R \rho
\quad \text{if and only if} \quad
\rmInv_R(\sigma) \subseteq \rmInv_R(\rho).
\end{align*}

Given $\sigma, \rho \in \SG_n$, the \emph{left weak Bruhat interval $[\sigma, \rho]_L$} (resp. the \emph{right weak Bruhat interval $[\sigma, \rho]_R$}) denotes the closed interval $\{\gamma \in \SG_n \mid \sigma \preceq_L \gamma \preceq_L \rho \}$ (resp. $\{\gamma \in \SG_n \mid \sigma \preceq_R \gamma \preceq_R \rho \}$) with respect to the left weak Bruhat order (resp. the right weak Bruhat order).

For later use, we introduce the following lemma.

\begin{lemma}{\rm (\cite[Proposition 3.1.6]{05BB})}
\label{lem: interval translation}
For $\sigma,\rho \in \SG_n$ with $\sigma \preceq_R \rho$, 
the map $[\sigma, \rho]_R \ra [\id, \sigma^{-1}\rho]_R, \gamma \mapsto \sigma^{-1} \gamma$ is a poset isomorphism.
Equivalently, for $\sigma,\rho \in \SG_n$ with $\sigma \preceq_L \rho$, the map $[\sigma,\rho]_L \ra [\id, \rho \sigma^{-1}]_L,\gamma \mapsto \gamma \sigma^{-1}$ is a poset isomorphism.
\end{lemma}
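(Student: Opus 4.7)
The plan is to prove the first assertion directly from the length characterization of $\preceq_R$, and then transport it to the left weak order through the inversion isomorphism $(\SG_n, \preceq_L) \to (\SG_n, \preceq_R)$, $\sigma \mapsto \sigma^{-1}$ recalled in \cref{subsec: Symmetric group}. The single tool needed is the standard fact that for $\alpha, \beta \in \SG_n$,
\[
\alpha \preceq_R \beta \quad \text{if and only if} \quad \ell(\beta) = \ell(\alpha) + \ell(\alpha^{-1}\beta),
\]
which is immediate from the definition of the covering relation $\preceq_R^\rmc$ through right multiplication by length-increasing simple transpositions.

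First I would check that $\varphi: \gamma \mapsto \sigma^{-1}\gamma$ is well-defined as a map $[\sigma,\rho]_R \to [\id, \sigma^{-1}\rho]_R$: given $\gamma$ with $\sigma \preceq_R \gamma \preceq_R \rho$, length additivity yields $\ell(\gamma) = \ell(\sigma) + \ell(\sigma^{-1}\gamma)$ and $\ell(\rho) = \ell(\gamma) + \ell(\gamma^{-1}\rho) = \ell(\sigma) + \ell(\sigma^{-1}\rho)$; subtracting these produces $\ell(\sigma^{-1}\rho) = \ell(\sigma^{-1}\gamma) + \ell((\sigma^{-1}\gamma)^{-1}(\sigma^{-1}\rho))$, so $\sigma^{-1}\gamma \preceq_R \sigma^{-1}\rho$. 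The assignment $\delta \mapsto \sigma\delta$ is a set-theoretic two-sided inverse to $\varphi$, and a symmetric length computation shows it lands in $[\sigma,\rho]_R$, so $\varphi$ is a bijection. To see that $\varphi$ is both order-preserving and order-reflecting, I would take $\gamma_1, \gamma_2 \in [\sigma,\rho]_R$ and use $\ell(\gamma_i) = \ell(\sigma) + \ell(\sigma^{-1}\gamma_i)$ together with the identity $(\sigma^{-1}\gamma_1)^{-1}(\sigma^{-1}\gamma_2) = \gamma_1^{-1}\gamma_2$ to rewrite $\ell(\gamma_2) - \ell(\gamma_1) = \ell(\gamma_1^{-1}\gamma_2)$, the condition characterizing $\gamma_1 \preceq_R \gamma_2$, as
\[
\ell(\sigma^{-1}\gamma_2) - \ell(\sigma^{-1}\gamma_1) = \ell\bigl((\sigma^{-1}\gamma_1)^{-1}(\sigma^{-1}\gamma_2)\bigr),
\]
which is exactly the condition $\sigma^{-1}\gamma_1 \preceq_R \sigma^{-1}\gamma_2$. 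Reading the equivalence in both directions yields the claim.

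For the second statement, I would transport along the inversion isomorphism $\iota: (\SG_n, \preceq_L) \to (\SG_n, \preceq_R)$, $\gamma \mapsto \gamma^{-1}$, which carries $[\sigma,\rho]_L$ bijectively onto $[\sigma^{-1},\rho^{-1}]_R$. Composing $\iota$ with the first-part isomorphism $[\sigma^{-1},\rho^{-1}]_R \to [\id, \sigma\rho^{-1}]_R$, $\delta \mapsto \sigma\delta$, and then with $\iota^{-1}$, yields the map $\gamma \mapsto (\sigma\gamma^{-1})^{-1} = \gamma\sigma^{-1}$ from $[\sigma,\rho]_L$ to $[\id, \rho\sigma^{-1}]_L$, and as a composite of poset isomorphisms it is itself a poset isomorphism. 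No real obstacle is expected; the only step demanding care is keeping the length identities straight when proving that $\varphi$ reflects the order, since one must observe the cancellation $(\sigma^{-1}\gamma_1)^{-1}(\sigma^{-1}\gamma_2) = \gamma_1^{-1}\gamma_2$ that renders the two length-additivity conditions equivalent.
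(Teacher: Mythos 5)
Your argument is correct. The paper itself gives no proof of this lemma — it is cited verbatim from \cite[Proposition~3.1.6]{05BB} — so there is no in-paper approach to compare against; your elementary derivation from the length-additivity characterization of $\preceq_R$ is a perfectly good self-contained route. One small caveat: the step where you assert that $\delta \mapsto \sigma\delta$ lands back in $[\sigma,\rho]_R$ by a ``symmetric length computation'' is not a mirror image of the forward direction. The forward direction only needs subtracting two equalities, whereas the backward direction (given $\id \preceq_R \delta \preceq_R \sigma^{-1}\rho$, showing $\sigma \preceq_R \sigma\delta \preceq_R \rho$) requires a subadditivity squeeze: from $\ell(\rho) = \ell(\sigma) + \ell(\delta) + \ell(\delta^{-1}\sigma^{-1}\rho)$ together with $\ell(\sigma\delta) \le \ell(\sigma)+\ell(\delta)$ and $\ell(\rho) \le \ell(\sigma\delta) + \ell(\delta^{-1}\sigma^{-1}\rho)$ one forces both inequalities to be equalities, whence $\sigma \preceq_R \sigma\delta$ and $\sigma\delta \preceq_R \rho$. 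This is a genuine extra observation, not a restatement of the forward computation, so it is worth spelling out. The transport of the statement to the left weak order via $\gamma \mapsto \gamma^{-1}$ is clean and correct.
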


Let us collect notations which will be used later.
For $S \subseteq \SG_n$ and $\xi \in \SG_n$, let
\[
S \cdot \xi := \{\gamma \xi \mid \gamma \in S \} 
\quad \text{and} \quad 
\xi \cdot S := \{ \xi \gamma \mid \gamma \in S \}.
\]
We use $w_0$ to denote the longest element in $\SG_n$.
For $I \subseteq [n-1]$, let $\SG_{I}$ be the parabolic subgroup of $\SG_n$ generated by $\{s_i \mid i\in I\}$ and $w_0(I)$ the longest element in $\SG_{I}$.
For $\alpha \models n$, let $w_0(\alpha) := w_0(\set(\alpha))$.
Finally, for $\sigma \in \SG_n$, we let $\sigma^{w_0} := w_0 \sigma w_0$.

\begin{lemma}{\rm (\cite[Theorem 6.2]{88BW})}
\label{lem: same desc int}
For $I \subseteq J \subseteq [n-1]$, we have
\[
\{\sigma \in \SG_n \mid I \subseteq \Des_L(\sigma) \subseteq J\} = [w_0(I), w_0 (J^\rmc) w_0]_R.
\]
\end{lemma}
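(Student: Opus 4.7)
The plan is to prove both inclusions separately, using as the main tool the monotonicity of the left descent set under the right weak Bruhat order together with the parabolic decomposition of $\SG_n$ relative to $\SG_I$.

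First I would establish the monotonicity: if $\sigma \preceq_R \tau$ then $\Des_L(\sigma) \subseteq \Des_L(\tau)$. Reducing to a covering relation $\tau = \sigma s_i$ with $\sigma(i) < \sigma(i+1)$, in one-line notation $\tau$ is obtained from $\sigma$ by swapping the entries at positions $i$ and $i+1$; a short case analysis comparing the relative positions of $j$ and $j+1$ before and after the swap shows that $\Des_L(\tau)$ equals $\Des_L(\sigma)$ or $\Des_L(\sigma) \cup \{\sigma(i)\}$ (the latter precisely when $\sigma(i+1) = \sigma(i)+1$). Next, the descent sets at the endpoints are computed directly: $w_0(I)$ reverses each maximal block of consecutive integers determined by $I$, giving $\Des_L(w_0(I)) = I$, and the identity $\Des_L(\pi w_0) = [n-1]\setminus\Des_L(\pi)$ applied with $\pi = w_0(J^\rmc)$ gives $\Des_L(w_0(J^\rmc)w_0) = J$. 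Together with monotonicity, this immediately yields the inclusion $[w_0(I), w_0(J^\rmc)w_0]_R \subseteq \{\sigma : I \subseteq \Des_L(\sigma) \subseteq J\}$.

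For the reverse inclusion, the central claim is that $I \subseteq \Des_L(\sigma)$ forces $w_0(I) \preceq_R \sigma$. I would factor $\sigma = u\widehat{\sigma}$ uniquely via the parabolic decomposition, with $u \in \SG_I$ and $\widehat{\sigma}$ the minimal-length representative of the left coset $\SG_I \sigma$, characterized by $\Des_L(\widehat{\sigma}) \cap I = \emptyset$ and satisfying $\ell(\sigma) = \ell(u) + \ell(\widehat{\sigma})$. For any $i \in I$, since $s_i u \in \SG_I$ the lengths-add property yields $\ell(s_i u \widehat{\sigma}) = \ell(s_i u) + \ell(\widehat{\sigma})$, so $i \in \Des_L(\sigma)$ if and only if $i \in \Des_L(u)$. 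Hence $I \subseteq \Des_L(\sigma)$ forces $\Des_L(u) = I$, and the only element of $\SG_I$ with left descent set equal to $I$ is $w_0(I)$; thus $\sigma = w_0(I)\widehat{\sigma}$ with lengths adding, i.e.\ $w_0(I) \preceq_R \sigma$. For the dual bound $\sigma \preceq_R w_0(J^\rmc)w_0$, I would use the anti-automorphism $\sigma \mapsto \sigma w_0$ of the right weak order (which reverses the order since $\ell(\sigma w_0) = \binom{n}{2} - \ell(\sigma)$) to rewrite the bound as $w_0(J^\rmc) \preceq_R \sigma w_0$, then apply the previous step to $\sigma w_0$ with $I$ replaced by $J^\rmc$, obtaining $J^\rmc \subseteq \Des_L(\sigma w_0) = [n-1]\setminus\Des_L(\sigma)$, equivalent to $\Des_L(\sigma) \subseteq J$. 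The main obstacle is the parabolic-decomposition step, specifically verifying that the intersection of $\Des_L(\sigma)$ with $I$ is determined solely by the $\SG_I$-factor $u$; the remaining ingredients are a routine case analysis, the $\pi \mapsto \pi w_0$ identity for left descents, and the anti-automorphism of the right weak order.
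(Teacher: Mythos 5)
Your proof is correct: every step checks out. The monotonicity claim is right (and in fact follows instantly from the paper's own inversion-set characterization of $\preceq_R$ in Subsection 2.2, since $i \in \Des_L(\sigma)$ exactly when $(i+1,i) \in \rmInv_L(\sigma^{-1})$, i.e.\ when the pair $(i+1,i)$ is a right inversion of $\sigma$, so $\rmInv$-containment gives descent-set containment without any case analysis); the endpoint computations $\Des_L(w_0(I)) = I$ and $\Des_L(w_0(J^\rmc)w_0) = J$ are correct; the parabolic decomposition $\sigma = u\widehat{\sigma}$ with $u \in \SG_I$, $\Des_L(\widehat{\sigma}) \cap I = \emptyset$ and additive lengths does force $\Des_L(u) = I$, hence $u = w_0(I)$ and $w_0(I) \preceq_R \sigma$; and the reduction of the upper bound to the lower bound via the order-reversing map $\sigma \mapsto \sigma w_0$ is valid. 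Note for comparison that the paper itself offers no proof of this lemma — it is quoted verbatim from Bj\"orner--Wachs \cite[Theorem~6.2]{88BW}, where it is obtained within their theory of generalized quotients — so your argument cannot be measured against an in-paper proof; what it provides is a short, self-contained derivation using only the standard parabolic (coset) decomposition, the fact that the longest element is the unique element of $\SG_I$ with full descent set, and the $w_0$-antiautomorphism of the right weak order, which is essentially the classical route and is perfectly adequate here.
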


\subsection{Regular posets and Schur labeled skew shape posets}
\label{Regular posets and Schur labeled skew shape posets}
Let $\sfP_n$ be the set of posets whose underlying set is $[n]$.
Given $P \in \sfP_n$, we write the partial order of $P$ as $\preceq_P$.

\begin{definition}{\rm (\cite[p. 110]{91BW})}\label{def: regular posets}
A poset $P \in \sfP_n$ is said to be \emph{regular} if the following holds:
for all $x,y,z \in [n]$ with
$x \preceq_P z$, if $x < y < z$ or $z<y<x$, then $x \preceq_P y$ or $y \preceq_P z$.
\end{definition}

We denote by $\sfRP_n$ the set of all regular posets in $\sfP_n$.
In the following, we will explain how regular posets can be characterized in terms of left weak Bruhat intervals.

Given $P \in \sfP_n$, 
let
\[
\Sigma_{L}(P) := \{\sigma \in \SG_n \mid \text{$\sigma(i) \leq \sigma(j)$ for all $i, j\in [n]$ with $i \preceq_P j$}\}.
\]
Throughout this paper, $\Sigma_L(P)$ is considered as the set of all linear extensions of $P$ under the correspondence $\sigma \mapsto ([n], \preceq_E)$, where $\preceq_E$ is the total order on $[n]$ given by 
$\sigma^{-1}(1) \preceq_E \sigma^{-1}(2) \preceq_E \cdots \preceq_E \sigma^{-1}(n)$.

\begin{theorem}{\rm (\cite[Theorem 6.8]{91BW})}\label{thm: left interval and regular}
Let $U \subseteq \SG_n$ with $|U| > 1$.
The following conditions are equivalent:
\begin{enumerate}[label = {\rm (\arabic*)}]
    \item $U$ is a left weak Bruhat interval.
    \item $U = \Sigma_L(P)$ for some $P \in \sfRP_n$.
\end{enumerate}  
\end{theorem}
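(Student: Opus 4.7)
The plan is to prove the equivalence via one of two routes. The most efficient route uses the fact, recorded earlier in the excerpt, that $\sigma \mapsto \sigma^{-1}$ is a poset isomorphism $(\SG_n, \preceq_L) \to (\SG_n, \preceq_R)$. Under this involution, left weak Bruhat intervals $[\sigma, \rho]_L$ correspond to right weak Bruhat intervals $[\sigma^{-1}, \rho^{-1}]_R$, while a direct comparison of definitions shows that $\sigma \in \Sigma_L(P)$ if and only if $\sigma^{-1} \in \Sigma_R(P)$, so that inverting each element of $\Sigma_L(P)$ produces $\Sigma_R(P)$. Applying the involution to $U$, the statement thus reduces immediately to Bj\"{o}rner--Wachs' original right-order theorem, which is the form stated in \cite{91BW}.

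For a direct proof of $(2) \Rightarrow (1)$, I would introduce the greedy extremal extensions $\sigma_P, \rho_P \in \Sigma_L(P)$: at step $k$, let $\sigma_P^{-1}(k)$ (respectively $\rho_P^{-1}(k)$) be the smallest (respectively largest) $P$-minimal element among those not yet used. These are always linear extensions of $P$, but the claim that they are the unique minimum and maximum of $\Sigma_L(P)$ under $\preceq_L$ is precisely where regularity enters. Granting such uniqueness, the containment $\Sigma_L(P) \subseteq [\sigma_P, \rho_P]_L$ is automatic, and the reverse containment is a quick consequence of the inversion-set characterization of $\preceq_L$: for any $i \preceq_P j$ with $i < j$, one has $(i,j) \notin \rmInv_L(\rho_P)$, so $(i,j) \notin \rmInv_L(\gamma)$ for every $\gamma \in [\sigma_P, \rho_P]_L$, giving $\gamma(i) < \gamma(j)$; the case $i > j$ is symmetric using $\sigma_P$.

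For $(1) \Rightarrow (2)$, given $U = [\sigma, \rho]_L$, I would declare $i \preceq_P j$ iff every $\gamma \in U$ satisfies $\gamma(i) \leq \gamma(j)$. This is immediately a partial order, and $U \subseteq \Sigma_L(P)$ by construction. A short exchange argument (using $|U| > 1$) forces $\sigma$ and $\rho$ to coincide with the greedy extremes $\sigma_P, \rho_P$ of this derived $P$, so the already-proved direction supplies $\Sigma_L(P) = [\sigma_P, \rho_P]_L = U$. Regularity of $P$ is verified by contradiction: suppose $x \preceq_P z$ with $x < y < z$ but neither $x \preceq_P y$ nor $y \preceq_P z$. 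The two failures produce $\gamma_1, \gamma_2 \in U$ with $\gamma_1(x) > \gamma_1(y)$ and $\gamma_2(y) > \gamma_2(z)$; interpolating along a saturated $\preceq_L$-chain in $U$ between $\gamma_1$ and $\gamma_2$ produces some $\gamma \in U$ with $\gamma(x) > \gamma(z)$, contradicting $x \preceq_P z$.

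The main technical obstacle in the direct route is establishing that $\sigma_P$ and $\rho_P$ are the unique $\preceq_L$-extrema of $\Sigma_L(P)$: a putative second maximum must be parlayed into a triple $(x,y,z)$ with $x \preceq_P z$, $x < y < z$, and neither $x \preceq_P y$ nor $y \preceq_P z$, which directly violates regularity. This step is precisely what the inverse-map translation offloads onto the cited right-order version, which is why that route is much cleaner for an actual write-up.
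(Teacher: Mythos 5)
The paper does not actually prove this statement; it is cited verbatim from Bj\"{o}rner--Wachs \cite[Theorem 6.8]{91BW}, where it is phrased for the right weak order and $\Sigma_R(P)$. Your first route --- transporting the result through the anti-isomorphism $\sigma\mapsto\sigma^{-1}$, using that left intervals map to right intervals and $\Sigma_L(P)$ maps to $\Sigma_R(P)$ --- is exactly the intended reduction and is correct; it is the cleanest justification of the statement as the paper cites it.

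Your direct-proof sketch is broadly sound, but one step of the $(1)\Rightarrow(2)$ regularity argument does not go through as stated. You propose to ``interpolate along a saturated $\preceq_L$-chain in $U$ between $\gamma_1$ and $\gamma_2$,'' but $\gamma_1$ and $\gamma_2$ are typically incomparable in $\preceq_L$, so no such chain exists; and walking a path in the Hasse diagram of $U$ does not obviously force $\gamma(x)>\gamma(z)$ at some intermediate vertex. The correct tool is the lattice structure of $(\SG_n,\preceq_L)$: for $x<y<z$ with $(x,y)\in\rmInv_L(\gamma_1)$ and $(y,z)\in\rmInv_L(\gamma_2)$, the join $\gamma:=\gamma_1\vee\gamma_2$ lies in $U$ (intervals are closed under joins and meets of their elements), and since inversion sets are transitively closed, $\rmInv_L(\gamma)$ contains the transitive closure of $\rmInv_L(\gamma_1)\cup\rmInv_L(\gamma_2)$, hence $(x,z)\in\rmInv_L(\gamma)$, i.e.\ $\gamma(x)>\gamma(z)$, contradicting $x\preceq_P z$. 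The symmetric case $z<y<x$ of the regularity condition is handled the same way using the meet $\gamma_1\wedge\gamma_2$ and co-closedness of inversion sets. With that repair, the direct argument matches the structure of Bj\"{o}rner--Wachs' original proof; but given that the paper treats the theorem as an external citation, the inverse-map route is what you should actually invoke.
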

Consider the map
\[
\upeta: \sfP_n \ra \mathscr{P}(\SG_n), \quad P \mapsto \Sigma_L(P),
\]
where $\mathscr{P}(\SG_n)$ is the power set of $\SG_n$.
One can see that $\upeta$ is injective.
Combining this with \cref{thm: left interval and regular}, we obtain a one-to-one correspondence 
\[
\upeta|_{\sfRP_n}: \sfRP_n \ra \rmInt(n), \quad P \mapsto \Sigma_L(P),
\]
where $\rmInt(n)$ is the set of nonempty left weak Bruhat intervals in $\SG_n$.

Next, let us introduce Schur labeled skew shape posets.
Let $\lambda / \mu$ be a skew partition of size $n$.
Given a bijective tableau $\tau$ of shape $\lambda/\mu$, we define $\sfposet(\tau)$ to be 
the poset $([n], \preceq_\tau)$, where
\begin{align}\label{eq: order of poset(tau)}
\text{$i \preceq_\tau j$ if and only if  $i$ lies weakly upper-left of $j$ in $\tau$.}
\footnotemark
\end{align}
\footnotetext{
Note that $\sfposet(\tau) \in \sfP_n$. Following our convention, the partial order $\preceq_\tau$ can also be written as $\preceq_{\sfposet(\tau)}$.
}
The Hasse diagram of $\sfposet(\tau)$ can be obtained by rotating $\tau$ $135^\circ$ counterclockwise.

\begin{example}
Let $\lambda / \mu = (2,2) \star (3,2) / (1)$.
For the bijective tableaux 
$\tau_0$ and $\tau_1$ of shape $\lambda/\mu$
introduced in \cref{subsec: comp and diag}, we have
\[
\def \pp {1}
\def \hp {0.7}
\sfposet(\tau_0) = 
\begin{array}{l}
\begin{tikzpicture}
\node at (0,0) {1};
\node at (1*\hp, -1*\pp) {2};
\node at (1*\hp, 1*\pp) {3};
\node at (2*\hp, 0*\pp) {4};

\node at (3*\hp, 1*\pp) {5};
\node at (4*\hp, 0*\pp) {6};
\node at (5*\hp, 1*\pp) {7};
\node at (6*\hp, 0*\pp) {8};

\draw (0.25*\hp, -0.25*\pp) -- (0.75*\hp, -0.75*\pp);
\draw (1.25*\hp, -0.75*\pp) -- (1.75*\hp, -0.25*\pp);
\draw (0.25*\hp, 0.25*\pp) -- (0.75*\hp, 0.75*\pp);
\draw (1.25*\hp, 0.75*\pp) -- (1.75*\hp, 0.25*\pp);
\draw (3.25*\hp, 0.75*\pp) -- (3.75*\hp, 0.25*\pp);
\draw (4.25*\hp, 0.25*\pp) -- (4.75*\hp, 0.75*\pp);
\draw (5.25*\hp, 0.75*\pp) -- (5.75*\hp, 0.25*\pp);
\end{tikzpicture}
\end{array}
\quad \text{and} \quad
\sfposet(\tau_1) = 
\begin{array}{l}
\begin{tikzpicture}
\node at (0,0) {1};
\node at (1*\hp, -1*\pp) {3};
\node at (1*\hp, 1*\pp) {2};
\node at (2*\hp, 0*\pp) {4};

\node at (3*\hp, 1*\pp) {5};
\node at (4*\hp, 0*\pp) {6};
\node at (5*\hp, 1*\pp) {7};
\node at (6*\hp, 0*\pp) {8};

\draw (0.25*\hp, -0.25*\pp) -- (0.75*\hp, -0.75*\pp);
\draw (1.25*\hp, -0.75*\pp) -- (1.75*\hp, -0.25*\pp);
\draw (0.25*\hp, 0.25*\pp) -- (0.75*\hp, 0.75*\pp);
\draw (1.25*\hp, 0.75*\pp) -- (1.75*\hp, 0.25*\pp);
\draw (3.25*\hp, 0.75*\pp) -- (3.75*\hp, 0.25*\pp);
\draw (4.25*\hp, 0.25*\pp) -- (4.75*\hp, 0.75*\pp);
\draw (5.25*\hp, 0.75*\pp) -- (5.75*\hp, 0.25*\pp);
\end{tikzpicture}
\end{array}.
\]
\end{example}
A \emph{Schur labeling of shape $\lambda/ \mu$} is a bijective tableau of shape $\lambda / \mu$ such that
the entries in each row decrease from left to right and the entries in each column increase from top to bottom.
Let $\sfS(\lambda/\mu)$ be the set of all Schur labelings of shape $\lambda / \mu$. 
Since $\tau_0$ and $\tau_1$ are Schur labelings of shape $\lambda/\mu$, $\sfS(\lambda/\mu)$ is nonempty.
Set
\[
\sfSP(\lambda/\mu) := \{\sfposet(\tau) \mid \tau \in \sfS(\lambda/\mu)
\}
\quad \text{and} \quad
\sfSP_n := \bigcup_{
|\lambda/\mu| = n
} \sfSP(\lambda/\mu).
\]

\begin{definition}\label{def: Schur labeled skew shape posets}
A poset $P \in \sfP_n$ is said to be a \emph{Schur labeled skew shape poset} if it is contained in $\sfSP_n$.
\end{definition}

\begin{remark}
In some papers, for instance \cite{93M, 06McNamara}, authors used a different convention than ours for Schur labeling.
We adopt the definition of Schur labeling used in Stanley's paper \cite{72Stanley}.
\end{remark}

For simplicity, we set $\sfRSP_n := \sfRP_n \cap \sfSP_n$.

\subsection{The $0$-Hecke algebra and the quasisymmetric characteristic}\label{subsec: 0-Hecke alg and QSym}
The $0$-Hecke algebra $H_n(0)$ is the associative $\C$-algebra with $1$ generated by $\pi_1,\pi_2,\ldots,\pi_{n-1}$ subject to the following relations:
\begin{align*}
\pi_i^2 &= \pi_i \quad \text{for $1\le i \le n-1$},\\
\pi_i \pi_{i+1} \pi_i &= \pi_{i+1} \pi_i \pi_{i+1}  \quad \text{for $1\le i \le n-2$},\\
\pi_i \pi_j &=\pi_j \pi_i \quad \text{if $|i-j| \ge 2$}.
\end{align*}
For each $1 \leq i \leq n-1$, let $\opi_i := \pi_i - 1$. 
Then, $\{\opi_i \mid i = 1, 2, \ldots, n-1\}$ is also a generating set of $H_n(0)$.

For any reduced expression $s_{i_1} s_{i_2} \cdots s_{i_p}$ for $\sigma \in \SG_n$, let 
\[
\pi_{\sigma} := \pi_{i_1} \pi_{i_2 } \cdots \pi_{i_p} \quad \text{and} \quad \opi_{\sigma} := \opi_{i_1} \opi_{i_2} \cdots \opi_{i_p}.
\]
It is well known that these elements are independent of the choices of reduced expressions, and both $\{\pi_\sigma \mid \sigma \in \SG_n\}$ and $\{\opi_\sigma \mid \sigma \in \SG_n\}$ are $\mathbb C$-bases for $H_n(0)$.

According to \cite{79Norton}, there are $2^{n-1}$ pairwise nonisomorphic irreducible $H_n(0)$-modules which are naturally indexed by compositions of $n$.
To be precise, for each composition $\alpha$ of $n$, there exists an irreducible $H_n(0)$-module $\bfF_{\alpha}:=\C v_{\alpha}$ endowed with the $H_n(0)$-action defined as follows: for each $1 \le i \le n-1$,
\[
\pi_i \cdot v_\alpha = \begin{cases}
0 & i \in \set(\alpha),\\
v_\alpha & i \notin \set(\alpha).
\end{cases}
\]

Let $\Hnmod$ be the category of finite dimensional left $H_n(0)$-modules and $\calR(H_n(0))$ the $\Z$-span of the set of (representatives of) isomorphism classes of modules in $\Hnmod$.
We denote by $[M]$ the isomorphism class corresponding to an $H_n(0)$-module $M$. 
The \emph{Grothendieck group $\calG_0(H_n(0))$ of $\Hnmod$} is the quotient of $\calR(H_n(0))$ modulo the relations $[M] = [M'] + [M'']$ whenever there exists a short exact sequence $0 \ra M' \ra M \ra M'' \ra 0$. 
The equivalence classes of the irreducible $H_n(0)$-modules form a $\Z$-basis for $\calG_0(H_n(0))$. Let
\[
\calG := \bigoplus_{n \ge 0} \calG_0(H_n(0)).
\]

Let us review the connection between $\calG$ and the ring $\Qsym$ of quasisymmetric functions.
For the definition of quasisymmetric functions, see~\cite[Section 7.19]{99Stanley}.
For a composition $\alpha$, the \emph{fundamental quasisymmetric function} $F_\alpha$, which was firstly introduced in~\cite{84Gessel}, is defined by
\[
F_\varnothing = 1 
\quad \text{and} \quad
F_\alpha = \sum_{\substack{1 \le i_1 \le i_2 \le \cdots \le i_n \\ i_j < i_{j+1} \text{ if } j \in \set(\alpha)}} x_{i_1}x_{i_2} \cdots x_{i_n} \quad \text{if $\alpha \neq \varnothing$}.
\]
It is known that $\{F_\alpha \mid \text{$\alpha$ is a composition}\}$ is a $\mathbb Z$-basis for $\Qsym$.
When $M$ is an $H_m(0)$-module and $N$ is
an $H_n(0)$-module, we write $M \boxtimes N$ for the induction product of $M$ and $N$, that is,
\begin{align*}
M \boxtimes N := M \otimes N \uparrow_{H_m(0) \otimes H_n(0)}^{H_{m+n}(0)}.
\end{align*}
Here, $H_m(0) \otimes H_n(0)$ is viewed as the subalgebra of $H_{m+n}(0)$ generated by $\{\pi_i \mid i \in [m+n-1] \setminus \{m\} \}$.
The induction product induces a multiplication on $\calG$.
It was shown in \cite{96DKLT} that the linear map
\begin{equation*}\label{quasi characteristic}
\ch : \calG \ra \Qsym, \quad [\bfF_{\alpha}] \mapsto F_{\alpha},
\end{equation*}
called \emph{quasisymmetric characteristic}, is a ring isomorphism.
Indeed, it turns out to be a Hopf algebra isomorphism when $\calG$ has the comultiplication induced from restriction. 

It is well known that $H_n(0)$ has the automorphisms $\autotheta$ and $\autophi$, as well as the anti-automorphism $\autochi$, defined in the following manner:
\begin{align*}
&\autophi: H_n(0) \ra H_n(0), \quad \pi_i \mapsto \pi_{n-i} \quad \text{for $1 \le i \le n-1$},\\
&\autotheta: H_n(0) \ra H_n(0), \quad \pi_i \mapsto - \opi_i \quad \text{for $1 \le i \le n-1$}, \\
&\autochi: H_n(0) \ra H_n(0), \quad \pi_i \mapsto \pi_i \quad \text{for $1 \le i \le n-1$}
\end{align*}
(for instance, see \cite{05Fayers, 97KT, 97LLT, 84LS}).
These maps commute with each other.

Note that an automorphism $\mu$ of $H_n(0)$  induces a covariant functor
\[
\bfT^{+}_\mu: \Hnmod \ra \Hnmod
\]
called the  \emph{$\mu$-twist}.
Similarly, an anti-automorphism $\nu$ of $H_n(0)$ induces a contravariant functor \[
\bfT^{-}_\nu: \Hnmod \ra \Hnmod
\]
called the \emph{$\nu$-twist}.
For the precise definitions of $\bfT^{+}_\mu$ and $\bfT^{-}_\nu$, see \cite[Subsection 3.4]{22JKLO}.
In \cite[Proposition 3.3.]{05Fayers}, it was shown that
\[
\bfT^{+}_\autophi(\bfF_\alpha) = \bfF_{\alpha^\rmr},
\quad
\bfT^{+}_\autotheta(\bfF_\alpha) = \bfF_{\alpha^\rmc},
\quad \text{and} \quad
\bfT^{-}_\autochi(\bfF_\alpha) = \bfF_{\alpha}
\]
for  $\alpha \models n$.
Let $\uprho$ and $\uppsi$ be the automorphisms  of $\Qsym$ defined by
\begin{align*}
\uprho(F_\alpha) = F_{\alpha^\rmr}
\quad \text{and} \quad
\uppsi(F_\alpha) = F_{\alpha^\rmc}
\end{align*}
for every composition $\alpha$.
For a finite dimensional $H_n(0)$-module $M$, it holds that 
\begin{align}\label{lifts of involutions on QSYM}
\begin{array}{c}
\ch( [\bfT^{+}_\autophi(M)] ) = \uprho \circ \ch([M]), 
\quad
\ch ( [\bfT^{+}_\autotheta(M)] )  = \uppsi \circ \ch([M]),\\[1ex]
\text{and} \quad
\ch ( [\bfT^{-}_\autochi(M)] ) =  \ch([M]).
\end{array}
\end{align}

\subsection{Modules arising from posets and weak Bruhat interval modules}
Let $P\in \sfP_n$. In \cite[Definition 3.18]{02DHT}, Duchamp, Hivert, and Thibon defined a right $H_n(0)$-module $M_P$ associated with $P$. In this paper, we are primarily concerned with left modules, so we introduce a left $H_n(0)$-module, denoted as $\sfM_P$, associated with $P$.

\begin{definition}\label{def: poset module}
Let $P \in \sfP_n$. 
Define $\sfM_P$ to be the left $H_n(0)$-module with $\C \Sigma_{L}(P)$ as the underlying space and with the $H_n(0)$-action defined by
\begin{align}\label{left action}
\pi_{i} \cdot \gamma :=
\begin{cases}
\gamma & \text{if $i \in \Des_L(\gamma)$}, \\
0 & \text{if $i \notin \Des_L(\gamma)$ and $s_i\gamma \notin \Sigma_{L}(P)$,} \\
s_i \gamma & \text{if $i \notin \Des_L(\gamma)$ and $s_i\gamma \in \Sigma_{L}(P)$}.
\end{cases} 
\end{align}
\end{definition}

One can see that the $H_n(0)$-action provided in \cref{left action} is well-defined through a slight modification of the proof in \cite[Subsection 3.9]{02DHT} that $M_P$ is a well-defined right $H_n(0)$-module.
Indeed, there is a close connection between $\sfM_P$ and $M_P$.
Let $\modHn$ be the category of finite dimensional right $H_n(0)$-modules.
In \cite[Subsection 4.3]{23CKO}, the authors introduced a contravariant functor 
$$\mathcal{F}_n: \Hnmod \ra \modHn$$
that preserves quasisymmetric characteristics.
\footnote{In \cite[Subsection 4.3]{23CKO}, the authors considered both left and right quasisymmetric characteristics because they were simultaneously working with two categories, $\Hnmod$ and $\modHn$.}
Using this functor, it is not difficult to see that
\[
\sfM_P \cong \bfT^{+}_\autotheta \circ \bfT^{-}_\autochi \circ \mathcal{F}_n^{-1}(M_P).
\]

Since the underlying set of $P$ is $[n]$, we can regard $P$ as the labeled poset $(P, \omega)$ with the labeling $\omega: P \ra [n]$ given by $\omega(i) = i$.
Under this consideration, 
a map $f: [n] \ra \Z_{\ge 0}$ is called a \emph{$P$-partition} if it satisfies the following conditions:
\begin{enumerate}[label = {\rm (\arabic*)}]
\item 
If $i \preceq_P j$, then $f(i) \le f(j)$.
\item 
If $i \preceq_P j$ and $i > j$, then $f(i) < f(j)$.
\end{enumerate}
We define the \emph{$P$-partition generating function $K_P$}  of $P$ by
\[
K_{P} := \sum_{f:  \text{$P$-partition}} x_1^{|f^{-1}(1)|} x_2^{|f^{-1}(2)|} \cdots.
\]

\begin{theorem}\label{thm: char of Mp}
For $P \in \sfP_n$, the following hold. 
\begin{enumerate}[label = {\rm (\arabic*)}]
\item $\ch([\sfM_P]) = \uppsi(K_{P})$.
\item 
  If $P \in \sfSP(\lambda/\mu)$ for a skew partition $\lambda/\mu$, 
  then 
$\ch([\sfM_P]) = s_{\lambda/\mu}$.
\end{enumerate}
\end{theorem}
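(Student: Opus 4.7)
The plan is to construct an explicit filtration of $\sfM_P$ whose composition factors are irreducible $H_n(0)$-modules $\bfF_\alpha$, sum the resulting fundamental quasisymmetric characteristics, and match the total against $\uppsi(K_P)$ via Stanley's fundamental lemma for $P$-partitions. Part~(2) will then reduce, via~(1), to a direct combinatorial computation of $K_P$ using the Schur labeling structure.

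For part~(1), fix a linear order $\gamma_1, \gamma_2, \ldots, \gamma_m$ on $\Sigma_L(P)$ refining the reverse of $\preceq_L$, and set $M_k := \Span_\C\{\gamma_1, \ldots, \gamma_k\}$. The action in~\cref{left action} sends each $\gamma_k$ to $\gamma_k$, to $0$, or to $s_i \gamma_k$ with $s_i \gamma_k \succ_L \gamma_k$; in the last case $s_i \gamma_k$ appears before $\gamma_k$ in the chosen order, so each $M_k$ is a submodule. On the quotient $M_k / M_{k-1}$ the action collapses to $\pi_i \cdot \gamma_k = \gamma_k$ when $i \in \Des_L(\gamma_k)$ and $\pi_i \cdot \gamma_k = 0$ otherwise, so $M_k / M_{k-1} \cong \bfF_{\comp([n-1] \setminus \Des_L(\gamma_k))}$. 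Summing characteristics yields
\[
\ch([\sfM_P]) = \sum_{\gamma \in \Sigma_L(P)} F_{\comp([n-1] \setminus \Des_L(\gamma))}.
\]
Invoking Stanley's fundamental lemma in the form $K_P = \sum_{\gamma \in \Sigma_L(P)} F_{\comp(\Des_L(\gamma))}$---the identification $\gamma \mapsto (\gamma^{-1}(1), \ldots, \gamma^{-1}(n))$ sending $\gamma \in \Sigma_L(P)$ to its linear extension converts $\Des_L(\gamma)$ into the descent set of that sequence---and using $\uppsi(F_\alpha) = F_{\alpha^\rmc}$ together with $\comp([n-1] \setminus S) = \comp(S)^\rmc$, the two expansions agree and~(1) follows.

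For part~(2), assume $P = \sfposet(\tau)$ for a Schur labeling $\tau$ of $\lambda/\mu$. By~(1) it suffices to show $\uppsi(K_P) = s_{\lambda/\mu}$. The transport $g(c) := f(\tau(c))$ carries each $P$-partition $f$ to a filling of $\tyd(\lambda/\mu)$; the row-decreasing and column-increasing structure of $\tau$ converts the two $P$-partition axioms into the statement that $g$ strictly increases along rows and weakly increases along columns. The transpose bijection identifies such fillings with semistandard Young tableaux of shape $(\lambda/\mu)'$, giving $K_P = s_{(\lambda/\mu)'}$. Since the restriction of $\uppsi$ to $\Sym$ is the classical involution $\omega$ with $\omega(s_{(\lambda/\mu)'}) = s_{\lambda/\mu}$, the claim follows.

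The main obstacle is pinning down the descent/complement conventions in~(1) so the filtration output matches $\uppsi(K_P)$ rather than $K_P$ on the nose---the switch from Duchamp--Hivert--Thibon's right-module $M_P$ to the left-module $\sfM_P$ is precisely what introduces the $\uppsi$-twist, and Stanley's lemma itself is convention-sensitive. A shorter, more conceptual alternative for~(1) is to invoke the preliminary identification $\sfM_P \cong \bfT^{+}_\autotheta \circ \bfT^{-}_\autochi \circ \mathcal{F}_n^{-1}(M_P)$ together with~\cref{lifts of involutions on QSYM} and the classical identity $\ch([M_P]) = K_P$ of~\cite{02DHT}; this trades the bookkeeping against reliance on the external functor $\mathcal{F}_n$.
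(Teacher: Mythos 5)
Your proof is correct. For part (1) you take a genuinely different route from the paper: you build an explicit composition series of $\sfM_P$ by listing $\Sigma_L(P)$ in any order refining the reverse of $\preceq_L$, identify each subquotient with an irreducible $\bfF_{\comp(\Des_L(\gamma))^\rmc}$, and compare the resulting expansion of $\ch([\sfM_P])$ against the fundamental expansion $K_P = \sum_{\gamma \in \Sigma_L(P)} F_{\comp(\Des_L(\gamma))}$ coming from Stanley's fundamental lemma. The paper instead proves (1) in one line by citing $\ch([M_P]) = K_P$ from Duchamp--Hivert--Thibon and transporting through the functorial identification $\sfM_P \cong \bfT^{+}_\autotheta \circ \bfT^{-}_\autochi \circ \mathcal{F}_n^{-1}(M_P)$ together with the effect of those twists on $\ch$ recorded in~\eqref{lifts of involutions on QSYM}. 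Your route is self-contained and makes the twist by $\uppsi$ transparent (it is exactly the complement $\Des_L(\gamma) \mapsto \Des_L(\gamma)^\rmc$ appearing in the composition factors), whereas the paper's route is shorter but leans on the external functor $\mathcal{F}_n$ of~\cite{23CKO}; you already note this alternative at the end, and it is precisely what the paper does. Part (2) is handled the same way in both: compute $K_P = s_{\lambda^\rmt/\mu^\rmt}$ directly from the Schur labeling as in~\cite[\S 7.19]{99Stanley}, then apply $\uppsi|_{\Sym} = \omega$.
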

\begin{proof}
(1)
It was shown in~\cite[Theorem 3.21(i)]{02DHT} that the (right) quasisymmetric characteristic of $M_P$ is given by $K_P$. 
Since $\sfM_P \cong \bfT^{+}_\autotheta \circ \bfT^{-}_\autochi \circ \mathcal{F}_n^{-1}(M_P)$, the assertion follows from \cref{lifts of involutions on QSYM}.

(2)
In the same manner as in \cite[Subsection 7.19]{99Stanley}, one can see that $K_P = s_{\lambda^\rmt/\mu^\rmt}$.
Here, $\lambda^\rmt$ and $\mu^\rmt$ are the transposes of $\lambda$ and $\mu$, respectively.
Now the assertion can be derived from the well known identity 
$\uppsi(s_{\lambda/\mu}) = s_{\lambda^\rmt / \mu^\rmt}$
(for instance, see \cite[Subsection 3.6]{13LMvW}).
\end{proof}

Next, let us introduce weak Bruhat interval modules, which were introduced by Jung, Kim, Lee, and Oh~\cite{22JKLO} to provide a unified method for dealing with  $H_n(0)$-modules constructed using tableaux.

\begin{definition}{\rm (\cite[Definition 1]{22JKLO})}\label{def: WBI action}
For each left weak Bruhat interval $[\sigma, \rho]_L$ in $\SG_n$,
define $\sfB([\sigma, \rho]_L)$ (simply, $\sfB(\sigma, \rho)$) to be  the $H_n(0)$-module with  $\C[\sigma,\rho]_L$ as the underlying space and with the $H_n(0)$-action defined by
\begin{align*}
\pi_i \cdot \gamma := \begin{cases}
\gamma & \text{if $i \in \Des_L(\gamma)$}, \\
0 & \text{if $i \notin \Des_L(\gamma)$ and $s_i\gamma \notin [\sigma,\rho]_L$,} \\
s_i \gamma & \text{if $i \notin \Des_L(\gamma)$ and $s_i\gamma \in [\sigma,\rho]_L$}.
\end{cases} 
\end{align*}
This module is called the \emph{weak Bruhat interval module associated to $[\sigma,\rho]_L$}.
\end{definition}

We can deduce from \cref{thm: left interval and regular} that for every $[\sigma, \rho]_L \in \rmInt(n)$, there exists a unique poset $P\in \sfP_n$ such that $\Sigma_L(P)=[\sigma, \rho]_L$. Since both $\sfB(\sigma, \rho)$ and $\sfM_P$ share $[\sigma, \rho]_L$ as their basis and exhibit identical $H_n(0)$-actions on this set, we can conclude that $\sfB(\sigma, \rho)$ is indeed equal to $\sfM_P$.

\begin{remark}
The weak Bruhat interval modules are equipped with the structure of semi-combinatorial $H_n(0)$-modules due to Hivert, Novelli, and Thibon~\cite{06HNT} and also that of diagram modules due to Searles \cite{22Searles}. 
More precisely,

\begin{enumerate}[label = {\rm (\arabic*)}]
\item $\sfB(\sigma,\rho)$ is the semi-combinatorial $H_n(0)$-module associated to the Yang-Baxter interval $[Y_{\sigma}(\id), Y_{\rho}(\id)]$, and

\item it was shown in \cite[Subsection 7.3]{22Searles} that $\sfB(\sigma,\rho)$ is isomorphic to a diagram module $\widehat{\mathbf{N}}_{\mathsf{StdTab}(D)}$.
\end{enumerate}

\end{remark}

\section{The weak Bruhat interval structure of $\Sigma_L(P)$ for $P \in \sfRSP_n$}\label{sec: int str for regular Schur}

Let $P \in \sfRSP_n$.
In this section, we explicitly describe the left weak Bruhat interval $\Sigma_L(P)$ in terms of reading words of standard Young tableaux.
To begin with, we introduce readings for bijective tableaux.

\begin{definition}\label{def: read tau}
Let $\tau$ be a bijective tableau of shape $\lambda / \mu$.
The \emph{$\tau$-reading} is the map 
$$
\rmread_\tau: 
\{\text{bijective tableaux of shape $\lambda / \mu$}\} \ra \SG_n, \quad T \mapsto \rmread_\tau(T),
$$ 
where $\rmread_\tau(T)$ is the permutation in $\SG_n$ given by
$\rmread_\tau(T)(k) = T_{\tau^{-1}(k)}$ for $1 \le k \le n$.
We call $\rmread_\tau(T)$ the \emph{$\tau$-reading word of $T$}.
\end{definition}

Given a bijective tableaux $T$ of shape $\lambda / \mu$,  the permutation $\rmread_{\tau}(T)$ in one-line notation can be obtained by reading the entries of $T$ in the order given by $\tau^{-1}(1),  \tau^{-1}(2), \ldots, \tau^{-1}(n)$. For instance, if $\tau = \begin{array}{l}\begin{ytableau}
4  & 2 &  3\\
5  &  1
\end{ytableau}
\end{array}
$ and $T = \begin{array}{l}\begin{ytableau}
1  & 3 &  4\\
2  & 5
\end{ytableau}\end{array}$, then
$\rmread_{\tau}(T) = 53412$.
With this definition, we have the following lemma.

\begin{lemma}\label{lem: identification of P and S}
For any bijective tableau $\tau$ of shape $\lambda / \mu$, 
$\Sigma_L(\sfposet(\tau)) = \rmread_\tau(\SYT(\lambda/\mu))$.
\end{lemma}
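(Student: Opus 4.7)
The plan is to prove the equality by exhibiting $\rmread_\tau$ as a bijection from the set of \emph{all} bijective tableaux of shape $\lambda/\mu$ to $\SG_n$, and then showing that a bijective tableau $T$ lies in $\SYT(\lambda/\mu)$ if and only if $\rmread_\tau(T)$ lies in $\Sigma_L(\sfposet(\tau))$. The bijectivity is immediate from the definition: given $\sigma\in\SG_n$, the unique bijective tableau $T$ with $\rmread_\tau(T)=\sigma$ is the one whose cell $c$ is filled with $\sigma(\tau_c)$, because $\rmread_\tau(T)(k)=T_{\tau^{-1}(k)}$ translates to $\sigma(\tau_c)=T_c$ after setting $k=\tau_c$.

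For the core equivalence, set $\sigma:=\rmread_\tau(T)$, so $\sigma(\tau_c)=T_c$ for every cell $c$. Equip the set of cells of $\lambda/\mu$ with the partial order $c\preceq c'$ iff $c$ lies weakly upper-left of $c'$. I would show that $T$ being a SYT is equivalent to the statement that $T_c<T_{c'}$ whenever $c\prec c'$. One direction is obvious (rows and columns are chains in this poset). For the other direction, the key small observation is that for any two cells $c_1=(r_1,k_1)\preceq c_2=(r_2,k_2)$ of $\lambda/\mu$, every cell $(r,k)$ with $r_1\le r\le r_2$ and $k_1\le k\le k_2$ also belongs to $\lambda/\mu$: the weakly decreasing sequences $\lambda_r$ and $\mu_r$ give $\mu_r\le\mu_{r_1}<k_1\le k$ and $k\le k_2\le\lambda_{r_2}\le\lambda_r$. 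Consequently $c_1$ and $c_2$ are joined by a lattice path of right- and down-steps inside $\lambda/\mu$, and strict increase along rows and columns propagates to $T_{c_1}<T_{c_2}$ by transitivity.

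It remains to translate through the bijection. By the very definition of $\sfposet(\tau)$ in \eqref{eq: order of poset(tau)}, the cell-poset relation $c\prec c'$ with $c\ne c'$ corresponds to $\tau_c\prec_\tau\tau_{c'}$ with $\tau_c\ne\tau_{c'}$; and $T_c<T_{c'}$ corresponds to $\sigma(\tau_c)<\sigma(\tau_{c'})$. Hence the strict inequality $T_c<T_{c'}$ for all $c\prec c'$ is equivalent to $\sigma(i)\le\sigma(j)$ for all $i\preceq_\tau j$, which is precisely the defining condition for $\sigma\in\Sigma_L(\sfposet(\tau))$ (the inequality is automatically strict for $i\ne j$ since $\sigma$ is a bijection). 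Combining with the bijectivity of $\rmread_\tau$ yields $\rmread_\tau(\SYT(\lambda/\mu))=\Sigma_L(\sfposet(\tau))$.

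The only non-trivial point is the cell-propagation argument in the second paragraph; everything else is a direct unpacking of the definitions of $\rmread_\tau$, $\sfposet(\tau)$, and $\Sigma_L(\,\cdot\,)$, so I do not anticipate any serious obstacle.
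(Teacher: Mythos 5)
Your proof is correct, and it takes a genuinely different route from the paper's. The paper establishes the inclusion $\rmread_\tau(\SYT(\lambda/\mu)) \subseteq \Sigma_L(\sfposet(\tau))$ by essentially the argument you give for the hard direction of your cell-poset characterization (namely that a standard Young tableau entry in a weakly-upper-left cell is strictly smaller, which is your lattice-path observation, stated there without the justification you supply), and then obtains the reverse inclusion by a cardinality count: it invokes $\ch([\sfM_{\sfposet(\tau_0^{\lambda/\mu})}]) = s_{\lambda/\mu}$ from \cref{thm: char of Mp}(2) together with the expansion $s_{\lambda/\mu} = \sum_{T\in\SYT(\lambda/\mu)} F_{\comp(T)}$ to conclude $|\Sigma_L(\sfposet(\tau))| = |\SYT(\lambda/\mu)|$. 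You instead note that $\rmread_\tau$ is a bijection from the set of \emph{all} bijective tableaux of shape $\lambda/\mu$ onto $\SG_n$ and show that under this bijection the condition ``$T$ is standard'' is \emph{equivalent} to ``$\rmread_\tau(T)\in\Sigma_L(\sfposet(\tau))$''; the easy direction of your cell-poset characterization (rows and columns are chains) then gives the reverse inclusion directly. This is more elementary and self-contained — it dispenses with the $P$-partition and quasisymmetric-characteristic machinery entirely — and it makes explicit the small but essential fact (that the rectangle spanned by two comparable cells lies inside $\tyd(\lambda/\mu)$) which the paper's first inclusion tacitly relies on. Both approaches are valid; yours buys independence from \cref{thm: char of Mp}, while the paper's recycles an identity it needs elsewhere anyway.
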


\begin{proof}
We first show that $\rmread_\tau(\SYT(\lambda/\mu)) \subseteq \Sigma_L(\sfposet(\tau))$.
To do this, take any $T \in \SYT(\lambda/\mu)$ and $i,j \in [n]$ with $i \preceq_\tau j$ (for the definition of $\preceq_\tau$, see \cref{eq: order of poset(tau)}).
Let $B_1$ and $B_2$ be the boxes in $\tyd(\lambda/\mu)$ such that $\tau_{B_1} = i$ and $\tau_{B_2} = j$.
Since $i \preceq_\tau j$, 
$B_1$ is weakly upper-left of $B_2$ in $\tyd(\lambda/\mu)$.
This implies that 
$$
\rmread_\tau(T)(i) = T_{B_1} \leq T_{B_2} = \rmread_\tau(T)(j).
$$
Therefore, $\rmread_\tau(T) \in \Sigma_L(\sfposet(\tau))$.

To complete the proof, let us show that $|\Sigma_L(\sfposet(\tau))| = |\rmread_\tau(\SYT(\lambda/\mu))|$.
Note that 
\begin{align*}
\ch([\sfM_{\sfposet(\tau^{\lambda/\mu}_0)}]) = \sum_{\gamma \in \Sigma_L(\sfposet(\tau^{\lambda/\mu}_0))} F_{\comp(\Des_L(\gamma))^\rmc}
\quad \text{and} \quad
\ch([\sfM_{\sfposet(\tau^{\lambda/\mu}_0)}]) =  s_{\lambda/\mu},
\end{align*}
where the second equality follows from 
\cref{thm: char of Mp}(2).
Putting these together with the well known equality $s_{\lambda / \mu} = \sum_{T \in \SYT(\lambda / \mu)} F_{\comp(T)}$, we have
\begin{align}\label{eq: F expansion equal}
\sum_{\gamma \in \Sigma_L(\sfposet(\tau^{\lambda/\mu}_0))} F_{\comp(\Des_L(\gamma))^\rmc} 
= \sum_{T \in \SYT(\lambda/\mu)} F_{\comp(T)}.
\end{align}
Here, $\comp(T) = \comp(\{i \in [n-1] \mid \text{$i$ is weakly right of $i+1$ in $T$}\})$.
As a consequence of \cref{eq: F expansion equal}, we have
\[
|\Sigma_L(\sfposet(\tau))| = |\Sigma_L(\sfposet(\tau^{\lambda/\mu}_0))| 
= |\SYT(\lambda/\mu)| = |\rmread_\tau(\SYT(\lambda/\mu))|.
\qedhere
\]
\end{proof}

The purpose of the remainder of this section is to describe the minimum and maximum of $\Sigma_L(P)$ with respect to $\preceq_L$.

As a first step, we deal with a characterization of regular Schur labeled skew shape posets.
For this purpose, the following definition is necessary.  
\begin{definition}\label{def: distinguished poset}
Let $\lambda/\mu$ be a skew partition of size $n$.
A Schur labeling $\tau$ of shape $\lambda / \mu$ is said to be \emph{distinguished} if
$\tau_B \geq \tau_{B'}$ whenever $B$ is weakly below and weakly left of $B'$ for boxes $B, B' \in \tyd(\lambda/\mu)$.
\end{definition}

\begin{example}
Consider the  Schur labelings of shape $(3,2,2)/(1)$
\[
\tau_0^{(3,2,2)/(1)} = 
\begin{array}{l}\begin{ytableau}
\none & \none & 1\\ 
3 & 2 \\
5 & 4
\end{ytableau}
\end{array}, 
 \quad
\tau_1^{(3,2,2)/(1)} = 
\begin{array}{l}\begin{ytableau}
\none & \none & 1\\ 
4 & 2 \\
5 & 3
\end{ytableau}
\end{array}, 
\quad \text{and} \quad
\tau = 
\begin{array}{l}\begin{ytableau}
\none & \none & 2\\ 
3 & 1 \\
5 & 4
\end{ytableau}
\end{array}.
\]
One sees that $\tau_0^{(3,2,2)/(1)}$ and $\tau_1^{(3,2,2)/(1)}$ are distinguished, 
whereas
$\tau$ is non-distinguished since $1$ appears weakly below and weakly left of $2$ in $\tau$.
\end{example}

Let $\sfDS(\lambda / \mu)$ be the set of all distinguished Schur labelings of shape $\lambda / \mu$.
For any Schur labeling $\tau$, let $\sfcnt_i(\tau)$ be the set of entries in the $i$th connected component of $\tau$ from the top.
For each $P \in \sfSP_n$, there exists a unique Schur labeling $\tau$ such that 
\begin{equation}\label{eq: def of tau_P}
\text{\parbox{.9\textwidth}{
\begin{enumerate}[label = {\rm (\roman*)}, leftmargin = 4ex]
\item $\sh(\tau)$ is basic,
\item $\sfposet(\tau) = P$, and
\item $\mathrm{min}(\sfcnt_i(\tau)) < \mathrm{min}(\sfcnt_j(\tau))$ for $1 \leq i < j \leq k$, where $k$ is the number of connected components of $P$.
\end{enumerate}
}}
\end{equation}
We denote this Schur labeling as $\tau_P$.
One can easily see that for $P \in \sfSP_n$, $\tau_P$ is distinguished if and only if every connected component of $\tau_P$ is filled with consecutive integers.

\begin{example}
Given two Schur labeled skew shape posets
\[ 
\def \pp {1}
\def \hp {0.7}
P = 
\begin{array}{l}
\begin{tikzpicture}
\node at (-1.5*\hp, -0.5*\pp]) {3};
\node at (0,0) {4};
\node at (1*\hp, -1*\pp) {5};
\node at (2*\hp, 0*\pp) {6};
\node at (3.5*\hp, 0*\pp) {1};
\node at (4.5*\hp, -1*\pp) {2};
\draw (0.25*\hp, -0.25*\pp) -- (0.75*\hp, -0.75*\pp);
\draw (1.25*\hp, -0.75*\pp) -- (1.75*\hp, -0.25*\pp);
\draw (3.75*\hp, -0.25*\pp) -- (4.25*\hp, -0.75*\pp);
\end{tikzpicture}
\end{array}
\quad \text{and} \quad
Q = 
\begin{array}{l}
\begin{tikzpicture}
\node at (-1.5*\hp, -0.5*\pp]) {4};
\node at (0,0) {1};
\node at (1*\hp, -1*\pp) {3};
\node at (2*\hp, 0*\pp) {5};
\node at (3.5*\hp, 0*\pp) {2};
\node at (4.5*\hp, -1*\pp) {6};
\draw (0.25*\hp, -0.25*\pp) -- (0.75*\hp, -0.75*\pp);
\draw (1.25*\hp, -0.75*\pp) -- (1.75*\hp, -0.25*\pp);
\draw (3.75*\hp, -0.25*\pp) -- (4.25*\hp, -0.75*\pp);
\end{tikzpicture}
\end{array},
\]
we have that
\[
\tau_P = 
\begin{array}{l}
\begin{ytableau}
\none & \none & \none & 2 & 1 \\
\none & \none & 3 \\
5 & 4 \\
6    
\end{ytableau}
\end{array}
\quad\; \text{and} \quad\;
\tau_Q = 
\begin{array}{l}
\begin{ytableau}
\none & \none & \none & 3 & 1 \\
\none & \none & \none & 5\\
\none & 6 & 2\\
4
\end{ytableau}
\end{array}.
\]
\end{example}

\begin{lemma}\label{lem: characterization: regular}
For $P \in \sfSP_n$, $P$ is regular if and only if $\tau_P$ is distinguished.
\end{lemma}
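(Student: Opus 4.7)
The plan is to use the equivalent characterization noted just before the lemma: $\tau_P$ is distinguished exactly when every connected component of $\tau_P$ is filled with consecutive integers. Throughout, I will write $B_v := \tau_P^{-1}(v)$ for the box containing the label $v$, so that $u \preceq_P v$ amounts to $B_u$ being weakly upper-left of $B_v$, and labels belonging to distinct connected components of $\tau_P$ are always $P$-incomparable.

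For the $(\Leftarrow)$ direction I would fix $x \preceq_P z$ and $y$ with $\min(x,z) < y < \max(x,z)$. Since $x$ and $z$ are $P$-comparable, they lie in a common component $C$; and since the labels in $C$ form a contiguous integer block by hypothesis, $y$ belongs to $C$ as well. Suppose for contradiction that neither $x \preceq_P y$ nor $y \preceq_P z$ holds, so that $B_x$ is not weakly upper-left of $B_y$ and $B_y$ is not weakly upper-left of $B_z$. Splitting each failure into either a row or a column inequality yields four subcases. Two of them immediately contradict the assumption that $B_x$ is weakly upper-left of $B_z$, since they force $\mathrm{row}(B_x) > \mathrm{row}(B_z)$ or $\mathrm{col}(B_x) > \mathrm{col}(B_z)$. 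In the remaining two subcases, $B_y$ lies strictly upper-right of both $B_x$ and $B_z$, or strictly below-left of both; applying the distinguished inequality $\tau_B > \tau_{B'}$ whenever $B$ is strictly below-left of $B'$, together with the row-decreasing and column-increasing constraints of the Schur labeling along shared rows and columns, then forces $y < \min(x,z)$ or $y > \max(x,z)$, the desired contradiction.

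For the $(\Rightarrow)$ direction I would argue the contrapositive. Suppose $\tau_P$ is not distinguished; then some connected component $C$ of $\tau_P$ has a label set that is not a contiguous interval, and I can pick $a, c \in C$ and $b \notin C$ with $a < b < c$. Two boxes of $C$ sharing a horizontal or vertical edge always determine a cover relation of $\sfposet(\tau_P)$, and the boxes making up $C$ are connected; hence the Hasse diagram of the subposet $P|_C$ is connected, and I can choose a sequence $a = a_0, a_1, \ldots, a_k = c$ of labels in $C$ with each consecutive pair forming a cover in $P$. Letting $i^\ast$ be the largest index with $a_{i^\ast} < b$, and noting that $a_{i^\ast+1} \neq b$ since $b \notin C$, the cover $\{a_{i^\ast}, a_{i^\ast+1}\}$ has one endpoint less than $b$ and the other greater than $b$. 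Because $b$ lies in a component distinct from $C$, it is $P$-incomparable with both endpoints of this cover, so setting $y := b$ exhibits a failure of the regularity condition of $P$.

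The main obstacle I anticipate is the four-case coordinate analysis in $(\Leftarrow)$: verifying that in the two genuinely nontrivial subcases the distinguished inequality (combined with the Schur-labeling monotonicity along shared rows and columns) really pushes $y$ outside the open interval $(\min(x,z),\max(x,z))$. The $(\Rightarrow)$ direction is comparatively routine once one records that same-row and same-column adjacent boxes in a Young diagram always produce cover relations in $\sfposet(\tau_P)$, which is exactly what makes the Hasse diagram of $P|_C$ connected.
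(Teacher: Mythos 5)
Your proof is correct and closely parallels the paper's argument, with one organizational difference worth noting. For $(\Leftarrow)$, your splitting of each comparability failure into a row inequality or a column inequality, dismissing the two transitive subcases against the hypothesis that $B_x$ is weakly upper-left of $B_z$, and then using the distinguished inequality on the two anti-diagonal configurations is exactly the paper's case analysis with slightly different bookkeeping. One small remark: in the two surviving subcases $B_y$ is strictly above-right (or strictly below-left) of \emph{both} $B_x$ and $B_z$, with strict inequalities in both coordinates, so the distinguished inequality alone forces $y<\min(x,z)$ or $y>\max(x,z)$; the row/column Schur monotonicity you additionally invoke is never actually used there. For $(\Rightarrow)$ you work contrapositively: if the label set of a component $C$ misses a value $b$ lying between $a,c\in C$, you walk a path in the Hasse diagram of $P|_C$ from $a$ to $c$ and locate a cover $\{a_{i^\ast},a_{i^\ast+1}\}$ straddling $b$, which is incomparable to both and hence witnesses non-regularity. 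The paper argues directly: it fixes the specific anti-monotone path from the top-right corner $B_1$ to the bottom-left corner $B_2$ of $\sfC$, and for each $m\in[(\tau_P)_{B_1},(\tau_P)_{B_2}]$ invokes regularity on the straddling step to force $m$ into $\sfC$. Both arguments hinge on finding a step of a path through the component that crosses the gap value; your contrapositive version has the mild advantage of not depending on the separate observation, which the paper uses tacitly to pass from $[(\tau_P)_{B_1},(\tau_P)_{B_2}]\subseteq\{\text{labels of }\sfC\}$ to contiguity, that $(\tau_P)_{B_1}$ and $(\tau_P)_{B_2}$ are in fact the minimum and maximum labels of $\sfC$.
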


\begin{proof}
To prove the ``only if'' part, assume that $P$ is a regular Schur labeled skew shape poset and $\lambda / \mu$ is the shape of $\tau_P$.
We claim that every connected component of $\tau_P$ is filled with consecutive integers.
Take an arbitrary connected component $\sfC$ of $\tau_P$.
Let $B_1$ be the box at the top of the rightmost column of $\sfC$ and $B_2$ the box at the bottom of the leftmost column of $\sfC$. 
Then, we may choose boxes $A_0:= B_1, A_2, A_3, \ldots, A_k := B_2$ satisfying that for all $1 \le i \le k-1$, $A_{i+1}$ is  weakly below and weakly left of $A_i$ and $A_{i}, A_{i+1}$ are in the same row or in the same column.
Let $m \in [(\tau_P)_{B_1}, (\tau_P)_{B_2}]$.
Then, there exists a unique index $1 \leq i \leq k-1$ such that $(\tau_P)_{A_i} \leq m \leq (\tau_P)_{A_{i+1}}$.
Since $P$ is regular, one of the following holds:
\begin{enumerate}[label = {\rm (\roman*)}]
\item 
If $(\tau_P)_{A_i} \preceq_P (\tau_P)_{A_{i+1}}$, then $(\tau_P)_{A_i} \preceq_P m$ or $m \preceq_P (\tau_P)_{A_{i+1}}$.
\item 
If $(\tau_P)_{A_{i+1}} \preceq_P (\tau_P)_{A_{i}}$, then $(\tau_P)_{A_{i+1}} \preceq_P m$ or $m \preceq_P (\tau_P)_{A_{i}}$.
\end{enumerate}
It follows that $(\tau_P)_{A_i}$, $m$, and $(\tau_P)_{A_{i+1}}$ appear in the same connected component, that is, $m$ appears in $\sfC$.
Thus, $\tau_P$ is distinguished.

Next, to prove the ``if'' part, assume that $\tau_P$ is a distinguished Schur labeling and $\lambda / \mu$ is the shape of $\tau_P$.
Let $B_1, B_2 \in \tyd(\lambda/\mu)$ with $(\tau_P)_{B_1} \prec_{\tau_P} (\tau_P)_{B_2}$.
By the definition of $\prec_{\tau_P}$, $B_1$ and $B_2$ are in the same connected component.
In order to establish the regularity of $P$, we need to prove that either $(\tau_P)_{B_1} \preceq_{\tau_P} (\tau_P)_C$ or $(\tau_P)_{C} \preceq_{\tau_P} (\tau_P)_{B_2}$ for all $C \in \tyd(\lambda/\mu)$ satisfying $(\tau_P)_{B_1} < (\tau_P)_C < (\tau_P)_{B_2}$ or $(\tau_P)_{B_1} > (\tau_P)_C > (\tau_P)_{B_2}$.

Assume that there exists $C \in \tyd(\lambda/\mu)$ such that $(\tau_P)_{B_1} < (\tau_P)_C < (\tau_P)_{B_2}$.
Since $\tau_P$ is a Schur labeling and $(\tau_P)_{B_1} \prec_{\tau_P} (\tau_P)_{B_2}$, the inequality $(\tau_P)_{B_1} < (\tau_P)_{B_2}$ implies that $B_2$ is strictly below $B_1$.
In addition, since $\tau_P$ is distinguished and $(\tau_P)_{B_1},(\tau_P)_{B_2}$ appear in the same connected component in $\tau_P$, $(\tau_P)_{C}$ appears in the same connected component with them in $\tau_P$.
Suppose for the sake of contradiction that $(\tau_P)_{B_1} \not\preceq_{\tau_P} (\tau_P)_C$ and $(\tau_P)_{C} \not\preceq_{\tau_P} (\tau_P)_{B_2}$.
Then $C$ satisfies one of the following conditions:
\begin{enumerate}[label = {\rm (\roman*)}]
\item $C$ is strictly above $B_1$ and strictly right of $B_2$.
\item $C$ is strictly left of $B_1$ and strictly below $B_2$.
\end{enumerate}
However, since $\tau_P$ is a Schur labeling and $(\tau_P)_{B_1} < (\tau_P)_C$, $C$ cannot satisfy (i).
Similarly, since $\tau_P$ is a Schur labeling and $(\tau_P)_{C} < (\tau_P)_{B_2}$, $C$ cannot satisfy (ii).
Therefore, $(\tau_P)_{B_1} \preceq_{\tau_P} (\tau_P)_C$ or $(\tau_P)_{C} \preceq_{\tau_P} (\tau_P)_{B_2}$.
In a similar way, one can show that if there exists $C \in \tyd(\lambda/\mu)$ such that $(\tau_P)_{B_1} > (\tau_P)_C > (\tau_P)_{B_2}$, then $(\tau_P)_{B_1} \preceq_{\tau_P} (\tau_P)_C$ or $(\tau_P)_{C} \preceq_{\tau_P} (\tau_P)_{B_2}$.
Thus, $P$ is regular.
\end{proof}

Note that $\tau_{\sfposet(\tau)} = \tau$ for any $\tau \in \sfDS(\lambda / \mu)$.
Considering this property together with \cref{lem: characterization: regular}, one can see that 
the map \begin{align}\label{eq: bijection btw RSP and DS}
\Phi: 
\sfRSP_n \ra \bigcup_{|\lambda/\mu| = n} \sfDS(\lambda/\mu),
 \quad
 P \mapsto \tau_P
 \end{align}
is a bijection and its inverse is given by
$\tau \mapsto \sfposet(\tau)$.

As a second step, we provide a lemma that will be used throughout this paper.

\begin{lemma}\label{lem: fixed T int}
For $T \in \SYT(\lambda/\mu)$, 
$\{\rmread_\tau(T) \mid \tau \in \sfDS(\lambda/\mu)\} = [\rmread_{\tau_0}(T), \rmread_{\tau_1}(T)]_R$.
\end{lemma}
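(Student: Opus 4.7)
The plan is to identify $\{\rmread_\tau(T) : \tau \in \sfDS(\lambda/\mu)\}$ with the set of linear extensions of a suitable regular poset on $[n]$, then invoke \cref{thm: left interval and regular} to conclude that this set is a weak Bruhat interval. Concretely, define a poset $P_T$ on $[n]$ by declaring $m_1 \preceq_{P_T} m_2$ if and only if $T^{-1}(m_1)$ lies weakly upper-right of $T^{-1}(m_2)$ in $\tyd(\lambda/\mu)$. This is the direction-reversed analogue of $\sfposet(\tau)$, tailored so that distinguished Schur labelings correspond to $P_T$-order-preserving bijections.

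First I will establish the bijection
\[
\sfDS(\lambda/\mu) \to \Sigma_L(P_T), \quad \tau \mapsto \rmread_\tau(T)^{-1},
\]
with inverse $\pi \mapsto \pi \circ T$ (where $T$ is viewed as a bijection from $\tyd(\lambda/\mu)$ to $[n]$). The forward direction is immediate from the distinguished condition, since $T^{-1}(m_1)$ being weakly upper-right of $T^{-1}(m_2)$ forces $\tau(T^{-1}(m_1)) \leq \tau(T^{-1}(m_2))$. Conversely, when $\pi \in \Sigma_L(P_T)$, the map $\tau := \pi \circ T$ satisfies the row-decreasing, column-increasing, and distinguished conditions, each of which unwinds back to an instance of $\pi$ respecting $P_T$. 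Next I will verify that $P_T$ is regular. Given $x \preceq_{P_T} z$ and $x < y < z$ (the case $z < y < x$ being symmetric), assuming neither $x \preceq_{P_T} y$ nor $y \preceq_{P_T} z$ holds, a short case split on the inequalities defining the weak upper-right relation produces a configuration in which $B_y := T^{-1}(y)$ is strictly upper-left of either $B_x := T^{-1}(x)$ or $B_z := T^{-1}(z)$. Any such configuration is impossible by the following SYT fact: if $B, B' \in \tyd(\lambda/\mu)$ with $B$ strictly upper-left of $B'$, then the intermediate box $(i_B, j_{B'})$ automatically lies in $\tyd(\lambda/\mu)$ (using that $\lambda/\mu$ is a skew partition), so $T$ strictly increases along the down-right path from $B$ through $(i_B, j_{B'})$ to $B'$, forcing $T_B < T_{B'}$. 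This yields a numerical contradiction with $x < y < z$, and is the main delicate step of the proof.

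Once $P_T$ is known to be regular, \cref{thm: left interval and regular} tells us $\Sigma_L(P_T)$ is a left weak Bruhat interval, and the order isomorphism $(\SG_n, \preceq_L) \cong (\SG_n, \preceq_R)$ via $\sigma \mapsto \sigma^{-1}$ transports it to the right weak Bruhat interval $\{\rmread_\tau(T) : \tau \in \sfDS(\lambda/\mu)\}$. It remains to identify the endpoints as $\rmread_{\tau_0}(T)$ and $\rmread_{\tau_1}(T)$. Classifying each pair $(a,b)$ with $a > b$ by the relative position of $B_a := T^{-1}(a)$ and $B_b := T^{-1}(b)$, pairs with $B_a, B_b$ in the same row or with $B_a$ weakly upper-right of $B_b$ are inversions of $\rmread_\tau(T)$ for every $\tau \in \sfDS(\lambda/\mu)$, whereas pairs in the same column or with $B_a$ weakly lower-left of $B_b$ are never inversions; the SYT path argument again excludes $B_a$ strictly upper-left of $B_b$; the only variability comes from $B_a$ strictly lower-right of $B_b$. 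Since $\tau_0$ (right-to-left within rows, top-down) never inverts such pairs while $\tau_1$ (top-down within columns, right-to-left) always does, $\rmread_{\tau_0}(T)$ and $\rmread_{\tau_1}(T)$ realize respectively the minimum and maximum of this interval under $\preceq_R$, completing the proof.
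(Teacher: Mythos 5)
Your proof is correct, but it takes a genuinely different route from the paper's. The paper proves the two set inclusions directly: for $\subseteq$ it compares inversion sets ($\rmInv_R(\rmread_{\tau_0}(T)) \subseteq \rmInv_R(\rmread_\tau(T)) \subseteq \rmInv_R(\rmread_{\tau_1}(T))$ for $\tau \in \sfDS(\lambda/\mu)$), and for $\supseteq$ it notes that $\tau \mapsto \rmread_\tau(T)$ bijects the set of \emph{all} bijective tableaux onto $\SG_n$ and then shows that any non-distinguished $\tau$ produces a reading word falling outside $[\rmread_{\tau_0}(T), \rmread_{\tau_1}(T)]_R$, again via an explicit bad inversion. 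You instead build an auxiliary poset $P_T$ on $[n]$ (with $m_1 \preceq_{P_T} m_2$ iff $T^{-1}(m_1)$ lies weakly upper-right of $T^{-1}(m_2)$), verify the bijection $\sfDS(\lambda/\mu) \to \Sigma_L(P_T)$, $\tau \mapsto \rmread_\tau(T)^{-1}$ (the key point being that the "weakly upper-right $\Rightarrow$ smaller entry" condition, applied to all pairs of boxes in $\tyd(\lambda/\mu)$, is exactly equivalent to being a distinguished Schur labeling), prove $P_T$ is regular via a four-way case split plus the fact that in a skew SYT an entry in a box strictly upper-left of another is strictly smaller (using that the intermediate corner box exists for skew shapes), and then invoke \cref{thm: left interval and regular} to conclude $\Sigma_L(P_T)$ is a weak Bruhat interval, transporting it to the right side by inversion. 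What this buys you is a more conceptual argument that plugs into the Bj\"orner--Wachs machinery the paper already uses elsewhere, and it avoids the "all bijective tableaux exhaust $\SG_n$" complementation step. What it costs you is that the regularity check for $P_T$ is roughly as much work as the paper's $\supseteq$ inclusion, and your endpoint identification (classifying pairs $(a,b)$ with $a>b$ by the relative position of $T^{-1}(a)$ and $T^{-1}(b)$) recapitulates essentially the same inversion bookkeeping as the paper's $\subseteq$ inclusion, so the total effort is comparable. One small caveat: \cref{thm: left interval and regular} is stated for $|U|>1$, so the degenerate case $|\sfDS(\lambda/\mu)|=1$ should be dispatched separately, though it is trivial there.
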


\begin{proof}
Let us show the inclusion $\{\rmread_\tau(T) \mid \tau \in \sfDS(\lambda/\mu)\} \subseteq [\rmread_{\tau_0}(T), \rmread_{\tau_1}(T)]_R$.
This can be done by proving $\rmread_{\tau_0}(T) \preceq_R \rmread_{\tau}(T)$ and $\rmread_{\tau}(T) \preceq_R \rmread_{\tau_1}(T)$ for all $\tau \in \sfDS(\lambda/\mu)$.
Since the method of proof for the latter inequality is essentially the same as that for the former one, we omit the proof for the latter inequality.
Let $\tau \in \sfDS(\lambda/\mu)$ and
$(i,j) \in \rmInv_R(\rmread_{\tau_0}(T))$.
Since $i > j$ and $\rmread_{\tau_0}(T)^{-1}(i) < \rmread_{\tau_0}(T)^{-1}(j)$,
the box $T^{-1}(j)$ is placed strictly left and weakly below $T^{-1}(i)$.
This, together with the definition of distinguished Schur labeling, implies that $\tau_{T^{-1}(i)} < \tau_{T^{-1}(j)}$, equivalently, $\rmread_{\tau}(T)^{-1}(i) < \rmread_{\tau}(T)^{-1}(j)$.
It follows that $(i,j) \in \rmInv_R(\rmread_{\tau}(T))$.
Since we chose an arbitrary $(i,j) \in \rmInv_R(\rmread_{\tau_0}(T))$, we have the inclusion $\rmInv_R(\rmread_{\tau_0}(T)) \subseteq \rmInv_R(\rmread_{\tau}(T))$.
Therefore, $\rmread_{\tau_0}(T) \preceq_R \rmread_{\tau}(T)$.

Let us show the opposite inclusion $\{\rmread_\tau(T) \mid \tau \in \sfDS(\lambda/\mu)\} \supseteq [\rmread_{\tau_0}(T), \rmread_{\tau_1}(T)]_R$.
Since $\{\rmread_\tau(T) \mid \text{$\tau$ is a bijective tableau of shape $\lambda / \mu$} \}$ is equal to $\SG_n$ as a set,
the inclusion can be obtained by proving that $\rmread_\tau(T) \notin [\rmread_{\tau_0}(T), \rmread_{\tau_1}(T)]_R$ for all bijective tableaux $\tau$ of shape $\lambda/\mu$ with $\tau \notin \sfDS(\lambda/\mu)$.
To prove it, choose an arbitrary bijective tableau $\tau$ of shape $\lambda/\mu$ with $\tau \notin \sfDS(\lambda/\mu)$.
Since $\tau \notin \sfDS(\lambda/\mu)$, there exists $ 1 \leq i < j \leq n$ such that $i$ is weakly below and weakly left of $j$ in $\tau$.
Set $x := T_{\tau^{-1}(i)}$ and $y := T_{\tau^{-1}(j)}$.
Then, $x$ appears left of $y$ in $\rmread_\tau(T)$.
On the other hand, since $\tau_0, \tau_1 \in \sfDS(\lambda/\mu)$, 
we have $(\tau_0)_{\tau^{-1}(i)} > (\tau_0)_{\tau^{-1}(j)}$ and $(\tau_1)_{\tau^{-1}(i)} > (\tau_1)_{\tau^{-1}(j)}$, which implies that $x$ appears right of $y$ in both $\rmread_{\tau_0}(T)$ and $\rmread_{\tau_1}(T)$.
If $x < y$, then $(y, x) \in \rmInv_R(\rmread_{\tau_0}(T))$ and $(y, x) \notin \rmInv_R(\rmread_\tau(T))$, thus $\rmInv_R(\rmread_{\tau_0}(T)) \not\subseteq \rmInv_R(\rmread_{\tau}(T))$.
Similarly, if $x > y$, then 
$\rmInv_R(\rmread_\tau(T)) \not\subseteq \rmInv_R(\rmread_{\tau_1}(T))$.
Hence, $\rmread_\tau(T) \notin [\rmread_{\tau_0}(T), \rmread_{\tau_1}(T)]_R$.
\end{proof}

As a last step, we define two specific standard Young tableaux. For a skew partition $\lambda/\mu$ of size $n$, let $T_{\lambda/\mu}$(resp. $T'_{\lambda/\mu}$) be the standard Young tableau obtained by filling $\tyd(\lambda/\mu)$ by $1, 2, \ldots, n$ from left to right starting with the top row (resp. from top to bottom starting with leftmost column).

\begin{example}\label{eg: source and sink tab}
Let $\lambda / \mu = (2,2) \star (3,2) / (1)$.
Then 
\begin{align*}
T_{\lambda/\mu} = 
\begin{array}{l}
\begin{ytableau}
\none & \none & \none & 1 & 2 \\
\none & \none & \none & 3 & 4 \\
\none & 5 & 6 \\
7 & 8
\end{ytableau}
\end{array}
\quad \text{and} \quad
T'_{\lambda/\mu} =
\begin{array}{l}
\begin{ytableau}
\none & \none & \none & 5 & 7 \\
\none & \none & \none & 6 & 8 \\
\none & 2 & 4 \\
1 & 3
\end{ytableau}
\end{array}.
\end{align*}
\end{example}

Now, we are ready prove the main theorem of this section.

\begin{theorem}\label{thm: interval descriptrion for regular skew schur poset}
Let $P \in \sfRSP_n$ and $\lambda/\mu = \sh(\tau_P)$.
Then, 
\[
\Sigma_L(P) = [\rmread_{\tau_P}(T_{\lambda/\mu}), \rmread_{\tau_P}(T'_{\lambda/\mu})]_L.
\]
\end{theorem}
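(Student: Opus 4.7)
The plan is to combine three earlier facts. First, since $P$ is regular, $\Sigma_L(P)$ is a left weak Bruhat interval by \cref{thm: left interval and regular}. Second, by \cref{lem: identification of P and S} applied to $\tau = \tau_P$, one has $\Sigma_L(P) = \rmread_{\tau_P}(\SYT(\lambda/\mu))$, so both candidate endpoints $\rmread_{\tau_P}(T_{\lambda/\mu})$ and $\rmread_{\tau_P}(T'_{\lambda/\mu})$ already lie in $\Sigma_L(P)$. Third, by \cref{lem: characterization: regular}, $\tau_P$ is a distinguished Schur labeling. Thus it suffices to prove that $\rmread_{\tau_P}(T_{\lambda/\mu})$ is the $\preceq_L$-minimum and $\rmread_{\tau_P}(T'_{\lambda/\mu})$ is the $\preceq_L$-maximum of $\Sigma_L(P)$.

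I will compare left inversion sets. Set $B_k := \tau_P^{-1}(k)$ and $\gamma_T := \rmread_{\tau_P}(T)$ for $T \in \SYT(\lambda/\mu)$. Then $(i,j) \in \rmInv_L(\gamma_T)$ precisely when $i < j$ and $T_{B_i} > T_{B_j}$. A geometric case analysis on the relative position of the boxes $B_i$ and $B_j$, combining the row/column monotonicity of the Schur labeling $\tau_P$ with the distinguished property of $\tau_P$ and the standardness of $T$, will show that the only configurations under which $(i,j)$ with $i<j$ can possibly be a left inversion of $\gamma_T$ (for some $T$) are: $(a)$ $B_i,B_j$ in the same row with $B_i$ strictly to the right of $B_j$; $(b)$ $B_i$ strictly southeast of $B_j$; and $(c)$ $B_i$ strictly northeast of $B_j$. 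In $(a)$ and $(b)$, standardness of $T$ forces $T_{B_i} > T_{B_j}$, so the pair always belongs to $\rmInv_L(\gamma_T)$. Configuration $(c)$ is the $T$-dependent one.

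For $T = T_{\lambda/\mu}$ (row reading), boxes in strictly earlier rows receive strictly smaller entries, so in $(c)$ we have $(T_{\lambda/\mu})_{B_i} < (T_{\lambda/\mu})_{B_j}$ and no inversion is produced. Hence $\rmInv_L(\gamma_{T_{\lambda/\mu}})$ consists only of the forced inversions from $(a)$ and $(b)$, and is contained in $\rmInv_L(\gamma_T)$ for every $T$, yielding $\gamma_{T_{\lambda/\mu}} \preceq_L \gamma_T$. For $T = T'_{\lambda/\mu}$ (column reading), boxes in strictly later columns receive strictly larger entries, so in $(c)$ we get $(T'_{\lambda/\mu})_{B_i} > (T'_{\lambda/\mu})_{B_j}$, producing the inversion. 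Consequently $\rmInv_L(\gamma_{T'_{\lambda/\mu}})$ contains every pair of the three types, and $\rmInv_L(\gamma_T) \subseteq \rmInv_L(\gamma_{T'_{\lambda/\mu}})$ for every $T$, yielding $\gamma_T \preceq_L \gamma_{T'_{\lambda/\mu}}$. Combined with $\Sigma_L(P)$ being a left weak Bruhat interval, this gives the desired equality.

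The main delicate point is the geometric case analysis, specifically ruling out the configuration in which $B_i$ is weakly below and weakly left of $B_j$; this is exactly where the distinguished property of $\tau_P$ is used, since it would force $\tau_{B_i} \geq \tau_{B_j}$, that is, $i \geq j$, contradicting $i<j$. The remaining impossibilities---$B_i,B_j$ in the same row with $B_i$ left of $B_j$, or in the same column with $B_i$ below $B_j$---follow directly from $\tau_P$ being a Schur labeling. Once these exclusions are in place, the cases $(a)$--$(c)$ behave exactly as described and the verification is routine.
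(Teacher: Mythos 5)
Your proof is correct, and it takes a genuinely different route from the paper. The paper reduces to the same goal (showing $\rmread_{\tau_P}(T_{\lambda/\mu})$ is the minimum and $\rmread_{\tau_P}(T'_{\lambda/\mu})$ is the maximum of $\rmread_{\tau_P}(\SYT(\lambda/\mu))$ under $\preceq_L$) and then handles only the special case $\tau_P = \tau_0$ by inversion-set comparison; for general $\tau_P \in \sfDS(\lambda/\mu)$, it invokes Lemma~\ref{lem: fixed T int} together with the factorization $\rmread_{\tau_P}(T) = \rmread_{\tau_0}(T)\,\rmread_{\tau_0}(\tau_P)^{-1}$ and a length-additivity argument to transfer the $\tau_0$ comparison to $\tau_P$. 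You instead run the inversion-set comparison directly for arbitrary distinguished $\tau_P$, via a clean geometric classification of how the boxes $B_i = \tau_P^{-1}(i)$, $B_j = \tau_P^{-1}(j)$ (with $i<j$) can be positioned: the Schur-labeling monotonicity eliminates the same-row-with-$B_i$-left and same-column-with-$B_i$-below cases; the distinguished property eliminates $B_i$ strictly southwest of $B_j$; the same-column-with-$B_i$-above and strictly-northwest cases never give an inversion in any standard tableau; the same-row-with-$B_i$-right and strictly-southeast cases always do; and only the strictly-northeast case is $T$-dependent, where the row-filling $T_{\lambda/\mu}$ avoids the inversion and the column-filling $T'_{\lambda/\mu}$ realizes it. This yields $\rmInv_L(\rmread_{\tau_P}(T_{\lambda/\mu})) \subseteq \rmInv_L(\rmread_{\tau_P}(T)) \subseteq \rmInv_L(\rmread_{\tau_P}(T'_{\lambda/\mu}))$ for every $T$, which is what you need. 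Your argument is more self-contained and more elementary (it does not rely on Lemma~\ref{lem: fixed T int}), and it makes transparent exactly where the distinguished property enters; the paper's route, while less direct here, factors the comparison through $\tau_0$ and reuses machinery (Lemma~\ref{lem: fixed T int}) that is needed elsewhere in the paper anyway.
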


\begin{proof}
Due to \cref{thm: left interval and regular} and \cref{lem: identification of P and S},
it suffices to show that $\rmread_{\tau_P}(T_{\lambda/\mu})$ is minimal and $\rmread_{\tau_P}(T'_{\lambda/\mu})$ is maximal in $\rmread_{\tau_P}(\SYT(\lambda/\mu))$ with respect to $\preceq_L$.
Let $T \in \SYT(\lambda / \mu)$.
In the case where $\tau_P = \tau_0$,
one can easily see that
\begin{align}\label{eq: read tau0 min and max}
\rmInv_L(\rmread_{\tau_0}(T_{\lambda/\mu})) 
\subseteq \rmInv_L(\rmread_{\tau_0}(T))
\subseteq
\rmInv_L(\rmread_{\tau_0}(T_{\lambda/\mu}')). 
\end{align}
In the case where $\tau_P \neq \tau_0$, we consider the equality
\begin{align}\label{eq: Im read tau0 and tau}
\rmread_{\tau_P}(T) = \rmread_{\tau_0}(T)
\rmread_{\tau_0}(\tau_P)^{-1}
\end{align}
which follows from \cref{def: read tau}.
By \cref{lem: characterization: regular}, we have $\tau_P \in \sfDS(\lambda/\mu)$, 
therefore combining \cref{lem: fixed T int} with \cref{eq: Im read tau0 and tau} yields that
\begin{align*}
\ell(\rmread_{\tau_P}(T)) = \ell(\rmread_{\tau_0}(T)) + \ell(\rmread_{\tau_0}(\tau_P)^{-1}).
\end{align*}
Now, we have that
\begin{align*}
\ell(\rmread_{\tau_P}(T)) - \ell(\rmread_{\tau_P}(T_{\lambda/\mu}))
& =
\ell(\rmread_{\tau_0}(T)) - \ell(\rmread_{\tau_0}(T_{\lambda/\mu}))  
\\
& =
\ell(\rmread_{\tau_0}(T) \rmread_{\tau_0}(T_{\lambda/\mu})^{-1}) 
& \text{by \cref{eq: read tau0 min and max}} \\
& =
\ell(\rmread_{\tau_P}(T) \rmread_{\tau_P}(T_{\lambda/\mu})^{-1})
& \text{by \cref{def: read tau}}.
\end{align*}
Therefore, $\rmread_{\tau_P}(T_{\lambda/\mu}) \preceq_L \rmread_{\tau_P}(T)$.
In the same manner, we can prove that
$\rmread_{\tau_P}(T) \preceq_L \rmread_{\tau_P}(T'_{\lambda/\mu})$.
\end{proof}

\section{An equivalence relation on $\rmInt(n)$} 
\label{sec: interval equal upto desc pres iso}

Recall that  $\rmInt(n)$ denotes the set of nonempty left weak Bruhat intervals in $\SG_n$, that is,
$$
\rmInt(n) = \{[\sigma,\rho]_L \mid 
\sigma,\rho \in \SG_n \text{ and } \sigma \preceq_L \rho\}.
$$
For $I_1, I_2 \in \rmInt(n)$, a poset isomorphism $f: (I_1, \preceq_L) \ra (I_2, \preceq_L)$ is called  \emph{descent-preserving} if $\Des_L(\gamma) = \Des_L(f(\gamma))$ for all $\gamma \in I_1$.
In this section, we study the classification of left weak Bruhat intervals in $\rmInt(n)$ up to descent-preserving poset isomorphism.
In particular, in the case where $P \in \sfRSP_n$, we explicitly describe the isomorphism class of $\Sigma_L(P)$.

We begin by 
explaining the reason why we consider descent-preserving poset isomorphisms.
Note that every interval $I \in \rmInt(n)$ can be represented by the colored digraph whose vertices are given by the permutations in $I$ and $\{1,2,\ldots, n-1\}$-colored arrows are given by
\begin{align*}
\gamma \overset{i}{\rightarrow} \gamma' 
\quad \text{if and only if} \quad
\text{$\gamma \preceq_L \gamma'$ and $s_i \gamma = \gamma'$}. 
\end{align*}
For intervals $I_1, I_2 \in \rmInt(n)$, 
a map $f: I_1 \ra I_2$ is called a \emph{colored digraph isomorphism} if $f$ is bijective and satisfies that for all $\gamma, \gamma' \in I_1$ and $1 \leq i \leq n-1$, 
\begin{align*}
\gamma \overset{i}{\rightarrow} \gamma' 
\quad \text{if and only if} \quad
f(\gamma) \overset{i}{\rightarrow} f(\gamma').
\end{align*}
If there exists a descent-preserving colored digraph isomorphism between two intervals $I_1$ and $I_2$, then $\sfB(I_1)$ is isomorphic to $\sfB(I_2)$.
Motivated by this fact, in \cite[Subsection 3.1]{22JKLO}, the authors posed the classification problem of weak Bruhat intervals up to descent-preserving colored digraph isomorphism.

A colored digraph isomorphism between  $I_1$ and $I_2$ is  a poset isomorphism with respect to $\preceq_L$, but a poset isomorphism $f: I_1 \ra I_2$  may not be a colored digraph isomorphism.
For instance, the poset isomorphism $f: [1234, 2134]_L \ra [1234, 1324]_L$ 
defined by $f(1234) = 1234$ and $f(2134) = 1324$ is not a colored digraph isomorphism since
\[ 
\begin{array}{l}
\begin{tikzpicture}
\node[] at (0,1) {1234};
\node[] at (0.2,0.5) {\tiny 1};
\draw[->] (0,.7) -- (0,.3);
\node[] at (0,0) {2134};
\end{tikzpicture}
\end{array}
\overset{f}{\longrightarrow} 
\begin{array}{l}
\begin{tikzpicture}
\node[] at (0,1) {1234};
\node[] at (0.2,0.5) {\tiny 2};
\draw[->] (0,.7) -- (0,.3);
\node[] at (0,0) {1324};
\end{tikzpicture}
\end{array}.
\]
However, if a poset isomorphism between left weak Bruhat intervals is descent-preserving,
it indeed proves to be a colored digraph isomorphism.

\begin{proposition}\label{Prop: desc pres isom and 0-Hecke}
Let $I_1, I_2 \in \rmInt(n)$. 
Every descent-preserving poset isomorphism $f: I_1 \ra I_2$ is a colored digraph isomorphism.
\end{proposition}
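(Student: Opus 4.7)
The plan is to reduce the statement to a single combinatorial fact about covers in left weak Bruhat order, namely: the simple transposition witnessing a cover is completely determined by the descent sets of its endpoints. Concretely, I would first isolate the following claim.

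\medskip
\noindent\textbf{Key Claim.} If $\gamma \preceq_L^c \gamma'$ is a covering relation in $(\SG_n, \preceq_L)$ with $\gamma' = s_i\gamma$, then $\Des_L(\gamma') \setminus \Des_L(\gamma) = \{i\}$.
\medskip

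Granting the Key Claim, the proposition falls out immediately. Let $\gamma \overset{i}{\rightarrow} \gamma'$ be a cover in $I_1$. Because $f$ is a poset isomorphism with respect to $\preceq_L$, $f(\gamma) \preceq_L^c f(\gamma')$ is a cover in $I_2$, so $f(\gamma') = s_j f(\gamma)$ for a unique $j \notin \Des_L(f(\gamma))$, i.e.\ $f(\gamma) \overset{j}{\rightarrow} f(\gamma')$. Applying the Key Claim to both covers,
\[
\{i\} = \Des_L(\gamma') \setminus \Des_L(\gamma) \quad \text{and} \quad \{j\} = \Des_L(f(\gamma')) \setminus \Des_L(f(\gamma)).
\]
Since $f$ is descent-preserving, the two right-hand sides coincide, forcing $i = j$. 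Running the same argument with $f^{-1}$ (which is again a descent-preserving poset isomorphism) yields the converse implication, establishing that $f$ is a colored digraph isomorphism.

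It remains to verify the Key Claim. Using the one-line description of $\Des_L$ recalled in Subsection~2.2, write $\gamma = w_1 w_2 \cdots w_n$, let $p_1, p_2$ be the positions of the values $i, i+1$ in $\gamma$, and note that $p_1 < p_2$ since $i \notin \Des_L(\gamma)$. Then $\gamma' = s_i\gamma$ is obtained from $\gamma$ by swapping the two values $i$ and $i+1$; all other entries occupy unchanged positions. Immediately $i$ is to the right of $i+1$ in $\gamma'$, so $i \in \Des_L(\gamma') \setminus \Des_L(\gamma)$. For any other $k$, the relative positions of $k$ and $k+1$ can differ between $\gamma$ and $\gamma'$ only when $\{k, k+1\} \cap \{i, i+1\} \neq \emptyset$, i.e.\ for $k \in \{i-1, i, i+1\}$. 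A short case analysis rules out $k = i-1$ and $k = i+1$: for instance, if $k = i-1$, letting $r$ denote the (common) position of $i-1$ in $\gamma$ and $\gamma'$, membership of $i-1$ in $\Des_L(\gamma') \setminus \Des_L(\gamma)$ would require $r < p_1$ and $r > p_2$, contradicting $p_1 < p_2$; the case $k = i+1$ is symmetric. Hence only $k = i$ contributes, proving the claim.

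I do not expect any real obstacle; the whole argument is a direct consequence of the combinatorial description of $\Des_L$ via one-line notation together with the well-known fact that covers in $\preceq_L$ are indexed by simple reflections. The only point requiring slight care is the case analysis in the Key Claim, where one must use both $p_1 < p_2$ and the fact that the positions of all values outside $\{i, i+1\}$ are preserved.
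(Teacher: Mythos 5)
Your proof is correct and takes essentially the same route as the paper: the paper shows $j \in \Des_L(\gamma') \setminus \Des_L(\gamma) \subseteq \{i-1, i, i+1\}$ and rules out $j = i\pm 1$ by exactly the one-line-notation analysis that underlies your Key Claim. Isolating the sharper identity $\Des_L(s_i\gamma) \setminus \Des_L(\gamma) = \{i\}$ for a cover is just a slightly cleaner packaging of the same argument.
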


\begin{proof}
Let $\gamma, \gamma' \in I_1$ with $\gamma \preceq_L^c \gamma'$. 
Since $f$ is a poset isomorphism, we have
$f(\gamma) \preceq_L^c f(\gamma')$.
Let $i,j \in [n-1]$ satisfying $\gamma' = s_i \gamma$ and $f(\gamma') = s_j f(\gamma)$.
For the assertion, it suffices to show that $i = j$.
Let $D_1 := \Des_L(\gamma)$ and $D_2 := \Des_L(\gamma')$.
Since $\gamma' = s_i \gamma$, we have
\[
\{i\} \subseteq (D_1 \cup D_2) \setminus (D_1 \cap D_2) \subseteq \{i-1, i, i+1\}.
\]
In addition, since $D_1 = \Des_L(f(\gamma))$, $D_2 = \Des_L(f(\gamma'))$ and $f(\gamma') = s_j f(\gamma)$, we have $j \in D_2 \setminus D_1$, and it follows that $j$ is one of $i-1$, $i$, and $i+1$.

If $j = i-1$, then $i-1, i \in D_2$.
It follows that
\[
\gamma' = \cdots \  i+1 \ \cdots \ i \   \cdots \  i-1 \  \cdots 
\quad
\text{in one-line notation},
\]
equivalently,
\[
\gamma = \cdots \  i \ \cdots \  i+1 \   \cdots \  i-1  \ \cdots
\quad \text{in one-line notation.}
\]
This implies that $j=i-1 \in D_1$, which is a contradiction to $j \notin \Des_L(f(\gamma))$.
Therefore, $j\neq i-1$.

In a similar manner, one can show that $j \neq i+1$.
Hence, $j = i$, as required.
\end{proof}

\cref{Prop: desc pres isom and 0-Hecke} says that   classifying
weak Bruhat intervals up to descent-preserving colored digraph isomorphism
is equivalent to classifying 
weak Bruhat intervals up to descent-preserving poset isomorphism.
With this equivalence in mind, we introduce an equivalence relation, whose reflexivity, symmetricity, and transitivity are obvious.
\begin{definition}
We define an equivalence relation $\Deq$ on $\rmInt(n)$ by $I_1 \Deq I_2$ if there is a descent-preserving (poset) isomorphism between $(I_1, \preceq_L)$ and $(I_2, \preceq_L)$.
\end{definition}

For each equivalence class  $C$, we define 
\[
\xi_C := \rho \sigma^{-1} \quad \text{for any $[\sigma, \rho]_L \in C$.}
\]
By \cref{Prop: desc pres isom and 0-Hecke}, $\xi_C$ does not depend on the choice of $[\sigma, \rho]_L \in C$. 
We also define
\begin{align*}
\omin(C) := \{\sigma \mid [\sigma, \rho]_L \in C\} \quad \text{and} \quad \omax(C) := \{\rho \mid [\sigma, \rho]_L \in C \}.
\end{align*}
From now on, we always regard $\omin(C)$ and $\omax(C)$ as subposets of $(\SG_n, \preceq_R)$.

The following lemma is the initial step in the proof of the main result of this section (\cref{thm: desc pres equiv}).

\begin{lemma}\label{lem: min C is weak Bruhat interval card 2}
Let $C$ be an equivalence class under $\Deq$ with $\ell(\xi_C) = 1$.
Then, $\omin(C)$ is a right weak Bruhat interval in $(\SG_n, \preceq_R)$.
\end{lemma}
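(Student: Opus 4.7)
Writing $\xi_C = s_i$ (which is legitimate since $\ell(\xi_C) = 1$ and $\xi_C$ is well-defined on $C$ thanks to \cref{Prop: desc pres isom and 0-Hecke}), every interval in $C$ takes the form $[\sigma, s_i\sigma]_L$ with $i \notin \Des_L(\sigma)$. The descent sets $D := \Des_L(\sigma)$ and $D' := \Des_L(s_i\sigma)$ are independent of the chosen representative, so
\[
\omin(C) = \bigl\{\sigma \in \SG_n : \Des_L(\sigma) = D,\ \Des_L(s_i\sigma) = D'\bigr\};
\]
comparing the one-line notations of $\sigma$ and $s_i\sigma$ further shows that $i \in D' \setminus D$ and $D \triangle D' \subseteq \{i-1, i, i+1\}$.

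The plan is to realize $\omin(C)^{-1}$ as the set of linear extensions of a regular poset on $[n]$ and to conclude via \cref{thm: left interval and regular}, together with the order-isomorphism $\gamma \mapsto \gamma^{-1}$ between $(\SG_n, \preceq_L)$ and $(\SG_n, \preceq_R)$. Translating the two descent conditions into strict inequalities among the positions $\sigma^{-1}(1), \ldots, \sigma^{-1}(n)$, the condition $\Des_L(\sigma) = D$ fixes the relative position of each consecutive pair $(k, k+1)$ in $\sigma$, while $\Des_L(s_i\sigma) = D'$ fixes the relative position of the ``skip-one'' pairs $(i-1, i+1)$ and $(i, i+2)$, with the direction in each case dictated by whether $i-1, i+1$ lie in $D \cap D'$, $D \setminus D'$, or outside $D$. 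Let $P$ be the poset on $[n]$ whose covering relations are exactly these required inequalities, interpreted as $a \prec_P b$ whenever ``$a$ is required to appear to the left of $b$ in every $\sigma \in \omin(C)$''; by construction $\omin(C)^{-1} = \Sigma_L(P)$.

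The main step is to show that $P$ is regular. The sub-poset $P_0$ obtained by taking only the $D$-relations is a zigzag on $1, \ldots, n$ whose comparable pairs all lie in a common monotone integer run; $P_0$ is therefore visibly regular. A brief case analysis on where $i-1$ and $i+1$ lie with respect to $D$ and $D'$ shows that both $\{i-1, i, i+1\}$ and $\{i, i+1, i+2\}$ are totally ordered in $P$, and combined with the regularity of $P_0$, this suffices to check the regularity axiom for $P$ at every triple $(x, y, z)$: either $y$ lies in the $i$-region $\{i-1, i, i+1, i+2\}$, where the local total orders apply, or $y$ lies outside and one applies $P_0$-regularity on the sub-chain entering the $i$-region, then extends across the region via the new relations. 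By \cref{thm: left interval and regular}, $\Sigma_L(P) = \omin(C)^{-1} = [\alpha, \beta]_L$ for some $\alpha, \beta \in \SG_n$, and hence $\omin(C) = [\alpha^{-1}, \beta^{-1}]_R$. The main obstacle will be the regularity verification: each of the nine configurations of how $i-1$ and $i+1$ sit inside $D$ and $D'$ must be handled, and for each new comparable pair produced by transitively closing the added skip-one relations with $P_0$, the regularity inequality must be checked.
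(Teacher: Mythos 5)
Your approach is genuinely different from the paper's. The paper proves the lemma directly: it splits into four cases according to the set $X = (D_1 \cup D_2) \setminus (D_1 \cap D_2)$, explicitly constructs the unique minimal element of $\omin(C)$ (the hardest case, $X = \{i_0-1, i_0\}$, requires building the permutation $\bfw_0$ and verifying several claims about the sets $\calI_{\mathrm{L}}, \calI_{\mathrm{C}}, \calI_{\mathrm{R}}$), then gets the maximal element by conjugation/multiplication by $w_0$, and finally checks $[\sigma_0,\sigma_1]_R \subseteq \omin(C)$ using \cref{lem: same desc int,lem: interval translation}. You instead identify $\omin(C)$ as the set of $\sigma$ with $\Des_L(\sigma) = D$ and $\Des_L(s_i\sigma) = D'$, encode the resulting constraints on $\sigma^{-1}$ as a poset $P$ on $[n]$ (consecutive pairs oriented by $D$, plus the two skip-one pairs $(i-1,i+1)$ and $(i,i+2)$ oriented by $D'$), and propose to show $P$ is regular so that $\omin(C)^{-1} = \Sigma_L(P)$ is a left interval by \cref{thm: left interval and regular}. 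This is a clean idea — it correctly translates descent constraints into $P$-partition constraints, and it avoids hand-constructing extremal elements — but the payoff is less than it looks: the regularity of $P$, which you flag as "the main obstacle," is exactly where the combinatorial content of the lemma lives, and your sketch of it (nine configurations, total-order observations on $\{i-1,i,i+1\}$ and $\{i,i+1,i+2\}$, plus an informal "extend across the region" step) is not a proof. Verifying that each new comparability created by transitively closing a skip-one relation with the zigzag chain satisfies the regularity axiom is a case analysis of comparable bulk to the paper's Case 3.

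Two smaller points you should tighten: (i) \cref{thm: left interval and regular} as stated requires $|U| > 1$, so the case $|\omin(C)| = 1$ needs a (trivial) separate remark; and (ii) you should justify that the relations you add actually define a partial order (no directed cycle). This does hold — a cycle would force $\omin(C) = \emptyset$, contradicting that $C$ is nonempty — but it deserves a sentence, since without it "$\Sigma_L(P)$" is not even well-defined. With those fixed and the regularity verification actually carried out, this would be a valid alternative proof, but as written the core step is deferred rather than done.
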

Before proving the lemma, we provide an outline of the proof for the reader's understanding.
We first classify the equivalence classes under consideration according to the set $X$ in \cref{eq: X(C)}.
Then, case by case, we show that $\omin(C)$ has a unique minimal element $\sigma_0$.
In particular, in {\bf Case 3}, we introduce a specific permutation $\bfw_0$ and show that it is the unique minimal element of $\omin(C)$.
Using these results, we next show that $\omin(C)$ has the unique maximal element $\sigma_1$.
Finally, we show that $[\sigma_0, \sigma_1]_R \subseteq \omin(C)$.
\begin{proof}
From the condition $\ell(\xi_C) = 1$ it follows that  $\xi_C = s_{i_0}$ 
for some $i_0 \in [n-1]$.

First, let us prove that  there exists a unique minimal element in $\omin(C)$.
Let
\begin{align}\label{eq: D_1 and D_2}
D_1 := \Des_L(\sigma)
\quad \text{and} \quad
D_2 := \Des_L(s_{i_0}\sigma)
\quad \text{for any $[\sigma, s_{i_0}\sigma]_L \in C$}
\end{align}
and 
\begin{align}\label{eq: X(C)}
X := (D_1 \cup D_2) \setminus (D_1 \cap D_2).
\end{align} 
One sees that $\{i_0\} \subseteq X \subseteq \{i_0 - 1, i_0, i_0 + 1\}$, and therefore $X$ can be one of the following:
\[
\{i_0-1, i_0, i_0+1\}, \ \
\{i_0\}, \ \  
\{i_0-1, i_0\}, \ \ \text{and} \ \  
\{i_0, i_0+1\}.
\]

{\bf Case 1: $X = \{i_0 - 1, i_0, i_0 + 1\}$.} 
Since  $i_0 - 1, i_0+1 \in D_1$ and $i_0 \notin D_1$,
\[
w_0(D_1) = \cdots \ i_0 \  i_0 - 1 \  \cdots \  i_0+2 \  i_0+1 \ \cdots 
\]
in one-line notation.
Considering this equality, one can see that
$[w_0(D_1), s_{i_0} w_0(D_1)]_L \in C$.
By \cref{lem: same desc int}, 
$w_0(D_1) \preceq_R \sigma$ for all $\sigma \in \SG_n$ with $\Des_L(\sigma) = D_1$.
Thus, $w_0(D_1)$ is a unique minimal element in $\omin(C)$.

{\bf Case 2: $X = \{i_0\}$.}
In this case, we have
\[
w_0(D_2) = \cdots \  i_0 + 1 \  i_0 \cdots \  \quad 
\]
in one-line notation.
Considering this equality, one can see that $[s_{i_0} w_0(D_2),  w_0(D_2)]_L \in C$.
Again, by \cref{lem: same desc int}, 
$w_0(D_2) \preceq_R \sigma$ for all $\sigma \in \SG_n$ with $\Des_L(\sigma) = D_2$. 
Thus, $s_{i_0} w_0(D_2)$ is a unique minimal element in $\omin(C)$.

{\bf Case 3: $X = \{i_0-1, i_0\}$.}
When $i_0 + 1 \notin D_1$,
following the way as in {\bf Case 1}, 
one can see that  $w_0(D_1)$ is a unique minimal element in $\omin(C)$.

From now on, assume that $i_0 + 1 \in D_1$.
We begin by introducing necessary notation.
Let
\[
\sfm_1 := \min\{m \in [n - 1] \mid [m, i_0 - 1] \subseteq D_1 \}
\quad \text{and} \quad
\sfm_2 := \max\{m  \in [n - 1] \mid [i_0 + 1, m] \subseteq D_1 \}.
\]
And, set
\[
p_1 := \sfm_1 - 1,
\quad
p_2 := \sfm_2 - i_0,
\quad
p_3 := i_0 - \sfm_1, 
\quad \text{and} \quad
p_4 := n - (\sfm_2 + 1).
\]
Let
\begin{enumerate}[label = $\bullet$]
\item 
$\bfw^{(1)}$ be the longest element of the subgroup $\SG_{D_1 \cap [p_1 - 1]}$ of $\SG_{p_1}$,
\item 
$\bfw^{(2)}$ be the longest element of $\SG_{p_2}$,
\item
$\bfw^{(3)}$ be the longest element of $\SG_{p_3}$, and
\item 
$\bfw^{(4)}$ be the longest element of the subgroup $\SG_{D_1 \cap [p_4 -1]}$ of $\SG_{p_4}$.
\end{enumerate}
With this notation, we define $\bfw_0$ to be the permutation given by
\[
\bfw_0(k) := \begin{cases}
\bfw^{(1)}(k) 
& \text{
if $k \in [p_1]$,
} \\
\bfw^{(2)}(k - p_1) + (i_0 + 1)
& \text{
if $k \in [p_1 +1, p_1 + p_2]$,
} \\
i_0
& \text{
if $k = p_1 + p_2 + 1$,
} \\
\bfw^{(3)}(k - (p_1 + p_2 + 1)) + (\sfm_1 - 1)
& \text{
if $k \in [p_1 + p_2 + 2, p_1 + p_2 + p_3 + 1]$,
} \\
i_0 + 1 
& \text{
if $k = p_1 + p_2 + p_3 + 2$,
} \\
\bfw^{(4)}(k - (p_1 + p_2 + p_3 + 2)) + (\sfm_2 + 1)
& \text{
if $k \in [p_1 + p_2 + p_3 + 3, n]$.
}
\end{cases}
\]
It should be remarked that 
\begin{align*}
\bfw_0([1,p_1]) &= [1,\sfm_1-1], 
\\
\bfw_0([p_1+1,p_1+p_2]) & = [i_0+2,\sfm_2+1], 
\\ 
\bfw_0(p_1+p_2+1) &= i_0,
\\ \bfw_0([p_1+p_2+2,p_1+p_2+p_3+1]) &= [\sfm_1, i_0-1], 
\\
\bfw_0(p_1+p_2+p_3+2) &= i_0+1,
\quad  \text{and} \\
\bfw_0([p_1+p_2+p_3+3,n]) &= [\sfm_2+2, n].
\end{align*}
From the definition of $\bfw_0$ it follows that $[\bfw_0, s_{i_0} \bfw_0]_L \in C$, equivalently, $\bfw_0 \in \omin(C)$.
We claim that $\bfw_0$ is a unique minimal element in $\omin(C)$.
This can be verified by showing that every minimal element in $\omin(C)$ is equal to $\bfw_0$.
Let $\sigma_0$ be a minimal element in $\omin(C)$. 
Set 
\begin{align*}
\begin{array}{l}
\calI_{\mathrm{L}} := \{\sigma_0(k) \mid 1 \le k < \sigma_0^{-1}(i_0)\}, \\[.5ex]
\calI_{\mathrm{C}} := \{\sigma_0(k) \mid \sigma_0^{-1}(i_0) < k < \sigma_0^{-1}(i_0+1) \}, 
\\[.5ex]
\calI_{\mathrm{R}} := \{\sigma_0(k) \mid \sigma_0^{-1}(i_0 + 1) < k \le n\}.
\end{array}
\end{align*}

To begin with, we establish the equalities
\begin{align}\label{eq: calI_L calI_C calI_R}
\calI_\mathrm{L} = [\sfm_1 -1] \cup [i_0+2, \sfm_2+1], 
\quad
\calI_\mathrm{C} = [\sfm_1, i_0 - 1],
\quad \text{and} \quad
\calI_\mathrm{R} = [\sfm_2 + 2,n].
\end{align}
These equalities are derived by verifying the following claims.
\smallskip

{\it \underline{Claim 1.} $\calI_\mathrm{C} = [\sfm_1, i_0 - 1]$.}
Let us show $\calI_\mathrm{C} \subseteq [\sfm_1, i_0 - 1]$.
First, to prove $\calI_\mathrm{C} \subseteq [i_0 - 1]$, we
assume that there exists $i \in \calI_\mathrm{C}$ such that $i \geq i_0$.
Let
\begin{align*}
k_1 & := \mathrm{max}\{k \in [n] \mid \sigma_0(k) \in \calI_\mathrm{C} \ \text{and} \  \sigma_0(k) \ge i_0 \}.
\end{align*}
By the definition of $\calI_\mathrm{C}$, $i_0,i_0+1 \notin \calI_\mathrm{C}$.
If $i_0 + 2 \in \calI_\mathrm{C}$, then $i_0 + 1 \in D_1 \setminus D_2$, which contradicts the assumption that $i_0 +1 \notin X$.
Since $\sigma_0(k_1) \in \calI_\mathrm{C}$,
it follows that $\sigma_0(k_1) \geq i_0 + 3$.
And, by the choice of $k_1$, we have $\sigma_0(k_1 + 1) \leq i_0+1$.
Putting these together yields that
$[\sigma_0 s_{k_1}, s_{i_0} \sigma_0s_{k_1}]_L \Deq [\sigma_0, s_{i_0}\sigma_0]_L$ and $\sigma_0 s_{k_1} \prec_R \sigma_0$.
This contradicts the minimality of $\sigma_0$ in $\omin(C)$, therefore $\calI_\mathrm{C} \subseteq [i_0 - 1]$.
Next, to prove $\calI_\mathrm{C} \subseteq [\sfm_1, n]$, we assume that there exists $i \in \calI_\mathrm{C}$ such that $i < \sfm_1$.
Let
\[
k_2 := \mathrm{min}\{ 
k \in [n] \mid \sigma_0(k) \in \calI_\mathrm{C} \ \text{and} \ 
\sigma_0(k) < \sfm_1\}.
\]
By the choice of $k_2$, we have $\sigma_0(k_2) + 1 \le \sfm_1 \le \sigma_0(k_2 - 1)$.
In addition, if $\sigma_0(k_2) + 1 = \sfm_1 = \sigma_0(k_2 - 1)$, then $\sfm_1 - 1 \in \Des_L(\sigma_0)$, which cannot happen by the definition of $\sfm_1$.
Therefore, $\sigma_0(k_2) + 1 < \sigma_0(k_2 - 1)$, which implies that $[\sigma_0 s_{k_2 - 1}, s_{i_0} \sigma_0 s_{k_2 - 1}]_L \Deq [\sigma_0, s_{i_0}\sigma_0]_L$ and
$\sigma_0 s_{k_2 - 1} \prec_R \sigma_0$.
This contradicts the minimality of $\sigma_0$ in $\omin(C)$.
Thus, $\calI_\mathrm{C} \subseteq [\sfm_1, i_0 - 1]$.

Let us show $\calI_\mathrm{C} \supseteq [\sfm_1, i_0 - 1]$.
Assume for the sake of contradiction that there exists $i \in [\sfm_1, i_0 - 1]$ such that $i \notin \calI_\mathrm{C}$.
Let $j$ be the maximal element in $[\sfm_1, i_0 - 1]$ such that $j \notin \calI_\mathrm{C}$.
Since $i_0 - 1 \in X$, we have $i_0 - 1 \in \calI_\mathrm{C}$.
It follows that $j < i_0 - 1$, so $j+1 \in [\sfm_1,i_0-1]$.
Combining this with the maximality of $j$, we have $j+1 \in \calI_\mathrm{C}$. 
And, by the definition of $\sfm_1$, we have $j \in D_1$.
Putting these together yields that $j \in \calI_\mathrm{R}$.
Let
\[
k_3 := \mathrm{min}\{
k \in [n] 
\mid  
\sigma_0(k) \in \calI_\mathrm{R} \  \text{and} \ 
\sigma_0(k)  \leq j \}.
\]
If $\sigma_0(k_3 -1) \le \sigma_0(k_3) + 1$, then $\sigma_0(k_3 -1) \le j + 1 < i_0$.
So, $\sigma_0(k_3 - 1) \neq i_0 + 1$ which implies $\sigma_0(k_3 -1) \in \calI_\mathrm{R}$.
This, together with the minimality of $k_3$, yields that $j+1 \le \sigma_0(k_3 -1)$.
It follows that $\sigma_0(k_3 - 1)  = j + 1$, 
which is a contradiction because $\sigma_0(k_3 -1) \in \calI_\mathrm{R}$, but $j+1 \in \calI_\mathrm{C}$.
Therefore, we have
$\sigma_0(k_3) + 1 < \sigma_0(k_3-1)$.
In addition, since $j < i_0 - 1$, we have $\sigma_0(k_3) < i_0 - 1$.
Putting these together yields that $[\sigma_0 s_{k_3 - 1}, s_{i_0} \sigma_0 s_{k_3 - 1}]_L \Deq [\sigma_0, s_{i_0}\sigma_0]_L$ and $\sigma_0 s_{k_3 - 1} \prec_R \sigma_0$, which contradicts the minimality of $\sigma_0$ in $\omin(C)$.
Thus, $\calI_\mathrm{C} \supseteq [\sfm_1, i_0 - 1]$.
\smallskip

{\it \underline{Claim 2.} $[\sfm_1 -1] \cup [i_0+2, \sfm_2+1] \subseteq \calI_\mathrm{L}$.}
By the definition of $\sfm_2$,
we have $[i_0 + 1, \sfm_2] \subseteq \Des_L(\sigma_0)$.
Since $i_0+2 \notin \calI_\mathrm{C}$, we have $[i_0+2, \sfm_2+1] \subseteq \calI_\mathrm{L}$.
To prove $[\sfm_1 - 1] \subseteq \calI_\mathrm{L}$, suppose that there exists $i \in [\sfm_1 - 1]$ such that $i \notin \calI_\mathrm{L}$.
Let
\[
k_4 := \mathrm{min}\{k \in [n] \mid \sigma_0(k) \in [\sfm_1 - 1] 
 \ \text{and} \ 
\sigma_0(k) \notin \calI_\mathrm{L}\}.
\]
Since $\sigma_0(k_4) \notin \calI_\mathrm{L}$ and $\sigma_0(k_4) < \sfm_1$, we have $\sigma_0(k_4) \in \calI_\mathrm{R}$. 
This implies that $\sigma_0(k_4-1) \notin \calI_\mathrm{L}  \cup  \{i_0\}  \cup  \calI_\mathrm{C}$.
In addition, the minimality of $k_4$ gives $\sigma_0(k_4 - 1) \geq \sfm_1$.
Since $[\sfm_1, i_0-1] = \calI_\mathrm{C}$, we have $\sigma_0(k_4 - 1) \geq i_0+1$.
Putting the above inequalities together,
we have
\[
\sigma_0(k_4) < \sfm_1 \le i_0-1 < i_0 + 1 \le \sigma_0(k_4-1),
\]
and so $\sigma_0(k_4) + 2 < \sigma_0(k_4 - 1)$.
It follows that $[\sigma_0 s_{k_4 - 1}, s_{i_0} \sigma_0 s_{k_4 - 1}]_L \Deq [\sigma_0, s_{i_0}\sigma_0]_L$ and $\sigma_0 s_{k_4 - 1} \prec_R \sigma_0$.
This contradicts the minimality of $\sigma_0$ in $\omin(C)$, thus $[\sfm_1 - 1] \subseteq \calI_\mathrm{L}$.
\smallskip

{\it \underline{Claim 3.} $[\sfm_2 + 2,n] \subseteq \calI_\mathrm{R}$.}
Suppose that there exists $i \in [\sfm_2 + 2,n]$ such that $i \notin \calI_\mathrm{R}$.
Let
\[
k_5 := \mathrm{max}\{k \in [n] \mid \sigma_0(k) \notin \calI_\mathrm{R} \ \  \text{and} \ \  \sigma_0(k)  \in [\sfm_2 + 2,n]\}.
\]
Since $k_5 \notin \calI_\mathrm{R}$ and $\sigma_0(k_5) >
i_0 + 1$, we have $\sigma_0(k_5) \in \calI_\mathrm{L}$, which implies that $\sigma_0(k_5 + 1) \notin \calI_\mathrm{R}$.
By the maximality of $k_5$, we have $\sigma_0(k_5+1) \le \sfm_2 + 1$.
If $\sigma_0(k_5+1) < \sfm_2 + 1$, then 
\[
\sigma_0(k_5+1) +1 < \sfm_2 + 2 \le \sigma_0(k_5).
\]
If $\sigma_0(k_5+1) = \sfm_2 + 1$, then $\sigma_0^{-1}(\sfm_2 + 2) > k_5+1$ due to the maximality of $\sfm_2$,  so $\sigma_0(k_5) \neq \sfm_2 + 2$ which implies
\[
\sigma_0(k_5+1) + 1 < \sigma_0(k_5).
\]
Putting these together with the inequalities $\sigma_0(k_5) \ge \sfm_2 + 2 >i_0 + 2$ yields that $[\sigma_0 s_{k_5}, s_{i_0} \sigma_0 s_{k_5}]_L \Deq [\sigma_0, s_{i_0}\sigma_0]_L$ and $\sigma_0 s_{k_5} \prec_R \sigma_0$.
This contradicts the minimality of $\sigma_0$ in $\omin(C)$, thus $[\sfm_2 + 2, n] \subseteq \calI_\mathrm{R}$.
\smallskip

Now, we are ready to show that $\sigma_0 = \bfw_0$.
Let 
\[
\calI^{(1)}_{\mathrm{L}} := \{ \sigma_0(k) \in \calI_{\mathrm{L}} \mid 1 \le k \le \sfm_1-1 \}
\quad \text{and} \quad
\calI^{(2)}_{\mathrm{L}} := \{ \sigma_0(k) \in \calI_{\mathrm{L}} \mid \sfm_1 \le k <\sigma_0^{-1}(i_0)  \}.
\]
We claim that $\calI^{(1)}_{\mathrm{L}} = [\sfm_1-1]$ and $\calI^{(2)}_{\mathrm{L}} = [i_0+2, \sfm_2+1]$.
We may assume that $\sfm_1 > 1$, otherwise, the claim is obvious.
To prove our claim, suppose that there exists $i \in \calI^{(1)}_{\mathrm{L}}$ such that $i \in [i_0+2,\sfm_2+1]$. 
Then, there exists $1 \le k < \sigma_0^{-1}(i_0) - 1$ such that $\sigma_0(k) \in [i_0+2, \sfm_2 + 1]$ and $\sigma_0(k + 1) \in [\sfm_1 - 1]$.
It follows that $[\sigma_0 s_{k}, s_{i_0} \sigma_0 s_{k}]_L \Deq [\sigma_0, s_{i_0}\sigma_0]_L$ and
$\sigma_0 s_{k} \prec_R \sigma_0$.
Again, this contradicts the minimality of $\sigma_0$ in $\omin(C)$, so 
\begin{align}\label{eq: calIL{(1)} calIL{(2)}}
\calI^{(1)}_{\mathrm{L}} = [\sfm_1-1]
\quad \text{and} \quad \calI^{(2)}_{\mathrm{L}} = [i_0+2, \sfm+1].
\end{align}
Putting
\cref{lem: same desc int},
\cref{eq: calI_L calI_C calI_R},
\cref{eq: calIL{(1)} calIL{(2)}}, and the minimality of $\sigma_0$ together, we conclude that $\sigma_0 = \bfw_0$.

{\bf Case 4: $X = \{i_0, i_0 + 1\}$.}
Take $[\sigma, s_{i_0} \sigma]_L \in C$ and let $C'$ be the equivalence class of $[\sigma^{w_0}, (s_{i_0} \sigma)^{w_0}]_L$.
By mimicking \cref{eq: D_1 and D_2} and \cref{eq: X(C)},  
we define 
\[
D'_1 := \Des_L(\sigma^{w_0}), 
\quad 
D'_2 := \Des_L((s_{i_0}\sigma)^{w_0}), 
\quad \text{and} \quad
X' := (D'_1 \cup D'_2) \setminus (D'_1 \cap D'_2).
\]
Since 
$D'_1 = \{
n-i \mid i \in D_1\}$ and $D'_2 = \{
n-i \mid i \in D_2\}$,
we have
\[
X' =  \{n-i_0, (n-i_0)+1\}.
\]
Following the proof of {\bf Case 3}, we see that  $\omin(C')$ has a unique minimal element $\bfw'_0$.
And, one can easily see that the map $f: \omin(C) \ra \omin(C')$, $\gamma \mapsto \gamma^{w_0}$ is a well-defined bijection and that for $\gamma_1, \gamma_2 \in \omin(C)$, $\gamma_1 \preceq_R \gamma_2$ if and only if $f(\gamma_1) \preceq_R f(\gamma_2)$.
Thus, $(\bfw'_0)^{w_0}$ is a unique minimal element in $\omin(C')$.
\medskip

Second, we will show that $\omin(C)$ has a unique maximal element.
Recall that we take $[\sigma, s_{i_0} \sigma]_L \in C$.
Let $C''$ be the equivalence class of $[s_{i_0} \sigma w_0, \sigma w_0]_L$.
Due to the previous arguments, we know that there is a unique minimal element $\gamma_0$ in $\omin(C'')$.
One can easily see that the map $g: \omin(C) \ra \omin(C'')$, $\gamma \mapsto \gamma w_0$ is a well-defined bijection and that for $\gamma_1, \gamma_2 \in \omin(C)$, $\gamma_1 \preceq_R \gamma_2$ if and only if $g(\gamma_1) \succeq_R g(\gamma_2)$.
Therefore, $\gamma_0 w_0$ is the unique maximal element in $\omin(C)$.

\medskip
Finally, we will show that  $\omin(C)$ is a right weak Bruhat interval in $(\SG_n, \preceq_R)$.
Let $\sigma_0$ and $\sigma_1$ be the minimal and maximal elements in $\omin(C)$, respectively.
Let $\gamma \in [\sigma_0, \sigma_1]_R$.
Since $\Des_L(\sigma_0) = \Des_L(\sigma_1)$,  we have
$\Des_L(\gamma) = \Des_L(\sigma_0)$ by \cref{lem: same desc int}.
Next, let us examine $\Des_L(s_{i_0}\gamma)$.
Since $\Des_L(\sigma_0) = \Des_L(\gamma)$,
it follows that $\gamma \preceq_L s_{i_0} \gamma$.
By \cref{lem: interval translation}, we have that $s_{i_0} \gamma \in [s_{i_0}\sigma_0, s_{i_0}\sigma_1]_{R}$.
Since $\Des_L(s_{i_0} \sigma_0) =\Des_L(s_{i_0} \sigma_1)$, we have $\Des_L(s_{i_0}\gamma) = \Des_L(s_{i_0}\sigma_0)$ by \cref{lem: same desc int}.
Thus, $\gamma \in \omin(C)$.
\end{proof}

\begin{example}\label{eg: C and min(C) and max(C)}
Let $C \subseteq \rmInt(4)$ be the equivalence class of $[2134, 2143]_L$.
One sees that 
\[
C = \{[2134, 2143]_L, [2314, 2413]_L, [2341, 2431]_L\}.
\]
So, $\omin(C) = \{2134,2314,2341\}$ and $\omax(C) = \{2143,2413,2431\}$ which are equal to $[2134, 2341]_R$ and $[2143,2431]_R$, respectively.
For the readers' convenience, we draw the left weak Bruhat intervals in $C$ within the left weak Bruhat graph of $\SG_4$ on the left hand side of \cref{fig: C and min(C) and max(C)}.
We also draw the right weak Bruhat intervals $\omin(C)$ and $\omax(C)$ within the right weak Bruhat graph of $\SG_4$ on the right hand side of \cref{fig: C and min(C) and max(C)}.
\end{example}

\begin{figure}[t]
\centering
\[
\def \hp{0.25}
\def \wp{0.2}
\def \wtab{2.5}
\def \htab{1.5}
\scalebox{0.7}{
\begin{tikzpicture}[baseline = 0mm, scale = 0.8]
\node at (0,0) {\color{lightgray}$4321$};
\node at (-\wtab,1*\htab) {\color{lightgray}$4231$};
\node at (0,1*\htab) {\color{lightgray}$4312$};
\node at (\wtab,1*\htab) {\color{lightgray}$3421$};
\node at (-\wtab*2,2*\htab) {\color{lightgray}$4132$};
\node at (-\wtab,2*\htab) {\color{lightgray}$4213$};
\node at (0,2*\htab) {\color{lightgray}$3241$};
\node at (\wtab,2*\htab) {\color{lightgray}$3412$};
\node at (\wtab*2,2*\htab) {\color{black}$2431$};

\node at (-\wtab/2*5,3*\htab) {\color{lightgray}$4123$};
\node at (-\wtab/2*3,3*\htab) {\color{lightgray}$3214$};
\node at (-\wtab/2,3*\htab) {\color{lightgray}$3142$};
\node at (\wtab/2,3*\htab) {\color{black}$2413$};
\node at (\wtab/2*3,3*\htab) {\color{lightgray}$1432$};
\node at (\wtab/2*5,3*\htab) {\color{black}$2341$};
\node at (-\wtab*2,4*\htab) {\color{lightgray}$3124$};
\node at (-\wtab,4*\htab) {\color{black}$2314$};
\node at (0,4*\htab) {\color{black}$2143$};

\node at (\wtab,4*\htab) {\color{lightgray}$1423$};
\node at (\wtab*2,4*\htab) {\color{lightgray}$1342$};
\node at (-\wtab,5*\htab) {\color{black}$2134$};
\node at (0,5*\htab) {\color{lightgray}$1324$};
\node at (\wtab,5*\htab) {\color{lightgray}$1243$};
\node at (0,6*\htab) {\color{lightgray}$1234$};

\draw [lightgray] (-\wp,6*\htab-\hp) -- (-\wtab+\wp,5*\htab+\hp);
\draw [lightgray] (0,6*\htab-\hp) -- (0,5*\htab+\hp);
\draw [lightgray] (\wp, 6*\htab -\hp) -- (\wtab-\wp,5*\htab+\hp);

\draw [lightgray] (-\wtab-\wp,5*\htab - \hp) -- (-\wtab*2+\wp,4*\htab+\hp);
\draw [black] (\wp-\wtab,5*\htab - \hp) -- (0-\wp,4*\htab+\hp);
\node at (-0.44*\wtab, 4.45*\htab+\hp) {\color{black}\footnotesize $s_3\cdot$};
\draw [lightgray] (-\wp,5*\htab - \hp) -- (-\wtab+\wp,4*\htab+\hp);
\draw [lightgray] (\wp,5*\htab - \hp) -- (\wtab-\wp,4*\htab+\hp);
\draw [lightgray] (-\wp+\wtab,5*\htab - \hp) -- (0+\wp,4*\htab+\hp);
\draw [lightgray] (\wp+\wtab,5*\htab - \hp) -- (2*\wtab-\wp,4*\htab+\hp);

\draw [lightgray] (-\wtab*2-\wp,4*\htab-\hp) -- (-\wtab/2*5+\wp,3*\htab+\hp);
\draw [lightgray] (-\wtab*2+\wp,4*\htab-\hp) -- (-\wtab/2*3-\wp,3*\htab+\hp);
\draw [lightgray] (-\wtab-\wp,4*\htab-\hp) -- (-\wtab/2*3+\wp,3*\htab+\hp);
\draw [black] (-\wtab+\wp,4*\htab-\hp) -- (\wtab/2-\wp,3*\htab+\hp);
\node at (-\wtab * 0.5+\wp,3.6*\htab-\hp) {\footnotesize $s_3\cdot$};
\draw [lightgray] (-\wp,4*\htab-\hp) -- (-\wtab/2+\wp,3*\htab+\hp);
\draw [lightgray] (\wtab-\wp,4*\htab-\hp) -- (\wtab/2+\wp,3*\htab+\hp);
\draw [lightgray] (\wtab+\wp,4*\htab-\hp) -- (\wtab/2*3-\wp,3*\htab+\hp);
\draw [lightgray] (2*\wtab-\wp,4*\htab-\hp) -- (\wtab/2*3+\wp,3*\htab+\hp);
\draw [lightgray] (2*\wtab+\wp,4*\htab-\hp) -- (\wtab/2*5-\wp,3*\htab+\hp);

\draw [lightgray] (-\wtab/2*5+\wp,3*\htab-\hp) -- (-\wtab*2-\wp,2*\htab+\hp);
\draw [lightgray] (-\wtab*2.3+\wp,3*\htab-\hp) -- (-1.2*\wtab-\wp,2*\htab+\hp);
\draw [lightgray] (-\wtab/2-\wp,3*\htab-\hp) -- (-\wtab*2+\wp,2*\htab+\hp);
\draw [lightgray] (-\wtab/2+\wp,3*\htab-\hp) -- (-\wp,2*\htab+\hp);
\draw [lightgray] (\wtab/2+\wp,3*\htab-\hp) -- (\wtab-\wp,2*\htab+\hp);
\draw [lightgray] (\wtab/2*3+\wp,3*\htab-\hp) -- (\wtab*2-\wp,2*\htab+\hp);
\draw [lightgray] (\wtab/2*5-\wp,3*\htab-\hp) -- (\wp,2*\htab+\hp);
\draw [black] (\wtab/2*5,3*\htab-\hp) -- (\wtab*2+\wp,2*\htab+\hp);
\node at (\wtab*2.35+\wp,2.7*\htab-1.5*\hp) {\footnotesize $s_3\cdot$};
\draw [lightgray] (-\wtab/2*3+\wp,3*\htab-\hp) -- (-\wtab-\wp,2*\htab+\hp);

\draw [lightgray] (-\wtab*2+\wp,2*\htab-\hp) -- (-\wtab-\wp, \htab+\hp);
\draw [lightgray] (-\wp,2*\htab-\hp) -- (-\wtab+\wp, \htab+\hp);
\draw [lightgray] (\wtab-\wp,2*\htab-\hp) -- (\wp, \htab+\hp);
\draw [lightgray] (\wtab,2*\htab-\hp) -- (\wtab, \htab+\hp);
\draw [lightgray] (2*\wtab-\wp,2*\htab-\hp) -- (\wtab+\wp, \htab+\hp);
\draw [lightgray] (-\wtab+\wp,2*\htab-\hp) -- (-\wp, \htab+\hp);

\draw [lightgray] (-\wtab+\wp,\htab-\hp) -- (-\wp, \hp);
\draw [lightgray] (0,\htab-\hp) -- (0, \hp);
\draw [lightgray] (\wtab-\wp,\htab-\hp) -- (\wp, \hp);
\end{tikzpicture}
}
\quad 
\scalebox{0.7}{
\begin{tikzpicture}[baseline = 0mm, scale = 0.8]
\node at (0,0) {\color{lightgray}$4321$};
\node at (-\wtab,1*\htab) {\color{lightgray}$4231$};
\node at (0,1*\htab) {\color{lightgray}$3421$};
\node at (\wtab,1*\htab) {\color{lightgray}$4312$};
\node at (-\wtab*2,2*\htab) {\color{black}$2431$};
\node at (-\wtab,2*\htab) {\color{lightgray}$3241$};
\node at (0,2*\htab) {\color{lightgray}$4213$};
\node at (\wtab,2*\htab) {\color{lightgray}$3412$};
\node at (\wtab*2,2*\htab) {\color{lightgray}$4132$};

\node at (-\wtab/2*5,3*\htab) {\color{black}$2341$};
\node at (-\wtab/2*3,3*\htab) {\color{lightgray}$3214$};
\node at (-\wtab/2,3*\htab) {\color{black}$2413$};
\node at (\wtab/2,3*\htab) {\color{lightgray}$3142$};
\node at (\wtab/2*3,3*\htab) {\color{lightgray}$1432$};
\node at (\wtab/2*5,3*\htab) {\color{lightgray}$4123$};
\node at (-\wtab*2,4*\htab) {\color{black}$2314$};
\node at (-\wtab,4*\htab) {\color{lightgray}$3124$};
\node at (0,4*\htab) {\color{black}$2143$};
\node at (\wtab,4*\htab) {\color{lightgray}$1342$};
\node at (\wtab*2,4*\htab) {\color{lightgray}$1423$};
\node at (-\wtab,5*\htab) {\color{black}$2134$};
\node at (0,5*\htab) {\color{lightgray}$1324$};
\node at (\wtab,5*\htab) {\color{lightgray}$1243$};
\node at (0,6*\htab) {\color{lightgray}$1234$};

\draw [lightgray] (-\wp,6*\htab-\hp) -- (-\wtab+\wp,5*\htab+\hp);
\draw [lightgray] (0,6*\htab-\hp) -- (0,5*\htab+\hp);
\draw [lightgray] (\wp, 6*\htab -\hp) -- (\wtab-\wp,5*\htab+\hp);

\draw [black] (-\wtab-\wp,5*\htab - \hp) -- (-\wtab*2+\wp,4*\htab+\hp);
\node at (-1.65*\wtab, 4.5*\htab+\hp) {\color{black}\footnotesize $\cdot s_2$};
\draw [lightgray] (\wp-\wtab,5*\htab - \hp) -- (0-\wp,4*\htab+\hp);
\draw [lightgray] (-\wp,5*\htab - \hp) -- (-\wtab+\wp,4*\htab+\hp);
\draw [lightgray] (\wp,5*\htab - \hp) -- (\wtab-\wp,4*\htab+\hp);
\draw [lightgray] (-\wp+\wtab,5*\htab - \hp) -- (0+\wp,4*\htab+\hp);
\draw [lightgray] (\wp+\wtab,5*\htab - \hp) -- (2*\wtab-\wp,4*\htab+\hp);

\draw [black] (-\wtab*2-\wp,4*\htab-\hp) -- (-\wtab/2*5+\wp,3*\htab+\hp);
\draw [lightgray] (-\wtab*2+\wp,4*\htab-\hp) -- (-\wtab/2*3-\wp,3*\htab+\hp);
\node at (-2.4*\wtab, 3.75*\htab-\hp) {\color{black}\footnotesize $\cdot s_3$};
\draw [lightgray] (-\wtab-\wp,4*\htab-\hp) -- (-\wtab/2*3+\wp,3*\htab+\hp);
\draw [lightgray] (-\wtab+\wp,4*\htab-\hp) -- (\wtab/2-\wp,3*\htab+\hp);
\draw [black] (-\wp,4*\htab-\hp) -- (-\wtab/2+\wp,3*\htab+\hp);
\node at (-0.4*\wtab, 3.75*\htab-\hp) {\color{black}\footnotesize $\cdot s_2$};
\draw [lightgray] (\wtab-\wp,4*\htab-\hp) -- (\wtab/2+\wp,3*\htab+\hp);
\draw [lightgray] (\wtab+\wp,4*\htab-\hp) -- (\wtab/2*3-\wp,3*\htab+\hp);
\draw [lightgray] (2*\wtab-\wp,4*\htab-\hp) -- (\wtab/2*3+\wp,3*\htab+\hp);
\draw [lightgray] (2*\wtab+\wp,4*\htab-\hp) -- (\wtab/2*5-\wp,3*\htab+\hp);

\draw [lightgray] (-\wtab/2*5+\wp,3*\htab-\hp) -- (-\wtab*2-\wp,2*\htab+\hp);
\draw [lightgray] (-\wtab*2.3+\wp,3*\htab-\hp) -- (-1.2*\wtab-\wp,2*\htab+\hp);
\draw [black] (-\wtab/2-\wp,3*\htab-\hp) -- (-\wtab*2+\wp,2*\htab+\hp);
\node at (-1.4*\wtab, 2.8*\htab-\hp) {\color{black}\footnotesize $\cdot s_3$};
\draw [lightgray] (-\wtab/2+\wp,3*\htab-\hp) -- (-\wp,2*\htab+\hp);
\draw [lightgray] (\wtab/2+\wp,3*\htab-\hp) -- (\wtab-\wp,2*\htab+\hp);
\draw [lightgray] (\wtab/2*3+\wp,3*\htab-\hp) -- (\wtab*2-\wp,2*\htab+\hp);
\draw [lightgray] (\wtab/2*5-\wp,3*\htab-\hp) -- (\wp,2*\htab+\hp);
\draw [lightgray] (\wtab/2*5,3*\htab-\hp) -- (\wtab*2+\wp,2*\htab+\hp);
\draw [lightgray] (-\wtab/2*3+\wp,3*\htab-\hp) -- (-\wtab-\wp,2*\htab+\hp);

\draw [lightgray] (-\wtab*2+\wp,2*\htab-\hp) -- (-\wtab-\wp, \htab+\hp);
\draw [lightgray] (-\wp,2*\htab-\hp) -- (-\wtab+\wp, \htab+\hp);
\draw [lightgray] (\wtab-\wp,2*\htab-\hp) -- (\wp, \htab+\hp);
\draw [lightgray] (\wtab,2*\htab-\hp) -- (\wtab, \htab+\hp);
\draw [lightgray] (2*\wtab-\wp,2*\htab-\hp) -- (\wtab+\wp, \htab+\hp);
\draw [lightgray] (-\wtab+\wp,2*\htab-\hp) -- (-\wp, \htab+\hp);

\draw [lightgray] (-\wtab+\wp,\htab-\hp) -- (-\wp, \hp);
\draw [lightgray] (0,\htab-\hp) -- (0, \hp);
\draw [lightgray] (\wtab-\wp,\htab-\hp) -- (\wp, \hp);

\draw [dotted, red, line width=0.4mm] 
(-1.4*\wtab,5.2*\htab)
-- (-0.7*\wtab,5.2*\htab)
-- (-0.7*\wtab,4.7*\htab)
-- (-1.6*\wtab,4*\htab)
-- (-2.2*\wtab,2.8*\htab)
-- (-2.8*\wtab,2.8*\htab)
-- (-2.8*\wtab,3.3*\htab)
-- (-2.4*\wtab,4.2*\htab)
-- (-1.4*\wtab,5.2*\htab);
\node at (-2.5*\wtab, 4.7*\htab) {$\omin(C)$};

\draw [dotted, blue, line width=0.4mm]
(-0.35*\wtab,4.2*\htab)
-- (0.3*\wtab,4.2*\htab)
-- (0.3*\wtab,3.8*\htab)
-- (-0.2*\wtab,2.7*\htab)
-- (-1.85*\wtab,1.75*\htab)
-- (-2.3*\wtab,1.75*\htab)
-- (-2.3*\wtab,2.35*\htab)
-- (-0.8*\wtab,3.2*\htab)
-- (-0.35*\wtab,4.2*\htab);
\node at (0*\wtab, 2.4*\htab) {$\omax(C)$};
\end{tikzpicture}
}
\]
\caption{The left weak Bruhat intervals in $C$ on $(\SG_4, \preceq_L)$ and the right weak Bruhat intervals $\omin(C)$ and $\omax(C)$ on $(\SG_4, \preceq_R)$ in \cref{eg: C and min(C) and max(C)}}
\label{fig: C and min(C) and max(C)}
\end{figure}

\begin{lemma}\label{lem: intersection of int is int}
The intersection of two right weak Bruhat intervals in $\SG_n$ is again a right weak Bruhat interval.
\end{lemma}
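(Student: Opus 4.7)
The plan is to invoke the classical fact that $(\SG_n, \preceq_R)$ is a lattice, so any two permutations $\sigma_1, \sigma_2$ admit a join $\sigma_1 \vee \sigma_2$ and any two $\rho_1, \rho_2$ admit a meet $\rho_1 \wedge \rho_2$ with respect to $\preceq_R$.  Given $I_1 = [\sigma_1, \rho_1]_R$ and $I_2 = [\sigma_2, \rho_2]_R$, I would then identify their intersection as
\[
I_1 \cap I_2 \;=\; [\,\sigma_1 \vee \sigma_2,\; \rho_1 \wedge \rho_2\,]_R,
\]
with the convention that the right-hand side is empty when $\sigma_1 \vee \sigma_2 \not\preceq_R \rho_1 \wedge \rho_2$.

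The inclusion $\subseteq$ would follow immediately from the universal properties of join and meet: any $\gamma \in I_1 \cap I_2$ is simultaneously a common upper bound of $\{\sigma_1,\sigma_2\}$ and a common lower bound of $\{\rho_1,\rho_2\}$, hence $\sigma_1 \vee \sigma_2 \preceq_R \gamma \preceq_R \rho_1 \wedge \rho_2$.  The reverse inclusion $\supseteq$ is equally formal: for each $i\in\{1,2\}$ one has $\sigma_i \preceq_R \sigma_1 \vee \sigma_2$ and $\rho_1 \wedge \rho_2 \preceq_R \rho_i$, so transitivity of $\preceq_R$ places any $\gamma \in [\sigma_1 \vee \sigma_2,\, \rho_1 \wedge \rho_2]_R$ into both $I_1$ and $I_2$.

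The one nontrivial ingredient is the lattice property of $(\SG_n,\preceq_R)$, which is standard (see, e.g., Björner--Brenti, \emph{Combinatorics of Coxeter groups}, Chapter~3).  If a self-contained argument is preferred, it can be extracted from the characterization recalled earlier in the excerpt, namely $\sigma \preceq_R \rho \iff \rmInv_R(\sigma) \subseteq \rmInv_R(\rho)$: inversion sets of permutations in $\SG_n$ are exactly the biclosed subsets of the set of positive transpositions, biclosed subsets are stable under intersection (giving the meet), and the join is obtained dually via left multiplication by $w_0$.

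Main obstacle: there is essentially none once the lattice structure is in hand; the remainder is a short manipulation of the defining inequalities, and I expect the entire argument to occupy only a few lines.
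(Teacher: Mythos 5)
Your proposal is correct and takes essentially the same approach as the paper: both invoke the classical fact that $(\SG_n, \preceq_R)$ is a lattice (citing Bj\"{o}rner--Brenti) and conclude that the intersection of two intervals is the interval from the join of the lower endpoints to the meet of the upper endpoints. You simply spell out the universal-property argument that the paper leaves implicit.
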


\begin{proof}
It is well known that $(\SG_n, \preceq_R)$ is a lattice,  that is, every two-element subset $\{\gamma_1,\gamma_2\} \subseteq \SG_n$ has the least upper bound and greatest lower bound (for example see \cite[Section 3.2]{05BB}).
Combining this with the fact $|\SG_n| < \infty$, we derive the desired result.
\end{proof}

The following theorem provides significant information regarding equivalence classes under $\Deq$.

\begin{theorem}\label{thm: desc pres equiv}
Let $C$ be an equivalence class 
under $\Deq$.
Then $\omin(C)$ and $\omax(C)$ are right weak Bruhat intervals in $(\SG_n, \preceq_R)$.
\end{theorem}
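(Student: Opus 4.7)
The approach is induction on $\ell(\xi_C)$, proving both conclusions simultaneously. The base case $\ell(\xi_C)=0$ is immediate from \cref{lem: same desc int} applied to the common left descent set shared by all singletons of $C$; the base case $\ell(\xi_C)=1$ is \cref{lem: min C is weak Bruhat interval card 2} for $\omin(C)$, together with a parallel case analysis for $\omax(C)$.

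For the inductive step with $\ell(\xi_C)=k\geq 2$, I would pick $i\in\Des_L(\xi_C)$ and factor $\xi_C = s_i\xi'$ reducedly, so that $\ell(\xi')=k-1$. Using \cref{Prop: desc pres isom and 0-Hecke}, as $\sigma$ ranges over $\omin(C)$ the sub-intervals $[\sigma,\xi'\sigma]_L$ all share a common equivalence class $C'$ with $\xi_{C'}=\xi'$, and the sub-intervals $[\xi'\sigma,\xi_C\sigma]_L$ share a class $C''$ with $\ell(\xi_{C''})=1$. By the inductive hypothesis, $\omin(C')$ and $\omax(C')$ are right weak Bruhat intervals, and by the base case so is $\omin(C'')$. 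Unpacking the definitions yields
\[
\omin(C) = \{\sigma\in\omin(C') : \xi'\sigma\in\omin(C'')\} \quad\text{and}\quad \omax(C) = s_i\cdot(\omax(C')\cap\omin(C'')).
\]

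The crux of the inductive step is the following key claim: \emph{for any equivalence class $D$, the bijection $\omin(D)\to\omax(D)$, $\sigma\mapsto\xi_D\sigma$, is a poset isomorphism with respect to $\preceq_R$.} To prove it, I would take a cover $\sigma_1\preceq_R^\rmc\sigma_2 = \sigma_1 s_j$ in $\omin(D)$, set $a=\sigma_1(j)$ and $b=\sigma_1(j+1)$ so that $a<b$, fix a reduced expression $\xi_D = s_{i_k}\cdots s_{i_1}$, and define $\eta_l = s_{i_l}\cdots s_{i_1}$ for $0\leq l\leq k$. Since $\sigma_1,\sigma_2\in\omin(D)$, the colored digraph isomorphism among the intervals of $D$ forces $\Des_L(\eta_l\sigma_1)=\Des_L(\eta_l\sigma_2)$ for every $l$; as $\eta_l\sigma_2 = \eta_l\sigma_1 s_j$ merely swaps the values $\eta_l(a),\eta_l(b)$ at positions $j,j+1$, equality of descent sets forces $|\eta_l(a)-\eta_l(b)|\geq 2$ at each step. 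A short case analysis of how $s_{i_l}$ interacts with the pair $(\eta_{l-1}(a),\eta_{l-1}(b))$ then shows that $\sgn(\eta_l(a)-\eta_l(b))$ is preserved at every step, yielding $\xi_D(a)<\xi_D(b)$ and hence $\xi_D\sigma_1\preceq_R^\rmc\xi_D\sigma_2$; reversing the chain via $\eta'_l = s_{i_{k-l+1}}\cdots s_{i_k}$ gives the opposite direction of cover preservation.

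Granting the key claim, the map $f\colon\omin(C')\to\omax(C')$, $\sigma\mapsto\xi'\sigma$, is a poset isomorphism of right weak Bruhat intervals, so $\omin(C)=f^{-1}(\omin(C'')\cap\omax(C'))$ is the preimage of a right weak Bruhat interval (\cref{lem: intersection of int is int}) under a poset iso between right weak Bruhat intervals; this preimage is an interval of $\omin(C')$ as a subposet, and by convexity of $\omin(C')$ in $(\SG_n,\preceq_R)$ it coincides with a right weak Bruhat interval of $\SG_n$. Analogously, the key claim for $C''$ gives a poset iso $\omin(C'')\to\omax(C'')$, $\tau\mapsto s_i\tau$, which sends the sub-interval $\omax(C')\cap\omin(C'')$ of $\omin(C'')$ to $\omax(C)$; the latter is thus a sub-interval of the right weak Bruhat interval $\omax(C'')$ and, again by convexity, itself a right weak Bruhat interval of $\SG_n$. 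The principal obstacle is the sign-preservation case analysis in the key claim; once it is in hand, the remainder of the induction assembles routinely from \cref{lem: intersection of int is int} and the inductive hypothesis.
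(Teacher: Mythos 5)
Your inductive step rests on the displayed identities
$\omin(C)=\{\sigma\in\omin(C'):\xi'\sigma\in\omin(C'')\}$ and
$\omax(C)=s_i\cdot(\omax(C')\cap\omin(C''))$, which you assert follow by ``unpacking the definitions.'' Only the inclusions $\subseteq$ are automatic (restrict the descent-preserving isomorphism to the two sub-intervals); the reverse inclusions are false in general, because $[\sigma,\xi'\sigma]_L\cup[\xi'\sigma,\xi_C\sigma]_L$ does not exhaust $[\sigma,\xi_C\sigma]_L$ once $[\id,\xi_C]_L$ is not a chain, so the descent sets of the remaining elements are not controlled by membership in $\omin(C')$ and $\omin(C'')$. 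Concretely, take $n=4$, $\xi_C=s_1s_3$, $i=1$, $\xi'=s_3$, and let $C$ be the class of $[1324,2413]_L$, so that $C'$ is the class of $[1324,1423]_L$ and $C''$ is the class of $[1423,2413]_L$. For $\sigma=3124$ one checks $\Des_L(3124)=\{2\}=\Des_L(1324)$, $\Des_L(4123)=\{3\}=\Des_L(1423)$ and $\Des_L(4213)=\{1,3\}=\Des_L(2413)$, so $[3124,4123]_L\Deq[1324,1423]_L$ and $[4123,4213]_L\Deq[1423,2413]_L$; hence $\sigma\in\omin(C')$ and $\xi'\sigma=4123\in\omin(C'')$. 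Nevertheless $\sigma\notin\omin(C)$: the interval $[3124,4213]_L=\{3124,4123,3214,4213\}$ contains $3214$ with $\Des_L(3214)=\{1,2\}$, while no element of $[1324,2413]_L=\{1324,1423,2314,2413\}$ has this descent set, so no descent-preserving isomorphism exists. The same data refutes your formula for $\omax(C)$, since it would force $4213\in\omax(C)$. Thus the heart of your induction fails, and neither the ``key claim'' nor \cref{lem: intersection of int is int} can repair it, since they are only invoked after these identities.

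This is exactly the point where the paper's argument is organized differently: instead of splitting $[\sigma,\xi_C\sigma]_L$ at the single element $\xi'\sigma$, it splits along \emph{all} atoms, setting $\calA=\{i\mid s_i\in[\id,\xi_C]_L\}$, taking for each $i\in\calA$ the classes $E_i$ and $E'_i$ of $[\sigma,s_i\sigma]_L$ and $[s_i\sigma,\xi_C\sigma]_L$, and proving $\omin(C)=\bigcap_{i\in\calA}s_i\cdot\bigl(\omax(E_i)\cap\omin(E'_i)\bigr)$. The intersection over all atoms is precisely what your single split lacks: the sub-intervals $[s_i\sigma,\xi_C\sigma]_L$ with $i\in\calA$ cover all of $[\sigma,\xi_C\sigma]_L$ except $\sigma$ itself, so membership in every factor really does pin down all descent sets in the interval and yields the reverse inclusion. (Your key claim that $\sigma\mapsto\xi_D\sigma$ gives a $\preceq_R$-isomorphism $\omin(D)\to\omax(D)$ is plausible and is the analogue of the paper's transfer $\omax(C)=\xi_C\cdot\omin(C)$ via \cref{lem: interval translation}, but it does not address the gap above.)
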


\begin{proof}
Note that $\sigma \preceq_L \xi_C \sigma$ for any $\sigma \in \omin(C)$ and that $\omax(C) = \xi_C \cdot \omin(C)$.
If we prove that $\omin(C)$ is a right weak Bruhat interval, then \cref{lem: interval translation} implies that $\omax(C)$ is also a right weak Bruhat interval.
So we will only prove that $\omin(C)$ is a right weak Bruhat interval.

When $\ell(\xi_C) = 0$, the assertion follows from \cref{lem: same desc int}.
From now on, assume that $\ell(\xi_C) \geq 1$.
We will prove the assertion by using mathematical induction on $\ell(\xi_C)$.
When $\ell(\xi_C) = 1$, the assertion is true by \cref{lem: min C is weak Bruhat interval card 2}.
Let $k$ be an arbitrary positive integer and suppose that the assertion holds for every equivalence class $C \in \scrC(n)$ with $\ell(\xi_C) \leq k$.
Let $C \in \scrC(n)$ with $\ell(\xi_C) = k+1$.
Set
$$
\calA := \{i \in [n-1] \mid s_i \in [\id, \xi_C]_L \}.
$$
Given $i \in \calA$ and $\sigma \in \omin(C)$, note that 
\begin{align*}
[\sigma, s_i \sigma]_L \Deq [\sigma', s_i \sigma']_L
\quad \text{and} \quad
[s_i \sigma, \xi_C \sigma]_L \Deq [s_i\sigma', \xi_C \sigma']_L
\end{align*}
for all $\sigma' \in \omin(C)$.
This says that the equivalence classes of $[\sigma, s_i \sigma]_L$ and $[s_i\sigma, \xi_C \sigma]_L$
do not depend on  $\sigma \in \omin(C)$.
For each $i \in \calA$,  we set 
\begin{align*}
E_i & := \text{the equivalence class of $[\sigma, s_i \sigma]_L$,}
\\
E'_i & := \text{the equivalence class of $[s_i \sigma, \xi_C \sigma]_L$}
\end{align*}
for any  $\sigma \in \omin(C)$.
Then, we set 
\begin{align*}
J_i := \omax(E_i) \cap \omin(E'_i) \quad \text{for $i \in \calA$,} \quad \text{and} \quad 
J := \bigcap_{i \in \calA}s_i \cdot J_i.
\end{align*}
Now, the desired assertion can be achieved by proving the following claims:
\begin{enumerate}[label = {(\roman*)}]
\item $\omin(C) = J$.
\item $J$ is a right weak Bruhat interval.
\end{enumerate}

First, let us prove $\omin(C) = J$.
By the definition of $J_i$, we have $s_i \sigma \in J_i$ for all $i \in \calA$ and $\sigma \in \omin(C)$. 
It follows that $\omin(C) \subseteq J$.
To prove the opposite inclusion $\omin(C) \supseteq J$, take $\sigma \in J$.
By the definition of $J$, we have
\begin{equation*}
[\sigma, s_i \sigma]_L  \in E_i 
\quad \text{and} \quad 
[s_i \sigma ,\xi_C \sigma]_L \in E'_i
\quad \text{for all $i \in \calA$}. 
\end{equation*}
And, \cref{lem: interval translation} implies that 
$$
\{s_i \sigma \mid i \in \calA \} = \{\gamma \in [\sigma, \xi_C\sigma]_L \mid \sigma \prec^\rmc_L \gamma \}.
$$
Putting these together yields that $[\sigma, \xi_C \sigma]_L \in C$, therefore $\sigma \in \omin(C)$.

Next, let us prove that $J$ is a right weak Bruhat interval.
Due to \cref{lem: intersection of int is int}, it suffices to show that $s_i \cdot J_i$ is a right weak Bruhat interval for $i \in \calA$.
Let us fix $i \in \calA$.
Since $\ell(\xi_{E_i}) = 1$ and $\ell(\xi_{E'_i}) = k$, $\omax(E_i)$ and $\omin(E'_i)$ are right weak Bruhat intervals by the induction hypothesis.
Combining this with \cref{lem: intersection of int is int} yields that $J_i$ is a right weak Bruhat interval.
In addition, we have $s_i \gamma \preceq_L \gamma$ for all $\gamma \in J_i$.
Therefore, by \cref{lem: interval translation}, $s_i \cdot J_i$ is a right weak Bruhat interval.
\end{proof}

According to ~\cref{thm: desc pres equiv}, every  equivalence class $C$ can be expressed as follows:
\[
C = \{[\gamma,  \xi_C\gamma]_L \mid \gamma \in [\sigma_0, \sigma_1]_R\},
\]
where $\sigma_0$ and $\sigma_1$ represent the minimal and maximal elements in $\omin(C)$, respectively.
In particular,
when $C$ is the equivalence class of $\Sigma_L(P)$ for $P \in \sfRSP_n$,
we can provide an explicit description of it.

\begin{theorem}\label{thm: equiv class X lam mu}
Let $P \in \sfRSP_n$ and $C$ the equivalence class of $\Sigma_L(P)$ under $\Deq$.
Then, 
\[
C = \{\Sigma_L(Q) \mid \text{$Q \in \sfRSP_n$ with $\sh(\tau_Q) = \sh(\tau_P)$} \}.
\]
\end{theorem}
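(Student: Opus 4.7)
Let $\lambda/\mu := \sh(\tau_P)$. The plan is to prove the two containments separately. For $\supseteq$, given $Q \in \sfRSP_n$ with $\sh(\tau_Q) = \lambda/\mu$, I would define $\Phi \colon \Sigma_L(P) \to \Sigma_L(Q)$ by $\rmread_{\tau_P}(T) \mapsto \rmread_{\tau_Q}(T)$ for $T \in \SYT(\lambda/\mu)$; by \cref{lem: identification of P and S} this is a well-defined bijection. Viewing each bijective tableau as a bijection $\tyd(\lambda/\mu) \to [n]$, \cref{def: read tau} reads $\rmread_\tau(T) = T \circ \tau^{-1}$, so $\Phi$ coincides with right multiplication by the fixed permutation $\tau_P \circ \tau_Q^{-1}$. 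Moreover the endpoint ratio $\rmread_\tau(T'_{\lambda/\mu}) \cdot \rmread_\tau(T_{\lambda/\mu})^{-1}$ equals $T'_{\lambda/\mu} \circ T_{\lambda/\mu}^{-1}$, independent of $\tau$; hence by \cref{thm: interval descriptrion for regular skew schur poset} and \cref{lem: interval translation} the map $\Phi$ is a poset isomorphism between $\Sigma_L(P)$ and $\Sigma_L(Q)$.

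To verify that $\Phi$ is descent-preserving, I would prove the key lemma
\[
\Des_L(\rmread_\tau(T)) = \{\, i \in [n-1] \mid T^{-1}(i+1) \text{ lies in a weakly higher row than } T^{-1}(i) \,\},
\]
valid for every $\tau \in \sfDS(\lambda/\mu)$ and every $T \in \SYT(\lambda/\mu)$. Since the right-hand side depends only on $T$, applying this to both $\tau_P$ and $\tau_Q$ immediately gives descent-preservation, whence $\Sigma_L(P) \Deq \Sigma_L(Q)$. For the lemma, I would enumerate the four possible relative positions of $(T^{-1}(i), T^{-1}(i+1))$ that occur in a SYT of skew shape, namely same row, same column, strictly above-and-right, or strictly below-and-left, ruling out ``strictly above-and-left'' and ``strictly below-and-right'' by short arguments using the partition inequality $\lambda_{r'} \ge \lambda_r$ for $r' \le r$. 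In each admissible configuration the distinguished property of $\tau$ determines the sign of $\tau(T^{-1}(i)) - \tau(T^{-1}(i+1))$, yielding the stated formula.

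For $\subseteq$, take any $I \in C$. By \cref{thm: desc pres equiv}, $I = [\sigma, \xi_C \sigma]_L$ for some $\sigma \in \omin(C)$, and by \cref{thm: left interval and regular} there is a unique $Q \in \sfRP_n$ with $I = \Sigma_L(Q)$. The descent-preserving isomorphism $\Phi' \colon \Sigma_L(P) \to I$ must send the minimum $\rmread_{\tau_P}(T_{\lambda/\mu})$ to $\sigma$, and by \cref{Prop: desc pres isom and 0-Hecke} together with \cref{lem: interval translation} it is then forced to be right multiplication by $\beta := \rmread_{\tau_P}(T_{\lambda/\mu})^{-1}\sigma$. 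Letting $\tau'$ be the unique bijective tableau of shape $\lambda/\mu$ with $\rmread_{\tau'}(T_{\lambda/\mu}) = \sigma$, the identity $\rmread_\tau(T) = T \circ \tau^{-1}$ yields $\Phi'(\rmread_{\tau_P}(T)) = \rmread_{\tau'}(T)$ for every $T$; by \cref{lem: identification of P and S} and the injectivity of $\upeta$, this forces $Q = \sfposet(\tau')$. It therefore suffices to show $\tau' \in \sfDS(\lambda/\mu)$, since \cref{lem: characterization: regular} will then give $Q \in \sfRSP_n$ with $\sh(\tau_Q) = \lambda/\mu$. Descent-preservation of $\Phi'$ combined with the lemma forces $\Des_L(\rmread_{\tau'}(T))$ to obey the same $\tau$-independent formula for every $T$; specializing to $T_{\lambda/\mu}$ and $T'_{\lambda/\mu}$ probes all horizontal and vertical adjacencies, forcing $\tau'$ to be row-decreasing and column-increasing respectively, while probing with SYTs tailored to place specific consecutive entries at a strictly below-left pair of boxes forces the remaining distinguished inequalities.

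The main obstacle I foresee is this last step: for a strictly below-left pair of boxes $B = (r,c)$, $B' = (r',c')$ (with $r > r'$ and $c < c'$) whose two ``helper'' boxes $(r, c')$ and $(r', c)$ both lie outside $\tyd(\lambda/\mu)$ --- forcing $\lambda_r < c'$ and $\mu_{r'} \ge c$, so that rows $r$ and $r'$ occupy disjoint column ranges of $\tyd(\lambda/\mu)$ --- the Schur property alone does not determine the sign of $\tau'(B) - \tau'(B')$, so one cannot conclude the distinguished inequality by chaining through intermediate boxes. Handling this case requires an explicit ideal-counting argument inside the box poset of $\lambda/\mu$ to produce a standard Young tableau of shape $\lambda/\mu$ in which $B'$ receives some value $i$ and $B$ receives $i+1$; once such a SYT is in hand, the lemma's formula forces $\tau'(B) > \tau'(B')$, completing the verification of the distinguished condition and hence the theorem.
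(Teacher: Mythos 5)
Your $\supseteq$ direction is essentially the paper's: you observe that $\rmread_{\tau_P}(T)\mapsto\rmread_{\tau_Q}(T)$ is right multiplication by a fixed permutation (hence a poset isomorphism of intervals) and verify descent-preservation via the formula $\Des_L(\rmread_\tau(T))=\{i : T^{-1}(i+1)\text{ in a weakly higher row than }T^{-1}(i)\}$, valid for all $\tau\in\sfDS(\lambda/\mu)$. This formula is a nice explicit packaging of what the paper argues in-line (``$i+1$ appears weakly above and strictly right of $i$ in $T$''); your case analysis is correct once one notes that for consecutive entries of an SYT of skew shape, the ``strictly-above-and-left'' and ``strictly-below-and-right'' configurations are impossible, since the helper box $(r,c')$ would carry an integer strictly between $i$ and $i+1$.

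For $\subseteq$ your route genuinely diverges from the paper. The paper writes $\omin(C)$ as a set of permutations with a common descent set, uses \cref{lem: same desc int} to trap each $\gamma\in\omin(C)$ in the right interval $[\rmread_{\tau_0}(T_{\lambda/\mu}),\rmread_{\tau_1}(T_{\lambda/\mu})]_R$, and then invokes \cref{lem: fixed T int}, which identifies this right interval with $\{\rmread_\tau(T_{\lambda/\mu}):\tau\in\sfDS(\lambda/\mu)\}$. You instead pull the putative bijective tableau $\tau'$ out of the colored-digraph isomorphism (a sound step, since by \cref{Prop: desc pres isom and 0-Hecke} a descent-preserving isomorphism of left intervals is forced to be right multiplication by the ratio of minima) and then try to establish $\tau'\in\sfDS(\lambda/\mu)$ directly, by transporting the descent formula from $\tau_P$ to $\tau'$ and probing with SYTs. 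Your probing is correct and complete: the row-decreasing and column-increasing conditions come from $T_{\lambda/\mu}$ and $T'_{\lambda/\mu}$, the strictly-below-left pairs that can be chained through a helper box inside the diagram need nothing extra, and the remaining ``disjoint column range'' pairs $(B,B')$ are incomparable in the box poset, so the standard down-set argument (take a linear extension of $D_{B'}\cup D_B\setminus\{B,B'\}$, then $B'$, then $B$, then the rest) produces the needed SYT with $T(B')=i$, $T(B)=i+1$. One small imprecision: $\lambda_r<c'$ and $c\le\mu_{r'}$ do not by themselves force rows $r,r'$ into disjoint column ranges; but when the ranges overlap, chaining through an intermediate column works, so only the genuinely disjoint case requires probing --- and the probing handles it. In effect your $\subseteq$ argument re-derives the content of \cref{lem: fixed T int} at the single tableau $T_{\lambda/\mu}$ by elementary means; the paper's appeal to that lemma and to \cref{lem: same desc int} is the cleaner route, at the cost of relying on the $\rmInv_R$-based proof of \cref{lem: fixed T int}, while yours is more self-contained but requires the extra SYT construction.
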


\begin{proof}
Let $\lambda/\mu = \sh(\tau_P)$.
Combining~\cref{eq: bijection btw RSP and DS} with \cref{thm: interval descriptrion for regular skew schur poset} yields that 
\[
\{\Sigma_L(Q) \mid \text{$Q \in \sfRSP_n$ with $\sh(\tau_Q) = \lambda / \mu$} \} = \{[\rmread_{\tau}(T_{\lambda/\mu}), \rmread_{\tau}(T'_{\lambda/\mu})]_L \mid 
\tau \in \sfDS(\lambda/\mu) \}.
\]
Therefore, for the assertion, we have only to show the equality
\begin{align*}
C = \{[\rmread_{\tau}(T_{\lambda/\mu}), \rmread_{\tau}(T'_{\lambda/\mu})]_L \mid 
\tau \in \sfDS(\lambda/\mu) \}.
\end{align*}

First, let us show that 
$\{[\rmread_{\tau}(T_{\lambda/\mu}), \rmread_{\tau}(T'_{\lambda/\mu})]_L \mid 
\tau \in \sfDS(\lambda/\mu) \} \subseteq C$.
This can be done by proving that for $\tau \in \sfDS(\lambda/\mu)$,
the map 
\begin{align*}
f_{P;\tau}: [\rmread_{\tau_P}(T_{\lambda/\mu}), \rmread_{\tau_P}(T'_{\lambda/\mu})]_L & \ra [\rmread_{\tau}(T_{\lambda/\mu}), \rmread_{\tau}(T'_{\lambda/\mu})]_L
\\ 
\rmread_{\tau_P}(T) &\mapsto \rmread_{\tau}(T)  \quad (T\in \SYT(\lambda / \mu))
\end{align*}
is a descent-preserving isomorphism.
Let us fix $\tau \in \sfDS(\lambda/\mu)$.
The definition of $\tau$-reading implies that 
for any $T_1,T_2 \in \SYT(\lambda/\mu)$, 
$$
\rmread_{\tau}(T_1) \preceq_L^\rmc \rmread_{\tau}(T_2)
\quad \text{if and only if} \quad
\rmread_{\tau_P}(T_1) \preceq_L^\rmc \rmread_{\tau_P}(T_2),
$$
and therefore $f_{P;\tau}$ is a poset isomorphism.
To show that $f_{P;\tau}$ is descent-preserving,   
choose arbitrary $T \in \SYT(\lambda/\mu)$ and $i \in \Des_L(\rmread_{\tau_P}(T))$.
Combining   the conditions $T \in \SYT(\lambda/\mu)$ and $\tau_P \in \sfDS(\lambda /\mu)$ with $i \in \Des_L(\rmread_{\tau_P}(T))$ yields that $i+1$ appears weakly above and strictly right of $i$ in $T$.
It follows that $i \in \Des_L(\rmread_{\tau}(T))$, so $\Des_L(\rmread_{\tau_P}(T)) \subseteq \Des_L(\rmread_{\tau}(T))$.
In the same manner, one can show that $\Des_L(\rmread_{\tau}(T)) \subseteq \Des_L(\rmread_{\tau_P}(T))$.
Therefore, $f_{P;\tau}$ is a descent-preserving isomorphism.

Next, let us show $C \subseteq \{[\rmread_{\tau}(T_{\lambda/\mu}), \rmread_{\tau}(T'_{\lambda/\mu})]_L \mid 
\tau \in \sfDS(\lambda/\mu) \}$.
In the previous paragraph, we prove that  $[\rmread_{\tau}(T_{\lambda/\mu}), \rmread_{\tau}(T'_{\lambda/\mu})]_L \in C$ for any $\tau \in \sfDS(\lambda/\mu)$.
This implies that $\rmread_{\tau}(T'_{\lambda/\mu}) = \xi_C \rmread_{\tau}(T_{\lambda/\mu})$, and so it suffices to show that
$$
\omin(C) \subseteq \{\rmread_\tau(T_{\lambda/\mu}) \mid \tau \in \sfDS(\lambda / \mu)\}.
$$
Due to \cref{lem: fixed T int}, 
this inclusion
can be obtained by proving 
$$
\gamma \in [\rmread_{\tau_0}(T_{\lambda/\mu}), \rmread_{\tau_1}(T_{\lambda/\mu})]_R \quad \text{for any $\gamma \in \omin(C)$.}
$$
Let $\gamma \in \omin(C)$.
Since $\rmread_{\tau_0}(T_{\lambda/\mu}) \in \omin(C)$, we have $\Des_L(\rmread_{\tau_0}(T_{\lambda/\mu})) = \Des_L(\gamma)$.
In addition, by the definitions of $\tau_0$ and $T_{\lambda/\mu}$, we have $\rmread_{\tau_0}(T_{\lambda/\mu}) = w_0(\alpha^\rmc)$, where  
$\alpha = (\lambda_1 - \mu_1, \lambda_2 - \mu_2, \ldots, \lambda_{\ell(\lambda)} - \mu_{\ell(\lambda)})$.
Putting these equalities together with~\cref{lem: same desc int} yields that $\rmread_{\tau_0}(T_{\lambda/\mu}) \preceq_{R} \gamma$.
Similarly, we have
\[
\Des_L(\rmread_{\tau_1}(T'_{\lambda/\mu})) = \Des_L(\xi_C\gamma)
\quad \text{and} \quad
\rmread_{\tau_1}(T'_{\lambda/\mu}) = w_0(\beta^\rmc)w_0,
\]
where $\beta = (\lambda^\rmt_1 - \mu^\rmt_1, \lambda^\rmt_2 - \mu^\rmt_2,
\ldots, \lambda^\rmt_{\ell(\lambda^\rmt)} - \mu^\rmt_{\ell(\lambda^\rmt)})$.
This, together with \cref{lem: same desc int}, yields that $\xi_C\gamma  \preceq_{R} \rmread_{\tau_1}(T'_{\lambda/\mu})$.
Since $\rmread_{\tau_1}(T_{\lambda/\mu}) \preceq_L  \rmread_{\tau_1}(T'_{\lambda/\mu}) = \xi_C \rmread_{\tau_1}(T_{\lambda/\mu})$, we have $\gamma \preceq_{R} \rmread_{\tau_1}(T_{\lambda/\mu})$.
Therefore, $\gamma \in [\rmread_{\tau_0}(T_{\lambda/\mu}), \rmread_{\tau_1}(T_{\lambda/\mu})]_R$ as desired.
\end{proof}

\cref{thm: equiv class X lam mu} tells us that
$\{\Sigma_L(P) \mid P \in \sfRSP_n \}$ is closed under $\Deq$ and the  equivalence classes inside it are parametrized by the skew partitions of size $n$.
Given a skew partition $\lambda/\mu$ of size $n$, 
let $C_{\lambda/\mu}$ be the equivalence class parametrized by $\lambda/\mu$, that is, 
\[
C_{\lambda/\mu} = \{\Sigma_L(P) \mid \text{$P \in \sfRSP_n$ with $\sh(\tau_P)= \lambda/\mu$} \}.
\]

\begin{corollary}\label{cor: equiv class closedness}
With the above notation,
we have
\[
\{\Sigma_L(P) \mid P \in \sfRSP_n \} = \bigsqcup_{|\lambda/\mu| = n} C_{\lambda / \mu} \quad \text{$($disjoint union$)$}.
\]
\end{corollary}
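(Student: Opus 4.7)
The plan is to deduce this corollary essentially as bookkeeping from \cref{thm: equiv class X lam mu} together with the injectivity of the map $\upeta: \sfP_n \to \mathscr{P}(\SG_n)$, $P \mapsto \Sigma_L(P)$ noted in \cref{sec: preliminary}. There are two tasks: verify the set-level equality $\{\Sigma_L(P) \mid P \in \sfRSP_n\} = \bigcup_{|\lambda/\mu|=n} C_{\lambda/\mu}$, and then verify that the sets $C_{\lambda/\mu}$ for distinct basic skew partitions $\lambda/\mu$ of size $n$ are pairwise disjoint.

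For the set-level equality, the inclusion $\supseteq$ is immediate from the definition of $C_{\lambda/\mu}$. For the reverse inclusion, I would argue as follows: given any $P \in \sfRSP_n$, the bijection in \cref{eq: bijection btw RSP and DS} identifies $P$ with its distinguished Schur labeling $\tau_P$, whose shape $\sh(\tau_P)$ is a (basic) skew partition of size $n$ by construction \cref{eq: def of tau_P}. Setting $\lambda/\mu := \sh(\tau_P)$, we have $\Sigma_L(P) \in C_{\lambda/\mu}$ by definition, which exhausts $\{\Sigma_L(P) \mid P \in \sfRSP_n\}$.

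For the disjointness, suppose that $\Sigma_L(P) \in C_{\lambda/\mu} \cap C_{\nu/\kappa}$. Unpacking the definitions, there exist $Q, Q' \in \sfRSP_n$ with $\sh(\tau_Q) = \lambda/\mu$, $\sh(\tau_{Q'}) = \nu/\kappa$, and $\Sigma_L(Q) = \Sigma_L(P) = \Sigma_L(Q')$. Since $\upeta|_{\sfRP_n}$ is injective (as noted after \cref{thm: left interval and regular}), $Q = Q' = P$, and therefore $\lambda/\mu = \sh(\tau_P) = \nu/\kappa$. Equivalently, one may observe that \cref{thm: equiv class X lam mu} identifies each $C_{\lambda/\mu}$ with the full $\Deq$-equivalence class of any of its representatives, whence disjointness follows from the standard fact that distinct equivalence classes are disjoint.

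I do not anticipate any real obstacle in carrying out this proof: all of the substantive content lives upstream in \cref{thm: equiv class X lam mu}, and the corollary is an almost formal restatement of the observation that the equivalence classes under $\Deq$ lying inside $\{\Sigma_L(P) \mid P \in \sfRSP_n\}$ are naturally indexed by basic skew partitions of size $n$ via the shape map $P \mapsto \sh(\tau_P)$.
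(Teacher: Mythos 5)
Your proposal is correct and matches the paper's (implicit) argument: the corollary is stated there as immediate bookkeeping from \cref{thm: equiv class X lam mu}, with coverage coming from $\lambda/\mu := \sh(\tau_P)$ being a basic skew partition of size $n$ and disjointness from the fact that $\Sigma_L(Q)$ determines $Q$ (injectivity of $\upeta$) and hence $\sh(\tau_Q)$. No gaps.
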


\section{The classification of $\sfM_P$'s  for $P \in \sfRSP_n$}
\label{sec: classification of X}

Let $P, Q \in \sfRSP_n$. By combining \cref{Prop: desc pres isom and 0-Hecke} with \cref{thm: equiv class X lam mu}, we can see that if $\tau_P$ and $\tau_Q$ have the same shape, then the $H_n(0)$-modules $\sfM_P$ and $\sfM_Q$ are isomorphic.
The purpose of this section is to demonstrate that the converse of this implication also holds. 
Let us briefly explain our strategy. First, we provide both a projective cover and an injective hull of $\sfM_P$ for every $P \in \sfRSP_n$. We discover that these modules are completely determined by the shape of $\tau_P$, as demonstrated in \cref{lem: proj cov and inj hull}. Then, we establish that if $\tau_P$ and $\tau_Q$ have different shapes, $\sfM_P$ and $\sfM_Q$ have either nonisomorphic projective covers or nonisomorphic injective hulls, as proven in \cref{thm: classification}.

To begin with, we present a brief overview of the background knowledge concerning projective modules and injective modules of the $0$-Hecke algebras.
In \cite[Proposition 4.1]{02DHT}, it was shown that $H_n(0)$ is a Frobenius algebra.
It is well known that every Frobenius algebra is self injective and for a finitely generated module $M$ of a self injective algebra, $M$ is projective if and only if it is injective (for instance, see \cite[Proposition 1.6.2]{91Benson}).
In \cite{79Norton}, a complete list of non-isomorphic projective indecomposable $H_n(0)$-modules was provided.

In the work \cite{22JKLO}, it was shown that this list can also be expressed in terms of weak Bruhat interval modules, specifically as $\{\bfP_\alpha \mid \alpha \models n\}$, where
\[
\bfP_\alpha := \sfB(w_0(\alpha^\rmc), w_0 w_0(\alpha)) \quad \text{for $\alpha \models n$.}
\]
We note that $\bfP_\alpha / \rad \; \bfP_\alpha$ is isomorphic to $\bfF_\alpha$, where $\rad \; \bfP_\alpha$ is the \emph{radical} of $\bfP_\alpha$, the intersection of maximal submodules of $\bfP_\alpha$.

In the following, we recall the definition of a projective cover and an injective hull.
Let $M$ be a finitely generated $H_n(0)$-module. 
A \emph{projective cover} of $M$ is a pair $(\bdP, f)$ consisting of a projective $H_n(0)$-module $\bdP$ and an $H_n(0)$-module epimorphism $f: \bdP \rightarrow M$ such that $\ker(f) \subseteq \rad(\bdP)$.
An \emph{injective hull} of $M$ is a pair $(\bdI, \iota)$, where $\bdI$ is an injective $H_n(0)$-module and $\iota: M \rightarrow \bdI$ is an $H_n(0)$-module monomorphism satisfying $\iota(M) \supseteq \soc(\bdI)$.
Here, $\soc(\bdI)$  is the \emph{socle} of $\bdI$, the sum of all irreducible submodules of $\bdI$.
A projective cover and an injective hull of $M$ always exist and they are unique up to isomorphism.
For more information, refer to \cite{95ARS, 99Lam}.

The projective modules introduced by Huang \cite{16Huang} play an important role in describing the projective cover and injective hull of $\sfM_P$ for $P \in \sfRSP_n$.
We briefly review these projective modules from the viewpoint of weak Bruhat interval modules.
A \emph{generalized composition} $\bal$ of $n$ is a formal expression $\alpha^{(1)} \star \alpha^{(2)} \star \cdots \star  \alpha^{(k)}$,
where $\alpha^{(i)} \models n_i$ for positive integers $n_i$'s with $n_1 + n_2 + \cdots + n_k = n$.
For compositions $\alpha = (\alpha_1, \alpha_2, \ldots, \alpha_{\ell(\alpha)})$ and $\beta = (\beta_1, \beta_2, \ldots, \beta_{\ell(\beta)})$, let 
$$
\alpha \cdot \beta = (\alpha_1, \alpha_2, \ldots, \alpha_{\ell(\alpha)}, \beta_1, \beta_2, \ldots, \beta_{\ell(\beta)})
\quad \text{and} \quad 
\alpha \odot \beta = (\alpha_1, \alpha_2, \ldots, \alpha_{\ell(\alpha)}+\beta_1, \beta_2, \ldots, \beta_{\ell(\beta)}).
$$
For a generalized composition $\bal = \alpha^{(1)} \star \alpha^{(2)} \star \cdots \star \alpha^{(k)}$, let
\[
\bal_{\bullet} := \alpha^{(1)} \cdot \alpha^{(2)} \cdot \cdots \cdot \alpha^{(k)}, \quad \bal_{\odot} := \alpha^{(1)} \odot \alpha^{(2)} \odot \cdots \odot \alpha^{(k)}
\]
and let 
\[
\bal^\rmc  := (\alpha^{(1)})^\rmc \star (\alpha^{(2)})^\rmc \star \cdots \star (\alpha^{(k)})^\rmc,
\quad
\bal^\rmr  := (\alpha^{(k)})^\rmr \star (\alpha^{(k-1)})^\rmr \star \cdots \star (\alpha^{(1)})^\rmr,
\]
and $\bal^{\rmc \cdot \rmr} := (\bal^\rmc)^\rmr$.
Normally, $(\bal_{\bullet})^\rmc \neq (\bal^\rmc)_{\bullet}$ and $(\bal_{\odot})^\rmc \neq (\bal^\rmc)_{\odot}$ for a generalized composition $\bal$.
Despite the potential for confusion, for the sake of brevity, we denote $(\bal_{\bullet})^\rmc$ and $(\bal_{\odot})^\rmc$  as $\bal_{\bullet}^\rmc$ and $\bal_{\odot}^\rmc$, respectively.
Then, we define 
$$
\bfP_\bal := 
\sfB(w_0(\bal_{\bullet}^\rmc), w_0w_0(\bal_{\odot})).
$$
Huang decomposed $\bfP_{\bal}$ into projective indecomposable modules, thus showed that it is projective.
To be precise, the following lemma was shown.

\begin{lemma}{\rm (\cite[Theorem 3.3]{16Huang})}\label{lem: Huang decomp gen Pa}
For a generalized composition $\bal = \alpha^{(1)} \star \alpha^{(2)} \star \cdots \star \alpha^{(k)}$ of $n$,
\[
\bfP_\bal 
\cong 
\bfP_{\alpha^{(1)}} \boxtimes \bfP_{\alpha^{(2)}} \boxtimes \cdots \boxtimes \bfP_{\alpha^{(k)}}
\cong
\bigoplus_{\beta \in [\bal]} \bfP_\beta,
\]
where $[\bal] := \{\alpha^{(1)} \; \square \; \alpha^{(2)} \; \square \; \cdots \; \square \; \alpha^{(k)} \mid \square = \cdot \; \text{or} \; \odot  \}$.
\end{lemma}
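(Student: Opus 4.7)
The plan is to prove the two isomorphisms separately. For the first, $\bfP_\bal \cong \bfP_{\alpha^{(1)}} \boxtimes \cdots \boxtimes \bfP_{\alpha^{(k)}}$, I induct on $k$ using associativity of $\boxtimes$, reducing to the case $\bfP_{\bgam \star \delta} \cong \bfP_\bgam \boxtimes \bfP_\delta$ for a generalized composition $\bgam$ of size $m$ and a composition $\delta$ of size $n$. The induction product carries the standard $\mathbb{C}$-basis $\{\pi_w \otimes (u \otimes v)\}$, where $w$ ranges over the minimal-length coset representatives of $\SG_{m+n}/(\SG_m \times \SG_n)$, $u$ over a basis of $\bfP_\bgam$, and $v$ over a basis of $\bfP_\delta$. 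I would define a linear map to $\bfP_{\bgam \star \delta}$ by $\pi_w \otimes (u \otimes v) \mapsto w \cdot (u \ast v)$, where $u \ast v \in \SG_{m+n}$ denotes the juxtaposition of $u$ with the shift of $v$ by $m$; a direct check shows this lands in the target left weak Bruhat interval $[w_0((\bgam \star \delta)_\bullet^\rmc), w_0 w_0((\bgam \star \delta)_\odot)]_L$ and intertwines the $H_{m+n}(0)$-actions, with bijectivity following from a dimension count.

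For the second isomorphism, $\bfP_\bal \cong \bigoplus_{\beta \in [\bal]} \bfP_\beta$, I use that $\bfP_\bal$ is projective (being an induction product of projective indecomposables, by the first isomorphism). By Krull--Schmidt it decomposes uniquely as $\bigoplus_\gamma \bfP_\gamma^{m_\gamma}$, and the remaining task is to determine the multiplicities $m_\gamma$. I would compute $\ch[\bfP_\bal] = \prod_{i=1}^k \ch[\bfP_{\alpha^{(i)}}]$ via multiplicativity of the quasisymmetric characteristic, and then iteratively apply the two-factor identity $\ch[\bfP_\alpha] \cdot \ch[\bfP_\delta] = \ch[\bfP_{\alpha \cdot \delta}] + \ch[\bfP_{\alpha \odot \delta}]$ (a standard consequence of the ribbon Schur product formula) to obtain the equality $\ch[\bfP_\bal] = \sum_{\beta \in [\bal]} \ch[\bfP_\beta]$ in $\Qsym$. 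Since the images $\{\ch[\bfP_\beta] : \beta \models n\}$ are linearly independent in $\Qsym$ (being, up to index convention, the ribbon Schur basis), this forces $m_\beta = 1$ for $\beta \in [\bal]$ and $m_\gamma = 0$ otherwise.

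The principal obstacle lies in the first isomorphism, specifically in verifying that the candidate map intertwines the action of the seam generator $\pi_m$ (the one that mixes the two tensor factors across position $m$). On the induction product side, the $\pi_m$-action involves the interplay with minimal coset representatives in $\SG_{m+n}/(\SG_m \times \SG_n)$, whereas on the weak Bruhat interval module side it is simply left multiplication by $s_m$ subject to the descent condition. Establishing this compatibility requires a careful analysis of how minimal coset representatives interact with the parabolic longest elements defining the interval endpoints; once this seam computation is settled, the remainder of the argument reduces to descent-set bookkeeping.
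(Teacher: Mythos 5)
The paper cites this result directly from Huang and gives no internal proof, so I assess your argument on its own merits. Your outline for the first isomorphism is plausible — it can also be obtained from the induction-product result for weak Bruhat interval modules in \cite{22JKLO} — but the argument you give for the second isomorphism, which you treat as the easy half, contains a genuine error.

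The step ``Since the images $\{\ch([\bfP_\beta]) : \beta \models n\}$ are linearly independent in $\Qsym$ \dots\ this forces $m_\beta = 1$'' is false. The map $\ch$ is an isomorphism from the Grothendieck group $\calG_0(\Hnmod)$ of \emph{all} finite-dimensional modules, in which the irreducible classes $[\bfF_\alpha]$ form a basis; the classes $[\bfP_\alpha]$ of the projective indecomposables do \emph{not} form a basis there. Already for $n = 3$ one computes
\[
\bfP_{(2,1)} = \sfB(213, 312), \qquad \bfP_{(1,2)} = \sfB(132, 231),
\]
so that $\ch([\bfP_{(2,1)}]) = F_{(2,1)} + F_{(1,2)} = \ch([\bfP_{(1,2)}])$, and the family $\{\ch([\bfP_\beta]) : \beta \models n\}$ is linearly dependent. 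Consequently the identity $\ch([\bfP_\bal]) = \sum_{\beta \in [\bal]} \ch([\bfP_\beta])$ in $\Qsym$ cannot determine the Krull--Schmidt multiplicities $m_\gamma$. Your parenthetical ``being, up to index convention, the ribbon Schur basis'' conflates the commutative ribbon Schur functions $r_\alpha \in \Sym$, which are certainly not linearly independent as $\alpha$ ranges over the $2^{n-1}$ compositions of $n$, with the noncommutative ribbons $R_\alpha \in \Nsym$. The repair is to move the computation into the split Grothendieck group of projective $H_n(0)$-modules, which is isomorphic to the degree-$n$ piece of $\Nsym$ with $[\bfP_\alpha] \leftrightarrow R_\alpha$: there $\{[\bfP_\alpha] : \alpha \models n\}$ genuinely is a basis, the two-factor identity reads $R_\alpha R_\delta = R_{\alpha\cdot\delta} + R_{\alpha\odot\delta}$, and because a projective $H_n(0)$-module is determined up to isomorphism by its class in this group, the Krull--Schmidt argument then closes as you intended.
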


It is clear from \cref{lem: Huang decomp gen Pa} that if $\bal$ and $\bbe$ are distinct generalized compositions of $n$, then 
$\bfP_\bal$ and $\bfP_\bbe$ are nonisomorphic.   
Let $\bal$ be a generalized composition of $n$.
For $\rho \in [w_0(\bal_\odot^\rmc), w_0 w_0(\bal_\odot)]_L$,
let $\Upsilon_{\bal;\rho}: \bfP_{\bal} \ra \sfB(w_0(\bal_\bullet^\rmc), \rho)$ be a $\C$-linear map  given by
\[
\gamma \mapsto 
\begin{cases}
\gamma & \text{if $\gamma \in [w_0(\bal_\bullet^\rmc), \rho]_L$,}\\
0 & \text{if $\gamma \in [w_0(\bal_\bullet^\rmc), w_0 w_0(\bal_\odot)]_L \setminus [w_0(\bal_\bullet^\rmc), \rho]_L$.}
\end{cases}
\]
Clearly, $\Upsilon_{\bal;\rho}$ is an $H_n(0)$-module epimorphism.
In addition, it follows from~\cite[Lemma 6.2]{22KY} that $\ker(\Upsilon_{\bal;\rho}) \subseteq \rad (\bfP_\bal)$.
Consequently, we have the following lemma.

\begin{lemma}\label{lem: radical cond}{\rm (cf. \cite[Lemma 6.2]{22KY})}
For a generalized composition $\bal$ of $n$ and $\rho \in [w_0(\bal_\odot^\rmc), w_0 w_0(\bal_\odot)]_L$,
the pair $(\bfP_\bal, \Upsilon_{\bal;\rho})$ is a projective cover of $\sfB(w_0(\bal_\bullet^\rmc), \rho)$.
\end{lemma}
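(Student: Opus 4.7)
The plan is to verify the three defining properties of a projective cover in $\Hnmod$: (i) $\bfP_\bal$ is projective; (ii) $\Upsilon_{\bal;\rho}$ is a well-defined $H_n(0)$-module epimorphism; and (iii) $\ker(\Upsilon_{\bal;\rho}) \subseteq \rad(\bfP_\bal)$. Property~(i) is immediate from \cref{lem: Huang decomp gen Pa}, which decomposes $\bfP_\bal$ as a direct sum of projective indecomposables $\bfP_\beta$. Surjectivity in property~(ii) is obvious from the definition of $\Upsilon_{\bal;\rho}$, since every basis element of $\sfB(w_0(\bal_\bullet^\rmc), \rho)$ is its own preimage.

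To see that $\Upsilon_{\bal;\rho}$ intertwines the $H_n(0)$-actions, I would argue by a case analysis according to whether $\gamma$ lies inside or outside $[w_0(\bal_\bullet^\rmc), \rho]_L$ and whether $i$ is a left descent of $\gamma$. The key observation is that if $\gamma \not\preceq_L \rho$ and $\gamma \preceq_L^\rmc s_i \gamma$, then $s_i \gamma \not\preceq_L \rho$ as well (otherwise $\gamma \preceq_L s_i \gamma \preceq_L \rho$ would force $\gamma \preceq_L \rho$). Together with the action formulas in \cref{def: poset module} and \cref{def: WBI action}, this reduces equivariance to a routine check: in every case either $\gamma$ and its image agree and $\pi_i$ acts by the same rule on both sides, or $\gamma$ lies outside $[w_0(\bal_\bullet^\rmc), \rho]_L$ and every possibility for $\pi_i \cdot \gamma$ also does, so the image is $0$ before and after.

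For property~(iii), which is the substantive content of the lemma, I would follow the strategy of \cite[Lemma 6.2]{22KY} and adapt it to a general $\rho$. Using the Huang decomposition $\bfP_\bal \cong \bigoplus_{\beta \in [\bal]} \bfP_\beta$, one has $\rad(\bfP_\bal) = \bigoplus_{\beta \in [\bal]} \rad(\bfP_\beta)$. Then for each basis element $\gamma$ with $\gamma \not\preceq_L \rho$, I would exhibit $\gamma$ as $\opi_\sigma \cdot v$ for some $\sigma \neq \id$ and some $v \in \bfP_\bal$, exploiting the fact that the elements $\opi_i$ generate $\rad(H_n(0))$, so that $\rad(H_n(0)) \cdot \bfP_\bal = \rad(\bfP_\bal)$ since $\bfP_\bal$ is projective. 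With (i)--(iii) verified, the standard characterization of projective covers over a finite-dimensional (indeed Frobenius) algebra (see \cite{95ARS, 99Lam}) delivers the conclusion.

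The main obstacle I anticipate is the radical containment in step (iii): one must carefully track how the excluded basis elements $\gamma \in [w_0(\bal_\bullet^\rmc), w_0 w_0(\bal_\odot)]_L \setminus [w_0(\bal_\bullet^\rmc), \rho]_L$ distribute across the summands $\bfP_\beta$ of the Huang decomposition, and verify that each of their components lies in $\rad(\bfP_\beta)$. In the extremal case $\rho = w_0 w_0(\bal_\odot)$ the kernel is trivial, and in the opposite extremal case $\rho = w_0(\bal_\odot^\rmc)$ the containment is precisely \cite[Lemma 6.2]{22KY}; the intermediate cases should interpolate by the same argument, with only notational rather than conceptual additions required.
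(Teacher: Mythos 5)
Your breakdown into (i)--(iii) is the right frame, and your check of equivariance in (ii) is sound: the key observations you isolate (that $\gamma \not\preceq_L \rho$ with $\gamma \preceq_L^\rmc s_i\gamma$ forces $s_i\gamma \not\preceq_L\rho$, and that $[w_0(\bal_\bullet^\rmc),\rho]_L \subseteq [w_0(\bal_\bullet^\rmc),w_0w_0(\bal_\odot)]_L$) handle all cases. The difficulty is in (iii), and there your sketch contains a false step. You assert that ``the elements $\opi_i$ generate $\rad(H_n(0))$.'' This is not so: since $\pi_i^2=\pi_i$, one computes $\opi_i^2 = -\opi_i$, so $-\opi_i$ is a nonzero idempotent, and nonzero idempotents can never lie in the Jacobson radical (which is a nilpotent ideal). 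Consequently, exhibiting a kernel vector as $\opi_\sigma\cdot v$ with $\sigma\neq\id$ does \emph{not} on its own place it in $\rad(\bfP_\bal)$, so the mechanism you propose does not deliver the radical containment.

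The shorter route, which is essentially what the paper does, is to notice that the kernel is monotone in $\rho$. If $\rho_1 \preceq_L \rho_2$ then $[w_0(\bal_\bullet^\rmc),\rho_1]_L \subseteq [w_0(\bal_\bullet^\rmc),\rho_2]_L$, hence $\ker(\Upsilon_{\bal;\rho_2}) \subseteq \ker(\Upsilon_{\bal;\rho_1})$ as vector spaces. Taking $\rho_1 = w_0(\bal_\odot^\rmc)$, the smallest allowable $\rho$, one gets
$\ker(\Upsilon_{\bal;\rho}) \subseteq \ker(\Upsilon_{\bal;w_0(\bal_\odot^\rmc)}) \subseteq \rad(\bfP_\bal)$,
where the last inclusion is exactly the content of \cite[Lemma~6.2]{22KY}. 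So there is no need to re-run that lemma's argument component-by-component across the Huang decomposition; the extremal case already dominates every other kernel. You correctly identified that the extremal case is \cite[Lemma~6.2]{22KY} and that the other extreme is trivial, but you did not make the monotonicity explicit -- once you do, the ``interpolation'' is literal containment of kernels and the proof closes immediately without touching $\opi_i$.
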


Let us provide  notation and a lemma needed to describe a projective cover and an injective hull of $\sfM_P$ for $P \in \sfRSP_n$. 
For a connected skew partition $\lambda / \mu$ of size $n$, define
\[
\balproj(\lambda/\mu) := (\lambda_1 - \mu_1, \lambda_2 - \mu_2, \ldots , \lambda_{\ell(\lambda)} - \mu_{\ell(\lambda)}).
\]
And, for a disconnected skew partition $\lambda / \mu$ of size $n$, define
\[
\balproj(\lambda/\mu) := \balproj(\lambda^{(1)} / \mu^{(1)}) 
\star \balproj(\lambda^{(2)} / \mu^{(2)})
\star \cdots \star
\balproj(\lambda^{(k)} / \mu^{(k)}),
\]
where $\lambda/\mu = \lambda^{(1)} / \mu^{(1)} \star \lambda^{(2)} / \mu^{(2)} \star \cdots \star \lambda^{(k)} / \mu^{(k)}$ with connected skew partitions $\lambda^{(i)} / \mu^{(i)}$'s $(1\le i \le k)$.

\begin{lemma}\label{lem: balproj and read}
Let $\lambda/\mu$ be a skew partition of size $n$.
\begin{enumerate}[label = {\rm (\arabic*)}]
\item 
$\rmread_{\tau_0}(T_{\lambda/\mu}) = w_0(\balproj(\lambda/\mu)_{\bullet}^\rmc)$.
\item 
$\rmread_{\tau_0}(T'_{\lambda/\mu}) \in [w_0(\balproj(\lambda/\mu)_{\odot}^\rmc), w_0w_0(\balproj(\lambda/\mu)_{\odot})]_L$.
\end{enumerate}
\end{lemma}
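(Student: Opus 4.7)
The plan is to unpack both identities into explicit combinatorial statements, proving part (1) by reading off the one-line notation and part (2) by a direct comparison of left inversion sets.

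For part (1), I observe that $\tau_0^{\lambda/\mu}$ visits the cells of $\tyd(\lambda/\mu)$ row by row from the top, and right-to-left within each row, while $T_{\lambda/\mu}$ fills row $i$ with the consecutive block $N_{i-1}+1, \ldots, N_i$, where $r_i := \lambda_i - \mu_i$ and $N_i := r_1 + \cdots + r_i$. Hence $\rmread_{\tau_0}(T_{\lambda/\mu})$ is the permutation that reverses each such block, and this is exactly $w_0(\alpha^\rmc)$ for $\alpha := (r_1, \ldots, r_{\ell(\lambda)})$. Since $\alpha = \balproj(\lambda/\mu)_\bullet$ regardless of whether $\lambda/\mu$ is connected, (1) follows.

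For part (2), set $\bal := \balproj(\lambda/\mu)$, $\sigma_1 := w_0(\bal_\odot^\rmc)$, $\sigma_2 := w_0 w_0(\bal_\odot)$, and $\gamma := \rmread_{\tau_0}(T'_{\lambda/\mu})$, and translate the desired $\sigma_1 \preceq_L \gamma \preceq_L \sigma_2$ into statements about left inversion sets. Because $\sigma_1$ reverses each block of $\bal_\odot$, $\rmInv_L(\sigma_1)$ consists of the pairs $(i,j)$ with $i<j$ lying in a common $\bal_\odot$-block. Likewise the identity $(w_0\sigma)(k) = n+1-\sigma(k)$ gives $\rmInv_L(\sigma_2) = \{(i,j) : i<j\} \setminus \rmInv_L(w_0(\bal_\odot))$, and the inversions of $w_0(\bal_\odot)$ are exactly the pairs in a common \emph{dual block}, by which I mean a maximal interval $[a,b] \subseteq [n]$ with $\{a, a+1, \ldots, b-1\} \subseteq \set(\bal_\odot)$. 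Assigning to each reading position $p$ the cell $B_p$ of $\tyd(\lambda/\mu)$ scanned at step $p$, the defining rule for $T'_{\lambda/\mu}$ yields $T'_{\lambda/\mu}(B) < T'_{\lambda/\mu}(B')$ iff $B$ lies in a strictly earlier column than $B'$, or they share a column and $B$ is strictly higher. The inclusion $\rmInv_L(\sigma_1) \subseteq \rmInv_L(\gamma)$ then follows from the $\star$-geometry: each $\bal_\odot$-block is either confined to one row (so $B_i$ is strictly right of $B_j$ for $i<j$ inside it) or spans rows straddling component boundaries (so the upper rows sit in strictly later columns than the lower ones), forcing $T'_{\lambda/\mu}(B_i) > T'_{\lambda/\mu}(B_j)$.

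For the reverse inclusion $\rmInv_L(\gamma) \subseteq \rmInv_L(\sigma_2)$, I argue contrapositively. A dual block of size $N \geq 2$ corresponds to a run $R_0, R_1, \ldots, R_{N-1}$ of $N$ consecutive rows inside a single $\star$-component; the interior rows $R_1, \ldots, R_{N-2}$ are each single-cell rows, and the endpoints of the block correspond to the leftmost cell of $R_0$ and the rightmost cell of $R_{N-1}$. The partition inequality $\mu_r \geq \mu_{r+1}$ combined with the shared-column condition $\mu_r + 1 \leq \lambda_{r+1} = \mu_{r+1}+1$ (coming from each length-one middle row adjacent to the next) forces the chain $\mu_{R_0} = \mu_{R_1} = \cdots = \mu_{R_{N-2}}$, so the first $N-1$ cells of the dual block lie in a common column and the final cell lies in the same or a later column. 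The filling rule for $T'_{\lambda/\mu}$ then yields non-inversions for every pair inside the dual block, completing (2). The main technical obstacle is the faithful correspondence between algebraic blocks of $\bal_\odot$ (governed by $\odot$-merging at component boundaries) and their row-column incarnations in $\tyd(\lambda/\mu)$, and it is most delicate when some part of $\bal_\odot$ equals one, which is precisely what enlarges a dual block beyond two and forces the partition chain above.
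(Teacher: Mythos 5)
Your proof is correct, and the route for part (2) is genuinely different from the paper's. The paper dispatches (2) in two lines: it asserts $\Des_R(\rmread_{\tau_0}(T'_{\lambda/\mu})) = \set(\balproj(\lambda/\mu)_{\odot}^\rmc)$ (which only requires comparing $T'$-values at scan-adjacent cells, i.e.\ adjacent positions in $\tau_0$-reading order) and then invokes \cref{lem: same desc int}, whose inverse form identifies $[w_0(\bal_\odot^\rmc), w_0 w_0(\bal_\odot)]_L$ as precisely the set of $\sigma$ with $\Des_R(\sigma) = \set(\bal_\odot^\rmc)$. You instead work at the level of left inversion sets for all pairs, computing $\rmInv_L(\sigma_1)$ and $\rmInv_L(\sigma_2)$ explicitly via the $\bal_\odot$-block and dual-block structure and then verifying the two containments $\rmInv_L(\sigma_1) \subseteq \rmInv_L(\gamma) \subseteq \rmInv_L(\sigma_2)$ by the row-column geometry of $T'_{\lambda/\mu}$ and the $\star$-layout (columns strictly decrease through a $\bal_\odot$-block because component boundaries push columns leftward; the forced equalities $\mu_{R_0} = \cdots = \mu_{R_0+N-2}$ from the length-one middle rows put the dual block's cells in one column so $T'$-values increase). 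This effectively reproves the special case of \cref{lem: same desc int} you need, trading the citation for a direct argument. The price is checking all pairs rather than just adjacent ones; the gain is self-containedness and an explicit picture of both inversion sets that you may find useful elsewhere. The underlying geometric observation — which cell dominates which under $T'_{\lambda/\mu}$ depending on column then row — is the same in both proofs.
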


\begin{proof}
By the definition of $\balproj(\lambda/\mu)$, the assertion (1) is clear.
In addition, one can easily see that
$\Des_R(\rmread_{\tau_0}(T'_{\lambda/\mu})) = \set(\balproj(\lambda/\mu)_{\odot}^\rmc)$.
So, by \cref{lem: same desc int}, the assertion (2) follows.
\end{proof}

Let $P \in \sfRSP_n$ and $\lambda/\mu = \sh(\tau_P)$.
By ~\cref{thm: interval descriptrion for regular skew schur poset}, 
$\sfM_P = \sfB(\rmread_{\tau_P}(T_{\lambda/\mu}), \rmread_{\tau_P}(T'_{\lambda/\mu}))$.
Furthermore, by ~\cref{thm: equiv class X lam mu}, we have an $H_n(0)$-module isomorphism
$$
f_{P}: \sfM_{\sfposet(\tau_0)} \ra \sfM_P,
\ \ 
\rmread_{\tau_0}(T) \mapsto \rmread_{\tau_P}(T) \;\;(T \in \SYT(\lambda/\mu)).
$$
Set 
$$
\eta_P := f_P \circ \Upsilon_{\balproj(\lambda/\mu); \rmread_{\tau_0}(T'_{\lambda/\mu})}.
$$
Combining \cref{lem: radical cond} and ~\cref{lem: balproj and read} implies that the pair $\left(\bfP_{\balproj(\lambda/\mu)}, \eta_P\right)$
is a projective cover of $\sfM_P$.

To find an injective hull of $\sfM_P$, we note that
\[
\rmread_{\tau_1}(T_{\lambda^\rmt/\mu^\rmt}) w_0 = \rmread_{\tau_0}(T'_{\lambda/\mu})
\quad \text{and} \quad
\rmread_{\tau_1}(T'_{\lambda^\rmt/\mu^\rmt}) w_0 = \rmread_{\tau_0}(T_{\lambda/\mu}).
\]
Combining these equalities with \cite[Theorem 4]{22JKLO} yields the following $H_n(0)$-module isomorphism:
\begin{align*}
g_1: \bfT^{-}_\hautotheta \left(
\sfM_{\sfposet(\tau_1^{\lambda^\rmt/\mu^\rmt})}\right) &\ra \sfM_{\sfposet(\tau_0^{\lambda/\mu})},
\\ 
\gamma^* &\mapsto (-1)^{\ell(\gamma \; \rmread_{\tau_1}(T'_{\lambda^\rmt/\mu^\rmt})^{-1})} \gamma w_0,
\end{align*}
where $\gamma \in [\rmread_{\tau_1}(T_{\lambda^\rmt/\mu^\rmt}), \rmread_{\tau_1}(T'_{\lambda^\rmt/\mu^\rmt})]_L$ and $\gamma^\ast$ denotes the dual of $\gamma$ with respect to the basis $[\rmread_{\tau_1}(T_{\lambda^\rmt/\mu^\rmt}), \rmread_{\tau_1}(T'_{\lambda^\rmt/\mu^\rmt})]_L$ for $\sfM_{\sfposet(\tau_1^{\lambda^\rmt/\mu^\rmt})}$.
Set
\[
\balinj(\lambda/\mu) := \balproj(\lambda^\rmt / \mu^\rmt)^{\rmc \cdot \rmr}.
\]
Again, by \cite[Theorem 4]{22JKLO}, we have the $H_n(0)$-module isomorphism 
\begin{align*}
g_2: \bfT^{-}_\hautotheta \left( \bfP_{\balproj(\lambda^\rmt / \mu^\rmt)}\right)
&\ra 
\bfP_{\balinj(\lambda/\mu)},
\\ 
\gamma^* &\mapsto (-1)^{\ell(\gamma (w_0w_0(\balinj(\lambda/\mu)_{\odot}))^{-1})} \gamma w_0,
\end{align*}
where $\gamma \in [
w_0(\balproj(\lambda^\rmt / \mu^\rmt)_{\bullet}^\rmc), w_0w_0(\balproj(\lambda^\rmt / \mu^\rmt)_{\odot})
]_L$ and $\gamma^\ast$ denotes the dual of $\gamma$ with respect to the basis $[
w_0(\balproj(\lambda^\rmt / \mu^\rmt)_{\bullet}^\rmc), w_0w_0(\balproj(\lambda^\rmt / \mu^\rmt)_{\odot})
]_L$ for $\bfP_{\balproj(\lambda^\rmt / \mu^\rmt)}$.
Set $\eta_{\sfposet(\tau_1^{\lambda^\rmt/\mu^\rmt})} := f_{\sfposet(\tau_1^{\lambda^\rmt/\mu^\rmt})} \circ \Upsilon_{\balproj(\lambda^\rmt/\mu^\rmt); \rmread_{\tau_0}(T'_{\lambda^\rmt/\mu^\rmt})}$.
As above, the pair
$\left(\bfP_{\balproj(\lambda^\rmt /\mu^\rmt)}, \eta_{\sfposet(\tau_1^{\lambda^\rmt/\mu^\rmt})}\right)$
is a projective cover of $\sfM_{\sfposet(\tau_1^{\lambda^\rmt/\mu^\rmt})}$.
And, since $\bfT^{-}_\hautotheta$ is contravariant, 
$\left( \bfT^{-}_\hautotheta \left(
\bfP_{\balproj(\lambda^\rmt /\mu^\rmt)}
\right), 
\bfT^{-}_\hautotheta 
\left(
\eta_{\sfposet(\tau_1^{\lambda^\rmt/\mu^\rmt})} 
\right)
\right)$
is an injective hull of $\bfT^{-}_\hautotheta(\sfM_{\sfposet(\tau_1^{\lambda^\rmt/\mu^\rmt})})$
(for the definition of $\bfT^{-}_\hautotheta $, see ~\cref{subsec: 0-Hecke alg and QSym}).
Consequently, the pair
$\left(\bfP_{\balinj(\lambda/\mu)}, \iota_P\right)$ is an injective hull of $\sfM_P$, where
\[
\iota_P = g_2 \circ \bfT^{-}_\hautotheta 
\left(
\eta_{\sfposet(\tau_1^{\lambda^\rmt/\mu^\rmt})} 
\right) \circ g_1^{-1} 
\circ f_P^{-1}.
\]

To summarize, we can state the following lemma.
\begin{lemma}\label{lem: proj cov and inj hull}
Let $P \in \sfRSP_n$ and $\lambda/\mu = \sh(\tau_P)$.
\begin{enumerate}[label = {\rm (\arabic*)}]
\item $\left( \bfP_{\balproj(\lambda/\mu)}, \eta_P \right)$ is a projective cover of $\sfM_P$.
\item $\left(\bfP_{\balinj(\lambda/\mu)}, \iota_P \right)$ is an injective hull of $\sfM_P$.
\end{enumerate}
\end{lemma}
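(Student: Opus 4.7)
I would assemble the ingredients set up immediately before the lemma. By \cref{thm: interval descriptrion for regular skew schur poset}, $\sfM_P$ coincides with the weak Bruhat interval module $\sfB(\rmread_{\tau_P}(T_{\lambda/\mu}), \rmread_{\tau_P}(T'_{\lambda/\mu}))$, and by \cref{thm: equiv class X lam mu} together with \cref{Prop: desc pres isom and 0-Hecke} the map $f_P: \sfM_{\sfposet(\tau_0)} \to \sfM_P$ is an $H_n(0)$-module isomorphism. Writing $\bal = \balproj(\lambda/\mu)$, \cref{lem: balproj and read}(1) rewrites $\rmread_{\tau_0}(T_{\lambda/\mu}) = w_0(\bal_\bullet^\rmc)$, and \cref{lem: balproj and read}(2) places $\rmread_{\tau_0}(T'_{\lambda/\mu})$ inside $[w_0(\bal_\odot^\rmc), w_0 w_0(\bal_\odot)]_L$. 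Thus the hypotheses of \cref{lem: radical cond} are met, yielding that $(\bfP_\bal, \Upsilon_{\bal;\rmread_{\tau_0}(T'_{\lambda/\mu})})$ is a projective cover of $\sfM_{\sfposet(\tau_0)}$. Composing with the isomorphism $f_P$ transports this to $(\bfP_\bal, \eta_P)$ as a projective cover of $\sfM_P$, which is the claim.

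\textbf{Plan for part (2).} The strategy is to dualize part (1) applied to the transposed shape $\lambda^\rmt/\mu^\rmt$. Since $H_n(0)$ is Frobenius, the contravariant functor $\bfT^{-}_\hautotheta$ carries projective covers to injective hulls of the twisted modules. First apply part (1) to $\sfposet(\tau_1^{\lambda^\rmt/\mu^\rmt})$ to obtain a projective cover $(\bfP_{\balproj(\lambda^\rmt/\mu^\rmt)}, \eta_{\sfposet(\tau_1^{\lambda^\rmt/\mu^\rmt})})$ of $\sfM_{\sfposet(\tau_1^{\lambda^\rmt/\mu^\rmt})}$. Twisting by $\bfT^{-}_\hautotheta$ then produces an injective hull of $\bfT^{-}_\hautotheta(\sfM_{\sfposet(\tau_1^{\lambda^\rmt/\mu^\rmt})})$. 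Next, the isomorphisms $g_1$ and $g_2$ from \cite[Theorem 4]{22JKLO}, together with $f_P$, repackage this data: $f_P \circ g_1$ identifies $\bfT^{-}_\hautotheta(\sfM_{\sfposet(\tau_1^{\lambda^\rmt/\mu^\rmt})})$ with $\sfM_P$, while $g_2$ identifies $\bfT^{-}_\hautotheta(\bfP_{\balproj(\lambda^\rmt/\mu^\rmt)})$ with $\bfP_{\balinj(\lambda/\mu)}$, using the definition $\balinj(\lambda/\mu) = \balproj(\lambda^\rmt/\mu^\rmt)^{\rmc \cdot \rmr}$. Conjugating the injective hull obtained by twisting through these identifications produces the pair $(\bfP_{\balinj(\lambda/\mu)}, \iota_P)$ as claimed.

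\textbf{Main obstacle.} Almost all substantive input --- the interval description of $\sfM_P$, the equivalence-class classification, and Huang's decomposition of $\bfP_\bal$ --- is already in place from earlier sections, so the lemma is largely a bookkeeping exercise. The subtlest step is verifying that the explicit duality isomorphisms $g_1, g_2$ genuinely intertwine the map $\bfT^{-}_\hautotheta(\eta_{\sfposet(\tau_1^{\lambda^\rmt/\mu^\rmt})})$ with $\iota_P$ on the nose. This amounts to carefully tracking the signs $(-1)^{\ell(\cdot)}$ and the right-multiplication by $w_0$ in the definitions of $g_1$ and $g_2$, and invoking the endpoint identities $\rmread_{\tau_1}(T_{\lambda^\rmt/\mu^\rmt}) w_0 = \rmread_{\tau_0}(T'_{\lambda/\mu})$ and $\rmread_{\tau_1}(T'_{\lambda^\rmt/\mu^\rmt}) w_0 = \rmread_{\tau_0}(T_{\lambda/\mu})$ noted in the surrounding text to match up the relevant intervals. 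Once the diagrams commute, both parts follow without further computation.
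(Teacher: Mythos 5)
Your plan matches the paper's own construction step for step: part (1) is exactly \cref{lem: radical cond} fed the data from \cref{lem: balproj and read} and transported across the isomorphism $f_P$ coming from \cref{thm: interval descriptrion for regular skew schur poset} and \cref{thm: equiv class X lam mu}; part (2) is the $\bfT^{-}_\hautotheta$-twist of part (1) applied to $\sfposet(\tau_1^{\lambda^\rmt/\mu^\rmt})$, repackaged through $g_1$, $g_2$, and $f_P$. One minor remark on your ``main obstacle'' paragraph: since $\iota_P$ is \emph{defined} as the composite $g_2 \circ \bfT^{-}_\hautotheta(\eta_{\sfposet(\tau_1^{\lambda^\rmt/\mu^\rmt})}) \circ g_1^{-1} \circ f_P^{-1}$, there is no diagram that needs to be checked to commute; the only genuine work is verifying the endpoint identities so that \cite[Theorem 4]{22JKLO} applies to produce the isomorphisms $g_1$ and $g_2$, after which ``injective hull'' is automatically preserved under pre- and post-composition by isomorphisms.
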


Now, we are ready to state the  classification of $\sfM_P$'s for $P \in \sfRSP_n$ up to $H_n(0)$-module isomorphism.

\begin{theorem}\label{thm: classification}
Let $P, Q \in \sfRSP_n$.
Then
\[
\sfM_{P} \cong \sfM_{Q} \quad \text{if and only if} \quad \sh(\tau_P) = \sh(\tau_Q).
\]
\end{theorem}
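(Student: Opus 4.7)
The plan is to handle the two directions separately. The ``if'' direction follows quickly from the machinery already in place: assuming $\sh(\tau_P)=\sh(\tau_Q)$, \cref{thm: equiv class X lam mu} gives $\Sigma_L(P)\Deq\Sigma_L(Q)$, and by \cref{Prop: desc pres isom and 0-Hecke} the witnessing descent-preserving poset isomorphism is automatically a colored digraph isomorphism, which from the defining $H_n(0)$-action of \cref{def: WBI action} lifts to an $H_n(0)$-module isomorphism of the associated weak Bruhat interval modules. Identifying $\sfM_P=\sfB(\Sigma_L(P))$ and $\sfM_Q=\sfB(\Sigma_L(Q))$ then yields $\sfM_P\cong\sfM_Q$.

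For the converse, set $\lambda/\mu=\sh(\tau_P)$ and $\nu/\kappa=\sh(\tau_Q)$ and assume $\sfM_P\cong\sfM_Q$. Since projective covers and injective hulls of a finitely generated module are unique up to isomorphism, \cref{lem: proj cov and inj hull} forces
\[
\bfP_{\balproj(\lambda/\mu)}\cong\bfP_{\balproj(\nu/\kappa)}
\quad\text{and}\quad
\bfP_{\balinj(\lambda/\mu)}\cong\bfP_{\balinj(\nu/\kappa)}.
\]
By \cref{lem: Huang decomp gen Pa}, each $\bfP_{\bal}$ decomposes into pairwise nonisomorphic projective indecomposables indexed by $[\bal]$, and the generalized composition $\bal$ is in turn recoverable from the set $[\bal]$ (its coarsest and finest elements being $\bal_\odot$ and $\bal_\bullet$, from which the positions of the $\star$s can be read off). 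The two displayed isomorphisms therefore yield $\balproj(\lambda/\mu)=\balproj(\nu/\kappa)$ and $\balinj(\lambda/\mu)=\balinj(\nu/\kappa)$.

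The remaining, and in my view principal, obstacle is purely combinatorial: to show that a basic skew partition is determined by the pair $(\balproj(\lambda/\mu),\balinj(\lambda/\mu))$. From $\balproj(\lambda/\mu)$ one reads off the number $k$ of connected components of $\lambda/\mu$ together with the row-length composition of each component in top-to-bottom order; dually, since $\balinj(\lambda/\mu)=\balproj(\lambda^{\rmt}/\mu^{\rmt})^{\rmc\cdot\rmr}$, it encodes the column-length composition of each connected component with the order reversed by the $\rmr$. Matching the $i$-th block of $\balproj$ with the $(k-i+1)$-st block of $\balinj$ recovers both the row-length and column-length compositions of every connected component, and a short induction on the number of rows---using the contiguity of each row and column and the fact that consecutive rows within a component share a column---reconstructs each connected component uniquely from these two compositions. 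This forces $\lambda/\mu=\nu/\kappa$ and completes the proof.
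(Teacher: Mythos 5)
Your proposal is correct and follows essentially the same route as the paper: the ``if'' direction via \cref{thm: equiv class X lam mu} and \cref{Prop: desc pres isom and 0-Hecke}, and the ``only if'' direction via \cref{lem: proj cov and inj hull}, the pairwise nonisomorphy of the $\bfP_{\bal}$'s from \cref{lem: Huang decomp gen Pa}, and the recovery of the basic skew shape from the row and column data encoded by $\balproj$ and $\balinj$. The only difference is that you spell out the final combinatorial reconstruction (and the recovery of $\bal$ from $[\bal]$) in more detail than the paper, which simply asserts that equal row-length and column-length data force $\lambda/\mu=\nu/\kappa$.
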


\begin{proof}
The ``if'' part follows from \cref{Prop: desc pres isom and 0-Hecke} and \cref{thm: equiv class X lam mu}.
To prove the ``only if'' part, suppose that $\sfM_{P} \cong \sfM_{Q}$.
For simplicity, let $\lambda/\mu = \sh(\tau_{P})$ and $\nu/\kappa = \sh(\tau_{Q})$.
By \cref{lem: proj cov and inj hull}, 
$\bfP_{\balproj(\lambda/\mu)} \cong \bfP_{\balproj(\nu/\kappa)}$ and $\bfP_{\balinj(\lambda/\mu)} 
\cong \bfP_{\balinj(\nu/\kappa)}$,
and therefore $\balproj(\lambda/\mu) = \balproj(\nu/\kappa)$ and $\balinj(\lambda/\mu) = \balinj(\nu/\kappa)$.
Since $\balproj(\lambda/\mu) = \balproj(\nu/\kappa)$, the number of boxes in the same row of $\tyd(\lambda/\mu)$ and $\tyd(\nu/\kappa)$ are the same.
Similarly, since $\balinj(\lambda/\mu) = \balinj(\nu/\kappa)$, 
the number of boxes in the same column of $\tyd(\lambda/\mu)$ and $\tyd(\nu/\kappa)$ are same.
Thus, we have  $\lambda/\mu = \nu/\kappa$.
\end{proof}

Note that \cref{thm: classification} is the classification theorem concerning the class of $H_n(0)$-modules $\{\sfM_P \mid P \in \sfRSP_n\}$. Consequently, a natural question arises: can this theorem be extended to the classes $\{\sfM_P \mid P \in \sfRP_n\}$ or $\{\sfM_P \mid P \in \sfSP_n\}$? This question appears to be highly nontrivial, as it involves the investigation of a broader set of modules. 
As a specific instance, let us examine the characterization of posets $Q \in \sfRP_n$ such that $\sfM_Q \cong \sfM_P$ when $P \in \sfRSP_n$. This problem can be readily addressed by assuming the validity of the following conjecture due to Stanley.

\begin{conjecture}{\rm (\cite[p. 81]{72Stanley})}\label{conj: Stanley}
For $P \in \sfP_n$, if $K_P$ is symmetric, then $P \in \sfSP_n$.
\end{conjecture}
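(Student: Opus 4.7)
The plan would be to work contrapositively: assume $P \in \sfP_n \setminus \sfSP_n$ and try to produce an obstruction to symmetry of $K_P$. Alternatively, one could work directly from the hypothesis of symmetry by first establishing a canonical Schur expansion $K_P = \sum_{\lambda/\mu} c_{\lambda/\mu}(P) s_{\lambda/\mu}$, and then attempting to extract from it a distinguished skew partition that structurally encodes $P$.

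First, I would translate the statement through \cref{thm: char of Mp}. Since $\ch([\sfM_P]) = \uppsi(K_P)$, symmetry of $K_P$ is equivalent to $\ch([\sfM_P])$ being symmetric. Thus the conjecture reduces to: if $\ch([\sfM_P])$ is symmetric, then $\sfM_P \cong \sfM_Q$ for some $Q \in \sfSP_n$, because \cref{thm: char of Mp}(2) guarantees $\ch([\sfM_Q]) = s_{\sh(\tau_Q)}$. Next I would try to recover a candidate skew shape directly from the module itself: using the projective cover and injective hull machinery of \cref{lem: proj cov and inj hull}, one could attempt to attach row-content and column-content generalized compositions $\balproj$ and $\balinj$ to any such $P$, and hope that these determine a skew partition $\lambda/\mu$ together with a Schur labeling $\tau$ of shape $\lambda/\mu$ such that $\sfposet(\tau) \cong P$ as a poset.

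The principal obstacle is the gap between combinatorial symmetry of a quasisymmetric generating function and a rigid shape-theoretic realization: no local criterion on the Hasse diagram of $P$ detecting symmetry of $K_P$ is known, and the examples in \cite{06McNamara} show that even subtle violations of the Schur labeled skew shape structure can preserve substantial symmetry of $K_P$. A second obstacle is that the representation-theoretic machinery of \cref{sec: classification of X} is tuned to $\sfRSP_n$: for arbitrary $P \in \sfP_n$ with symmetric $K_P$, neither a well-behaved Schur labeling nor the weak Bruhat interval structure of \cref{thm: left interval and regular} is available a priori, so one would have to introduce new invariants of $\sfM_P$ sensitive enough to distinguish non-Schur posets from genuine skew shape posets. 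Given that this conjecture has been verified for $|P| \le 8$ but has resisted proof for over fifty years, I would expect any successful approach to combine the quasisymmetric/Hopf-algebraic framework used throughout this paper with a genuinely new combinatorial rigidity statement, rather than proceeding by direct induction on $|P|$ or by a straightforward manipulation of Schur expansions.
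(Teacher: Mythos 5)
This statement is an open conjecture; the paper does not prove it, only cites \cite{72Stanley} and notes it has been verified for $|P| \le 8$. You correctly recognize this and decline to fabricate a proof, which is the right call.

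A few remarks on the reasoning you do offer. Your translation through $\ch$ is sound as far as it goes: since $\uppsi$ restricts to an involution on $\Sym$, symmetry of $K_P$ is indeed equivalent to symmetry of $\ch([\sfM_P])$. However, your claimed reduction to ``$\sfM_P \cong \sfM_Q$ for some $Q \in \sfSP_n$'' is strictly weaker than the conjecture, which asserts $P \in \sfSP_n$ as a poset, not merely that $\sfM_P$ lands in the right isomorphism class. Two non-isomorphic posets can yield isomorphic $0$-Hecke modules, so recovering $P$ itself from $\sfM_P$ is an additional rigidity step that your sketch does not address. You also underplay a second gap: the $\balproj$/$\balinj$ invariants of \cref{lem: proj cov and inj hull} are only defined once you already know $\sfM_P$ has the weak Bruhat interval structure coming from regularity of $P$, which is unavailable for arbitrary $P \in \sfP_n$ with symmetric $K_P$ (symmetry of $K_P$ does not imply regularity). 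Your identification of the core obstruction — the lack of a local combinatorial criterion on the Hasse diagram detecting symmetry — is accurate and is indeed the reason this has remained open.
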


In more detail, by combining Stanley's conjecture with \cref{thm: char of Mp}(1), we can deduce that $\ch([\sfM_Q])$ is not symmetric, and as a consequence, $\sfM_Q \not\cong \sfM_P$ unless $Q \in \sfSP_n$. This observation leads to the following conclusion from \cref{thm: classification}:
\[
\{Q \in \sfRP_n \mid \sfM_Q \cong \sfM_P\} 
= 
\{Q \in \sfRSP_n \mid \sh(\tau_P) = \sh(\tau_Q) \}.
\]
If the shape of $\tau_P$ is non-skew, it is indeed possible to derive this conjectural identity without depending on the validity of Stanley's conjecture (see \cref{prop: small evi for conj}). 
However, tackling the general case remains beyond our current comprehension.
For further discussions on classifications, refer to \cref{subsec: classification}.

\section{A characterization of regular Schur labeled skew shape posets $P$ and distinguished filtrations of $\sfM_P$}\label{Sec: filtration}

In this section, we prove that a poset $P \in \sfP_n$ is a regular Schur labeled skew shape poset if and only if $\Sigma_L(P)$ is dual plactic closed (\cref{thm: RSP iff DPC}).
Then, by considering the dual plactic closedness of $\Sigma_L(P)$, we construct   filtrations 
\[
0 =: M_0  \subsetneq M_1 \subsetneq M_2 \subsetneq \cdots \subsetneq M_l := \sfM_P
\]
such that $\ch([M_{k}/M_{k-1}])$ is a Schur function for all $1 \leq k \leq l$
(\cref{thm: filt of X lam mu}).

\subsection{A characterization of regular Schur labeled skew shape posets}\label{subsec: char of RSPn}

Let $P \in \sfP_n$ and let  $\Sigma_R(P) :=\{\gamma^{-1} \mid \gamma \in \Sigma_L(P)\}$.
In~\cite[Fact 1]{93M}, it was stated that if $P \in \sfSP_n$, then  $\Sigma_R(P)$ is plactic-closed. 
This, however, is not true.
For instance, considering the case where $\lambda/\mu = (3,2)/(2)$ and $\tau = \begin{ytableau}
\none & \none & 2\\
3 & 1
\end{ytableau} \in \sfS(\lambda/\mu)$,
we have
\[
\Sigma_R(\sfposet(\tau)) = \{312, 231, 321\}, 
\]
which is not plactic-closed.

The purpose of this subsection is to  prove that $P \in \sfRSP_n$ if and only if $\Sigma_R(P)$ is plactic-closed.
We begin by providing  background knowledge relevant to the plactic congruence.
For instance, see \cite{05BB, 97Ful, 91Sag, 99Stanley}.

For $\sigma \in \SG_n$ and $1 < i < n$, 
we write $\sigma \overset{1}{\cong} \sigma s_i$ if
\[
\sigma(i) < \sigma(i-1) < \sigma(i+1) 
\quad \text{or} \quad 
\sigma(i+1) < \sigma(i-1) < \sigma(i).
\]
And, we write $\sigma \overset{2}{\cong} \sigma s_{i-1}$ if
\[
\sigma(i-1) < \sigma(i+1) < \sigma(i) 
\quad \text{or} \quad 
\sigma(i) < \sigma(i+1) < \sigma(i-1).
\]
The \emph{Knuth equivalence} (or \emph{plactic congruence}) is an equivalence relation $\Keq$ on $\SG_n$ defined by $\sigma \Keq \rho$ if and only if there are $\gamma_1,\gamma_2, \ldots, \gamma_k \in \SG_n$ such that
\[
\sigma = \gamma_1 \overset{a_1}{\cong} \gamma_2 \overset{a_2}{\cong} \cdots \overset{a_{k-1}}{\cong} \gamma_k = \rho,
\]
where $a_1,a_2,\ldots a_{k-1} \in \{1,2\}$.
A subset $S$ of $\SG_n$ is called \emph{plactic-closed} if for any $\sigma \in S$, every $\rho \in \SG_n$ with $\rho \Keq \sigma$ is also an element of $S$,
in other words, $S$ is a union of equivalence classes under $\Keq$.

The \emph{dual Knuth equivalence} (or \emph{dual plactic congruence}) is an equivalence relation $\dKeq$ on $\SG_n$ defined by 
\[
\sigma \dKeq \rho
\quad \text{if and only if} \quad \sigma^{-1} \Keq \rho^{-1}.
\]
A subset $S$ of $\SG_n$ is called \emph{dual plactic-closed} if for any $\sigma \in S$, every $\rho \in \SG_n$ with $\rho \dKeq \sigma$ is also an element of $S$,
in other words, $S$ is a union of equivalence classes under $\dKeq$.

The Knuth and dual Knuth equivalences are closely related to the \emph{Robinson–Schensted correspondence}, which is a one-to-one correspondence between $\SG_n$ and $\bigcup_{\lambda \vdash n} \SYT(\lambda) \times \SYT(\lambda)$.
For $\sigma \in \SG_n$,
we use the notation $(\Ptab(\sigma), \Qtab(\sigma))$ to represent the image of $\sigma$ under this bijection.
We call $\Ptab(\sigma)$ and $\Qtab(\sigma)$ as the \emph{insertion tableau} and \emph{recording tableau} of $\sigma$, respectively.
It is well known that $\Ptab(\sigma) = \Qtab(\sigma^{-1})$ 
and
\begin{align*}
\sigma \Keq \rho 
\quad \text{if and only if} \quad 
\Ptab(\sigma) = \Ptab(\rho)
\quad \text{for $\sigma, \rho \in \SG_n$}.
\end{align*}
Putting these together, one can easily derive that
\begin{align*}
\sigma \dKeq \rho 
\quad \text{if and only if} \quad 
\Qtab(\sigma) = \Qtab(\rho)
\quad \text{for $\sigma, \rho \in \SG_n$}.
\end{align*}
For a subset $S$ of $\SG_n$, $S$ is plactic-closed if and only if $\{\gamma^{-1} \mid \gamma \in S\}$ is dual plactic-closed.
Based on this fact, we will consider the claim that $\Sigma_R(P)$ is plactic-closed and the claim that $\Sigma_L(P)$ is dual plactic-closed to be identical.

Let us collect the terminologies and lemmas necessary for the proof of the main result of this subsection.
Let $T$ be a standard Young tableau of skew shape.
Denote by $\Rect(T)$ the \emph{rectification of $T$}, that is, the unique standard Young tableau of partition shape obtained by applying jeu de taquin slides to $T$ (see \cite[Section 1.2]{97Ful}).
Then, 
\begin{align}\label{eq: Ptab read and Rect}
\Rect(T) = 
\Ptab(\rmread_{\tau_0}(T) w_0) 
\quad \text{for any $T \in \SYT(\lambda/\mu)$.}
\end{align}
Define $T^\rmt$ to be the tableau obtained from $T$ by flipping it along its main diagonal.

\begin{lemma}\label{lem: admissible same Q}
Let $\lambda / \mu$ be a skew partition and $T \in \SYT(\lambda/\mu)$.
Then, 
\[
\Ptab(\rmread_\tau(T)) = \Rect(T)^\rmt
\quad
\text{for any $\tau \in \sfDS(\lambda/\mu)$.}
\]
\end{lemma}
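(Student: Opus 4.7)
The case $\tau=\tau_0$ is immediate. Combining the identity \eqref{eq: Ptab read and Rect}, namely $\Rect(T)=\Ptab(\rmread_{\tau_0}(T)\,w_0)$, with the classical Sch\"utzenberger fact that right-multiplication by $w_0$ transposes the insertion tableau (i.e.\ $\Ptab(\sigma w_0)=\Ptab(\sigma)^{\rmt}$), applied to $\sigma=\rmread_{\tau_0}(T)$, yields $\Ptab(\rmread_{\tau_0}(T))=\Rect(T)^{\rmt}$. So the lemma holds for $\tau_0$.

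For a general $\tau\in\sfDS(\lambda/\mu)$, it suffices to establish the Knuth equivalence $\rmread_\tau(T)\Keq\rmread_{\tau_0}(T)$, since two permutations with the same $\Ptab$ are precisely Knuth equivalent. By Lemma~\ref{lem: fixed T int}, the set $\{\rmread_\sigma(T):\sigma\in\sfDS(\lambda/\mu)\}$ coincides with the right weak Bruhat interval $[\rmread_{\tau_0}(T),\rmread_{\tau_1}(T)]_R$, so $\rmread_\tau(T)$ and $\rmread_{\tau_0}(T)$ are joined by a chain of covers in this interval. A cover arises from two distinguished Schur labelings $\sigma,\sigma'$ differing by the transposition of the consecutive values $i,i+1$ at cells $B=\sigma^{-1}(i),\,B'=\sigma^{-1}(i+1)$. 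Combining the Schur-labeling axioms with the distinguishedness of both $\sigma$ and $\sigma'$ forces $B$ and $B'$ to lie in different rows, different columns, and in NW--SE position, and the length condition on the cover gives $T_B<T_{B'}$.

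The main step is to verify that each such cover preserves the Knuth class, i.e.\ $\rmread_\sigma(T)\Keq\rmread_{\sigma'}(T)$. In the favourable case where the cell $\sigma^{-1}(i-1)$ (resp.\ $\sigma^{-1}(i+2)$) is forced by distinguishedness into the geometric region bounded by $B$ and $B'$, the standardness of $T$ then forces the corresponding $T$-entry to lie strictly between $T_B$ and $T_{B'}$, and the cover is itself a type-1 (resp.\ type-2) elementary Knuth transformation. In the remaining configurations the Knuth equivalence requires a longer chain of Knuth moves, some of which may temporarily leave $[\rmread_{\tau_0}(T),\rmread_{\tau_1}(T)]_R$; I plan to exhibit such a chain explicitly by exploiting the placement of the cells with $\sigma$-value close to $i$, which is tightly constrained by distinguishedness together with the standard-tableau constraints on~$T$.

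The main obstacle is producing these Knuth chains uniformly across the two orientations of $(B,B')$. The SE case can be reduced to the NW case by interchanging the roles of $\sigma$ and $\sigma'$ within the cover, and the boundary cases $i=1$ or $i+1=n$ are handled by invoking whichever of $\sigma^{-1}(i-1),\sigma^{-1}(i+2)$ is defined. Stitching these equivalences along the chain of covers connecting $\rmread_{\tau_0}(T)$ to $\rmread_\tau(T)$ then completes the proof.
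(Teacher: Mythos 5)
Your reduction to showing $\Ptab(\rmread_\tau(T))=\Ptab(\rmread_{\tau_0}(T))$ and the identification $\Ptab(\rmread_{\tau_0}(T))=\Rect(T)^\rmt$ match the paper's setup, and you correctly use \cref{lem: fixed T int} to identify $\{\rmread_\sigma(T):\sigma\in\sfDS(\lambda/\mu)\}$ with the interval $[\rmread_{\tau_0}(T),\rmread_{\tau_1}(T)]_R$. From there, however, your route diverges from the paper's, and you have a genuine gap. You attempt to verify Knuth equivalence \emph{cover by cover} along the interval by showing each step is an elementary Knuth transformation; you yourself observe that this fails in general (the cell carrying value $i-1$ or $i+2$ need not have its $T$-entry wedged between $T_B$ and $T_{B'}$), and in those cases you only state that you ``plan to exhibit'' a longer Knuth chain. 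That chain is the whole content of the lemma: two permutations in the same Knuth class need not be connected by a chain of elementary moves \emph{staying inside the interval}, and a cover $\pi\to\pi s_i$ need not be an elementary Knuth move even when the two ends have the same $\Ptab$. The ensuing casework on the positions of the cells with $\sigma$-values near $i$, together with the constraints from $T$ being standard, is unbounded as stated and is precisely the difficulty your outline leaves open. The proof is therefore incomplete.

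By contrast, the paper avoids the local casework entirely with a global monotonicity argument: it invokes Ta\c{s}kin's result (that for $\sigma\preceq_R\rho$ one has $\Ptab(\sigma)=\Ptab(\rho)$ or $\sh(\Ptab(\rho))\triangleleft\sh(\Ptab(\sigma))$), observes that $\Ptab(\rmread_{\tau_0}(T))=\Ptab(\rmread_{\tau_1}(T))$ by a transpose computation, and concludes that $\Ptab$ is constant along the whole chain $\rmread_{\tau_0}(T)\preceq_R\rmread_\tau(T)\preceq_R\rmread_{\tau_1}(T)$ since the shape can only weakly decrease in dominance order and the two endpoints already coincide. If you wish to pursue your local approach you would need to actually construct the Knuth chains (allowing moves that leave the interval), which seems considerably harder than the dominance-order argument; otherwise, importing Ta\c{s}kin's proposition is the cleaner fix.
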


\begin{proof}
It is well known that
\begin{align}\label{eq: Ptab and trans}
\Ptab(\sigma w_0) = \Ptab(\sigma)^\rmt
\quad \text{for any $\sigma \in \SG_n$}    
\end{align}
(for instance, see \cite[Theorems 3.2.3]{91Sag}).
Therefore, 
due to \cref{eq: Ptab read and Rect},
the assertion can be verified by showing  that 
\begin{align}\label{eq: wts in Lem}
\Ptab(\rmread_{\tau}(T)) = \Ptab(\rmread_{\tau^{\lambda/\mu}_0}(T))
\quad 
\text{
for any $\tau \in \sfDS(\lambda/\mu)$.}
\end{align}

Applying Ta\c{s}kin's result \cite[Proposition 3.2.5]{06Taskin} to the weak order\footnote{This order was originally defined in ~\cite[2.5.1]{04Mel}, where it is called the \emph{induced Duflo order}.}  on $\SYT_n$ given in \cite[Definition 3.1.3]{06Taskin}, we derive that for $\sigma, \rho \in \SG_n$ with $\sigma \preceq_R \rho$,
\begin{align}\label{eq: bruhat order and shape}
\Ptab(\sigma) = \Ptab(\rho)
\quad \text{or} \quad
\sh(\Ptab(\rho)) \triangleleft \sh(\Ptab(\sigma)).
\end{align}
Here, $\trianglelefteq$ denotes the dominance order on the set of partitions of $n$.
On the other hand,   \cref{lem: fixed T int} says that  
\begin{align}\label{eq: right order on same tab}
\rmread_{\tau_0^{\lambda/\mu}}(T) \preceq_R \rmread_{\tau}(T) \preceq_R \rmread_{\tau_1^{\lambda/\mu}}(T) \quad \text{for $\tau \in \sfDS(\lambda/\mu)$}.
\end{align}
Note that 
\[
\Ptab(\rmread_{\tau^{\lambda^\rmt/\mu^\rmt}_0}(T^\rmt)w_0)
\underset{\cref{eq: Ptab read and Rect}}{=} \Rect(T^\rmt)
= \Rect(T)^\rmt
\underset{\cref{eq: Ptab read and Rect}}{=} \Ptab(\rmread_{\tau^{\lambda/\mu}_0}(T)w_0)^\rmt.
\]
Since $\rmread_{\tau^{\lambda/\mu}_1}(T) = 
\rmread_{\tau^{\lambda^\rmt/\mu^\rmt}_0}(T^\rmt) w_0$,
it follows from ~\cref{eq: Ptab and trans} that 
\begin{align}\label{eq: same P tau zero one}
\Ptab(\rmread_{\tau^{\lambda/\mu}_1}(T)) 
= \Ptab(\rmread_{\tau^{\lambda/\mu}_0}(T)).
\end{align}
Now, the equality in \cref{eq: wts in Lem} is obtained by combining ~\cref{eq: bruhat order and shape}, ~\cref{eq: right order on same tab}, and \cref{eq: same P tau zero one}.
\end{proof}

We introduce two important results due to Malvenuto \cite{93M}.

\begin{lemma}{\rm(\cite[Theorem 1]{93M})}\label{lem: thm of Mal}
For $P \in \sfP_n$, if $\Sigma_R(P)$ is plactic-closed, then $P$ is a Schur labeled skew shape poset.
\end{lemma}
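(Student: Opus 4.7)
The plan is to combine the RSK parametrization of Knuth classes with the fundamental identity
\[
\sum_{\gamma \in C_T} F_{\comp(\Des_R(\gamma))} \;=\; s_{\sh(T)}, \qquad C_T := \{\gamma \in \SG_n \mid \Ptab(\gamma) = T\},
\]
valid for any $T \in \SYT_n$. Assuming $\Sigma_R(P)$ is plactic-closed, I would first decompose it as $\Sigma_R(P) = \bigsqcup_{T \in \mathcal{T}} C_T$ for some $\mathcal{T} \subseteq \SYT_n$. Summing the identity above over $\mathcal{T}$ then gives
\[
K_P \;=\; \sum_{T \in \mathcal{T}} s_{\sh(T)},
\]
so $K_P$ is symmetric with a nonnegative integer Schur expansion.

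The next step is to pin down $\mathcal{T}$. I would argue that $\mathcal{T}$ is precisely the image of $\SYT(\lambda/\mu) \to \SYT_n,\ T \mapsto \Rect(T)$, for a single basic skew shape $\lambda/\mu$, so that $K_P = s_{\lambda/\mu}$. The input here is the well-known identity $s_{\lambda/\mu} = \sum_{T \in \SYT(\lambda/\mu)} s_{\sh(\Rect(T))}$ combined with the linear independence of Schur functions: since $\Sigma_R(P)$ is the set of linear extensions of a poset, it must be connected under swaps of adjacent incomparable elements, which via RSK produces exactly the dual-Knuth orbits whose recording tableaux enumerate $\SYT(\lambda/\mu)$. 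Here one must invoke the fact that a plactic-closed subset of $\SG_n$ whose fundamental expansion is Schur-positive and whose ``recording fiber'' is closed under jeu de taquin slides is forced to be $\bigsqcup_{T \in \SYT(\lambda/\mu)} \{\gamma : \Ptab(\gamma) = \Rect(T)\}$ for a single skew shape $\lambda/\mu$.

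Finally, I would construct a Schur labeling $\tau$ of shape $\lambda/\mu$ satisfying $\sfposet(\tau) = P$. The labeling is defined by placing $i \in [n]$ into the cell $c \in \tyd(\lambda/\mu)$ such that $c$ holds the entry $i$ in $\Qtab(\gamma^{-1})$ for one (equivalently any) fixed $\gamma \in \Sigma_R(P)$; equivalently, by reading off the cell positions via $\rmread_{\tau_0}$ applied to the unique standard Young tableau of shape $\lambda/\mu$ matching the extremal element of $\Sigma_R(P)$. The main obstacle is verifying that (i) the resulting filling is row-decreasing and column-increasing, and (ii) the induced order $\preceq_\tau$ on $[n]$ coincides with $\preceq_P$; I would handle (i) by checking the behavior of RSK under the adjacent transpositions that generate $\Sigma_R(P)$ as a right weak Bruhat convex set, and (ii) by translating each cover relation $i \precdot_P j$ into a forbidden adjacent swap in every $\gamma \in \Sigma_R(P)$, matching it to a weakly upper-left placement of the cell of $i$ relative to that of $j$ in $\tyd(\lambda/\mu)$.
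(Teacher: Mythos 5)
The paper does not prove this lemma; it is quoted verbatim from Malvenuto \cite[Theorem 1]{93M}, so there is no in-paper proof to compare your argument against. Evaluating your proposal on its own terms, there is a genuine gap at the second step. The displayed ``well-known identity''
\[
s_{\lambda/\mu} \;=\; \sum_{T \in \SYT(\lambda/\mu)} s_{\sh(\Rect(T))}
\]
is false. The jeu-de-taquin form of the Littlewood--Richardson rule says that $c^{\lambda}_{\mu\nu}$ equals the number of $T \in \SYT(\lambda/\mu)$ rectifying to one \emph{fixed} $T_0 \in \SYT(\nu)$; since each of the $|\SYT(\nu)|$ choices of $T_0$ is hit equally often, your right-hand side actually equals $\sum_{\nu} c^{\lambda}_{\mu\nu}\,|\SYT(\nu)|\,s_\nu$, not $s_{\lambda/\mu}=\sum_\nu c^\lambda_{\mu\nu} s_\nu$. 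The proposed characterization of $\mathcal{T}$ as the full image of $\Rect$ inherits this error: if $\mathcal{T}$ were that image, then $|\Sigma_R(P)| = \sum_{T \in \mathcal{T}} |C_T| = \sum_{\nu : c^\lambda_{\mu\nu}>0} |\SYT(\nu)|^2$, whereas for any $P$ with Schur labeling of shape $\lambda/\mu$ one has $|\Sigma_R(P)| = |\SYT(\lambda/\mu)| = \sum_\nu c^\lambda_{\mu\nu}|\SYT(\nu)|$. These counts already disagree for $\lambda/\mu=(2,1,1)/(1)$ (three versus five). What $\mathcal{T}$ must really be is a transversal of the $\Rect$-fibers, containing exactly $c^\lambda_{\mu\nu}$ tableaux of each shape $\nu$, not all of $\SYT(\nu)$.

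Independently of that arithmetic, the step that pins $\mathcal{T}$ to a single skew shape is stated as an intention rather than proved. Appealing to ``connectedness under swaps of adjacent incomparable elements'' is the Bj\"orner--Wachs convexity of $\Sigma_R(P)$, but you give no mechanism forcing a plactic-closed, convex set to realize a skew-shape pattern rather than, say, a disjoint union of several such patterns. This is precisely where the content of Malvenuto's theorem lives: having $K_P$ Schur-positive is \emph{not} enough, since deducing $P\in\sfSP_n$ even from mere symmetry of $K_P$ is Stanley's open conjecture, and you have not used the plactic-closed hypothesis structurally beyond upgrading symmetry to Schur-positivity. The construction of a Schur labeling in your final paragraph rests on both of these unjustified steps, so the argument as written does not establish the lemma.
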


For $P \in \sfP_n$, we say a subposet $Q$ of $P$ is \emph{convex} if $Q$ satisfies the property that for any $x \in P$ if there exist $y_1, y_2 \in Q$ such that $y_1 \preceq_P x \preceq_P y_2$, then $x \in Q$.
For a subposet $Q = \{i_1 < i_2 < \cdots <i_{|Q|} \}$ of $P \in \sfP_n$, the \emph{standardization of $Q$}, denoted by $\mathsf{st}(Q)$, is the poset obtained from $Q$ by replacing $i_j$ with $j$ for $1 \leq j \leq |Q|$.

\begin{lemma}{\rm(\cite[Corollary 1]{93M})}\label{lem: cor of Mal}
Let $P \in \sfP_n$ such that $\Sigma_R(P)$ is plactic-closed.
For any convex subposet $Q$ of $P$, $\Sigma_R(\mathsf{st}(Q))$ is plactic-closed.
\end{lemma}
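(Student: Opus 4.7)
The plan is to establish Knuth closure of $\Sigma_R(\mathsf{st}(Q))$ by reducing a single elementary Knuth move in $\SG_{|Q|}$ to a corresponding Knuth move on a suitable lift in $\SG_n$, and then exploiting the plactic closedness of $\Sigma_R(P)$. Write $Q = \{q_1 < q_2 < \cdots < q_k\}$ and let $f \colon Q \to [k]$ be the order-preserving bijection realizing the standardization. Fix $\pi \in \Sigma_R(\mathsf{st}(Q))$ and assume $\pi'$ arises from $\pi$ by the Knuth move of pattern $acb \leftrightarrow cab$ at three consecutive positions $i, i+1, i+2$ with $a < b < c$ in $[k]$ (the other pattern $bca \leftrightarrow bac$ is handled analogously). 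A direct comparison of positions before and after the swap shows that $\pi' \notin \Sigma_R(\mathsf{st}(Q))$ happens exactly when $a \preceq_{\mathsf{st}(Q)} c$, equivalently $f^{-1}(a) \preceq_P f^{-1}(c)$. It therefore suffices to derive a contradiction from this last assumption.

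Assume $f^{-1}(a) \preceq_P f^{-1}(c)$. The heart of the plan is to construct a lift $\widetilde{\pi} \in \Sigma_R(P)$ whose $Q$-subword (the subsequence of entries belonging to $Q$, read left to right) reproduces the $Q$-sequence $q_{\pi(1)}\,q_{\pi(2)}\cdots q_{\pi(k)}$ encoded by $\pi$, and in which the three entries $f^{-1}(a), f^{-1}(c), f^{-1}(b)$ occupy three consecutive positions of $\widetilde{\pi}$. The existence of such a lift is guaranteed by the convexity of $Q$: were some $y \in P \setminus Q$ required to lie between two consecutive $Q$-entries of $\widetilde{\pi}$ in every linear extension of $P$ extending the fixed $Q$-order, then $q_{\pi(j)} \preceq_P y \preceq_P q_{\pi(j+1)}$ would follow, contradicting $y \notin Q$. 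Moreover, convexity implies that each $y \in P \setminus Q$ has either no $Q$-predecessor or no $Q$-successor in $P$, so a topological sort on $P \setminus Q$ that inserts each $y$ either to the left or to the right of the designated three-position block yields the desired $\widetilde{\pi}$.

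Because $f$ is order-preserving, the three consecutive entries $f^{-1}(a), f^{-1}(c), f^{-1}(b)$ of $\widetilde{\pi}$ form the Knuth pattern $a' c' b'$ in $\SG_n$ with $a' := f^{-1}(a) < b' := f^{-1}(b) < c' := f^{-1}(c)$. Applying the Knuth move $a' c' b' \mapsto c' a' b'$ produces $\widetilde{\pi}' \Keq \widetilde{\pi}$, and the plactic closedness of $\Sigma_R(P)$ forces $\widetilde{\pi}' \in \Sigma_R(P)$. However, $\widetilde{\pi}'$ now places $f^{-1}(c)$ strictly before $f^{-1}(a)$, contradicting $f^{-1}(a) \preceq_P f^{-1}(c)$, since any element of $\Sigma_R(P)$ must list $P$-predecessors before $P$-successors. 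This contradiction establishes $\pi' \in \Sigma_R(\mathsf{st}(Q))$, hence the plactic closedness of $\Sigma_R(\mathsf{st}(Q))$. The main obstacle is the lift construction in the second paragraph: although convexity immediately rules out any single $y \in P \setminus Q$ being trapped in the $P$-interval between two consecutive $Q$-entries, one must additionally verify that chains of $P$-relations among elements of $P \setminus Q$ do not conspire to force such a trapping, which the topological sort handles by exploiting the dichotomy---provided by convexity---that each $y \in P \setminus Q$ has all its $Q$-comparabilities on a single side.
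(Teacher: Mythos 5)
Your proposal is correct, but note that the paper itself offers no proof of this statement: it is quoted verbatim from Malvenuto \cite[Corollary 1]{93M}, so there is no in-paper argument to compare against. Your self-contained argument checks out. The reduction to a single elementary Knuth move is valid: since only the relative order of the two swapped letters $a$ and $c$ changes, $\pi'$ fails to lie in $\Sigma_R(\mathsf{st}(Q))$ precisely when the swapped pair is comparable in $\mathsf{st}(Q)$ in the order it had in $\pi$ (for $acb\to cab$ this is $a \preceq_{\mathsf{st}(Q)} c$; for the reverse move it is $c \preceq_{\mathsf{st}(Q)} a$, and your mechanism handles that case identically). The heart of the matter, the lift, is also sound: convexity gives the dichotomy that no $y \in P\setminus Q$ can have both a $Q$-element below it and a $Q$-element above it, and placing all elements of $P\setminus Q$ with a $Q$-element below them after the entire $Q$-block and all remaining elements before it (each side ordered by any linear extension of the induced subposet) produces a linear extension of $P$ in which $Q$ appears as a contiguous block in the prescribed order; convexity is exactly what rules out a relation from the right part into $Q$ or into the left part. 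Since $f^{-1}$ is order-preserving on labels, the lifted triple is again a Knuth pattern, the move transports, plactic closedness of $\Sigma_R(P)$ keeps the image inside $\Sigma_R(P)$, and the reversed order of $f^{-1}(a)$ and $f^{-1}(c)$ contradicts their assumed comparability. Two cosmetic remarks: the counterfactual sentence about a $y$ ``required to lie between two consecutive $Q$-entries in every linear extension'' is not a proof on its own (the implication from unliftability to the two comparabilities is the nontrivial direction) and is in any case superfluous, since the explicit block construction you give afterwards already establishes existence; and it is worth stating explicitly that the side assignment in your topological sort is consistent with relations inside $P\setminus Q$ (if $y_1\preceq_P y_2$ and $y_1$ is forced to the right, then so is $y_2$), which is immediate but is the one interaction your write-up leaves tacit.
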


Now, we are ready to prove the main result of this subsection.

\begin{theorem}\label{thm: RSP iff DPC}
For $P \in \sfP_n$, $P$ is a regular Schur labeled skew shape poset if and only if $\Sigma_L(P)$ is dual plactic-closed.
\end{theorem}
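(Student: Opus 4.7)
The plan is to translate dual Knuth moves on permutations into combinatorial moves on standard Young tableaux through the reading-word bijection $T \mapsto \rmread_{\tau_P}(T)$ from \cref{lem: identification of P and S}. A swap of values $i,i+1$ in $\sigma = \rmread_{\tau_P}(T)$ corresponds to a swap of the entries $i,i+1$ in $T$, so everything reduces to determining when elementary dual Knuth moves on $\sigma$ match ``legal'' swaps in $T$, meaning swaps of entries $i, i+1$ occupying distinct rows and distinct columns of $T$.

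For the forward direction, I would assume $P \in \sfRSP_n$, so $\tau_P \in \sfDS(\lambda/\mu)$ by \cref{lem: characterization: regular} and $\Sigma_L(P) = \rmread_{\tau_P}(\SYT(\lambda/\mu))$. The central claim to establish is that whenever an elementary dual Knuth move swapping $i, i+1$ is valid on $\rmread_{\tau_P}(T)$, the entries $i, i+1$ lie in distinct rows and distinct columns of $T$; this suffices, as swapping them then yields another SYT whose reading word equals $\sigma'$. I will argue contrapositively: supposing $i, i+1$ share a row of $T$ (the column case being symmetric), set $a = \tau_P(T^{-1}(i))$ and $b = \tau_P(T^{-1}(i+1))$, so that $a > b$ by the row-decreasing property of $\tau_P$. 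A D1 move would demand $\tau_P(T^{-1}(i-1)) \in (b, a)$, and the distinguished property restricts $T^{-1}(i-1)$ to one of two ``extreme'' regions relative to $T^{-1}(i)$ and $T^{-1}(i+1)$; in each region an auxiliary cell of $\lambda/\mu$ (whose membership is forced by the partition inequalities on $\lambda$) contradicts SYT monotonicity. The D2 move is handled analogously, with one subtle sub-case---$i+2$ sitting directly below $i+1$---ruled out because it would force $\mu_{r+1} > \mu_r$, violating that $\mu$ is a partition.

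For the reverse direction, I would first invoke Malvenuto's theorem \cref{lem: thm of Mal} (dual plactic closure of $\Sigma_L(P)$ is equivalent to plactic closure of $\Sigma_R(P)$) to reduce to $P \in \sfSP_n$, and then via \cref{lem: characterization: regular} aim to show $\tau_P$ is distinguished. Assuming not, pick cells $B$ strictly below and weakly left of $B'$ with $\tau_P(B) < \tau_P(B')$ and $\tau_P(B') - \tau_P(B)$ minimized; I then construct an SYT $T$ and an index $i$ placing $i$ at $B'$, $i+1$ at a neighbor $B''$ of $B'$ (in the same row if possible, otherwise in the same column), and $i+2$ at $B$, filling the remaining cells compatibly with SYT growth. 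The $\tau_P$-values at $B, B', B''$ witness the D2 condition on $\sigma := \rmread_{\tau_P}(T)$, so the swap of $i, i+1$ is a valid dual Knuth move on $\sigma$; but swapping $i, i+1$ in $T$ destroys row (respectively column) monotonicity, so $\sigma'$ fails to lie in $\rmread_{\tau_P}(\SYT(\lambda/\mu)) = \Sigma_L(P)$, contradicting dual plactic closure.

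The hardest step will be the SYT construction in the reverse direction, where one must verify that the prescribed local placement of $i, i+1, i+2$ can always be extended to a full SYT; this is a compatibility question between the partial filling and the skew shape. A cleaner alternative is to invoke \cref{lem: cor of Mal}: extract from the minimal bad pair a four-element convex subposet of $P$ whose standardization has relations $3 \prec 1$ and $4 \prec 2$, and check directly that its $\Sigma_L$ equals $\{2413, 3412, 3421, 4231, 4312, 4321\}$, which is not dual plactic-closed since $3421 \dKeq 2431$ (via the D2 swap of $2, 3$, licensed by the position of $4$) while $2431$ is absent. Either approach produces the required contradiction.
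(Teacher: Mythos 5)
Your forward direction (``only if'') is a genuinely different argument from the paper's, and it is sound. You work directly with elementary dual Knuth moves on $\sigma = \rmread_{\tau_P}(T)$, showing that if $i$ and $i+1$ occupy the same row or column of $T$ then the distinguished property of $\tau_P$ together with SYT monotonicity on carefully located auxiliary cells rules out both the D1 and D2 conditions, so that any applicable move corresponds to a legal swap in $T$. The paper instead proves the equality $\Ptab(\rmread_{\tau_P}(T)) = \Ptab(\rmread_{\tau_0}(T))$ via rectification (\cref{lem: admissible same Q}), then transfers dual plactic closure from $\rmread_{\tau_0}(\SYT(\lambda/\mu))$ (Garsia--Remmel) to $\rmread_{\tau_P}(\SYT(\lambda/\mu))$ using Ta\c{s}kin's shape-monotonicity statement \cref{eq: left bruhat order and shape}. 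Your route is more elementary and self-contained; the paper's is shorter because it reuses heavy machinery. (One small expository note: in your D2 sub-case ``$i+2$ directly below $i+1$'', the genuine contradiction is that $(r+1,c)\in\tyd(\lambda/\mu)$ would force $T(r+1,c)=i+1$, clashing with $T(r,c+1)=i+1$; the $\mu_{r+1}>\mu_r$ argument is what rules out $(r+1,c)\notin\tyd(\lambda/\mu)$, so both halves are needed.)

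The reverse direction has a genuine gap. Both of your proposed constructions can fail, and the failure is not a matter of missing details but of nonexistence. Take
\[
\tau_P \;=\; \begin{array}{l}\begin{ytableau}
\none & \none & 2 & 1 \\
\none & \none & 4 \\
\none & 3 \\
5
\end{ytableau}\end{array}
\]
of shape $(4,3,2,1)/(2,2,1)$, whose poset $P$ has relations $2 \prec_P 1$, $2 \prec_P 4$, with $3$ and $5$ isolated. Here $\tau_P$ is non-distinguished, and the unique bad pair (in your sense, with $\tau_P(B') - \tau_P(B)$ minimal) is $B=(3,2)$, $B'=(2,3)$. But $(2,3)$ has no neighbor to its right or below in the diagram, so there is no cell $B''$ at which to place $i+1$ with $T(B')<T(B'')$; your primary SYT construction cannot even start. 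Your alternative also fails: every order relation in $P$ has $2$ as its smaller element, so no four-element convex subposet can have two independent cover relations $3\prec 1$, $4\prec 2$ after standardization. The paper avoids both obstacles by first reformulating non-distinguishedness via \cref{lem: characterization: regular} (some connected component of $\tau_P$ is not filled with consecutive integers), and then extracting a \emph{three}-element convex subposet $Q=\{m_{-1}, m_0+1, m_1\}$ consisting of an adjacent pair inside the broken component together with a single interleaving element from another component; $\Sigma_L(\mathsf{st}(Q))$ is $\{123,132,213\}$ or $\{231,312,321\}$, neither of which is dual plactic-closed, and \cref{lem: cor of Mal} finishes. To repair your proof you would need to replace the four-element gadget with this three-element one, and to invoke the ``consecutive integers per component'' characterization to guarantee its existence.
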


\begin{proof}
To establish ``only if'' part, let $P \in \sfRSP_n$ and $\lambda/\mu = \sh(\tau_P)$.
Due to \cref{lem: identification of P and S}, we have that 
\[\Sigma_L(P) = \rmread_{\tau_P}(\SYT(\lambda/\mu)).
\]
We claim that $\rmread_{\tau_P}(\SYT(\lambda/\mu))$ is dual plactic-closed.

As mentioned in~\cite[Property A]{85GR}, one can easily see that $\rmread_{\tau_0}(\SYT(\lambda/\mu))$ is dual plactic-closed.
\footnote{
\cite[Property A]{85GR} is stated as ``For any skew diagram $D$ the collection $W^{-1}(D)$ is a union of Knuth equivalence classes''.
Following the notation of this paper, $W^{-1}(\tyd(\lambda / \mu)) = \{(\rmread_{\tau_0}(T) w_0)^{-1} \mid T \in \SYT(\lambda/\mu)\}$.
So, \cite[Property A]{85GR} says that the set $\rmread_{\tau_0}(\SYT(\lambda/\mu)) w_0 := \{\rmread_{\tau_0}(T) w_0 \mid T \in \SYT(\lambda/\mu)\}$ is dual plactic-closed.
Although $\rmread_{\tau_0}(\SYT(\lambda/\mu))$ is different from $\rmread_{\tau_0}(\SYT(\lambda/\mu)) w_0$,  the dual plactic closedness of $\rmread_{\tau_0}(\SYT(\lambda/\mu))$ can be proved in the same way as that of $\rmread_{\tau_0}(\SYT(\lambda/\mu)) w_0$.
}
In addition, by \cref{lem: admissible same Q}, we have
\begin{align}\label{eq: admissible same Q}
\Ptab(\rmread_{\tau_P}(T)) = \Ptab(\rmread_{\tau_0}(T)) 
\quad \text{for all $T \in \SYT(\lambda/\mu)$}.
\end{align}
Therefore, given $T \in \SYT(\lambda/\mu)$, if we show that 
\[
\rmread_{\tau_P}(T) \dKeq \rmread_{\tau_P}(U)
\quad \text{
for all $U \in \SYT(\lambda/\mu)$ with $\rmread_{\tau_0}(T) \dKeq \rmread_{\tau_0}(U)$,}
\]
then we have
\[
\{\gamma \in \SG_n \mid \gamma \dKeq \rmread_{\tau_P}(T)\} \subseteq \rmread_{\tau_P}(\SYT(\lambda/\mu)).
\]

Let $T, U \in \SYT(\lambda/\mu)$ with $\rmread_{\tau_0}(T) \dKeq \rmread_{\tau_0}(U)$.
Since $\rmread_{\tau_0}(\SYT(\lambda/\mu))$ is dual plactic-closed, there exist standard Young tableaux $T_0 := T, T_1, \ldots, T_l := U$ of shape $\lambda/\mu$ 
such that for any $1 \leq k \leq l$, 
\begin{align*}
\rmread_{\tau_0}(T_k) \dKeq\rmread_{\tau_0}(T) 
\ \  \text{and} \ \ 
\rmread_{\tau_0}(T_k) = s_{i_k} \rmread_{\tau_0}(T_{k-1}) \ \  \text{for some $i_k \in [n-1]$}. 
\end{align*}
Combining \cref{eq: admissible same Q} with the equality $\sh(\Ptab(\rmread_{\tau_0}(T_k))) = \sh(\Ptab(\rmread_{\tau_0}(T)))$, we have
\begin{align}\label{eq: shape of Ptab of read}
\sh(\Ptab(\rmread_{\tau_P}(T_k))) = \sh(\Ptab(\rmread_{\tau_P}(T)))
\quad \text{for all $1 \le k \le l$.}
\end{align}
Note that \cref{eq: bruhat order and shape} is equivalent with 
for $\sigma, \rho \in \SG_n$ with $\sigma \preceq_L \rho$,
\begin{align}\label{eq: left bruhat order and shape}
\sigma \dKeq \rho
\quad \text{or} \quad
\sh(\Qtab(\rho)) \triangleleft \sh(\Qtab(\sigma)).
\end{align}
Putting \cref{eq: shape of Ptab of read} together with \cref{eq: left bruhat order and shape}, we have 
$\rmread_{\tau_P}(T) \dKeq \rmread_{\tau_P}(U)$.
Since we chose arbitrary $T,U \in \SYT(\lambda/\mu)$, we conclude that $\rmread_{\tau_P}(\SYT(\lambda/\mu))$ is dual plactic-closed.

To establish the ``if'' part of the assertion, we prove the contraposition,
that is, if $P$ is not a regular Schur labeled skew shape poset, then $\Sigma_L(P)$ is not dual plactic-closed.
If $P$ is not a Schur labeled skew shape poset, then \cref{lem: thm of Mal} says that $\Sigma_L(P)$ is not dual plactic-closed.
So, we assume that $P \in \sfP_n$ is a non-regular Schur labeled skew shape poset.

One can easily check that if $n = 1,2,3$, then $\Sigma_L(P)$ is not dual plactic-closed.
Suppose $n > 3$.
Then, by \cref{lem: characterization: regular},  $\tau_P$ is a non-distinguished Schur labeling.
This implies that there exists $k \in \Z_{>0}$ such that $\sfcnt_{k}(\tau_P)$ is not filled with consecutive integers.
Let $k_0$ be the minimum among these integers and let $m_0$ be the minimum element among $m \in \sfcnt_{k_0}(\tau_P)$ such that $m + 1 \notin \sfcnt_{k_0}(\tau_P)$.
Since $\sfcnt_{k_0}(\tau_P)$ is not filled with consecutive integers, 
we can choose 
$$
m_1 = \min \{m \in \sfcnt_{k_0}(\tau_P) \mid m > m_0\}.
$$
Since $m_0$ and $m_1$ are in the same connected component of the Schur labeling $\tau_P$ and $m_0 < m_1$, we can take $m_{-1} \in \sfcnt_{k_0}(\tau_P)$ such that $m_{-1} < m_1$ and $m_{-1}$ is adjacent to $m_1$. Here, the sentence ``$m_{-1}$ is adjacent to $m_1$'' means that the box containing $m_{-1}$ and that containing $m_{1}$ share an edge.
We note that $m_{-1}$ can be $m_0$.
Because of the choice of $m_1$, we have $m_{-1} < m_0+1 < m_1$.
Let $Q$ be the subposet of $P$ whose underlying set is $\{m_{-1}, m_0 + 1, m_1\}$.
In $P$, $m_1$ covers $m_{-1}$ and $m_0 + 1$ is incomparable with both $m_1$ and $m_{-1}$.
This implies that $Q$ is a convex subposet of $P$. 
In addition, since $m_{-1} < m_0 + 1 < m_1$, we have $\Sigma_L(\mathsf{st}(Q)) = \{123,132,213\}$ or $\Sigma_L(\mathsf{st}(Q)) = \{312,231,321\}$.
Thus, $\Sigma_L(\mathsf{st}(Q))$ is not dual plactic closed.
Combining this with \cref{lem: cor of Mal} yields that $\Sigma_L(P)$ is not dual plactic closed, as desired.
\end{proof}

\subsection{Distinguished filtrations of $\sfM_P$ for $P \in \sfRSP_n$}\label{subsec: distinguished filt}

We begin by introducing the definition of distinguished filtrations.

\begin{definition}\label{def: dist filt}
Let $\mathcal{B} = \{\mathcal{B}_\alpha \mid \alpha \in I\}$ be a linearly independent subset of $\Qsym_n$ 
with the property that  $\mathcal{B}_\alpha$ is $F$-positive for all $\alpha \in I$,
where $I$ is an index set.
Given a finite dimensional $H_n(0)$-module $M$,
a \emph{distinguished filtration of $M$ with respect to $\mathcal{B}$} is an $H_n(0)$-submodule series  of $M$ 
\[
0 =: M_0 \subset M_1 \subset M_2 \subset \cdots \subset M_l := M
\]
such that for all $1 \leq k \leq l$, $\ch([M_k / M_{k-1}]) = \mathcal{B}_\alpha$ for some $\alpha \in I$.
\end{definition}

As seen in ~\cref{eg: not dist filt},
a distinguished filtration of $M$ with respect to $\calB$ may not exist even if $\ch([M])$ expands positively in $\calB$.
This is because the category  $\Hnmod$ is  neither semisimple nor representation-finite when $n > 3$ (\cite{11DY, 02DHT}).

\begin{example}\label{eg: not dist filt}
Let $\calB = \{s_\lambda \mid \lambda \vdash 4 \}$.
For $B = \{2314, 1423, 3214,2413, 1432, 3412\}$,
let $M$ be the $H_4(0)$-module with underlying space $\C B$ and with the $H_4(0)$-action defined by
\begin{align*}
\pi_{i} \cdot \gamma :=
\begin{cases}
\gamma & \text{if $i \in \Des_L(\gamma)$}, \\
0 & \text{if $i \notin \Des_L(\gamma)$ and $s_i\gamma \notin B$,} \\
s_i \gamma & \text{if $i \notin \Des_L(\gamma)$ and $s_i\gamma \in B$}.
\end{cases} 
\end{align*}
The $H_4(0)$-action on $B \cup \{ 0 \}$  is illustrated in the following figure:
\[
\def \vp {1.25}
\def \hp {2}
\begin{tikzpicture}
\node[] at (0*\hp, 0*\vp) {$3412$};

\node[] at (0*\hp, 1*\vp) {$2413$};
\node[] at (-2*\hp, 1*\vp) {$3214$};
\node[] at (2*\hp, 1*\vp) {$1432$};

\node[] at (-1*\hp, 2*\vp) {$2314$};
\node[] at (1*\hp, 2*\vp) {$1423$};

\node at (0*\hp + 0.2*\hp, 0*\vp) {} edge [out=40,in=320, loop] ();
\node at (0*\hp + 0.625*\hp, 0*\vp) {\footnotesize $\pi_2$};

\node at (-2*\hp + 0.2*\hp, 1*\vp) {} edge [out=40,in=320, loop] ();
\node at (-2*\hp + 0.75*\hp, 1*\vp) {\footnotesize $\pi_1, \pi_2$};

\node at (0*\hp + 0.2*\hp, 1*\vp) {} edge [out=40,in=320, loop] ();
\node at (0*\hp + 0.75*\hp, 1*\vp) {\footnotesize $\pi_1, \pi_3$};

\node at (2*\hp + 0.2*\hp, 1*\vp) {} edge [out=40,in=320, loop] ();
\node at (2*\hp + 0.75*\hp, 1*\vp) {\footnotesize $\pi_2, \pi_3$};

\node at (-1*\hp + 0.2*\hp, 2*\vp) {} edge [out=40,in=320, loop] ();
\node at (-1*\hp + 0.625*\hp, 2*\vp) {\footnotesize $\pi_1$};

\node at (1*\hp + 0.2*\hp, 2*\vp) {} edge [out=40,in=320, loop] ();
\node at (1*\hp + 0.625*\hp, 2*\vp) {\footnotesize $\pi_3$};

\draw[->] (0*\hp, 0.7*\vp) -- (0, 0.3);
\node at (0.15*\hp, 0.5*\vp) {\footnotesize $\pi_2$};

\draw[->] (-1.3*\hp, 1.7*\vp) -- (-1.7*\hp, 1.3*\vp);
\node at (-1.6*\hp, 1.6*\vp) {\footnotesize $\pi_2$};

\draw[->] (-0.7*\hp, 1.7*\vp) -- (-0.3*\hp, 1.3*\vp);
\node at (-0.4*\hp, 1.6*\vp) {\footnotesize $\pi_3$};

\draw[->] (1.3*\hp, 1.7*\vp) -- (1.7*\hp, 1.3*\vp);
\node at (1.6*\hp, 1.6*\vp) {\footnotesize $\pi_2$};

\draw[->] (0.7*\hp, 1.7*\vp) -- (0.3*\hp, 1.3*\vp);
\node at (0.4*\hp, 1.6*\vp) {\footnotesize $\pi_1$};

\draw[->] (0 *\hp, -0.3*\vp) -- (0*\hp, -0.6*\vp);
\node at (0*\hp, -0.8*\vp) {\footnotesize $0$};
\node at (0.35*\hp, -.55*\vp) {\footnotesize $\pi_1, \pi_3$};

\draw[->] (-2 *\hp, 0.7*\vp) -- (-2*\hp, 0.4*\vp);
\node at (-2*\hp, 0.2*\vp) {\footnotesize $0$};
\node at (-1.85*\hp, 0.55*\vp) {\footnotesize $\pi_3$};

\draw[->] (2 *\hp, 0.7*\vp) -- (2*\hp, 0.4*\vp);
\node at (2*\hp, 0.2*\vp) {\footnotesize $0$};
\node at (2.15*\hp, 0.55*\vp) {\footnotesize $\pi_1$};

\end{tikzpicture}
\]
One sees that 
\[
\ch([M]) = s_{(3,1)} + s_{(2,1,1)} = (F_{(3,1)} + F_{(2,2)} + F_{(1,3)}) + (F_{(2,1,1)} + F_{(1,2,1)} + F_{(1,1,2)}).
\]
So, if there exists a distinguished filtration of $M$ with respect to $\calB$, then there exists a three-dimensional $H_4(0)$-submodule $N$ of $M$ such that $\ch([N])$ is equal to either $s_{(3,1)}$ or $s_{(2,1,1)}$.
We claim that such a submodule $N$ does not exist.

Note that
\begin{align}\label{eq: decomposition of M}
M = \C \{2314 - 3214, 1423-1432, 2413, 3412 \} \oplus \C \{3214\} \oplus \C \{1432\}. 
\end{align}
Here, $\C \{3214\}$ and $\C \{1432\}$ are irreducible.
And, $\C \{2314 - 3214, 1423-1432, 2413, 3412 \}$ is indecomposable since it is isomorphic to a submodule of the injective indecomposable module $\bfP_{(1,2,1)}$.
Therefore, \cref{eq: decomposition of M} is a decomposition of $M$ into indecomposables.
The $H_4(0)$-action on $\{2314 - 3214, 1423-1432, 2413, 3412, 3214, 1432\} \cup \{ 0 \}$ is illustrated in the following figure:
\[
\def \vp {1.25}
\def \hp {2}
\begin{tikzpicture}
\node[] at (0*\hp, 0*\vp) {$3412$};

\node[] at (0*\hp, 1*\vp) {$2413$};
\node[] at (-3*\hp - 0.25*\hp, 1*\vp) {$3214$};
\node[] at (3*\hp + 0.25*\hp, 1*\vp) {$1432$};

\node[] at (-1*\hp, 2*\vp) {$2314 - 3214$};
\node[] at (1*\hp, 2*\vp) {$1423 - 1432$};

\draw[->] (-1.2*\hp, 1.7*\vp) -- (-1.2*\hp, 1.4*\vp);
\node at (-1.2*\hp, 1.2*\vp) {\footnotesize $0$};
\node at (-1.05*\hp, 1.55*\vp) {\footnotesize $\pi_2$};

\draw[->] (1.25*\hp, 1.7*\vp) -- (1.25*\hp, 1.4*\vp);
\node at (1.25*\hp, 1.2*\vp) {\footnotesize $0$};
\node at (1.4*\hp, 1.55*\vp) {\footnotesize $\pi_2$};

\node at (0*\hp + 0.2*\hp, 0*\vp) {} edge [out=40,in=320, loop] ();
\node at (0*\hp + 0.625*\hp, 0*\vp) {\footnotesize $\pi_2$};

\node at (-3*\hp + 0.2*\hp - 0.25*\hp, 1*\vp) {} edge [out=40,in=320, loop] ();
\node at (-3*\hp + 0.75*\hp - 0.25*\hp, 1*\vp) {\footnotesize $\pi_1, \pi_2$};

\node at (0*\hp + 0.2*\hp, 1*\vp) {} edge [out=40,in=320, loop] ();
\node at (0*\hp + 0.75*\hp, 1*\vp) {\footnotesize $\pi_1, \pi_3$};

\node at (3*\hp + 0.2*\hp + 0.25*\hp, 1*\vp) {} edge [out=40,in=320, loop] ();
\node at (3*\hp + 0.75*\hp + 0.25*\hp, 1*\vp) {\footnotesize $\pi_2, \pi_3$};

\node at (-1*\hp + 0.5*\hp, 2*\vp) {} edge [out=40,in=320, loop] ();
\node at (-1*\hp + 0.925*\hp, 2*\vp) {\footnotesize $\pi_1$};

\node at (1*\hp + 0.5*\hp, 2*\vp) {} edge [out=40,in=320, loop] ();
\node at (1*\hp + 0.925*\hp, 2*\vp) {\footnotesize $\pi_3$};

\draw[->] (0*\hp, 0.7*\vp) -- (0, 0.3);
\node at (0.15*\hp, 0.5*\vp) {\footnotesize $\pi_2$};


\draw[->] (-0.7*\hp, 1.7*\vp) -- (-0.3*\hp, 1.3*\vp);
\node at (-0.4*\hp, 1.6*\vp) {\footnotesize $\pi_3$};


\draw[->] (0.7*\hp, 1.7*\vp) -- (0.3*\hp, 1.3*\vp);
\node at (0.4*\hp, 1.6*\vp) {\footnotesize $\pi_1$};

\draw[->] (0 *\hp, -0.3*\vp) -- (0*\hp, -0.6*\vp);
\node at (0*\hp, -0.8*\vp) {\footnotesize $0$};
\node at (0.35*\hp, -.55*\vp) {\footnotesize $\pi_1, \pi_3$};

\draw[->] (-3*\hp - 0.25*\hp, 0.7*\vp) -- (-3*\hp - 0.25*\hp, 0.4*\vp);
\node at (-3*\hp - 0.25*\hp, 0.2*\vp) {\footnotesize $0$};
\node at (-2.85*\hp - 0.25*\hp, 0.55*\vp) {\footnotesize $\pi_3$};

\draw[->] (3*\hp + 0.25*\hp, 0.7*\vp) -- (3*\hp + 0.25*\hp, 0.4*\vp);
\node at (3*\hp + 0.25*\hp, 0.2*\vp) {\footnotesize $0$};
\node at (3.15*\hp + 0.25*\hp, 0.55*\vp) {\footnotesize $\pi_1$};

\node at (-1.75*\hp, \vp) {$\oplus$};
\node at (2*\hp, \vp) {$\oplus$};

\end{tikzpicture}
\]
The injective hulls of $\C \{2314 - 3214, 1423-1432, 2413, 3412 \}$, $\C \{3214\}$, and $\C \{1432\}$ are $\bfP_{(1,2,1)}$, $\bfP_{(1,3)}$, and $\bfP_{(3,1)}$, respectively.
This implies that the socle of $M$ is $\C\{3412\} \oplus \C \{3214\} \oplus \C \{1432\}$.
It follows that for every three-dimensional submodule $N$ of $M$, $1 \le \dim \soc(N) \le 3$.
We list all three-dimensional submodules $N$ of $M$ in \cref{tab: three-dimensional submodules of M}.
Based on this, we conclude that there are no $H_4(0)$-submodules $N$ of $M$ such that $\ch([N]) = s_{(3,1)}$ or $s_{(2,1,1)}$.
\end{example}

\begin{table}[t]
\renewcommand*\arraystretch{1.2}
\centering
$\begin{array}{c|c|c}
\text{ Three-dimensional submodules $N$ of $M$} & \dim \soc (N) & \ch([N]) 
\\ \hline \hline
\C \{3214, 1432, 3412\} & 3 & F_{(3,1)} + F_{(1,3)} + F_{(1,2,1)}
\\ \hline
\C \{3214, 1423 - 1432 - 2413, 3412\} & 2 & F_{(3,1)} + F_{(1,1,2)} + F_{(1,2,1)}
\\ \hline
\C \{3214, 2314 - 3214 - 2413, 3412\} & 2 & F_{(3,1)} + F_{(2,1,1)} + F_{(1,2,1)}
\\ \hline
\C \{3214, 2413, 3412\} & 2 & F_{(3,1)} + F_{(2,2)} + F_{(1,2,1)}
\\ \hline
\C \{1432, 1423 - 1432 - 2413, 3412\} & 2 & F_{(1,3)} + F_{(1,1,2)} + F_{(1,2,1)}
\\ \hline
\C \{1432, 2314 - 3214 - 2413, 3412 \} & 2 & F_{(1,3)} + F_{(2,1,1)} + F_{(1,2,1)}
\\ \hline
\C \{1432, 2413, 3412\} & 2 & F_{(1,3)} + F_{(2,2)} + F_{(1,2,1)}
\\ \hline
\C \{2314 - 3214, 2413, 3412\} & 1 & F_{(2,1,1)} + F_{(2,2)} + F_{(1,2,1)}
\\ \hline
\C \{1423-1432, 2413, 3412\} & 1 & F_{(1,1,2)} + F_{(2,2)} + F_{(1,2,1)}
\end{array}$
\caption{The complete list of three-dimensional submodules of $M$ in \cref{eg: not dist filt}}
\label{tab: three-dimensional submodules of M}
\end{table}

Let $f \in \Qsym_n$ and $\calB = \{\calB_\alpha \mid \alpha \in I \}$ be the linearly independent set given in ~\cref{def: dist filt}.
When $f$ expands positively in $\calB$, i.e., 
\begin{align}\label{eq: expand B}
f = \sum_{\alpha \in I} c_\alpha \calB_\alpha \quad (c_\alpha \in \Z_{\geq 0}),
\end{align}
finding an $H_n(0)$-module $M$ such that 
\begin{enumerate}[label = {\rm (C\arabic*)}]
\item $\ch([M]) = f$,
\item it is not a direct sum of irreducible modules, yet it possesses a combinatorial model that can be effectively handled, and
\item it has a distinguished filtration with respect to $\calB$
\end{enumerate}
is a very important problem in the sense that 
this filtration can be considered as a nice representation theoretic interpretation of ~\cref{eq: expand B}.

In this subsection, we focus on the above problem in the case where $\calB$ is $\calS := \{s_{\lambda} \mid \lambda \vdash n \}$ and $f = s_{\lambda/\mu}$ for a skew partition $\lambda/\mu$ of size $n$.
Note that for all $P \in \sfRSP_n$ with 
$\sh(\tau_P) = \lambda/\mu$,
$\sfM_P$ satisfies (C1) and (C2) because $\ch([\sfM_P]) = s_{\lambda/\mu}$ by ~\cref{thm: char of Mp}(2) and it has a combinatorial model $\Sigma_L(P)$.
In the following, we show that $\sfM_P$ satisfies (C3).

\begin{theorem}\label{thm: filt of X lam mu}
For every $P \in \sfRSP_n$,
$\sfM_P$ has a distinguished filtration  with respect to $\calS$.
\end{theorem}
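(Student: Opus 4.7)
The plan is to exploit \cref{thm: RSP iff DPC}, which tells us that $\Sigma_L(P)$ is dual plactic-closed, and then stratify $\sfM_P$ according to the shape of the recording tableau $\Qtab$. First I would partition
\[
\Sigma_L(P) = C_{Q_1} \sqcup C_{Q_2} \sqcup \cdots \sqcup C_{Q_l}
\]
as a disjoint union of dual Knuth equivalence classes (well-defined precisely by the dual plactic-closedness), indexing the classes so that $\sh(Q_i) \triangleleft \sh(Q_j)$ implies $i < j$, i.e., along a linear extension of dominance order with the most-dominated shapes listed first. Then set
\[
M_k := \Span_\C(C_{Q_1} \cup C_{Q_2} \cup \cdots \cup C_{Q_k}),
\]
so that $0 = M_0 \subsetneq M_1 \subsetneq \cdots \subsetneq M_l = \sfM_P$.

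The key technical step is verifying that each $M_k$ is an $H_n(0)$-submodule. For $\sigma \in C_{Q_k}$, the only non-trivial case of the $\pi_i$-action is $\pi_i \cdot \sigma = s_i \sigma$, which occurs when $i \notin \Des_L(\sigma)$ and $s_i \sigma \in \Sigma_L(P)$. In that case $\sigma \preceq_L^\rmc s_i \sigma$, and the property \cref{eq: left bruhat order and shape} established inside the proof of \cref{thm: RSP iff DPC} forces either $\sigma \dKeq s_i \sigma$ (so $s_i \sigma \in C_{Q_k} \subseteq M_k$), or $\sh(\Qtab(s_i \sigma)) \triangleleft \sh(Q_k)$, in which case the class $C_{\Qtab(s_i \sigma)}$ has index strictly less than $k$ and hence $s_i \sigma \in M_{k-1} \subseteq M_k$. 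Either way $s_i \sigma \in M_k$, so the filtration is well-defined.

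For the successive quotient $M_k / M_{k-1}$, its basis is canonically identified with $C_{Q_k}$, and the induced action sends $\sigma + M_{k-1}$ to $0$ whenever $s_i \sigma$ leaves $C_{Q_k}$, since any such $s_i \sigma$ lies in $M_{k-1}$ by the strict shape decrease established above. Repeating the quasisymmetric characteristic computation used in the proof of \cref{lem: identification of P and S} therefore yields
\[
\ch([M_k / M_{k-1}]) = \sum_{\sigma \in C_{Q_k}} F_{\comp(\Des_L(\sigma))^\rmc}.
\]
Via the classical identification of $\Des_L(\sigma)$ with the descent set of $\Ptab(\sigma)$ together with the fact that $\Ptab$ restricts to a bijection $C_{Q_k} \to \SYT(\sh(Q_k))$, this sum coincides with the expansion of $\ch([\sfM_{\sfposet(\tau_0^{\sh(Q_k)})}])$ in fundamental quasisymmetric functions, which by \cref{thm: char of Mp}(2) equals $s_{\sh(Q_k)}$.

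The main conceptual obstacle has already been discharged by \cref{thm: RSP iff DPC}; once dual plactic closure is in hand, the delicate point of the present argument is purely organizational, namely choosing the order on dual Knuth classes to be compatible with dominance so that the $\preceq_L$-covering relation strictly respects the filtration. The only residual subtlety is the identification of $\sum_{\sigma \in C_{Q_k}} F_{\comp(\Des_L(\sigma))^\rmc}$ with a single Schur function, which I would handle by reducing to the straight-shape case of \cref{thm: char of Mp}(2), where the linear extension set is itself a single dual Knuth class.
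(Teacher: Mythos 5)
Your construction is exactly the paper's: stratify $\Sigma_L(P)$ by dual Knuth classes (legitimate by \cref{thm: RSP iff DPC}), order the classes along a linear extension of dominance with the more dominated shapes first, and verify the submodule property of the partial spans using \cref{eq: left bruhat order and shape} applied to the cover $\gamma \preceq_L s_i\gamma$. That part of your argument is correct and is word-for-word the paper's strategy.

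The one slip is in your identification of the successive quotients. Since $\Des_L(\sigma)$ equals the descent set of $\Ptab(\sigma)$ and $\Ptab$ is a bijection $C_{Q_k}\to\SYT(\sh(Q_k))$, you get
$\ch([M_k/M_{k-1}]) = \sum_{T\in\SYT(\sh(Q_k))} F_{\comp(\Des(T))^\rmc}$, and because of the \emph{complement} this is $\uppsi(s_{\sh(Q_k)}) = s_{\sh(Q_k)^\rmt}$, not $s_{\sh(Q_k)}$; correspondingly, the multiset $\{\Des_L(\gamma) \mid \gamma\in C_{Q_k}\}$ does not match $\{\Des_L(\rmread_{\tau_0}(T)) \mid T\in\SYT(\sh(Q_k))\}$, so the claimed equality with $\ch\bigl(\bigl[\sfM_{\sfposet(\tau_0^{\sh(Q_k)})}\bigr]\bigr)$ fails (the reading words $\rmread_{\tau_0}(\SYT(\mu))$ form the dual Knuth class whose recording tableau has shape $\mu^\rmt$, so the correct reduction is to the straight shape $\sh(Q_k)^\rmt$). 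A concrete check: if $\sh(Q_k)=(n)$ then $C_{Q_k}=\{\id\}$ and its contribution is $F_{(1^n)}=s_{(1^n)}$, not $s_{(n)}$. Since $\calS$ contains all Schur functions of degree $n$, this transpose error does not affect the validity of the theorem — the paper itself records the quotient as $s_{\sh(T_k)^\rmt}$ — but your justification of the last step should be repaired either by invoking $\sum_{T\in\SYT(\mu)}F_{\comp(\Des(T))}=s_\mu$ together with $\uppsi$, or by comparing with the module attached to the transposed straight shape.
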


\begin{proof}
To begin with, we choose any total order  $\ll$ on $\SYT_n$ subject to the condition that
\begin{align}\label{eq: total order condition}
T \ll S \quad \text{whenever $\sh(T) \triangleleft \sh(S)$.}
\end{align}
Write $\{\Qtab(\gamma) \mid \gamma \in \Sigma_L(P) \}$ as
\[
 \{T_1 \ll T_2 \ll \cdots \ll T_l \}.
\]
For $0 \leq k \leq l$, set 
\[
B_k := \{\gamma \in  \SG_n \mid \Qtab(\gamma) = T_i \  \text{for some $1 \leq i \leq k$}\}.
\]
It is clear that $\emptyset = B_0 \subset B_1 \subset B_2 \subset \cdots \subset B_l$.
And, by \cref{thm: RSP iff DPC}, we have $B_l = \Sigma_L(P)$.
We claim that 
\begin{align}\label{eq: module filtration candidate of dist}
0 = \C B_0 \subset \C B_1 \subset \C B_2 \subset \cdots \subset \C B_l = \sfM_{P}
\end{align}
is a distinguished filtration of $\sfM_P$ with respect to $\calS$.

First, we show that for $1 \leq k \leq l$, 
\[
\pi_i \cdot \gamma \in  B_k \cup \{0\}
\quad \text{for all $i \in [n-1]$ and $\gamma \in B_k$.}
\]
Take any $i \in [n-1]$ and $\gamma \in B_k$.
If $\pi_i \cdot \gamma = 0$ or $\gamma$, then there is nothing to prove.
Assume that $\pi_i \cdot \gamma = s_i \gamma$.
Then, by the definition of $H_n(0)$-action on $\sfM_P$, we have $\gamma \preceq_L s_i \gamma$.
Combining this inequality with \cref{eq: left bruhat order and shape} yields that 
\[
\gamma \dKeq s_i \gamma
\quad \text{or} \quad
\sh(\Qtab(s_i \gamma)) \triangleleft \sh(\Qtab(\gamma)).
\]
This implies that $s_i \gamma \in B_k$ as desired.

Next, we show that the filtration given in \cref{eq: module filtration candidate of dist}
is distinguished with respect to $\calS$.
For $1 \le k \le l$, $\{\gamma + M_{k-1} \mid \gamma \in B_k \setminus B_{k-1}\}$ is a basis for $M_k / M_{k-1}$ and $B_k \setminus B_{k-1}$ is an equivalence class under $\dKeq$.
It follows that $\ch([M_k / M_{k-1}])$ is a Schur function, more precisely, $\ch([M_k / M_{k-1}]) = s_{\sh(T_k)^\rmt}$.
\end{proof}

\begin{example}\label{eg: distinguished filt}
Let $P = \sfposet(\tau_0^{(4,2,1)/(2,1)})$.
Following the method presented in the proof of ~\cref{thm: filt of X lam mu}, we will construct two distinguished filtrations of $\sfM_P$ with respect to $\{s_{\lambda} \mid \lambda \vdash 4 \}$ by choosing two distinct total orders on $\SYT_4$.

Note that $\{\Qtab(\gamma) \mid \gamma \in \Sigma_L(P)\}$ is given by 
\[
\left\{
Q_1 := \begin{array}{c} 
\begin{ytableau}
1 \\
2 \\
3 \\ 
4
\end{ytableau}
\end{array}, \;
Q_2 := \begin{array}{c} 
\begin{ytableau}
1 & 3\\
2 \\
4
\end{ytableau}
\end{array}, \;
Q_3 := \begin{array}{c} 
\begin{ytableau}
1 & 4\\
2 \\
3
\end{ytableau}
\end{array}, \;
Q_4 := \begin{array}{c} 
\begin{ytableau}
1 & 3 \\
2 & 4
\end{ytableau}
\end{array}, \;
Q_5 := 
\begin{array}{c} 
\begin{ytableau}
1 & 3 & 4 \\
2
\end{ytableau}
\end{array}
\right\}
\]
and $\sh(Q_1) \triangleleft \sh(Q_2) = \sh(Q_3) \triangleleft \sh(Q_4) \triangleleft \sh(Q_5)$.
Choose a total order $\ll_1$(resp.  $\ll_2$) on $\SYT_4$ 
satisfying both   \cref{eq: total order condition} and $Q_2 \ll_1 Q_3$ (resp. $Q_3 \ll_2 Q_2$).
For $1 \leq k \leq 5$, let
\[
B'_k := \{\gamma \in  \SG_4 \mid \Qtab(\gamma) = Q_k\}.
\]
When we use $\ll_1$, 
we let 
$$
B_k := \bigsqcup_{1 \le l \le k} B'_l \quad \text{for $0 \le k \le 5$,}
$$
and when we use $\ll_2$, we let
$$
B_k := \bigsqcup_{1 \le l \le k} B'_l \quad \text{for $k = 0,1,3,4,5$}
\quad \text{and} \quad
B_2 := B'_1 \sqcup B'_3.
$$
Then, 
\[
0 = \C B_0  \subset \C B_1 \subset \C B_2 \subset \C B_3 \subset \C B_4 \subset \C B_5 = \sfM_P
\]
is the desired  distinguished  filtration of $\sfM_P$ with respect to $\{s_\lambda \mid \lambda \vdash 4 \}$.

For the readers' convenience, we draw the $H_4(0)$-action on the basis $\Sigma_L(P) = [2134, 4321]_L$ for $\sfM_P$ and the sets $B'_k$ $(1 \leq k \leq 5)$ in \cref{fig: figure for example}.
\end{example}

\begin{figure}[t]
\[
\def \vp {1.3}
\def \hp {2}
\begin{tikzpicture}
\node[right] at (-1.3*\hp, 0*\vp) {\textcolor{violet}{\small $B'_5$}};
\draw[violet, dashed, thick] (-0.4*\hp, -2.2*\vp) -- (-1.4*\hp, -1.4*\vp) -- (-1.4*\hp, -0.6*\vp) -- (-0.4*\hp, 0.2*\vp) -- (0.4*\hp, 0.2*\vp) -- (0.4*\hp, -2.2*\vp) -- (-0.4*\hp, -2.2*\vp);


\node[right] at (2.2*\hp, -1.3*\vp) {\textcolor{brown}{\small $B'_4$}};
\draw[brown, dashed, thick] (1.6*\hp, -2.2*\vp) -- (0.6*\hp, -1.4*\vp) -- (0.6*\hp, -0.8*\vp) -- (1.4*\hp, -0.8*\vp) -- (2.4*\hp, -1.6*\vp) -- (2.4*\hp, -2.2*\vp) -- (1.6*\hp, -2.2*\vp);


\node[right] at (-2.9*\hp, -2.5*\vp) {\textcolor{teal}{\small $B'_3$}};
\draw[teal, dashed, thick] (-1.4*\hp, -4.2*\vp) -- (-2.4*\hp, -3.4*\vp) -- (-2.4*\hp, -1.8*\vp) -- (-1.6*\hp, -1.8*\vp) -- (-1.6*\hp, -3*\vp) -- (-0.6*\hp, -3.8*\vp) -- (-0.6*\hp, -4.2*\vp) -- (-1.4*\hp, -4.2*\vp);

\node[right] at (2.2*\hp, -3.7*\vp) {\textcolor{blue}{\small $B'_2$}};
\draw[blue, dashed, thick] (0.6*\hp, -4.2*\vp) -- (-0.4*\hp, -3.4*\vp) -- (-0.4*\hp, -2.8*\vp) -- (2.4*\hp, -2.8*\vp) -- (2.4*\hp, -3.4*\vp) -- (1.4*\hp, -4.2*\vp) -- (0.6*\hp, -4.2*\vp);

\node[left] at (-0.5*\hp, -5*\vp) {\textcolor{red}{\small $B'_1$}};
\draw[red, dashed, thick]
(-0.4*\hp, -5.2*\vp)-- (-0.4*\hp, -4.8*\vp) -- (0.4*\hp, -4.8*\vp) -- (0.4*\hp, -5.2*\vp) -- (-0.4*\hp, -5.2*\vp);

\node[] at (0*\hp, 0*\vp) {$2134$};

\node[] at (-1*\hp, -1*\vp) {$3124$};
\node[] at (1*\hp, -1*\vp) {$2143$};

\node[] at (-2*\hp, -2*\vp) {$3214$};
\node[] at (0*\hp, -2*\vp) {$4123$};
\node[] at (2*\hp, -2*\vp) {$3142$};

\node[] at (-2*\hp, -3*\vp) {$4213$};
\node[] at (-0*\hp, -3*\vp) {$4132$};
\node[] at (2*\hp, -3*\vp) {$3241$};

\node[] at (-1*\hp, -4*\vp) {$4312$};
\node[] at (1*\hp, -4*\vp) {$4231$};

\node[] at (0*\hp, -5*\vp) {$4321$};

\node at (0*\hp + 0.2*\hp, 0*\vp) {} edge [out=40,in=320, loop] ();
\node at (0*\hp + 0.625*\hp, 0*\vp) {\footnotesize $\pi_1$};

\node at (-1*\hp + 0.2*\hp, -1*\vp) {} edge [out=40,in=320, loop] ();
\node at (-1*\hp + 0.625*\hp, -1*\vp) {\footnotesize $\pi_2$};
\node at (1*\hp + 0.2*\hp, -1*\vp) {} edge [out=40,in=320, loop] ();
\node at (1*\hp + 0.75*\hp, -1*\vp) {\footnotesize $\pi_1, \pi_3$};

\node at (-2*\hp + 0.2*\hp, -2*\vp) {} edge [out=40,in=320, loop] ();
\node at (-2*\hp + 0.75*\hp, -2*\vp) {\footnotesize $\pi_1, \pi_2$};
\node at (0*\hp + 0.2*\hp, -2*\vp) {} edge [out=40,in=320, loop] ();
\node at (0*\hp + 0.625*\hp, -2*\vp) {\footnotesize $\pi_3$};
\node at (2*\hp + 0.2*\hp, -2*\vp) {} edge [out=40,in=320, loop] ();
\node at (2*\hp + 0.625*\hp, -2*\vp) {\footnotesize $\pi_2$};

\node at (-2*\hp + 0.2*\hp, -3*\vp) {} edge [out=40,in=320, loop] ();
\node at (-2*\hp + 0.75*\hp, -3*\vp) {\footnotesize $\pi_1, \pi_3$};
\node at (0*\hp + 0.2*\hp, -3*\vp) {} edge [out=40,in=320, loop] ();
\node at (0*\hp + 0.75*\hp, -3*\vp) {\footnotesize $\pi_2, \pi_3$};
\node at (2*\hp + 0.2*\hp, -3*\vp) {} edge [out=40,in=320, loop] ();
\node at (2*\hp + 0.75*\hp, -3*\vp) {\footnotesize $\pi_1, \pi_2$};

\node at (-1*\hp + 0.2*\hp, -4*\vp) {} edge [out=40,in=320, loop] ();
\node at (-1*\hp + 0.75*\hp, -4*\vp) {\footnotesize $\pi_2, \pi_3$};
\node at (1*\hp + 0.2*\hp, -4*\vp) {} edge [out=40,in=320, loop] ();
\node at (1*\hp + 0.75*\hp, -4*\vp) {\footnotesize $\pi_1, \pi_3$};

\node at (0*\hp + 0.2*\hp, -5*\vp) {} edge [out=40,in=320, loop] ();
\node at (0*\hp + 0.875*\hp, -5*\vp) {\footnotesize $\pi_1, \pi_2, \pi_3$};

\draw[->] (-0.2*\hp, -0.25*\vp) -- (-0.8*\hp, -0.7*\vp);
\node at (-0.6*\hp, -0.4*\vp) {\footnotesize $\pi_2$};
\draw[->] (0.2*\hp, -0.25*\vp) -- (0.8*\hp, -0.7*\vp);
\node at (0.6*\hp, -0.4*\vp) {\footnotesize $\pi_3$};

\draw[->] (-1.2*\hp, -1.25*\vp) -- (-1.8*\hp, -1.7*\vp);
\node at (-1.6*\hp, -1.4*\vp) {\footnotesize $\pi_1$};
\draw[->] (-0.8*\hp, -1.25*\vp) -- (-0.2*\hp, -1.7*\vp);
\node at (-0.4*\hp, -1.4*\vp) {\footnotesize $\pi_3$};

\draw[->] (1.2*\hp, -1.25*\vp) -- (1.8*\hp, -1.7*\vp);
\node at (1.6*\hp, -1.4*\vp) {\footnotesize $\pi_2$};
\draw[->] (-2*\hp, -2.25*\vp) -- (-2*\hp, -2.7*\vp);
\node at (-2.1*\hp, -2.45*\vp) {\footnotesize $\pi_3$};

\draw[->] (-0.2*\hp, -2.25*\vp) -- (-1.8*\hp, -2.7*\vp);
\node at (-1.25*\hp, -2.45*\vp) {\footnotesize $\pi_1$};
\draw[->] (0*\hp, -2.25*\vp) -- (0*\hp, -2.7*\vp);
\node at (-0.1*\hp, -2.45*\vp) {\footnotesize $\pi_2$};

\draw[->] (1.8*\hp, -2.25*\vp) -- (0.2*\hp, -2.7*\vp);
\node at (0.75*\hp, -2.45*\vp) {\footnotesize $\pi_3$};
\draw[->] (2*\hp, -2.25*\vp) -- (2*\hp, -2.7*\vp);
\node at (2.1*\hp, -2.45*\vp) {\footnotesize $\pi_1$};

\draw[->] (-1.8*\hp, -3.25*\vp) -- (-1.2*\hp, -3.7*\vp);
\node at (-1.4*\hp, -3.4*\vp) {\footnotesize $\pi_2$};

\draw[->] (0.2*\hp, -3.25*\vp) -- (0.8*\hp, -3.7*\vp);
\node at (0.6*\hp, -3.4*\vp) {\footnotesize $\pi_1$};
\draw[->] (1.8*\hp, -3.25*\vp) -- (1.2*\hp, -3.7*\vp);
\node at (1.4*\hp, -3.4*\vp) {\footnotesize $\pi_3$};

\draw[->] (-0.8*\hp, -4.25*\vp) -- (-0.2*\hp, -4.7*\vp);
\node at (-0.4*\hp, -4.4*\vp) {\footnotesize $\pi_1$};
\draw[->] (0.8*\hp, -4.25*\vp) -- (0.2*\hp, -4.7*\vp);
\node at (0.4*\hp, -4.4*\vp) {\footnotesize $\pi_2$};
\end{tikzpicture}
\]
\caption{The $H_4(0)$-action on the basis $\Sigma_L(P) = [2134, 4321]_L$ for $\sfM_P$ and the sets $B'_k$ $(1 \leq k \leq 5)$ in \cref{eg: distinguished filt}}
\label{fig: figure for example}
\end{figure}

\section{Further avenues}\label{sec: Final remark}

In this section, we discuss future directions regarding the classification problem, the decomposition problem, and how to recover $\sfM_P$ for $P \in \sfRSP_n$ from a module of the generic Hecke algebra $H_n(q)$ by specializing $q$ to $0$.

\subsection{The classification problem}\label{subsec: classification}

In ~\cref{thm: classification}, we successfully classify $\sfM_P$ for $P \in \sfRSP_n$.
To be precise, we show that for $P, Q \in \sfRSP_n$, 
\begin{align}\label{eq: final rem class}
\sfM_P \cong \sfM_Q \quad \text{if and only if} \quad \sh(\tau_P) = \sh(\tau_Q).
\end{align}
Recall that $\sfRSP_n = \sfRP_n \cap \sfSP_n$. 
Hence, it would be natural to consider the classification problem for $\{\sfM_P \mid P \in \sfSP_n\}$ and $\{\sfM_P \mid P \in \sfRP_n\}$.

\subsubsection{A remark on the classification problem for $\{\sfM_P \mid P \in \sfSP_n \}$}\label{subsubsec: rem 1}
Since the notion `the shape of $\tau_P$' is available for $P \in \sfSP_n$, 
one may expect that the classification given in ~\cref{eq: final rem class} can be extended to $\{\sfM_P  \mid P \in \sfSP_n\}$.
Unfortunately,  this expectation turns out to be false.

Let
\[
\tau_1 := \begin{array}{c}
\begin{ytableau}
    \none & \none & 2 & 1 \\
    4 & 3
\end{ytableau}
\end{array}, 
\quad
\tau_2 := \begin{array}{c}
\begin{ytableau}
\none & \none & 4 & 2 \\
 3 & 1
\end{ytableau}
\end{array}
\quad \text{and} \quad
\tau_3 := \begin{array}{c}
\begin{ytableau}
\none & \none & 4 & 1 \\
3 & 2
\end{ytableau}
\end{array}.
\]
Then   $\sfM_{\sfposet(\tau_i)}$ $(i = 1,2,3)$ is decomposed into indecomposables as follows:
\begin{align*}
\sfM_{\sfposet(\tau_1)} & \cong \bfP_{(4)} \oplus \bfP_{(2,2)}, \\
\sfM_{\sfposet(\tau_2)} 
& \cong \bfF_{(1,2,1)} \oplus \sfB(4213, 4312) \oplus \bfF_{(3,1)} \oplus \bfF_{(2,2)} \oplus \bfF_{(4)}, \\
\sfM_{\sfposet(\tau_3)} 
& \cong \bfF_{(1,2,1)} \oplus \sfB(4213, 4312) \oplus \sfB(2431, 3421) \oplus \bfF_{(4)},
\end{align*}
where $\sfB(4213, 4312)$ and $\sfB(2431, 3421)$ are $2$-dimensional indecomposable modules.
These decompositions  show that  $\sfM_{\sfposet(\tau_1)}$, $\sfM_{\sfposet(\tau_2)}$, and $\sfM_{\sfposet(\tau_3)}$ are pairwise non-isomorphic although all  $\tau_{\sfposet(\tau_i)}$'s have the same shape.

\subsubsection{A conjecture on the classification problem for $\{\sfM_P \mid P \in \sfRP_n \}$}\label{subsubsec: rem 2}

Note that for $P \in \sfRP_n$, the notion `the shape of $\tau_P$' has not been defined.
This leads us to introduce a 
classification of 
$\{\sfM_P \mid P \in \sfRSP_n\}$ without using this notion.
To be precise, by combining \cref{thm: equiv class X lam mu}
and \cref{thm: classification}, we derive that
for $P, Q \in \sfRSP_n$,
\begin{align}\label{eq: conj on class}
\sfM_P \cong \sfM_Q \quad \text{if and only if} \quad \Sigma_L(P) \Deq \Sigma_L(Q).
\end{align}
We expect that this classification can be extended  to $\sfRP_n$ in its current form.
The validity of this expectation has been checked for values of $n$ up to 6 with the aid of the computer program 
\textsc{SageMath}.
Let us provide an overview of our verification process.
We first classify all left weak Bruhat intervals in $\SG_n$ ($n \leq 6$) up to descent-preserving isomorphism and choose 
a complete list $\frakI_n$ of inequivalent representatives.
Next, we let $\mathfrak{A}_n$ be the set of all unordered pairs
$([\sigma_1, \rho_1]_L, [\sigma_2, \rho_2]_L)$ of intervals in  $\frakI_n$ satisfying that  $[\sigma_1, \rho_1]_L \neq [\sigma_2, \rho_2]_L$ and 
\begin{align}\label{eq: necc cond for iso}
\ch([\sfB(\sigma_1, \rho_1)]) = \ch([\sfB(\sigma_2, \rho_2)]), \;\, \Des_L(\sigma_1) = \Des_L(\sigma_2), \;\,  \Des_L(\rho_1) = \Des_L(\rho_2).
\end{align}
Note that ~\cref{eq: necc cond for iso} is a necessary condition for $\sfB(\sigma_1, \rho_1) \cong \sfB(\sigma_2, \rho_2)$.
Finally, we show that  for all $(I_1, I_2) \in \mathfrak{A}_n$, $\sfB(I_1) \not\cong \sfB(I_2)$.
When $n \leq 5$, there is nothing to prove because $\mathfrak{A}_n = \emptyset$.
When $n = 6$,
$\mathfrak{A}_6$ has fourteen pairs.
Note that 
if $(I_1, I_2) \in \mathfrak{A}_6$,  then $(w_0 \cdot I_1 \cdot {w_0}, w_0 \cdot I_2 \cdot {w_0}) \in \mathfrak{A}_6$ and 
\[
\sfB(I_1) \cong \sfB(I_2) \quad \underset{\text{\cite[Theorem 4]{22JKLO}}}{\Longleftrightarrow} \quad \sfB(w_0 \cdot I_1 \cdot w_0) \cong \sfB(w_0 \cdot I_2 \cdot w_0). 
\]
Therefore, it suffices to examine seven pairs $(I_1^{(k)}, I_2^{(k)})$ listed in ~\cref{tab: non des pres modules}.
\begin{table}[t]
\renewcommand*\arraystretch{1.2}
\centering
$\begin{array}{c|c|c}
 k & I_1^{(k)} & I_2^{(k)} 
\\ \hline \hline
1 & [123456, 426351]_L & [123456, 624153]_L 
\\ \hline
2 & [123456, 354612]_L & [123456, 561324]_L
\\ \hline
3 & [123456, 356412]_L & [123456, 561342]_L
\\ \hline
4 & [123456, 563124]_L &  [123456, 534612]_L
\\ \hline
5 & [123456, 536412]_L & [123456, 563142]_L
\\ \hline
6 & [123456, 465312]_L & [123456, 645132]_L
\\ \hline
7 & [123456, 564213]_L & [123456, 546231]_L
\end{array}$
\caption{Seven pairs $(I_1^{(k)}, I_2^{(k)})$ in $\mathfrak{A}_6$}
\label{tab: non des pres modules}
\end{table}
For $3 \leq k \leq 7$, 
using \cref{lem: radical cond}, 
one can see that the projective covers of $\sfB(I_1^{(k)})$ and $\sfB(I_2^{(k)})$ are not isomorphic.
Therefore, $\sfB(I_1^{(k)})$ and $\sfB(I_2^{(k)})$ are not isomorphic.
For $k = 1, 2$, one can see that $\sfB(I_1^{(k)})$ and $\sfB(I_2^{(k)})$ are not isomorphic  in a brute force manner.

Let us give another evidence for our expectation.
Specifically, we show that \cref{eq: conj on class} holds when $P \in \sfRSP_n$, $Q \in \sfRP_n$, and $\ch([\sfM_P])$ is a Schur function. 
This can be derived from the proposition presented below.

\begin{proposition}\label{prop: small evi for conj}
Let $P$ be a poset in $\sfRSP_n$ such that $\ch([\sfM_P])$ is a Schur function.

\begin{enumerate}[label = {\rm (\arabic*)}]
\item If $Q \in \sfP_n$ satisfies that $\sfM_Q \cong \sfM_P$, 
then $Q \in \sfRSP_n$.
\item  The isomorphism class of $\sfM_P$ within $\{\sfM_Q \mid Q \in \sfP_n \}$
is equal to the isomorphism class of $\sfM_P$ within $\{\sfM_Q \mid Q \in \sfRSP_n \}$ as sets.
\end{enumerate}
\end{proposition}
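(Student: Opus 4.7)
Part (2) is a set-theoretic reformulation of (1) --- both assert that every $Q \in \sfP_n$ with $\sfM_Q \cong \sfM_P$ lies in $\sfRSP_n$ --- so I focus on (1). From $\sfM_Q \cong \sfM_P$ we get $\ch([\sfM_Q]) = \ch([\sfM_P]) = s_\nu$ for some partition $\nu$, hence by \cref{thm: char of Mp}(1), $K_Q = \uppsi(s_\nu) = s_{\nu^\rmt}$. By \cref{thm: RSP iff DPC}, it then suffices to show that $\Sigma_L(Q)$ is dual plactic-closed.

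The plan is to establish that $\Sigma_L(Q)$ is in fact a single dual Knuth equivalence class of shape $\nu^\rmt$. The motivating case is $\Sigma_L(P)$ itself: \cref{lem: admissible same Q} gives $\Ptab(\rmread_{\tau_P}(T)) = \Rect(T)^\rmt$ for $T \in \SYT(\sh(\tau_P))$, and the identity $\ch([\sfM_P]) = s_\nu$ forces every rectification $\Rect(T)$ to have shape $\nu$, so $\Ptab$ ranges over $\SYT(\nu^\rmt)$ as $T$ varies. Dual plactic closure of $\Sigma_L(P)$ (\cref{thm: RSP iff DPC}) together with the cardinality equality $|\Sigma_L(P)| = f^{\nu^\rmt}$ then forces $\Qtab$ to be constant on $\Sigma_L(P)$, realizing it as a single dual Knuth class. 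To transfer this conclusion to $\Sigma_L(Q)$, I would exploit the module isomorphism beyond mere characters: by \cref{lem: proj cov and inj hull}, $\sfM_P$ has projective cover $\bfP_{\balproj(\sh(\tau_P))}$ and injective hull $\bfP_{\balinj(\sh(\tau_P))}$, both indecomposable because $\sh(\tau_P)$ is connected (a consequence of $s_{\sh(\tau_P)}$ being a single Schur function rather than a proper sum). Hence $\sfM_Q$ inherits a simple top and simple socle, which forces $\Sigma_L(Q)$ to have a unique minimum $\sigma_0^Q$ and a unique maximum $\sigma_1^Q$ in the left weak Bruhat order with descent sets prescribed by those of the extrema of $\Sigma_L(P)$. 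Combining these uniqueness constraints with the expansion $K_Q = \sum_{\sigma \in \Sigma_L(Q)} F_{\comp(\Des_L(\sigma))} = s_{\nu^\rmt}$ and Gessel's identity $\sum_{\sigma:\Qtab(\sigma)=Q_0}F_{\comp(\Des_L(\sigma))} = s_{\sh(Q_0)}$, the goal is to identify $\Sigma_L(Q)$ with the unique dual Knuth class of shape $\nu^\rmt$ compatible with these extremal data; \cref{thm: RSP iff DPC} then yields $Q \in \sfRSP_n$.

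\emph{Main obstacle.} The central difficulty is bridging character equality and the set-theoretic claim that $\Sigma_L(Q)$ is a dual Knuth class. Descent-multiset equality alone is insufficient: for instance $\{132, 213\} \subset \SG_3$ is not a dual Knuth class yet shares the descent multiset of the dual Knuth class $\{132, 231\}$, and both give $s_{(2,1)}$ under $\sum F_{\comp(\Des_L)}$. The missing ingredient is the precise structural transfer from $\sfM_P$ --- the unique extremal elements of $\Sigma_L(Q)$ with specific descent sets, together with the matching combinatorial $H_n(0)$-action on basis elements --- which should pin down $\Sigma_L(Q)$ uniquely among candidate subsets of $\SG_n$ with the correct descent profile. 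Carrying this discrimination out cleanly is the principal technical work of the proof.
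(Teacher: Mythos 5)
Your plan and the paper's proof diverge significantly, and yours has a genuine gap you yourself flag as unresolved. The paper does \emph{not} go through \cref{thm: RSP iff DPC} or try to identify $\Sigma_L(Q)$ with a single dual Knuth class. Instead, it transports the \emph{poset/colored-digraph} structure directly through the module isomorphism. Concretely: by \cite[Theorem~2.2]{05W}, $\ch([\sfM_P]) = s_\lambda$ forces $\sh(\tau_P) \in \{\lambda, \lambda^\circ\}$. In the case $\sh(\tau_P) = \lambda$, one invokes \cite[Lemma~3.12]{19Searles} to show that $\rmread_{\tau_P}(T_\lambda)$ is the \emph{unique} element of $\Sigma_L(P)$ whose descent set contains $\Des_L(\rmread_{\tau_P}(T_\lambda))$; combined with $\ch([\sfM_P]) = \ch([\sfM_Q])$, this produces a unique $\sigma \in \Sigma_L(Q)$ with the analogous property, and forces $f(\rmread_{\tau_P}(T_\lambda)) = c\sigma$. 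After rescaling, since $\rmread_{\tau_P}(T_\lambda)$ is a cyclic generator and the $\pi_i$-action is combinatorial on the bases, the isomorphism $f$ sends $\Sigma_L(P)$ bijectively onto $\Sigma_L(Q)$ while preserving descents. In particular $\Sigma_L(Q)$ is a left weak Bruhat interval, $f|_{\Sigma_L(P)}$ is a descent-preserving poset isomorphism, and \cref{thm: equiv class X lam mu} immediately gives $Q \in \sfRSP_n$. The case $\sh(\tau_P) = \lambda^\circ$ is handled by passing to the $\autophi$-twist (equivalently the posets $\overline{P}^*$, $\overline{Q}^*$) and reducing to the first case.

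The concrete gap in your proposal is exactly the step you call the ``main obstacle.'' You derive that $\sfM_Q$ has simple top and simple socle (correct: $\sh(\tau_P)$ is connected since it is $\lambda$ or $\lambda^\circ$, so $\balproj$ and $\balinj$ are honest compositions, not generalized ones), and you then assert this ``forces $\Sigma_L(Q)$ to have a unique minimum $\sigma_0^Q$ and a unique maximum $\sigma_1^Q$ in the left weak Bruhat order with descent sets prescribed by those of the extrema of $\Sigma_L(P)$.'' This inference is not justified: for a non-regular $Q$, $\Sigma_L(Q)$ need not be a left weak Bruhat interval, and the module-theoretic data (simple top/socle, characteristic, descent multiset) do not, by themselves, pin down which subset of $\SG_n$ is the basis --- your own example $\{132,213\}$ vs.\ $\{132,231\}$ illustrates the ambiguity in one direction. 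What makes the paper's argument close is that it uses the \emph{full} module isomorphism at the level of bases (not just its shadow on characters or top/socle), combined with the uniqueness of the descent-maximal generator furnished by \cite[Lemma~3.12]{19Searles}, to identify $\Sigma_L(Q)$ with $f(\Sigma_L(P))$ elementwise. If you want to salvage your route via dual Knuth classes, you would still need this basis-level transport (or an equivalent rigidity argument), at which point you have essentially reproduced the paper's proof and the detour through \cref{thm: RSP iff DPC} becomes unnecessary.
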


\begin{proof}
(1) Suppose that $\ch([\sfM_P]) = s_{\lambda}$ for some $\lambda \vdash n$.
By ~\cite[Theorem 2.2]{05W}, $\sh(\tau_P)$ is either $\lambda$ or $\lambda^\circ$, where $\lambda^\circ$ denotes the skew partition whose Young diagram is obtained by rotating $\tyd(\lambda)$ by $180^\circ$.

First, we consider the case where $\sh(\tau_P) = \lambda$.
Let $f: \sfM_P \ra \sfM_Q$ be an $H_n(0)$-module isomorphism.
By ~\cref{thm: interval descriptrion for regular skew schur poset},
we see that 
$\Sigma_L(P) = [\rmread_{\tau_P} (T_{\lambda}), \rmread_{\tau_P}(T'_{\lambda})]_L$ and therefore $\rmread_{\tau_P} (T_{\lambda})$ is a cyclic generator of $\sfM_P$.
In addition, in view of \cite[Lemma 3.12]{19Searles}, we have that 
\begin{equation}\label{negation of descent inclusion}
\Des_L(\rmread_{\tau_P}(T)) \not\supseteq \Des_L(\rmread_{\tau_P}(T_{\lambda}))
\text{ for all $T \in \SYT(\lambda) \setminus \{T_{\lambda} \}$.}
\end{equation}
Combining ~\eqref{negation of descent inclusion} with the equality $\ch([\sfM_P]) = \ch([\sfM_Q])$, we can deduce  that there exists a unique $\sigma \in \Sigma_L(Q)$ such that $\Des_L(\sigma) \supseteq \Des_L(\rmread_{\tau_P}(T_{\lambda}))$.
This fact implies that  $f(\rmread_{\tau_P}(T_{\lambda})) = c \sigma$ for some nonzero $c \in \C$.
We may assume that $c = 1$ by considering the isomorphism $\frac{1}{c}f$ instead of $f$. 
Since $f$ is an $H_n(0)$-module isomorphism,
$\Sigma_L(Q)$ is equal to $f(\Sigma_L(P))$ and therefore is a left weak Bruhat interval. Furthermore, it holds that   
\[
\Des_L(f(\gamma)) = \Des_L(\gamma) \quad \text{for all $\gamma \in \Sigma_L(P)$.}
\]
As a consequence, we obtain a descent-preserving isomorphism $f|_{\Sigma_L(P)}: \Sigma_L(P) \ra \Sigma_L(Q)$.
Now the assertion follows from  ~\cref{thm: equiv class X lam mu}.

Next, consider the case where $\sh(\tau_P) = \lambda^\circ$.
Let $\overline{P}^*$ and $\overline{Q}^*$ be the posets in $\sfP_n$ whose orders are defined by 
\[
u \preceq_{\overline{P}^*} v
\Longleftrightarrow 
n + 1 - v \preceq_P n + 1 - u
\quad \text{and} \quad
u \preceq_{\overline{Q}^*} v
\Longleftrightarrow 
n + 1 - v \preceq_Q n + 1 - u,
\]
respectively.
Since $P$ is a poset in $\sfRSP_n$ with $\sh(\tau_P) = \lambda^\circ$,  $\overline{P}^*$ is a poset in $\sfRSP_n$ with $\sh(\tau_{\overline{P}^*}) = \lambda$.
By \cite[Theorem 3.6(a)]{23CKO}, we have $\sfM_{\overline{P}^*} \cong \bfT^+_{\autophi} (\sfM_{P})$ and $\sfM_{\overline{Q}^*} \cong \bfT^+_{\autophi} (\sfM_{Q})$, which implies that $\sfM_{\overline{Q}^*} \cong \sfM_{\overline{P}^*}$.
It follows from the first case that $\overline{Q}^* \in \sfRSP_n$, thus $Q \in \sfRSP_n$.

(2) It follows from (1).
\end{proof}

Based on these evidences, we propose the following conjecture.

\begin{conjecture}\label{conj: first one}
Let $P, Q \in \sfRP_n$.
If $\sfM_P \cong \sfM_Q$, then $\Sigma_L(P) \Deq \Sigma_L(Q)$.
\end{conjecture}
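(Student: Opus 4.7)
The converse of \cref{conj: first one}, which is implicit but not stated, is immediate: if $\Sigma_L(P)\Deq\Sigma_L(Q)$ then a descent-preserving poset isomorphism between $\Sigma_L(P)$ and $\Sigma_L(Q)$ is a colored digraph isomorphism by \cref{Prop: desc pres isom and 0-Hecke}, and this extends $\C$-linearly to an $H_n(0)$-module isomorphism $\sfM_P\cong\sfM_Q$. So only the forward direction requires work. My plan is, given an abstract isomorphism $f\colon\sfM_P\xrightarrow{\sim}\sfM_Q$, to construct a descent-preserving bijection $\tilde f\colon\Sigma_L(P)\to\Sigma_L(Q)$ and then invoke \cref{Prop: desc pres isom and 0-Hecke} to upgrade it to a colored digraph isomorphism, which is exactly the relation $\Sigma_L(P)\Deq\Sigma_L(Q)$.

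The starting point is the \emph{principal filtration} of $\sfM_P$ by cyclic submodules. Writing $\Sigma_L(P)=[\sigma_1,\rho_1]_L$, convexity of left weak Bruhat intervals together with the explicit $H_n(0)$-action in \cref{def: poset module} gives, for each $\gamma\in\Sigma_L(P)$,
\[
H_n(0)\cdot\gamma \;=\; \Span\bigl\{\gamma'\in\Sigma_L(P):\gamma\preceq_L\gamma'\bigr\} \;=\; \Span[\gamma,\rho_1]_L,
\]
an $H_n(0)$-submodule of dimension $|[\gamma,\rho_1]_L|$. Hence $\gamma\mapsto H_n(0)\cdot\gamma$ is an order-reversing embedding of $(\Sigma_L(P),\preceq_L)$ into the submodule lattice of $\sfM_P$, and the same holds on the $Q$-side. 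If I can single out the image of this embedding \emph{intrinsically} inside the submodule lattice of $\sfM_P$—call such submodules \emph{basic}—then $f$ carries basic submodules of $\sfM_P$ bijectively to basic submodules of $\sfM_Q$, and the inverse correspondence with elements of $\Sigma_L(Q)$ produces the desired $\tilde f$.

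Assuming $\tilde f$ is in place, verifying descent-preservation should proceed via the idempotents $\pi_{w_0(S)}\in H_n(0)$: the condition $S\subseteq\Des_L(\gamma)$ is equivalent to $\pi_{w_0(S)}\cdot\gamma=\gamma$, which in turn is detectable on $H_n(0)\cdot\gamma$ via the fact that this subspace is the unique simple top generator of the local cyclic module $H_n(0)\cdot\gamma$. Combining this with \cref{lem: same desc int}, which describes the elements of $\SG_n$ with a prescribed descent set as a single right weak Bruhat interval, should force $\Des_L(\tilde f(\gamma))=\Des_L(\gamma)$ for every $\gamma$. Together with the order-preservation inherited from the submodule lattice, this gives a descent-preserving poset isomorphism, completing the argument.

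The principal obstacle—and the reason the statement is only conjectural beyond the computer check at $n\le6$ and the special case handled in \cref{prop: small evi for conj}—is the intrinsic characterization of basic cyclic submodules. In general a non-basis vector $v=\sum_{\gamma}c_\gamma\gamma\in\sfM_P$ may generate a cyclic submodule that either coincides with $H_n(0)\cdot\gamma'$ for some basis $\gamma'$ or has no direct poset-theoretic meaning, and $\sfM_P$ may admit nontrivial automorphisms that scramble the basis. To attack this I would refine the invariants attached to each cyclic submodule $M\subseteq\sfM_P$: its projective cover (recalling \cref{lem: radical cond}), its socle and injective hull, the composition multiplicities in each Loewy layer, and the quasisymmetric function $\ch([M])$. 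The goal is a rigidity statement saying that an $H_n(0)$-module isomorphism between two basic cyclic submodules forces matching descent data at the generators. A complementary inductive route is to restrict $\sfM_P$ to parabolic subalgebras $H_J(0)$ with $J\subsetneq[n-1]$: such a restriction decomposes along convex sub-intervals, and \cref{conj: first one} applied at rank smaller than $n$ would let one pin down individual basis elements of $\sfM_P$ up to the ambiguity that must be resolved by the rigidity statement above.
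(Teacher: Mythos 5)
The statement you are addressing is \cref{conj: first one}, which is a \emph{conjecture} in the paper, not a theorem. The authors offer only evidence for it: a computer verification of the classification of weak Bruhat interval modules for $n\le 6$ (summarized around \cref{tab: non des pres modules}) and \cref{prop: small evi for conj}, which handles the special case where $P\in\sfRSP_n$ and $\ch([\sfM_P])$ is a single Schur function. So there is no proof in the paper to compare your proposal against, and your write-up should not present itself as a proof.

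As a plan of attack, the ingredients you assemble are correct. The converse does follow from \cref{Prop: desc pres isom and 0-Hecke}, exactly as the paper remarks after stating the conjecture. Your identity $H_n(0)\cdot\gamma=\Span[\gamma,\rho_1]_L$ holds (convexity of the interval plus the form of the action in \cref{def: poset module}), the assignment $\gamma\mapsto H_n(0)\cdot\gamma$ is an order anti-embedding of $(\Sigma_L(P),\preceq_L)$ into the submodule lattice, and each $H_n(0)\cdot\gamma$ is local with simple top $\bfF_\alpha$ where $\set(\alpha)=[n-1]\setminus\Des_L(\gamma)$, so descents of $\gamma$ are recoverable from the isomorphism type of $H_n(0)\cdot\gamma$. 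It is worth noting that the paper's \cref{prop: small evi for conj} proves the restricted case by a cognate rigidity argument: when $\sh(\tau_P)$ is a partition, the cyclic generator $\rmread_{\tau_P}(T_\lambda)$ is the unique element of $\Sigma_L(P)$ whose descent set contains the others' (via \eqref{negation of descent inclusion}), which pins down $f$ on the generator and propagates descent-preservation to the whole interval.

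The gap, which you correctly identify, is that nothing is known to characterize the subfamily $\{H_n(0)\cdot\gamma : \gamma\in\Sigma_L(P)\}$ intrinsically inside the submodule lattice of $\sfM_P$. An abstract isomorphism $f$ need not send basis elements to scalar multiples of basis elements: $\sfM_P$ can admit nontrivial automorphisms, and a non-basis vector can generate a cyclic submodule that coincides with a basic one or not. Until that rigidity statement is supplied, the map $\widetilde f$ you want to produce is not even well-defined, so the filtration-plus-idempotents argument does not close. This is exactly the reason the statement remains a conjecture in the paper, and your proposal does not resolve it; it is a reasonable roadmap that isolates the missing ingredient, but it is not a proof.
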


We remark that the converse of ~\cref{conj: first one} holds due to 
\cref{Prop: desc pres isom and 0-Hecke}.

\subsection{The decomposition problem of $\sfM_P$ for $P \in \sfRSP_n$}\label{subsec: decomp}

A Young diagram of skew shape is called a \emph{ribbon} if it does not contain any $2\times 2$ square. 
For simplicity, we call a skew partition a \emph{ribbon} if the corresponding Young diagram is a ribbon.
Note that our ribbons are not necessarily connected.
Consider a skew partition 
\[
\lambda/\mu = \lambda^{(1)} / \mu^{(1)} \star \lambda^{(2)} / \mu^{(2)} \star \cdots \star \lambda^{(k)} / \mu^{(k)}
\]
such that  $\lambda^{(i)} / \mu^{(i)}$ is connected for all $1 \le i \le k$.
We say that \emph{$\lambda / \mu$ contains a disconnected ribbon} if there exists an index $1 \le j \le k-1$ such that both $\lambda^{(j)} / \mu^{(j)}$ and $\lambda^{(j+1)} / \mu^{(j+1)}$ are ribbons.
With this notation,   we state the following proposition.
\begin{proposition}\label{prop: regard decomp}
Let $P \in \sfRSP_n$.
\begin{enumerate}[label = {\rm (\arabic*)}]
\item If $\sh(\tau_P)$ is connected, then $\sfM_P$ is indecomposable.
\item If $\sh(\tau_P)$ contains a disconnected ribbon, then $\sfM_P$ is not indecomposable.
\end{enumerate}
\end{proposition}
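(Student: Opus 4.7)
My strategy combines the projective cover of $\sfM_P$ given by \cref{lem: proj cov and inj hull}(1) with Huang's decomposition from \cref{lem: Huang decomp gen Pa}. Set $\lambda/\mu := \sh(\tau_P)$, so that $\bfP_{\balproj(\lambda/\mu)}$ is a projective cover of $\sfM_P$.

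For (1), connectedness of $\lambda/\mu$ makes $\balproj(\lambda/\mu)$ an ordinary composition (with no $\star$). Hence the projective cover is the projective indecomposable $\bfP_{\balproj(\lambda/\mu)}$, whose top is the simple module $\bfF_{\balproj(\lambda/\mu)}$. Since a module and its projective cover share the same top, $\sfM_P/\rad(\sfM_P)$ is simple. Any nontrivial direct sum decomposition of $\sfM_P$ would yield a nontrivial decomposition of the top with at least two nonzero simple summands, contradicting this simplicity; therefore $\sfM_P$ is indecomposable.

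For (2), write $\lambda/\mu = \lambda^{(1)}/\mu^{(1)} \star \cdots \star \lambda^{(k)}/\mu^{(k)}$ with each $\lambda^{(i)}/\mu^{(i)}$ connected, and fix $j$ such that $\lambda^{(j)}/\mu^{(j)}$ and $\lambda^{(j+1)}/\mu^{(j+1)}$ are ribbons, with compositions $\alpha^{(j)}$ and $\alpha^{(j+1)}$. The plan has three ingredients. (i) I will establish a factorization $\sfM_P \cong \sfM_{P_1} \boxtimes \cdots \boxtimes \sfM_{P_k}$, where each $P_i$ is a regular Schur labeled skew shape poset of shape $\lambda^{(i)}/\mu^{(i)}$: by \cref{thm: classification}, one may choose $\tau_P = \tau_0^{\lambda/\mu}$, so that $\sfposet(\tau_P)$ becomes the disjoint union of the component posets on consecutive blocks of labels, and linear extensions decompose as interleavings of linear extensions of each component, matching the standard basis of the induction product. (ii) For each $i$ such that $\lambda^{(i)}/\mu^{(i)}$ is a ribbon with composition $\alpha^{(i)}$, I will show $\sfM_{P_i} \cong \bfP_{\alpha^{(i)}}$: direct comparison of the weak Bruhat interval $[\rmread_{\tau_{P_i}}(T_{\lambda^{(i)}/\mu^{(i)}}), \rmread_{\tau_{P_i}}(T'_{\lambda^{(i)}/\mu^{(i)}})]_L$ from \cref{thm: interval descriptrion for regular skew schur poset} with the defining interval $[w_0((\alpha^{(i)})^\rmc), w_0 w_0(\alpha^{(i)})]_L$ of $\bfP_{\alpha^{(i)}}$ shows that they agree. (iii) By \cref{lem: Huang decomp gen Pa},
\[
\bfP_{\alpha^{(j)}} \boxtimes \bfP_{\alpha^{(j+1)}} \cong \bfP_{\alpha^{(j)} \cdot \alpha^{(j+1)}} \oplus \bfP_{\alpha^{(j)} \odot \alpha^{(j+1)}};
\]
distributing the remaining induction product factors over this direct sum expresses $\sfM_P$ as a direct sum of two nonzero submodules, so $\sfM_P$ is not indecomposable.

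The main obstacle is ingredient (i): verifying the induction product factorization requires a careful combinatorial argument showing that the $H_n(0)$-action from \cref{def: poset module} is compatible with the natural identification of linear extensions of a disjoint-union poset with interleavings of linear extensions of its components. Once this structural factorization is in hand, ingredients (ii) and (iii) follow readily from the weak Bruhat interval description of $\sfM_P$ developed in \cref{sec: int str for regular Schur} and from Huang's lemma.
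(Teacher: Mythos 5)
Your part~(1) argument is correct and fills in the details the paper leaves implicit: \cref{lem: proj cov and inj hull}(1) gives $\bfP_{\balproj(\lambda/\mu)}$ as a projective cover of $\sfM_P$, which is a projective \emph{indecomposable} when $\lambda/\mu$ is connected (since $\balproj(\lambda/\mu)$ is then an ordinary composition), so the top of $\sfM_P$ is simple and a nontrivial direct sum decomposition is impossible.

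For part~(2), your strategy coincides with the paper's: pass to a convenient representative of the shape class, factor $\sfM_P$ as an induction product over connected components, identify the ribbon factors with projective indecomposables $\bfP_{\alpha^{(i)}}$, and use \cref{lem: Huang decomp gen Pa} to split $\bfP_{\alpha^{(j)}} \boxtimes \bfP_{\alpha^{(j+1)}}$. Your steps~(ii) and~(iii) are fine; in particular, for a connected ribbon component one can verify that $\rmread_{\tau_0}(T_{\lambda^{(i)}/\mu^{(i)}}) = w_0((\alpha^{(i)})^\rmc)$ and $\rmread_{\tau_0}(T'_{\lambda^{(i)}/\mu^{(i)}}) = w_0 w_0(\alpha^{(i)})$, so the interval defining $\sfM_{P_i}$ is literally the defining interval of $\bfP_{\alpha^{(i)}}$.

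The genuine gap is the step you yourself flag as the obstacle, namely~(i). Observing that the linear extensions of a disjoint-union poset are interleavings of linear extensions of the components is straightforward; what is not automatic is that the $H_n(0)$-action of \cref{def: poset module} on $\Sigma_L(P)$ agrees, under that identification, with the induced-module action on $M \otimes N \uparrow_{H_{n_1}(0) \otimes H_{n_2}(0)}^{H_{n_1+n_2}(0)}$. That requires choosing coset representatives and checking how $\pi_i$ acts for $i$ at the boundary between blocks, and your proposal does not carry this out. The paper closes exactly this gap by introducing the tableau model $X_{\lambda/\mu}$ in the appendix and invoking \cref{prop: property of X}(1), which in turn rests on a compatibility lemma for weak Bruhat interval modules under $\boxtimes$ proved in \cite[Lemma 4]{22JKLO}. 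So the outline is the right proof, but step~(i) needs either that cited lemma or a direct argument; as written, the proposal is incomplete at precisely that point.
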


\begin{proof}
(1) It follows from \cref{lem: proj cov and inj hull}.

(2) 
Suppose that $\sh(\tau_P)$  contains a disconnected ribbon.
Let $\lambda/\mu = \sh(\tau_P)$.
Write $\lambda/\mu$ as $\lambda^{(1)} / \mu^{(1)} \star \lambda^{(2)} / \mu^{(2)} \star \cdots \star \lambda^{(k)} / \mu^{(k)}$, where  $\lambda^{(i)} / \mu^{(i)}$ is connected for all $1 \le i \le k$ and  both $\lambda^{(j)} / \mu^{(j)}$ and $\lambda^{(j+1)} / \mu^{(j+1)}$ are ribbons for some $1 \leq j \leq k-1$.

In ~\cref{sec: appendix}, we constructed an $H_n(0)$-module $X_{\lambda/\mu}$ satisfying that  
$X_{\lambda/\mu} \cong \sfM_P$. 
From now on, we will prove the assertion for $X_{\lambda/\mu}$ instead of $\sfM_P$.
By \cref{prop: property of X}(1), we have the $H_n(0)$-module isomorphism 
\[
X_{\lambda/\mu} \cong X_{\lambda^{(1)}/\mu^{(1)}} \boxtimes \cdots \boxtimes X_{\lambda^{(k)}/\mu^{(k)}}.
\]
Set $X^{(1)} := X_{\lambda^{(1)} / \mu^{(1)}}  
\boxtimes \cdots \boxtimes  X_{\lambda^{(j-1)} / \mu^{(j-1)}}$ and $X^{(2)} := X_{\lambda^{(j+2)} / \mu^{(j+2)}} \boxtimes \cdots \boxtimes X_{\lambda^{(k)} / \mu^{(k)}}$.
Since $\lambda^{(j)}/\mu^{(j)}$ and $\lambda^{(j+1)}/\mu^{(j+1)}$ are ribbons, 
$X_{\lambda^{(j)}/\mu^{(j)}} \cong \bfP_\alpha$ and $X_{\lambda^{(j+1)}/\mu^{(j+1)}}  \cong \bfP_\beta$, where
$\alpha = \balproj(\lambda^{(j)} / \mu^{(j)})$ and $\beta = \balproj(\lambda^{(j+1)} / \mu^{(j+1)})$.
Therefore, 
\[
X_{\lambda/\mu}
\cong X^{(1)} \boxtimes \bfP_\alpha \boxtimes \bfP_\beta \boxtimes X^{(2)}.
\]
Combining
~\cref{lem: Huang decomp gen Pa} with the  fact that $\boxtimes$ is distributive over $\oplus$, we derive the $H_n(0)$-module isomorphism
\[
X_{\lambda/\mu} \cong 
(X^{(1)} \boxtimes \bfP_{\alpha \cdot \beta} \boxtimes X^{(2)})
\oplus
(X^{(1)}
\boxtimes \bfP_{\alpha \odot \beta} \boxtimes X^{(2)}).
\]
This shows $X_{\lambda/\mu}$ is not indecomposable.
\end{proof}

The contraposition of \cref{prop: regard decomp}(2) says that 
if $\sfM_P$ is indecomposable, then $\sh(\tau_P)$ does not contain any disconnected ribbon.
We ask if the converse is true.
In the case where $\sh(\tau_P)$ is connected, it is true by \cref{prop: regard decomp}(1).
In the case where $\sh(\tau_P)$ is disconnected,
we verified its validity when $|P| \leq 6$.
Indeed, this was done by showing that 
$\Endo(\sfM_P)$ has no idempotent  except for $0$ and $\id$.
Refer to the following example.

\begin{example}\label{eg: disconnected and indecomp}
Let $\lambda/\mu = (3,3,1)/(1,1)$ and $P = \sfposet(\tau_0^{\lambda/\mu})$.
Then, $\Sigma_L(P) = [21435, 42531]_L$ is a basis for $\sfM_P$.
Let $f \in \Endo(\sfM_P)$ be an idempotent  and let $$
f(21435) = \sum_{\gamma \in [21435, 42531]_L} c_\gamma \gamma \quad (c_\gamma \in \C).
$$
Note that 
$$
\{\gamma \in [21435, 42531]_L \mid \Des_L(21435) \subseteq \Des_L(\gamma) \} = \{21435, 21543, 42531\}.
$$
Since $f$ is an $H_5(0)$-module homomorphism, 
this equality 
implies that 
 $c_\gamma = 0$ for all $\gamma \in [21435, 42531]_L \setminus \{21435,21543,42531\}$.
In addition, $c_{21543} = 0$ since 
$$
\pi_1 \pi_2 \cdot 21435 = 0 \quad \text{and} \quad
\pi_1 \pi_2 \cdot f(21435) =  c_{21543} \, 32541.
$$
Hence, $f - c_{21435} \, \id$ is an $H_5(0)$-module homomorphism such that 
\[
(f - c_{21435} \id) (\gamma) = \begin{cases}
c_{42531} 42531 & \text{if $\gamma = 21435$},\\
0 & \text{if $\gamma \in [21435, 42531]_L \setminus \{21435\}$,}
\end{cases}
\]
and therefore $(f - c_{21435} \id)^2 = 0$.
Since $f$ is an idempotent, the possible values for $c_{21435}$ are $0$ or $1$.
Using the fact that $f$ is an idempotent again, we have that $c_{42531} = 0$.
As a consequence, $f$ is $0$ or $\id$.
\end{example}

Based on the above discussion, we propose the following conjecture.

\begin{conjecture}\label{conj: on indecomp}
Let $P \in \sfRSP_n$.
Suppose that $\sh(\tau_P)$ is disconnected and does not contain any disconnected ribbon.
Then,  $\sfM_P$ is indecomposable.
\end{conjecture}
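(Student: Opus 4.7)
The plan is to establish indecomposability by showing that $\Endo(\sfM_P)$ has only the trivial idempotents $0$ and $\id$. I would first replace $\sfM_P$ by the tableau model $X_{\lambda/\mu}$ from the appendix, where $\lambda/\mu = \sh(\tau_P)$, and decompose $\lambda/\mu = \lambda^{(1)}/\mu^{(1)} \star \cdots \star \lambda^{(k)}/\mu^{(k)}$ into its connected components. Exactly as in the proof of \cref{prop: regard decomp}(2), this yields the induction-product isomorphism $X_{\lambda/\mu} \cong X_{\lambda^{(1)}/\mu^{(1)}} \boxtimes \cdots \boxtimes X_{\lambda^{(k)}/\mu^{(k)}}$. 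By \cref{prop: regard decomp}(1), each non-ribbon factor is indecomposable, while each ribbon factor equals a projective indecomposable $\bfP_{\balproj(\lambda^{(i)}/\mu^{(i)})}$. The hypothesis that $\lambda/\mu$ contains no disconnected ribbon is precisely the statement that no two consecutive factors in the induction product are simultaneously projective.

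Next, I would use \cref{lem: proj cov and inj hull} together with \cref{lem: Huang decomp gen Pa} to describe the top and socle, obtaining $X_{\lambda/\mu}/\rad\, X_{\lambda/\mu} \cong \bigoplus_{\beta \in [\balproj(\lambda/\mu)]} \bfF_\beta$ and $\soc X_{\lambda/\mu} \cong \bigoplus_{\gamma \in [\balinj(\lambda/\mu)]} \bfF_\gamma$. For each $\beta \in [\balproj(\lambda/\mu)]$, I would choose a cyclic lift $v_\beta \in X_{\lambda/\mu}$ of a generator of the $\bfF_\beta$-summand. Any $f \in \Endo(X_{\lambda/\mu})$ is then determined by the $2^{k-1}$ images $f(v_\beta)$, with each $f(v_\beta)$ constrained (via the descent criterion already exploited in \cref{eg: disconnected and indecomp}) to lie in the span of those basis tableaux whose $\tau_P$-reading permutation has left descent set containing $\set(\beta)$.

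The heart of the argument is a linkage step: for any two compositions $\beta, \beta' \in [\balproj(\lambda/\mu)]$ that differ in a single "$\cdot$" versus "$\odot$" at the boundary between adjacent components $\lambda^{(i)}/\mu^{(i)}$ and $\lambda^{(i+1)}/\mu^{(i+1)}$, I would exhibit an explicit element $\pi_\sigma \in H_n(0)$ whose action carries $v_\beta$ to $v_{\beta'}$ modulo $\rad\, X_{\lambda/\mu}$. The point is that such a $\pi_\sigma$ exists precisely when at least one of the two adjoining components is not a ribbon, because the presence of a $2 \times 2$ block in the corresponding Young diagram provides the room for the braid-type reduced word required. Applying $f$ to both sides forces the top-level scalar coefficients of $v_\beta$ and $v_{\beta'}$ to agree; iterating across all such adjacent pairs propagates a single common scalar $c$ through all of $[\balproj(\lambda/\mu)]$. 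The endomorphism $f - c\, \id$ then annihilates the top and so takes values in $\rad\, X_{\lambda/\mu}$, and a dual socle-level argument (using $\bfP_{\balinj(\lambda/\mu)}$) combined with an induction on radical length should then force $f - c\, \id = 0$, with $c \in \{0,1\}$ when $f$ is idempotent.

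The main obstacle is producing the linking operators $\pi_\sigma$ uniformly and verifying that they genuinely move $v_\beta$ to $v_{\beta'}$ modulo the radical. The combinatorial geometry of $\tyd(\lambda^{(i)}/\mu^{(i)}) \star \tyd(\lambda^{(i+1)}/\mu^{(i+1)})$ can be intricate, and the failure of this step when both adjoining components are ribbons is exactly the content of the splitting in \cref{prop: regard decomp}(2); so whatever $\sigma$ one constructs must genuinely depend on the existence of a $2 \times 2$ block in at least one of the two components. Navigating this case analysis case by case, or finding a conceptual replacement that handles all shapes at once, is the chief technical hurdle of the proof.
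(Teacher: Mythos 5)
The statement you are attempting to prove is presented in the paper as a \emph{conjecture} (\cref{conj: on indecomp}); the paper gives no proof, only partial evidence: \cref{prop: regard decomp}(1) handles the connected case, \cref{prop: regard decomp}(2) shows that a disconnected ribbon forces decomposability, and the authors report brute-force verification of the remaining cases for $n \le 6$ via the idempotent argument illustrated in \cref{eg: disconnected and indecomp}. So there is no paper proof to compare your proposal against, and your plan, even if completed, would go beyond what the paper actually establishes.

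That said, your proposal contains a fatal flaw at the linkage step that you identify as ``the heart of the argument.'' You want a single algebra element $\pi_\sigma$ with $\pi_\sigma v_\beta \equiv v_{\beta'} \pmod{\rad X_{\lambda/\mu}}$ for two distinct $\beta, \beta' \in [\balproj(\lambda/\mu)]$. This is impossible regardless of the shape. The top $X_{\lambda/\mu}/\rad X_{\lambda/\mu}$ is semisimple and decomposes as $\bigoplus_{\beta} \bfF_\beta$ with the $\bfF_\beta$ pairwise nonisomorphic one-dimensional submodules. Each $\bfF_\beta$ is $H_n(0)$-stable, so for any $h \in H_n(0)$ the image $h\cdot \bar v_\beta$ lies back in $\C\bar v_\beta$ and in particular can never equal $\bar v_{\beta'}$ for $\beta' \neq \beta$. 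Consequently the assertion that ``applying $f$ to both sides forces the top-level scalar coefficients of $v_\beta$ and $v_{\beta'}$ to agree'' does not follow: the algebra action alone cannot transport information between distinct top summands, so you cannot propagate a common scalar $c$ across $[\balproj(\lambda/\mu)]$ by this mechanism. Any genuine linking argument would have to exploit the \emph{radical} structure, e.g.\ a common element of $\rad X_{\lambda/\mu}$ reached from two different tops, or the contravariant relation between top and socle via the uniserial pieces of $\bfP_\bal$; but producing such an element uniformly for all non-ribbon adjacent pairs is exactly the open combinatorial problem, and your sketch does not supply it. You candidly flag this as the chief hurdle, and indeed it is: until it is resolved, the proposal does not constitute a proof, and the statement remains a conjecture.
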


\subsection{Recovering $\sfM_P$ for $P \in \sfRSP_n$ from an  $H_n(q)$-module by  specializing $q$ to $0$}\label{subsec: generic Hecke}
Let $q \in \C$.
The Hecke algebra $H_n(q)$ is the associative $\C$-algebra with $1$ generated by $T_1, T_2, \ldots, T_{n-1}$ subject to the following relations:
\begin{align*}
T_i^2 &= (q-1) T_i + q \quad \text{for $1\le i \le n-1$},\\
T_i T_{i+1} T_i &= T_{i+1} T_i T_{i+1}  \quad \text{for $1\le i \le n-2$},\\
T_i T_j &=T_j T_i \quad \text{if $|i-j| \ge 2$}.
\end{align*}
Let $q \in \C$ be generic,  that is,  $q$ is neither zero nor a root of unity.
It is well known that $H_n(q)$ is isomorphic to the group algebra $\C[\SG_n]$, and thus 
the category of left finite dimensional $H_n(q)$-modules is semisimple
and there exists a ring isomorphism (\cite[Section 3.2]{97KT})
\[
\textbf{ch}_q: \bigoplus_{n\ge 0} \calG_0(H_n(q))  \ra \Sym,
\quad
[V^\lambda(q)] \mapsto s_\lambda.
\]
Here, $\bigoplus_{n\ge 0} \calG_0(H_n(q))$ is the Grothendieck ring of the tower   of generic Hecke algebras equipped with addition and multiplication from direct sum and induction product,
$\Sym$ is the ring of symmetric functions, and $V^\lambda(q)$ is the irreducible $H_n(q)$-module attached to a partition $\lambda$ of size $n$.
The explicit description of $V^{\lambda}(q)$ can be found in \cite[p.7]{92KW}.

Let $P \in \sfRSP_n$.
Viewing $q$ as an indeterminate, one may ask if 
$\sfM_P$ can be obtained from an $H_n(q)$-module by specializing $q$ to $0$.
However, it should be noted that the process of `specializing 
$q$ to 
$0$' depends on the choice of bases for the
$H_n(q)$-module under consideration, as illustrated in the example below.

\begin{example}
The irreducible $H_3(q)$-module $V^{(2,1)}(q)$ has the underlying space $\C\{v_1, v_2\}$ and the $H_3(q)$-action defined by
\begin{align*}
\begin{cases}
T_1 \cdot v_1 = -v_1, \\
T_2 \cdot v_1 = v_2,
\end{cases}
\quad \text{and} \quad\quad
\begin{cases}
T_1 \cdot v_2 = -q^2 v_1  + q v_2, \\
T_2 \cdot v_2 = qv_1 + (q - 1)v_2.
\end{cases}
\end{align*}
By the specialization $q = 0$, we have the $H_3(0)$-action on $\C\{v_1, v_2\}$ defined by
\begin{align*}
\begin{cases}
\opi_1 \cdot v_1 = -v_1, \\
\opi_2 \cdot v_1 = v_2,
\end{cases}
\quad \text{and} \quad\quad
\begin{cases}
\opi_1 \cdot v_2 = 0, \\
\opi_2 \cdot v_2 = - v_2.
\end{cases}
\end{align*}
The resulting module is isomorphic to $\bfT^{+}_\autotheta(\sfM_{P_1})$, where $P_1 = \sfposet(\tau_0^{(2,1)}) \in \sfRSP_3$.

On the other hand, if we choose the basis
 $\{w_1 := qv_1 - v_2, w_2 := (q^2 - q) v_1 - q v_2\}$ for $V^{(2,1)}(q)$,
 then we have
\begin{align*}
\begin{cases}
T_1 \cdot w_1 = w_2, \\
T_2 \cdot w_1 = -w_1,
\end{cases}
\quad \text{and} \quad\quad
\begin{cases}
T_1 \cdot w_2 = qw_1 + (q-1)w_2, \\
T_2 \cdot w_2 = -q^2 w_1 + q w_2.
\end{cases}
\end{align*}
By the specialization $q = 0$, we have the $H_3(0)$-action on $\C\{w_1, w_2\}$ defined by
\begin{align*}
\begin{cases}
\opi_1 \cdot w_1 = w_2, \\
\opi_2 \cdot w_1 = -w_1,
\end{cases}
\quad \text{and} \quad\quad
\begin{cases}
\opi_1 \cdot w_2 = -w_2, \\
\opi_2 \cdot w_2 = 0.
\end{cases}
\end{align*}
The resulting module is isomorphic to $\bfT^{+}_\autotheta(\sfM_{P_2})$, where $P_2 = \sfposet(\tau_0^{(2,2)/(1)}) \in \sfRSP_3$.
It is worthwhile to remark   that while $\bfT^{+}_\autotheta(\sfM_{P_1})$ and $\bfT^{+}_\autotheta(\sfM_{P_2})$ have the same quasisymmetric characteristic  
$\textbf{ch}_q([V^{(2,1)}(q)])$,  they are not isomorphic.
\end{example}

We expect that for $P \in \sfRSP_n$, $\bfT^{+}_\autotheta(\sfM_P)$ can be obtained from an $H_n(q)$-module, whose image under $\textbf{ch}_q$ equals $K_P$, by applying the specialization $q = 0$ to a suitable basis.

\appendix

\section{A tableau description of $\sfM_P$ for $P \in \sfRSP_n$}\label{sec: appendix}

Let $P \in \sfRSP_n$.
Note that $\Sigma_L(P)$ is a basis of $\sfM_P$ consisting of permutations.
Here, we construct an $H_n(0)$-module that is isomorphic to $\sfM_P$ and has a tableau basis.

For a skew partition $\lambda / \mu$ of size $n$,
consider the bijection 
$$f: \SYT(\lambda/\mu) \ra \Sigma_L(\sfposet(\tau_0^{\lambda/\mu})), \quad T \mapsto \rmread_{\tau_0}(T).
$$
Let $\widetilde{f}: \C\SYT(\lambda/\mu) \ra \sfM_{\sfposet(\tau_0^{\lambda/\mu})}$ be the $\C$-linear isomorphism obtained by extending $f$ by linearity.
We endow $\C\SYT(\lambda/\mu)$ with an $H_n(0)$-module structure by letting
\[
h \cdot x := \widetilde{f}^{-1}(h \cdot \widetilde{f}(x)) \quad \text{for $h \in H_n(0)$ and $x \in \C\SYT(\lambda/\mu)$}.
\] 
One can see that 
for $T \in \SYT(\lambda/\mu)$ and $1 \le i \le n-1$,
\[
\pi_i \cdot T =
\begin{cases}
T & \text{if $i$ is strictly left of $i+1$ in $T$,}\\
0 & \text{if $i$ and $i+1$ are in the same column of $T$,}\\
s_i \cdot T & \text{if $i$ is strictly right of $i + 1$ in $T$.}
\end{cases}
\]
Here, $s_i \cdot T$ is the tableau obtained from $T$ by swapping $i$ and $i+1$.
We denote the resulting module by $X_{\lambda/\mu}$.
By ~\cref{thm: classification}, we have
\begin{enumerate}[label = $\bullet$]
\item $\sfM_P \cong X_{\sh(\tau_P)}$ for $P \in \sfRSP_n$, and

\item $X_{\lambda/\mu} \not\cong X_{\nu/\kappa}$ for distinct skew partitions $\lambda/\mu$, $\nu/\kappa$ of size $n$.
\end{enumerate}
Therefore, $X_{\sh(\tau_P)}$ can be viewed as a representative of the isomorphism class of $\sfM_P$ in the category $\Hnmod$.

\begin{remark}

(1) 
For a composition $\alpha$, Searles ~\cite{19Searles}  constructed an indecomposable $0$-Hecke module $\mathsf{X}_\alpha$ whose image under the 
quasisymmetric characteristic is an extended Schur function.
In particular, when $\alpha$ is a partition, our $X_\alpha$ is identical to 
$\mathsf{X}_{\alpha}$.

(2)
For a generalized composition $\bal$, let $\lambda/\mu$ be a unique  skew partition satisfying the conditions that $\balproj(\lambda/\mu) = \bal$ and $\lambda/\mu$ is a ribbon.
Then, $X_{\lambda/\mu} \cong \bfP_\bal$.

\end{remark}

The following proposition shows how $X_{\lambda/\mu}$'s behave with respect to induction product, restrictions, and (anti-)automorphism twists of $\autophi$ and $\hautotheta$.

\begin{proposition}\label{prop: property of X}
We have the following isomorphisms.
\begin{enumerate}[label = {\rm (\arabic*)}]
    \item For skew partitions $\lambda/\mu$ of size $n$ and $\nu/\kappa$ of size $m$, 
    \[
    X_{\lambda/\mu} \boxtimes X_{\nu/\kappa} \cong X_{\lambda/\mu \star \nu/\kappa} \quad \text{as $H_{n+m}(0)$-modules.}
    \]
    \item For a skew partition $\lambda/\mu$ of size $n$ and
     $1 \leq k \leq n-1$, 
    \[
    X_{\lambda/\mu} \downarrow_{H_k(0) \otimes H_{n-k}(0)} \cong \bigoplus_{\substack{|\nu/\mu| = k \\ \mu \subset \nu \subset \lambda}} X_{\overline{\nu/\mu}} \otimes  X_{\overline{\lambda/\nu}} \quad \text{as $H_k(0) \otimes H_{n-k}(0)$-modules}.
    \]
    Here, $\overline{\nu/\mu}$ and $\overline{\lambda/\nu}$ denote the basic skew partitions whose Young diagrams are obtained from $\tyd(\nu/\mu)$ and $\tyd(\lambda/\nu)$, respectively,  by removing empty rows and empty columns.

    \item For a skew partition $\lambda/\mu$ of size $n$,
    \[
\bfT^{+}_\autophi(X_{\lambda/\mu}) \cong X_{(\lambda/\mu)^\circ} \quad \text{and} \quad  \bfT^{-}_\hautotheta(X_{\lambda/\mu}) \cong X_{\lambda^\rmt/\mu^\rmt}.
     \]
Here, $(\lambda/\mu)^\circ$ is the skew partition whose Young diagram is obtained by  rotating $\tyd(\lambda/\mu)$ by $180^\circ$. 
\end{enumerate}
\end{proposition}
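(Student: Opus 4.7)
The plan is to establish each of the three assertions by exhibiting an explicit $\C$-linear bijection between the natural tableau bases and verifying that it intertwines the $H_n(0)$-actions in question. Throughout, I would freely use the identification $X_{\lambda/\mu} \cong \sfM_{\sfposet(\tau_0^{\lambda/\mu})}$ so that results on weak Bruhat interval modules from \cite{22JKLO} remain available.

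For (1), the crucial combinatorial observation is that in $\tyd(\lambda/\mu \star \nu/\kappa)$ the two subdiagrams $\tyd(\lambda/\mu)$ and $\tyd(\nu/\kappa)$ occupy disjoint rows and disjoint columns. Consequently, every $U \in \SYT(\lambda/\mu \star \nu/\kappa)$ is determined by (a) the subset $A \subseteq [n+m]$ of size $n$ placed in the $\lambda/\mu$ part, (b) the SYT of $\lambda/\mu$ obtained from $U|_A$ by the order-preserving relabeling $A \to [n]$, and (c) the analogous SYT of $\nu/\kappa$ obtained from $[n+m] \setminus A$. This matches the basis of $X_{\lambda/\mu} \boxtimes X_{\nu/\kappa}$, which is indexed by triples $(w, T, S)$ with $w$ a minimal length representative in $\SG_{n+m}/(\SG_n \times \SG_m)$, $T \in \SYT(\lambda/\mu)$, and $S \in \SYT(\nu/\kappa)$, since $w$ encodes exactly the choice of $A$. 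I would define the map $\pi_w \otimes (T \otimes S) \mapsto U(w, T, S)$ accordingly; for $i \neq n$ the intertwining is essentially built into the induction product construction and the tableau rule for $X_{\lambda/\mu\star\nu/\kappa}$, while the case $i = n$ reduces to a boundary swap analysis.

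For (2), the decomposition is essentially combinatorial: each $U \in \SYT(\lambda/\mu)$ uniquely determines a shape $\nu$ with $\mu \subset \nu \subset \lambda$ and $|\nu/\mu| = k$, namely the shape occupied by $\{1,\dots,k\}$ in $U$. Letting $V_\nu$ be the span of those $U$ realizing $\nu$, the generators $\pi_1, \dots, \pi_{k-1}$ only swap pairs inside $\{1,\dots,k\}$ and $\pi_{k+1}, \dots, \pi_{n-1}$ only swap pairs inside $\{k+1,\dots,n\}$, so the shape $\nu$ is preserved and each $V_\nu$ is an $H_k(0) \otimes H_{n-k}(0)$-submodule. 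Splitting $U$ into its two subtableaux and shifting the entries of the second one down by $k$ yields the isomorphism $V_\nu \cong X_{\overline{\nu/\mu}} \otimes X_{\overline{\lambda/\nu}}$ after passing to basic representatives, and summing over $\nu$ gives the statement.

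For (3), the first isomorphism is realized by the rotate-and-complement map $T \mapsto T^\circ$, where $T^\circ$ is obtained from $T$ by a $180^\circ$ rotation and the substitution $k \mapsto n+1-k$ of entries; this is a bijection $\SYT(\lambda/\mu) \to \SYT((\lambda/\mu)^\circ)$, and its compatibility with the $\autophi$-twisted action reduces to the observation that the positions of $n-i$ and $n-i+1$ in $T$ correspond under rotation to those of $i+1$ and $i$ in $T^\circ$ with left/right reversed, matching the three-case definition of $\pi_i \cdot T$ in $X_{(\lambda/\mu)^\circ}$. For the second isomorphism I would use the transpose $T \mapsto T^\rmt$ together with sign corrections coming from the contravariance of $\bfT^{-}_\hautotheta$; alternatively, since $\ch$ of the twisted module equals $\uppsi(s_{\lambda/\mu}) = s_{\lambda^\rmt/\mu^\rmt}$ by \cref{lifts of involutions on QSYM}, one can combine this with the projective cover and injective hull computations of \cref{lem: proj cov and inj hull} and the classification \cref{thm: classification} to conclude the isomorphism without direct bijective verification. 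The main obstacle is the bookkeeping at the boundary index $i = n$ in the proof of (1), where the shuffle structure of the induction product must be aligned carefully with the tableau rule on the composite shape; once this is handled, all remaining verifications are routine case analyses.
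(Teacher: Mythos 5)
Your plan is a genuinely different route from the paper's. The paper proves all three parts in one line: since $X_{\lambda/\mu}$ is by construction isomorphic to the weak Bruhat interval module $\sfB(\rmread_{\tau_0}(T_{\lambda/\mu}), \rmread_{\tau_0}(T'_{\lambda/\mu}))$, assertion (1) is the induction-product formula for interval modules \cite[Lemma 4]{22JKLO}, assertion (2) is the restriction formula \cite[Theorem 2]{22JKLO}, and assertion (3) is the (anti-)automorphism twist formula \cite[Theorem 4]{22JKLO}. You instead re-prove these facts directly on the tableau bases: the identification of the induced basis $\pi_w\otimes(T\otimes S)$ (with $w$ a minimal coset representative, i.e.\ a choice of the entry set placed in the $\lambda/\mu$ component) with $\SYT(\lambda/\mu\star\nu/\kappa)$, the decomposition of the restriction according to the shape occupied by $\{1,\dots,k\}$, and rotation-with-complement resp.\ transposition-with-signs in (3) are all the correct maps, and such an argument would make the appendix self-contained at the price of redoing the case analysis that the citation outsources.

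Two cautions. First, in (1) the intertwining is not ``essentially built in'' for $i\neq n$: for every generator $\pi_i$ one must rewrite $\pi_i\pi_w$ according to whether $s_iw\prec_R w$, whether $s_iw$ is again a minimal coset representative (giving $\pi_{s_iw}\otimes(T\otimes S)$), or whether $s_iw=ws_j$ with $s_j$ in the parabolic subgroup (pushing the action onto the tensor factor), and then match each case with the tableau rule; the index $i=n$ is not special here, and this uniform bookkeeping is precisely the content of \cite[Lemma 4]{22JKLO}, so it cannot be waved off as routine while being the entire proof. Second, your fallback argument for the second isomorphism in (3) does not work as stated: knowing $\ch([\bfT^{-}_\hautotheta(X_{\lambda/\mu})])=s_{\lambda^\rmt/\mu^\rmt}$ together with \cref{lem: proj cov and inj hull} cannot force an isomorphism, since the quasisymmetric characteristic, the projective cover, and the injective hull do not determine an $H_n(0)$-module up to isomorphism, and \cref{thm: classification} only compares modules of the form $\sfM_Q$ with $Q\in\sfRSP_n$ --- that the twisted module is again of this (interval) form is exactly what \cite[Theorem 4]{22JKLO}, or your direct transpose-with-signs computation, must supply. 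So keep the direct verification as the actual proof of that half and drop the alternative.
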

\begin{proof}
The first assertion follows from
\cite[Lemma 4]{22JKLO},
the second from \cite[Theorem 2]{22JKLO}, and the third from \cite[Theorem 4]{22JKLO}.
\end{proof}

\paragraph{{\bf Acknowledgments.}}
The authors are deeply grateful to the anonymous referees for their meticulous reading of the manuscript and their invaluable advice. We would especially like to express our sincere thanks to the referee for bringing references \cite{84LS, 97LLT} and the contents of \cref{subsec: generic Hecke} to our attention.
\medskip 

\paragraph{{\bf Funding statement.}}
The first author was supported by the National Research Foundation of Korea(NRF) grant funded by the Korean Government (No. NRF-2020R1A5A1016126) and Basic Science Research Program through NRF funded by the Ministry of Education (No. RS-2023-00240377).
The second author was supported by NRF grant funded by the Korean Government (No. NRF-2020R1F1A1A01071055), Basic Science Research Program through NRF funded by the Ministry of Education (No. RS-2023-00271282),  NRF grant funded by the Korea government(MSIT) (No. RS-2024-00342349), and the Sogang University Research Grant of 2024(No. 202412001.01).
The third author was supported by NRF grant funded by the Korean Government (No. NRF-2020R1F1A1A01071055) and by NRF grant funded by the Korea government(MSIT) (No. RS-2024-00342349).

\bibliographystyle{plain}

\begin{thebibliography}{10}

\bibitem{95ARS}
M.~Auslander, I.~Reiten, and S.~Smal\o.
\newblock {\em Representation theory of {A}rtin algebras}, volume~36 of {\em
  Cambridge Studies in Advanced Mathematics}.
\newblock Cambridge University Press, Cambridge, 1995.

\bibitem{20BS}
Joshua Bardwell and Dominic Searles.
\newblock 0-{H}ecke modules for {Y}oung row-strict quasisymmetric {S}chur
  functions.
\newblock {\em European J. Combin.}, 102:103494, 18, 2022.

\bibitem{91Benson}
D.~J. Benson.
\newblock {\em Representations and cohomology. {I}}, volume~30 of {\em
  Cambridge Studies in Advanced Mathematics}.
\newblock Cambridge University Press, Cambridge, 1991.
\newblock Basic representation theory of finite groups and associative
  algebras.

\bibitem{15BBSSZ}
C.~Berg, N.~Bergeron, F.~Saliola, L.~Serrano, and M.~Zabrocki.
\newblock Indecomposable modules for the dual immaculate basis of
  quasi-symmetric functions.
\newblock {\em Proc. Amer. Math. Soc.}, 143(3):991--1000, 2015.

\bibitem{09BL}
Nantel Bergeron and Huilan Li.
\newblock Algebraic structures on {G}rothendieck groups of a tower of algebras.
\newblock {\em J. Algebra}, 321(8):2068--2084, 2009.

\bibitem{05BB}
A.~Bj\"{o}rner and F.~Brenti.
\newblock {\em Combinatorics of {C}oxeter groups}, volume 231 of {\em Graduate
  Texts in Mathematics}.
\newblock Springer, New York, 2005.

\bibitem{88BW}
Anders Bj\"{o}rner and Michelle~L. Wachs.
\newblock Generalized quotients in {C}oxeter groups.
\newblock {\em Trans. Amer. Math. Soc.}, 308(1):1--37, 1988.

\bibitem{91BW}
Anders Bj\"{o}rner and Michelle~L. Wachs.
\newblock Permutation statistics and linear extensions of posets.
\newblock {\em J. Combin. Theory Ser. A}, 58(1):85--114, 1991.

\bibitem{23CKO}
Seung-Il Choi, Young-Hun Kim, and Young-Tak Oh.
\newblock {P}oset modules of the 0-{H}ecke algebras and related quasisymmetric
  power sum expansions.
\newblock {\em European J. Combin.}, 120:Paper No. 103965, 34, 2024.

\bibitem{11DY}
BangMing Deng and GuiYu Yang.
\newblock Representation type of 0-{H}ecke algebras.
\newblock {\em Sci. China Math.}, 54(3):411--420, 2011.

\bibitem{02DHT}
G.~Duchamp, F.~Hivert, and J.-Y. Thibon.
\newblock Noncommutative symmetric functions. {VI}. {F}ree quasi-symmetric
  functions and related algebras.
\newblock {\em Internat. J. Algebra Comput.}, 12(5):671--717, 2002.

\bibitem{96DKLT}
G.~Duchamp, D.~Krob, B.~Leclerc, and J.-Y. Thibon.
\newblock Fonctions quasi-sym\'{e}triques, fonctions sym\'{e}triques non
  commutatives et alg\`ebres de {H}ecke \`a {$q=0$}.
\newblock {\em C. R. Acad. Sci. Paris S\'{e}r. I Math.}, 322(2):107--112, 1996.

\bibitem{05Fayers}
M.~Fayers.
\newblock {$0$}-{H}ecke algebras of finite {C}oxeter groups.
\newblock {\em J. Pure Appl. Algebra}, 199(1-3):27--41, 2005.

\bibitem{97Ful}
William Fulton.
\newblock {\em Young tableaux}, volume~35 of {\em London Mathematical Society
  Student Texts}.
\newblock Cambridge University Press, Cambridge, 1997.
\newblock With applications to representation theory and geometry.

\bibitem{85GR}
A.~M. Garsia and J.~Remmel.
\newblock Shuffles of permutations and the {K}ronecker product.
\newblock {\em Graphs Combin.}, 1(3):217--263, 1985.

\bibitem{84Gessel}
Ira~M. Gessel.
\newblock Multipartite {$P$}-partitions and inner products of skew {S}chur
  functions.
\newblock In {\em Combinatorics and algebra ({B}oulder, {C}olo., 1983)},
  volume~34 of {\em Contemp. Math.}, pages 289--317. Amer. Math. Soc.,
  Providence, RI, 1984.

\bibitem{06HNT}
F.~Hivert, J-C. Novelli, and J-Y. Thibon.
\newblock Yang-{B}axter bases of {$0$}-{H}ecke algebras and representation
  theory of {$0$}-{A}riki--{K}oike--{S}hoji algebras.
\newblock {\em Adv. Math.}, 205(2):504--548, 2006.

\bibitem{16Huang}
J.~Huang.
\newblock A tableau approach to the representation theory of {$0$}-{H}ecke
  algebras.
\newblock {\em Ann. Comb.}, 20(4):831--868, 2016.

\bibitem{22JKLO}
Woo-Seok Jung, Young-Hun Kim, So-Yeon Lee, and Young-Tak Oh.
\newblock Weak {B}ruhat interval modules of the 0-{H}ecke algebra.
\newblock {\em Math. Z.}, 301(4):3755--3786, 2022.

\bibitem{22KY}
Young-Hun Kim and Semin Yoo.
\newblock Weak {B}ruhat interval modules of the $0$-{H}ecke algebra for genomic
  {S}chur functions.
\newblock {\em arXiv preprint}, arXiv:2211.06575 [math.RT], 2022.

\bibitem{92KW}
R.~C. King and B.~G. Wybourne.
\newblock Representations and traces of the {H}ecke algebras {$H_n(q)$} of type
  {$A_{n-1}$}.
\newblock {\em J. Math. Phys.}, 33(1):4--14, 1992.

\bibitem{97KT}
D.~Krob and J-Y. Thibon.
\newblock Noncommutative symmetric functions. {IV}. {Q}uantum linear groups and
  {H}ecke algebras at {$q=0$}.
\newblock {\em J. Algebraic Combin.}, 6(4):339--376, 1997.

\bibitem{99Lam}
T.~Y. Lam.
\newblock {\em Lectures on modules and rings}, volume 189 of {\em Graduate
  Texts in Mathematics}.
\newblock Springer-Verlag, New York, 1999.

\bibitem{84LS}
A.~Lascoux and M.~P. Schützenberger.
\newblock Symmetrization operators on polynomial rings.
\newblock {\em Funct. Anal. Appl.}, 21(4):324--326, 1987.

\bibitem{97LLT}
Alain Lascoux, Bernard Leclerc, and Jean-Yves Thibon.
\newblock Flag varieties and the {Y}ang-{B}axter equation.
\newblock {\em Lett. Math. Phys.}, 40(1):75--90, 1997.

\bibitem{13LMvW}
K.~Luoto, S.~Mykytiuk, and S.~van Willigenburg.
\newblock {\em An introduction to quasisymmetric {S}chur functions}.
\newblock SpringerBriefs in Mathematics. Springer, New York, 2013.

\bibitem{93M}
Claudia Malvenuto.
\newblock {$P$}-partitions and the plactic congruence.
\newblock {\em Graphs Combin.}, 9(1):63--73, 1993.

\bibitem{06McNamara}
Peter McNamara.
\newblock Cylindric skew {S}chur functions.
\newblock {\em Adv. Math.}, 205(1):275--312, 2006.

\bibitem{04Mel}
Anna Melnikov.
\newblock On orbital variety closures in {$sl_n$}. {I}. {I}nduced {D}uflo
  order.
\newblock {\em J. Algebra}, 271(1):179--233, 2004.

\bibitem{79Norton}
P.~Norton.
\newblock {$0$}-{H}ecke algebras.
\newblock {\em J. Austral. Math. Soc. Ser. A}, 27(3):337--357, 1979.

\bibitem{91Sag}
Bruce~E. Sagan.
\newblock {\em The symmetric group - representations, combinatorial algorithms,
  and symmetric functions.}
\newblock Wadsworth \& Brooks / Cole mathematics series. Wadsworth, 1991.

\bibitem{19Searles}
Dominic Searles.
\newblock Indecomposable {$0$}-{H}ecke modules for extended {S}chur functions.
\newblock {\em Proc. Amer. Math. Soc.}, 148(5):1933--1943, 2020.

\bibitem{22Searles}
Dominic Searles.
\newblock Diagram supermodules for $0$-{H}ecke-{C}lifford algebras.
\newblock {\em arXiv preprint}, arXiv:2202.12022 [math.RT], 2022.

\bibitem{72Stanley}
Richard Stanley.
\newblock {\em Ordered structures and partitions}.
\newblock American Mathematical Society, Providence, R.I., 1972.
\newblock Memoirs of the American Mathematical Society, No. 119.

\bibitem{99Stanley}
Richard Stanley.
\newblock {\em Enumerative combinatorics. {V}ol. 2}, volume~62 of {\em
  Cambridge Studies in Advanced Mathematics}.
\newblock Cambridge University Press, Cambridge, 1999.

\bibitem{06Taskin}
Muge Taskin.
\newblock {\em Properties of four partial orders on standard {Y}oung tableaux}.
\newblock ProQuest LLC, Ann Arbor, MI, 2006.
\newblock Thesis (Ph.D.)--University of Minnesota.

\bibitem{15TW}
V.~Tewari and S.~van Willigenburg.
\newblock Modules of the {$0$}-{H}ecke algebra and quasisymmetric {S}chur
  functions.
\newblock {\em Adv. Math.}, 285:1025--1065, 2015.

\bibitem{19TW}
V.~Tewari and S.~van Willigenburg.
\newblock Permuted composition tableaux, {$0$}-{H}ecke algebra and labeled
  binary trees.
\newblock {\em J. Combin. Theory Ser. A}, 161:420--452, 2019.

\bibitem{05W}
Stephanie van Willigenburg.
\newblock Equality of {S}chur and skew {S}chur functions.
\newblock {\em Ann. Comb.}, 9(3):355--362, 2005.

\end{thebibliography}

\end{document}